\title[Confluence in quantum $K$-theory and $q$-oscillatory integrals]
{Confluence in quantum $K$-theory of weak Fano manifolds and
  $q$-oscillatory integrals for toric manifolds}
\author{Todor Milanov and Alexis Roquefeuil}
\address{Kavli IPMU (WPI), UTIAS, The University of Tokyo, Kashiwa, Chiba 277-8583, Japan}
\email{todor.milanov@ipmu.jp}
\address{Kavli IPMU (WPI), UTIAS, The University of Tokyo, Kashiwa, Chiba 277-8583, Japan}
\email{alexis.roquefeuil@ipmu.jp}
\date{\today}
\thanks{{\em 2020 Math. Subj. Class.} 14N35, 14J33, 39A45 (Primary), 33D05, 35Q53 (Secondary)}
\thanks{
{\em Key words and phrases:} Frobenius structures,
K-theoretic Gromov--Witten invariants, q-difference equations, q-gamma function}
\thanks{
	Correspondence to be sent to: alexis.roquefeuil@ipmu
}
\thanks{
	Declarations of interest: none.
}
\begin{document}

\begin{abstract}
For a smooth projective variety whose anti-canonical bundle is nef, we prove confluence of the small $K$-theoretic $J$-function, i.e., after rescaling appropriately the Novikov variables, the small $K$-theoretic $J$-function has a limit when $q\to 1$, which coincides with the small cohomological $J$-function. 
Furthermore, in the case of a Fano toric manifold of Picard rank 2, we prove the $K$-theoretic version of an identity due to Iritani that compares the $I$-function of the toric manifold and the oscillatory integral of the toric mirror. In particular, our confluence result yields a new proof of Iritani's identity in the case of a toric manifold of Picard rank 2. 
\end{abstract}

\maketitle

\setcounter{tocdepth}{2}
\tableofcontents

\section{Introduction}

\subsection{Foreword}

Let $X$ be a smooth complex projective variety and let
$K(X)=K^0(X;\mathbb{C})$ be the Grothendieck group of topological 
vector bundles on $X$. For simplicity, we will assume that $H^{\rm
  odd}(X;\CC)=0$. In particular, the Chern character map
$\operatorname{ch}: K^0(X)\to H(X;\CC)$ is a ring isomorphism. Let us
denote by $X_{0,n,d}$ the proper moduli stack of genus-0 stable maps 
of degree $d\in H_2(X;\mathbb{Z})$ with $n$ marked points. The
operation that assigns to a point in the moduli space the 
cotangent line at the $i$-th marked point is functorial and it gives
rise to a line bundle $L_i \to X_{0,n,d}$, while evaluation at the $i$-th marked
point gives rise to a map of Deligne--Mumford stacks
$\operatorname{ev}_i:X_{0,n,d}\to X$ known as the evaluation map. 
Let $E_1,\dots,E_n\in K(X)$, then following Givental and Y. P. Lee (see
\cite{Gi1, Lee1}) we introduce the $K$-theoretic Gromov--Witten (GW)
invariants.
\begin{definition}
The $K$-theoretic Gromov--Witten invariants of $X$ are given by
\ben
\langle E_1 L_1^{k_1},\dots,E_n L_n^{k_n}\rangle_{g,n,d} = 
\chi\Big(\mathcal{O}_{\rm virt}\otimes  
\operatorname{ev}^*_1(E_1) L_1^{k_1} \cdots \operatorname{ev}^*_n(E_n) L_n^{k_n} \Big) \in \mathbb{Z},
\een
where $\chi(\mathcal{F})$ denotes the holomorphic Euler characteristic
of $\mathcal{F}$ and $\mathcal{O}_{\rm virt}$ is the so called {\em virtual structure
  sheaf} (see \cite{Lee1}). 
\end{definition}

Let us fix a set $P_1,\dots,P_r$ of ample line bundles, s.t.,
$p_i=c_1(P_i)$ form a $\ZZ$-basis of
$H^2(X;\mathbb{Z})\cap H^{1,1}(X;\CC)\cong NS^1(X)$. If
$d\in H_2(X;\mathbb{Z})$ then we define
\ben
Q^d:=Q_1^{\langle p_1,d\rangle}\cdots Q_r^{\langle p_r,d\rangle},
\een
where $Q_1,\dots,Q_r$ are formal variables known as the {\em Novikov
  variables}. Let us assign to each $Q_i$ degree $m_i\in \ZZ$ defined
by the identity $c_1(T_X) = \sum_{i=1}^r m_i p_i$. For $t\in K(X)$ put
\beq\label{KGW-series}
\langle E_1 L_1^{k_1},\dots,E_n L_n^{k_n}\rangle_{g,n}(t) =
\sum_{d} \sum_{\ell=0}^\infty 
\frac{Q^d}{\ell !} \langle E_1 L_1^{k_1},\dots,E_n
L_n^{k_n},t,\dots,t\rangle_{g,n+\ell,d}.
\eeq
Let us fix a basis $\{\Phi_i\}_{i=1}^N \subset K(X)$ and denote by
$\{\Phi^i\}_{i=1}^N$ the dual basis with respect to the Euler pairing
\ben
g_{ij}:=\chi(\Phi_i\otimes \Phi_j) =  
\int_X \operatorname{ch}(\Phi_i) \operatorname{ch}(\Phi_j) \operatorname{Td}(X).  
\een
Following Givental, we introduce
\begin{definition}[\cite{IMT}, Definition 2.4]\label{def:intro_k_theoretical_j_function}
  The small $K$-theoretic $J$-function of the variety $X$ is the formal power series
  \ben
  J(q,Q)=1-q + \sum_{d\in \operatorname{Eff}(X)} \sum_{i=1}^N
  \Big\langle \frac{\Phi_i}{1-qL_1} \Big\rangle_{0,1,d} \,
  Q^d \Phi^i
  \in
  K(X)(q)[\![Q]\!].
  \een
\end{definition}
Let us recall also the definition of the cohomological GW invariants. Let us fix bases
$\{\phi_i\}_{i=1}^N$ and $\{\phi^i\}_{i=1}^N$ of $H(X;\CC)$ dual to each other with
respect to the Poincare pairing, that is,
\ben
(\phi_i, \phi^j):= \int_{[X]} \phi_i \cup \phi^j = \delta_{i,j},\quad
1\leq i,j\leq N.
\een
In fact, we choose $\phi_i:=\operatorname{ch}(\Phi_i)$ and 
$\phi^i := \operatorname{ch}(\Phi^i)\operatorname{td}(X)$. 
The GW invariants are defined by
\ben
\langle
\phi_{i_1}\psi_1^{k_1},\dots,
\phi_{i_n}\psi_n^{k_n}
\rangle_{g,n,d} =
\int_{[X_{g,n,d}]^{\rm virt}}
\prod_{s=1}^N \operatorname{ev}_s^*(\phi_{i_s}) \, \psi_s^{k_s},
\een
where $\operatorname{ev}_s: X_{g,n,d}\to X$ is the evaluation map at
the $s$-th marked point, $\psi_s=c_1(L_s)$, and $[X_{g,n,d}]^{\rm
  virt}$ is the virtual fundamental cycle constructed in \cite{BF_intnormalcone}. 
The cohomological $J$-function is defined by
\ben
J^{\rm coh}(z,Q)=-z+
\sum_{d\in \operatorname{Eff}(X)} \sum_{i=1}^N
\Big\langle \frac{\phi_i}{-z-\psi} \Big\rangle_{0,1,d} \,
Q^d \phi^i.
\een
The parameter $z$ in $J^{\rm coh}(z,Q) $ is in fact redundant due to
the homogeneity properties of the $J$-function. Namely, let us define
the degree operator
\ben
\xymatrix{\operatorname{deg}: H(X;\CC)\ar[r] &  H(X;\CC)},
\een
which for a homogeneous element $\phi\in H^{2k}(X;\CC)$ is defined by
$\operatorname{deg}(\phi)=k \phi$. Using the formula for the dimension
of the virtual fundamental cycle we get
\begin{equation}\label{eqn:intro_cohom_J_recover_z}
J^{\rm coh}(z,Q_1,\dots,Q_r) = z^{1-\operatorname{deg}}
J^{\rm coh}(1,z^{-m_1}Q_1,\dots, z^{-m_r} Q_r).
\end{equation}

One way to compare cohomological and $K$-theoretical Gromov--Witten
invariants is to compare their $J$-functions as solutions of their
respective functional equations. In more details, one can use
confluence of $q$-difference equations to obtain the cohomological
$J$-function as a limit of the $K$-theoretical one. This has been
studied for projective spaces in \cite{roquefeuil_thesis} and for the
quintic threefold in \cite{Wen_QK_quintic_threefold}. Recall that a
line bundle $L$ is said to be {\em nef} if it has a non-negative degree on
all complex curves in $X$, that is, if $f:C\to X$ is a holomorphic map
from a complex curve $C$ to $X$, then $\int_{[C]} c_1(f^*L)\geq 0$. 
Our first main goal will be to prove the following confluence result.
\begin{theorem*}[Theorem \ref{t1}]
  If $X$ is a smooth projective variety, such that, the anti-canonical
  bundle $K_X^\vee$ is nef, then the limit
  \ben
  \lim_{q\to 1}
  (q-1)^{\operatorname{deg}-1}\, 
  \operatorname{ch}\Big(
  J(q, (q-1)^{m_1} Q_1,\dots, (q-1)^{m_r} Q_r)
  \Big)
  \een
  exists and it coincides with $J^{\rm coh}(1,Q)$. 
\end{theorem*}
Let us point out that the title of our paper is a bit misleading,
because weak Fano manifold $X$ means that $K_X^\vee$ is nef and {\em big}, while
our theorem does not require the condition that $K_X^\vee$ is big.
In particular, our theorem applies to all Fano and Calabi--Yau manifolds.
The proof of Theorem \ref{t1} is based on the reconstruction
result of Givental--Tonita for the $K$-theoretic $J$-function in terms of
cohomological GW invariants. More precisely, in the case of the 
small $J$-function, the recursion procedure outlined in the proof of
Proposition 4 in \cite{GT} takes a very elegant form. Using the Fano
condition and the formula for the dimension of the virtual fundamental
cycle, the statement of Theorem \ref{t1} follows quite easily.

Our next goal is motivated by the problem of comparing the $K$-theoretic $I$-function of a toric
manifold $X$ with the corresponding oscillatory integral defined through
the toric mirror of $X$. The $K$-theoretic $I$-function was introduced by
Givental in
\cite{Givental:QK_fixed_point_localization,Givental:perm_toric_q_hypergeometric}. More
precisely, using fixed-point localization techniques, Givental was
able to prove that a certain $q$-hypergeometric series
$I_X^{K-\textnormal{th}}$, called the {\em small $K$-theoretic $I$-function}
(see Definition \ref{def:gamma_k_th_I_function}), belongs to the
permutation-equivariant $K$-theoretic Lagrangian cone of $X$. 
Let us point out that in the case of a Fano toric manifold, up to some
simple normalization factor, $I_X^{K-\textnormal{th}}$  coincides with the small $J$-function of
$X$. Moreover, the small $J$-function in permutation equivariant
$K$-theoretical GW theory coincides with the non-equivariant
one, i.e., with the small $J$-function used in the current paper.  
On the other hand, from the toric data of $X$ one can construct a
$K$-theoretic version of the toric mirror model (see
\cite{Givental:perm_mirror}). Namely, following Givental, let us consider the following
family of functions:
\[
    \begin{tikzcd}
      Y := \left( \mathbb{C}^* \right)^n
        \arrow[r,"W^{K-\textnormal{th}}"]
        \arrow[d,"\pi"]
      &
      \mathbb{C}
      \\
      B := \left( \mathbb{C}^* \right)^r
    \end{tikzcd},
  \]
where we denote by $x_1, \dots, x_n$ the standard coordinates on $Y$,
$Q_1, \dots, Q_r$ the standard coordinates on $B$, and the maps
$W^{K-\textnormal{th}}$ and $\pi$ are given by 
  \begin{align*}
    W^{K-\textnormal{th}}(x_1,\dots,x_n)
    &:=
    \sum_{j=1}^n \sum_{l > 0}
    \frac{x_j^l}{l(1-q^l)}
    \in \mathbb{C},
    \\
    \pi(x_1, \dots, x_n)
    &:=
    \left(
      \cdots,
        \prod_{j=1}^n x_i^{m_{ij}}
      ,\cdots
    \right)
    \in B.
  \end{align*}
Let us recall first the case of quantum cohomology, which was
investigated in details in \cite{Iritani:gamma_structure}. Put 
\[
\mathcal{I}^{\textnormal{coh}}(z,Q)
    :=
    \int_{\Gamma_\mathbb{R}}
    \exp \left(
      W^{\textnormal{coh}}_{|\pi^{-1}(Q)}(x_1,\dots,x_n)
    \right)
    \omega_{\pi^{-1}(Q)},
\]
where 
\[
\omega_{\pi^{-1}(Q)}:=
\frac{
d\log x_1 \wedge\cdots \wedge d\log x_n}{ 
d\log Q_1\wedge \cdots \wedge d\log Q_r},
\] 
is interpreted naturally as a holomorphic volume form on the fiber
$\pi^{-1}(Q)$ and $\Gamma_\mathbb{R}$
is the real Lefschetz thimble (see Remark
\ref{rmk:gamma_cohomology_real_cycle}).
The comparison result goes as follows.
\begin{theorem*}[\cite{Iritani:gamma_structure}, see Theorem
  \ref{thm:gamma_cohomology_oscillatory_i_function}]
If $X$ is a Fano toric manifold,
then the cohomological oscillatory integral
$\mathcal{I}^{\textnormal{coh}}$ and the cohomological $I$-function
(see Definition \ref{def:gamma_cohomology_I_function}) are related by the identity 
  \[
    \mathcal{I}^{\textnormal{coh}}(z,Q)
    =
    \int_{\left[X\right]}
    \widehat{\Gamma}(TX)\cup z^\rho\, z^{\operatorname{deg}}\, 
    I^{\textnormal{coh}}(z,Q),
  \]
where $\rho=c_1(TX)\cup$ is the operator of cup product multiplication
by $c_1(TX)$, $\int_{\left[X\right]}$ denotes the intersection product
by $[X] \in H_*(X;\mathbb{C})$, and $\widehat{\Gamma}(TX)$ is a
cohomological characteristic class given by 
  \[
    \widehat{\Gamma}(TX)
    =
    \prod_{\delta_j : \textnormal{Chern root of }TX} \Gamma(1 + \delta_j) \in H^*(X;\mathbb{C}).
  \]
\end{theorem*}
Let us go back to the $K$-theoretical setting. The difficulty of the
problem depends on the Picard rank of $X$. The Picard rank 1 case does
not contain non-Fano manifolds. Therefore, since we would like to see
the role of the Fano condition, we will focus on the case when $X$ is a
Fano toric manifold of Picard rank 2. Let us point out that the Picard
rank 2 case is the 1st one to consider if one is interested in extending
Iritani's result to non-Fano manifolds. 
Using the $K$-theoretic mirror family of functions, we construct a
$q$-analogue of the oscillatory integral, by using the Jackson integral 
\[
  \left[
    \int_0^\infty
    \right]_q
    f(x) d_qx
    :=
    \sum_{d \in \mathbb{Z}}
    q^d
    f\left(q^d \right).
\] 
In Definition \ref{def:qosc_q_oscillatory_integral_real}, we introduce the $q$-oscillatory integral
\[
  \mathcal{I}^{K-\textnormal{th}}(q,Q_1,Q_2)
    :=
    \left[
    \int_{\Gamma }
    \right]_q
    \exp \left(
      W^{K-\textnormal{th}}_{|\pi^{-1}(Q)}(x_1,\dots,x_n)
    \right)
    \omega_{\pi^{-1}(Q),q}.
  \]
  \begin{remark}\label{rem:Iritani}
    The $q$-difference equations in K-theoretic GW theory were first
    studied by Iritani. Namely, using graph spaces, he was able to
    construct a system of $q$-difference equations, which can be
    solved in terms of the K-theoretic J-function. His argument is
    available in \cite{IMT}, Section 2.6. The existence of the
    q-difference equations was established by different methods also by Givental and
    Tonita in \cite{GT}. Furthermore, in the case of toric manifolds,
    Iritani proposed a $q$-oscillatory integral defined via the Jackson
    integral, which are very similar to the integrals that we would
    like to use, except that we work with $|q|>1$, while Iritani
    considered $|q|<1$ (see \cite{Iritani:qdifference_toric}, Section 5).  
  \end{remark}
In Proposition \ref{prop:qosc_mirrors_have_same_qde}, we show that the
functions $I_X^{K-\textnormal{th}}$ and
$\mathcal{I}^{K-\textnormal{th}}$ are solutions to the same set of
$q$-difference equations. 
In order to compare these two functions, we introduce a multiplicative
characteristic class, which can be viewed as a $q$-analogue of Iritani's gamma
class $\widehat{\Gamma}(TX)$. 
Denote by $(z;q)_\infty := \prod_{l \geq 0}(1-q^lz)$ the $q$-Pochhammer symbol (for $|q| < 1$), and let $\Gamma_q(x) := (1-q)^{1-x} \frac{(q;q)_\infty}{(q^x;q)_\infty}$ be the $q$-gamma function.
\begin{definition*}[Definition \ref{def:qosc_q_gamma_class}]
  The $q$-gamma class of a symplectic toric manifold is defined to be, for $|q| > 1$,
  \[
    \widehat{\gamma_q}(TX)
    :=
    \prod_{\delta_j : \textnormal{Chern root of }TX} 
\delta_j (1-q^{-1})^{\delta_j-1} \Gamma_{q^{-1}}(\delta_j) \in H^*(X;\mathbb{C}).
  \]
\end{definition*}
The 2nd main goal of this paper is to prove the following
$K$-theoretical analogue of Iritani's theorem: 
\begin{theorem*}[Theorem
  \ref{thm:gamma_comparison_theorem_in_quantum_k_theory}]
Suppose that $q>1$ is a real number and that the Novikov variables satisfy
$Q_1, Q_2 \in q^\mathbb{Z}$. Then the $K$-theoretic $I$-function
  $I_X^{K-\textnormal{th}}$ and the oscillatory integral
  $\mathcal{I}^{K-\textnormal{th}}$ are related by the following
  identity: 
  \[
    \mathcal{I}^{K-\textnormal{th}}(q,Q_1,Q_2)
    =
    \int_{
      \left[
        X
      \right]
    }
\widehat{\gamma}_q (TX) \cup
    \textnormal{ch}_q\left( I_{X}^{K-\textnormal{th}}(q,Q_1,Q_2)
\right),
  \]
where $\int_{[X]}$ denotes the cap product with the fundamental class
$[X] \in H_*(X_{\mu,K};\mathbb{C})$, $\widehat{\gamma}_q(TX)$
is the $q$-gamma class of Definition \ref{def:qosc_q_gamma_class} and 
  \[
    \textnormal{ch}_q(E)
    :=
    \sum_{\delta_j : \textnormal{Chern root of }E}
    q^{\delta_j}.
  \]
\end{theorem*}
Note that in quantum cohomology, the scope of \cite{Iritani:gamma_structure} goes much beyond the comparison theorem as we stated it here - in particular, the gamma class $\widehat{\Gamma}(TX)$ is used to define the A-side integral structure and the quantum cohomology central charge.
The search for $K$-theoretical analogues of these constructions should motivate further investigations related to the $q$-gamma class $\widehat{\gamma}_q (TX)$.

\subsection{Plan of the paper}

This paper is essentially structured in two independant parts.
The first part consists of Sections \ref{sec:ABC} and \ref{sec:confluence}, whose goal is to prove the confluence of the small $K$-theoretical $J$-function to the small cohomological $J$-function.
Section \ref{sec:ABC} deals with prerequisites to understand the reconstruction result of \cite{GT}, which we will explain in Subsections \ref{sec:DM-isom} and \ref{ssec:reconstruction_of_j_function}.
Finally, in Subsection \ref{ssec:confluence_of_j_function}, we state and prove the 1st main result of this paper - confluence of the small $K$-theoretic $J$-function (Theorem \ref{t1}).

The second part consists of Section \ref{sec:oscillatory_integral_and_gamma}, in which we study two mirrors to the $J$-function: the $I$-function and the ($q$-)oscillatory integral.
Subsection \ref{ssec:toric_manifolds} fixes the notations for symplectic toric manifolds.
Subsection \ref{ssec:oscillatory_in_qh} deals with the results already known in quantum cohomology. We will also give a new strategy to prove the comparison result between the small cohomological $I$-function and the oscillatory integral (see Theorem \ref{thm:gamma_cohomology_oscillatory_i_function}).
In subsection \ref{ssection:q_gamma_structure_in_qk}, motivated by
Givental's proposal for mirror symmetry for the small $K$-theoretical
$J$-function, we define a $q$-oscillatory integral (defined through the Jackson
integral, see Definition
\ref{def:qosc_q_oscillatory_integral_real}). There is a subtlety in
the case when the Novikov variables do not belong to the $q$-spiral
$q^\ZZ:=\{q^m\ |\ m\in \ZZ\}$. Following De Sole and Kac  
\cite{DeSole_Kac:integral_representation_of_qgamma} we were able to
find a natural construction of the $q$-oscillatory integral for
arbitrary values of the Novikov variables modulo a certain conjecture
about the regularity of the $K$-theoretic quantum $q$-difference
equations. Furthermore, in Section \ref{sec:comp_thm} we prove the 2nd
main result of this paper, that is, the comparison between the $K$-theoretical $I$-function and the $q$-oscillatory integral (Theorem 
\ref{thm:gamma_comparison_theorem_in_quantum_k_theory}).
Finally, in Section \ref{sec:confl_comp_thm} we give a new proof of Iritani's
theorem by taking the limit $q\to 1$ in Theorem 
\ref{thm:gamma_comparison_theorem_in_quantum_k_theory} and using the
confluence result in Theorem \ref{t1}. 

\begin{remark}
  When this paper was uploaded as a preprint, we have decided to add three appendices.
  The first appendix contains a sketch of the proof of Theorem \ref{thm:confluence_qHHR_recursion_formula}, following \cite{GT}, Propositions 3 and 4.
  In the second appendix, we show that our proof of Theorem \ref{thm:gamma_comparison_theorem_in_quantum_k_theory} can also be used in the setting of quantum cohomology to obtain a new proof of Theorem \ref{thm:gamma_cohomology_oscillatory_i_function}.
  The last appendix deals with a different definition of the $q$-oscillatory integral (see Definition \ref{def:qosc_q_oscillatory_integral_real}), for which we tried to do similar computations but could only manage partial results.
  Since these appendices only contain partial results, or no new result, we have decided to remove them from the present submission while keeping them available on the preprint version.
\end{remark}
\section{The ABC of twisted Gromov--Witten theory}
\label{sec:ABC}

Our goal here is to give the definitions of Gromov--Witten invariants required for the reconstruction theorem of quantum $K$-theory from quantum cohomology of \cite{GT}.
In Subsection \ref{sec:def} we define ABC-twisted (cohomological) GW invariants.
In Subsection \ref{sec:tw-rem} we explain how to reconstruct ABC-twisted GW invariants in terms of the usual cohomological GW invariants.
In the remaining subsections \ref{sec:fake_kgw} and \ref{sec:stem_inv}, we define the two particular kinds of ABC-twisted GW invariants that appear in the reconstruction theorem.

Suppose that $Y$ is an orbifold whose coarse moduli space $|Y|$ is a
projective variety. Let $IY=\bigsqcup_{v=1}^m Y_v$ and
$\overline{IY}=\bigsqcup_{v=1}^m \overline{Y}_v$ 
be respectively the {\em inertia} and the {\em rigidified
  inertia} orbifolds of $Y$, where the index $v$ ($1\leq v\leq m$) enumerates the connected
components of the coarse moduli space $|IY|=|\overline{IY}|$. Let us assume that
$Y_1=Y$ and so $Y_v$ (resp. $\overline{Y}_v$) with $v\neq 1$ are the so-called {\em twisted
} (resp. {\em rigidified twisted}) {\em sectors}. We would like to recall Givental--Tonita's twisted
orbifold Gromov--Witten theory of $Y$, which plays a key role in the
study of $K$-theoretic GW theory (see \cite{GT,T}).

In fact, the only orbifolds that we will be interested in will be the global
quotients $Y=[X/{\mathbf{\mu}}_m]$, where ${\mathbf{\mu}}_m$ is the
multiplicative group of order $m$, $X$ is a smooth projective variety,
and the action of ${\mathbf{\mu}}_m $ is the trivial one. In this
case the twisted sectors are parametrized by the elements $g$ of the
group ${\mathbf{\mu}}_m$ and we have
\ben
Y_g = [X/{\mathbf{\mu}}_m],\quad
\overline{Y}_g=[X/({\mathbf{\mu}}_m/\langle g\rangle )],
\een
where $\langle g\rangle$ denotes the cyclic subgroup generated by
$g$. 

\subsection{Definition of ABC twists}\label{sec:def}
Some standard references for orbifold GW theory is \cite{AGV,CR} (see
also \cite{T} for an overview). Let $Y_{g,n,d}$ be the moduli space of
orbifold stable maps $f:(C,s_1,\dots,s_n)\to Y$. The moduli space has
connected components $Y_{g,n,d}^{v_1,\dots,v_n}$ parametrized by $n$-tuples $(v_1,\dots,v_n)$,
$v_i\in [1,m]:=\{1,2,\dots,m\}$ and consisting of stable maps, such
that, $f|_{s_i}\in \overline{Y}_{v_i}$. Let $\C_{g,n,d}^{(v_1,\dots,v_n)}:=Y_{g,n+1,d}^{(v_1,\dots,v_n,1)}$ be the
connected components of the universal curve $\C_{g,n,d}$ and
$\pi: \C_{g,n,d}\to Y_{g,n,d}$ and $\operatorname{ev}:\C_{g,n,d}\to Y$
be respectively the map forgetting the last marked point and the
evaluation map at the last marked point. Finally, let $Z_v$ ($v\in [1,m]$) be
the closed substack of $\C_{g,n,d}$ parametrizing stable maps
$(C,s_1,\dots,s_{n+1},f)$, such that, if $C'$ denotes the
irreducible component of $C$ that carries the $(n+1)$-st marked point
$s_{n+1}$, then
\begin{itemize}
\item[(i)]
  $C'$ carries exactly two nodes of $C$, say $z_+$ and $z_-$, and no
  other marked points.
\item[(ii)]
 The map $f$ maps $C'$ to $Y$ with degree 0, that is, $C'$ is
 contracted to a point.
\item[(iii)]
  The evaluation map at $z_+$ or $z_-$ lands in $\overline{Y}_v.$
\end{itemize}
Let us recall that the forgetful map $\pi$ is characterized by the
property that it forgets the $(n+1)$st marked point and it contracts
the resulting unstable components. Therefore, the fibers of
$\pi|_Z:Z:=\bigsqcup_v Z_v\to Y_{g,n,d}$ are non-empty only if the domain
curve $C$ of the corresponding stable map in $Y_{g,n,d}$ is singular.
In this case, the points in the fiber of $\pi|_Z$ correspond to the
singular points of $C$. The above moduli spaces and maps between them form
the following diagram:
\ben
\xymatrix{Z_v\ar[r]^-{\iota_v} &
  C_{g,n,d}^{(v_1,\dots,v_n)}=Y_{g,n+1,d}^{(v_1,\dots,v_n,1)}
  \ar[d]_\pi
  \ar[r]^-{\operatorname{ev}_{n+1}} &
  Y\\
  & Y_{g,n,d}^{(v_1,\dots,v_n)}
  \ar[r]^-{(\operatorname{ev}_{1},\dots, \operatorname{ev}_{n})} &
  \overline{Y}_{v_1}\times \cdots \times \overline{Y}_{v_n},}
\een
where $\operatorname{ev}_i$ is the evaluation map at the $i$-th marked
point and $\iota_v$ is the natural inclusion map. 

The twisted GW invariants depend on the choice of the following 3
types of data:
\begin{enumerate}
\item[(A)]
  A finite number of orbifold vector bundles
  $E_\alpha\to Y$ (where $1\leq \alpha\leq k_A$) and identically indexed
  multiplicative characteristic classes
  \ben
  A_\alpha (\ell):= \exp\left(\sum_{i=0}^\infty
    s^A_{\alpha,i} \operatorname{ch}_i(\ell)
  \right),\quad s^A_{\alpha,i}\in \CC.
  \een
\item[(B)]
  A finite number of polynomials $f_\beta\in K^0(X)[\ell]$ where ($1\leq
  \beta\leq k_B$) and identically indexed multiplicative characteristic
  classes 
  \ben
  B_\beta (\ell):= \exp\left(\sum_{i=0}^\infty
    s^B_{\beta,i} \operatorname{ch}_i(\ell)
  \right),\quad s^B_{\beta,i}\in \CC.
  \een
\item[(C)]
  A finite number of orbifold vector bundles $E_{v,\gamma}\to Y$
  (where $v\in [1,m], 1\leq \gamma\leq k_v$) and identically indexed
  multiplicative characteristic classes
  \ben
  C_{v,\gamma} (\ell):= \exp\left(\sum_{i=0}^\infty
    s^C_{v,\gamma,i} \operatorname{ch}_i(\ell)
  \right),\quad s^C_{v,\gamma,i}\in \CC.
  \een
\end{enumerate}
Using the above data we define the following three types of cohomology
classes:
\begin{definition}
The $ABC$-twists are the three cohomological classes on $Y_{g,n,d}$ defined by:
\ben
\Theta^A_{g,n,d}:=
\prod_{\alpha=1}^{k_A}
A_\alpha(\pi_* \operatorname{ev}^*_{n+1}(E_\alpha)),
\een
\ben
\Theta^B_{g,n,d}:=
\prod_{\beta=1}^{k_B}
B_\beta(\pi_* (
\operatorname{ev}^*_{n+1}f_\beta(L^{-1}_{n+1})-
\operatorname{ev}^*_{n+1}f_\beta(1) )),
\een
where $\operatorname{ev}^*_{n+1}f_\beta(\ell)\in
\operatorname{ev}^*_{n+1}K^0(X)[\ell]$ and
\ben
\Theta^C_{g,n,d}:=
\prod_{v=1}^m
\prod_{\gamma=1}^{k_v}
C_{v,\gamma}(\pi_*(
\operatorname{ev}^*_{n+1}(E_{v,\gamma})\otimes
\iota_* \O_{Z_v}),
\een
where $\pi_*=R^0\pi_*-R^1\pi_*$ is the $K$-theoretic pushforward and
$L_{n+1}\to C_{g,n,d}$ is the orbifold line bundle corresponding to
the cotangent line at the $(n+1)$st marked point.
\end{definition}
Put
$\Theta^{ABC}_{g,n,d} =\Theta^{A}_{g,n,d} \Theta^{B}_{g,n,d}
\Theta^{C}_{g,n,d} $. Then the ABC-twisted GW-invariants are
defined by the following formula:
\begin{definition}\label{ABC-inv}
Let $k_1, \dots, k_n \in \mathbb{Z}_{\geq 0}$. The ABC-twisted Gromov--Witten invariants of $Y$ are defined by
\ben
\langle
\phi_{i_1}\psi^{k_1},\dots,\phi_{i_n}\psi^{k_n}
\rangle^{ABC}_{g,n,d} =
\int_{[Y_{g,n,d}]^{\rm virt}} \Theta^{ABC}_{g,n,d} \prod_{s=1}^n
\operatorname{ev}_s^*(\phi_{i_s}) \psi_s^{k_s},
\een
where $\{\phi_i\}_{i=1}^N$ is a basis of $H^*(\overline{IY};\CC)=H^*(IY;\CC)$,
$\psi_s$ is the 1st Chern class of the orbifold line bundle $L_s$
corresponding to the cotangent line at the $s$-th marked point, and 
$[Y_{g,n,d}]^{\rm virt}$ is the virtual fundamental cycle of \cite{BF_intnormalcone}.
\end{definition}

Let us also recall some properties of the grading in orbifold GW
theory, which we will need later on to prove confluence of the $K$-theoretical $J$-function. Let
$\mathcal{Y}=(\mathcal{Y}_1\rightrightarrows \mathcal{Y}_0)$ be an
orbifold groupoid representing the Morita equivalence class of $Y$.
The inertia orbifold $IY$ is represented by the orbifold groupoid
whose objects are pairs $(y,g)$, $y\in \mathcal{Y}_0$ and $g\in
\operatorname{Aut}(y):=\{\xi\in \mathcal{Y}_1| s(\xi)=t(\xi)=y\}$ and
whose morphisms $\operatorname{Mor}_{I\mathcal{Y}}((y,g),(y',g'))$ are morphisms
$f\in \operatorname{Mor}_{\mathcal{Y}}(y,y')$ such that
$g'\circ f=f\circ g$.  Furthermore, the orbifold tangent bundle $TY$
is represented by the orbifold groupoid $T\mathcal{Y}$, whose objects
are pairs $(y,\xi)$, $y\in \mathcal{Y}_0$ and $\xi\in
T_y\mathcal{Y}_0$ and whose morphisms
$\operatorname{Mor}_{T\mathcal{Y}}((y,\xi),(y',\xi'))$ consists of
morphisms $f\in \operatorname{Mor}_{\mathcal{Y}}(y,y')$, such that,
$df(\xi)=\xi'$. If $(y,g)\in I\mathcal{Y}$, then $g$ acts naturally on
$T_y\mathcal{Y}_0$ and since $g$ has finite order, it is
diagonalizable. Let $\lambda_i(y,g)=e^{2\pi\mathbf{i} \alpha_i(y,g)}$,
$0\leq \alpha_i(y,g)<1$, $1\leq i\leq \operatorname{dim}(Y)$ be the
eigenvalues of $g$. Then $\iota(y,g):=\sum_i \alpha_i(y,g)$ is a
rational number depending only on the connected component $Y_v$ to
which the point $(y,g)$ belongs to, so we put
$\iota(v):=\iota(y,g)$. If $\phi\in H^{2k}(Y_v,\CC)$, then the
Chen--Ruan degree of $\phi$ is defined to  be
$\operatorname{deg}_{CR}(\phi):=k+\iota(v)$.

\begin{proposition}
The complex dimension of the virtual fundamental cycle of
$Y_{g,n,d}^{(v_1,\dots,v_n)}$ is
\ben
\textnormal{dim} \left[ Y_{g,n,d}^{(v_1,\dots,v_n)} \right]^{\textnormal{vir}} =
3g-3+n + D(1-g) + \int_d c_1(TY) -\sum_{i=1}^n \iota(v_i),
\een
where $D$ is the complex dimension of $Y$.
\end{proposition}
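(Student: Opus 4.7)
The plan is to derive this dimension formula via a standard deformation-obstruction analysis of twisted stable maps, combined with orbifold Riemann--Roch on the domain curve. This is the content of the foundational work of Abramovich--Graber--Vistoli \cite{AGV}; here I only indicate the key steps.

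First, I would use the forgetful morphism $Y_{g,n,d}^{(v_1,\dots,v_n)}\to \overline{\mathcal{M}}_{g,n}^{\rm tw}(v_1,\dots,v_n)$ to the moduli of twisted $n$-pointed genus $g$ curves with prescribed monodromy data $(v_1,\dots,v_n)$. This base stack has complex dimension $3g-3+n$, as in the smooth case: once the monodromy orders at the marked points are fixed, the twisted structure is rigid and does not contribute to the deformation count. On the open locus where the source curve $(C,s_1,\dots,s_n)$ has finite automorphism group, the virtual tangent complex of the fiber over $(C,s_1,\dots,s_n)$ has Euler characteristic $\chi(C,f^*TY)$, giving the splitting
\[
\textnormal{vdim}\, Y_{g,n,d}^{(v_1,\dots,v_n)} \;=\; (3g-3+n) + \chi(C, f^*TY).
\]

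Second, I would compute $\chi(C,f^*TY)$ by orbifold (Kawasaki) Riemann--Roch on the twisted curve $C$. At each marked point $s_i$ the local monodromy acts on $(f^*TY)|_{s_i}$ with eigenvalues $\{e^{2\pi\mathbf{i}\alpha_{i,k}}\}_{k=1}^{D}$, $0\leq \alpha_{i,k}<1$, giving
\[
\chi(C, f^*TY) \;=\; \deg(f^*TY) + D(1-g) - \sum_{i=1}^n \sum_{k=1}^{D} \alpha_{i,k}.
\]
The degree term equals $\int_d c_1(TY)$ by definition of the degree of the orbifold stable map. Since $Y_{v_i}\subset IY$ is the connected component through a representative $(y,g_i)$ of the monodromy data at $s_i$, the eigenvalues $\{e^{2\pi\mathbf{i}\alpha_{i,k}}\}$ coincide with the $\lambda_k(y,g_i)$ appearing in the definition of $\iota$. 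Therefore $\sum_k \alpha_{i,k}=\iota(v_i)$, and substituting yields the desired formula.

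The only conceptual point requiring care is the identification of the age correction produced by orbifold Riemann--Roch with the author's $\iota(v_i)$: both are expressed as $\sum_k \alpha_k$ with $\alpha_k\in[0,1)$ the logarithms of the eigenvalues of the monodromy action on $TY$, so the matching is immediate and uses only that $\iota$ is locally constant on $IY$. No input specific to global quotients is needed; note that in the paper's special case $Y=[X/\mu_m]$ with trivial action one has $\iota(v)=0$ for all $v$, and the formula reduces to the standard one for stable maps to $X$, consistent with the fact that such orbifold structures do not produce new deformations.
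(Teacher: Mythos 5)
Your proposal is correct. The paper states this proposition with an immediate \qed and offers no proof of its own, citing it implicitly as a known fact from the foundational literature on orbifold Gromov--Witten theory (\cite{AGV,CR}); your argument — decomposing the virtual dimension as $\dim \overline{\mathcal{M}}^{\rm tw}_{g,n}=3g-3+n$ plus the Euler characteristic $\chi(C,f^*TY)$ of the relative obstruction theory, then applying orbifold Riemann--Roch on the twisted curve so that the age corrections at the marked points produce the terms $-\iota(v_i)$ — is precisely the standard derivation. The only delicate point, which you handle correctly, is the identification of the orbifold degree of $f^*TY$ with $\int_d c_1(TY)$ and of the age at $s_i$ with $\iota(v_i)$ as defined in the paper (both being $\sum_k\alpha_k$ for the eigenvalue logarithms of the monodromy on $TY$), and your sanity check against the global-quotient case $Y=[X/\mu_m]$ with trivial action, where all $\iota(v)=0$, is exactly the specialization the paper uses in its applications.
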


\begin{corollary}
If the twisted GW invariant of Definition \eqref{ABC-inv} is non-zero,
then the following inequality holds:
\ben
\sum_{s=1}^n \operatorname{deg}_{CR}(\phi_{i_s}) + k_{s} \leq
3g-3+n + D(1-g) + \int_d c_1(TY).
\een
\end{corollary}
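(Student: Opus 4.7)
The plan is to deduce the inequality by a direct dimension count, matching the total complex cohomological degree of the integrand in Definition \ref{ABC-inv} against the complex virtual dimension provided by the preceding Proposition.

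First, I would analyze the integrand $\Theta^{ABC}_{g,n,d} \prod_{s=1}^n \operatorname{ev}_s^*(\phi_{i_s}) \psi_s^{k_s}$ factor by factor. Each multiplicative characteristic class occurring in $\Theta^{A}$, $\Theta^{B}$, $\Theta^{C}$ is defined as an exponential $\exp\bigl(\sum_i s_{\cdot, i} \operatorname{ch}_i(\cdot)\bigr)$, hence decomposes as $1 + (\text{terms of positive complex degree})$. Consequently $\Theta^{ABC}_{g,n,d}$ has a decomposition into homogeneous components of all non-negative complex degrees, with degree-$0$ component equal to $1$. The remaining factor $\prod_s \operatorname{ev}_s^*(\phi_{i_s}) \psi_s^{k_s}$ is homogeneous of complex degree $\sum_s \bigl(\tfrac{1}{2}\deg_{\mathbb{R}}(\phi_{i_s}) + k_s\bigr)$, since $\psi_s$ has complex degree $1$ and pullback preserves degree.

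Next, I would invoke the basic principle that an integral against a virtual fundamental cycle of complex dimension $D_{\rm vir}$ vanishes on any class of complex degree strictly greater than $D_{\rm vir}$. For the twisted invariant in Definition \ref{ABC-inv} to be non-zero, at least one homogeneous component of the integrand must have complex degree equal to $D_{\rm vir} = \dim[Y_{g,n,d}^{(v_1,\dots,v_n)}]^{\rm virt}$. Since the component of $\Theta^{ABC}_{g,n,d}$ contributing the \emph{smallest} degree is the degree-$0$ part, the minimal total degree of the integrand is exactly $\sum_s \bigl(\tfrac{1}{2}\deg_{\mathbb{R}}(\phi_{i_s}) + k_s\bigr)$, and therefore non-vanishing forces
\[
\sum_{s=1}^n \Bigl( \tfrac{1}{2}\deg_{\mathbb{R}}(\phi_{i_s}) + k_s \Bigr) \leq 3g-3+n + D(1-g) + \int_d c_1(TY) - \sum_{s=1}^n \iota(v_s),
\]
by the dimension formula just stated in the Proposition.

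Finally, I would translate the inequality into Chen--Ruan degrees using the definition $\operatorname{deg}_{CR}(\phi_{i_s}) = \tfrac{1}{2}\deg_{\mathbb{R}}(\phi_{i_s}) + \iota(v_s)$ recalled just above the Proposition. Adding $\sum_s \iota(v_s)$ to both sides makes the age contributions cancel, and one lands exactly on
\[
\sum_{s=1}^n \bigl( \operatorname{deg}_{CR}(\phi_{i_s}) + k_s \bigr) \leq 3g-3+n + D(1-g) + \int_d c_1(TY),
\]
as desired. There is no serious obstacle here; the only point requiring a moment of care is ensuring that the lowest-degree piece of $\Theta^{ABC}$ really is $1$, which is transparent from the exponential form of $A_\alpha$, $B_\beta$, $C_{v,\gamma}$ and the fact that $\pi_{*}(\cdots)$ is a well-defined virtual bundle so that $\operatorname{ch}_0$ contributes only a constant already absorbed into the normalization.
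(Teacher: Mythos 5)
Your proof is correct and gives exactly the intended argument; the paper leaves the Corollary unproved (just $\qed$) because it is an immediate dimension count from the preceding Proposition, which is precisely what you carry out: the integral against $[Y^{(v_1,\dots,v_n)}_{g,n,d}]^{\rm virt}$ detects only the component of complex degree equal to the virtual dimension, and since $\Theta^{ABC}_{g,n,d}$ contributes only non-negative degrees, non-vanishing forces the degree of $\prod_s \operatorname{ev}_s^*(\phi_{i_s})\psi_s^{k_s}$ to be at most that dimension; rewriting via $\operatorname{deg}_{CR}(\phi_{i_s}) = \tfrac{1}{2}\deg_{\mathbb{R}}(\phi_{i_s}) + \iota(v_s)$ then makes the age terms cancel. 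One small remark: for this Corollary you do not actually need the degree-zero piece of $\Theta^{ABC}$ to equal $1$ (or even to be non-zero) — it suffices that $\Theta^{ABC}$ has no components of negative degree, which holds automatically; the stronger statement $\Theta^{ABC} = 1 + (\text{higher})$ is only used later in the paper, to identify the equality-case twisted correlator with the untwisted one.
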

\subsection{Removing the twists}\label{sec:tw-rem}
The results of this section are not really needed in this paper. We
include them just for completeness of the reconstruction theorem. To begin with let us organize the
twisted GW invariants into a generating function. Let
$\mathbf{t}=(t_{k,v,a})$ be a sequence of formal variables where
$k\geq 0$, $v\in [1,m]$, and $1\leq a\leq
N_v:=\operatorname{dim}H(Y_v;\CC)$. Let $\{\phi_{v,a}\}$ ($v\in [1,m],
1\leq a\leq N_v$) be a basis of $H(Y_v;\CC)$. Put
\ben
\mathbf{t}(z):=\sum_{k=0}^\infty \sum_{v=1}^m \sum_{a=1}^{N_v}
t_{k,v,a} \phi_{v,a} z^k. 
\een
The total descendent potential of the ABC-twisted GW invariants is
defined by
\ben
\D^{\rm ABC}(\hbar,\mathbf{t}) :=\exp\Big(
\sum_{n,g,d} \frac{\hbar^{g-1} Q^d}{n!} \langle
\mathbf{t}(\psi_1),\dots,\mathbf{t}(\psi_n)\rangle^{\rm ABC}_{g,n,d}
\Big).
\een
Let us define also AB-twisted, A-twisted, and non-twisted (i.e. cohomological) GW
invariants by formula \eqref{ABC-inv} except that we replace
$\Theta^{ABC}$ with respectively $\Theta^{AB}:=\Theta^A\Theta^B$,
$\Theta^A$, and $1$. 
The corresponding total descendent potentials $\D^{\rm AB}$, $\D^{\rm A}$, and $\D$
are defined in the same fashion.
Our goal is to express $\D^{\rm ABC}$ in terms of $\D$.

Let us begin with the C-twist. Let $I:IY\to IY$ be the involution
induced by the map $(y,g)\mapsto (y,g^{-1})$. The involution maps a
connected component $Y_v$ isomorphically to a connected component which
we denote by $Y_{I(v)}$. The C-twist is removed by the following
formula:
\begin{proposition}[\cite{T}]
The C-twist is removed from the total descendant potential $\D^{\rm ABC}$ according to the following identity:
\ben
\D^{\rm ABC}(\hbar,\mathbf{t})=\exp\Big(\frac{\hbar}{2} 
\sum_{v=1}^m \sum_{k,l=0}^\infty \sum_{a,b=1}^{N_v}
A^v_{ka,lb} \frac{\partial^2}{\partial t_{v,k,a} \partial
  t_{I(v),l,b}}
\Big)\, \D^{\rm AB}(\hbar,\mathbf{t}),
\een
where the coefficients $A^v_{ka,lb}\in \CC$ are defined in terms of
the twisting data of type $C$.
\end{proposition}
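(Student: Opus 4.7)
The plan is to show that the $C$-twist contribution localizes to the boundary of $Y_{g,n,d}$, and then to recognize the resulting generating-function identity as the standard Wick-type exponential implementing gluing at nodes.

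First I would analyze the substack $Z_v\subset \C_{g,n,d}$. By its definition, a point of $Z_v$ is a stable map whose domain has a genus-zero contracted component $C'$ meeting the rest of the curve at exactly two nodes $z_\pm$, carrying the $(n+1)$-st marked point, and with the branch at $z_+$ having orbifold monodromy labeling $Y_v$ (so that the branch at $z_-$ corresponds to $Y_{I(v)}$). Forgetting the distinguished marked point contracts $C'$, hence $\pi|_{Z_v}$ factors through a boundary divisor $D_v\subset Y_{g,n,d}$ parametrizing stable maps with a distinguished node whose two branches lie in $\overline{Y}_v$ and $\overline{Y}_{I(v)}$; moreover $\pi|_{Z_v}:Z_v\to D_v$ has zero-dimensional fibers.

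Next I would compute the pushforward
\ben
F_{v,\gamma}:=\pi_*\bigl(\operatorname{ev}_{n+1}^*(E_{v,\gamma})\otimes \iota_{v,*}\O_{Z_v}\bigr)
\een
by the projection formula together with orbifold Grothendieck--Riemann--Roch: it is $\iota_{D_v,*}$ of an explicit class on $D_v$, polynomial in the cotangent $\psi$-classes $\psi_\pm$ at the two branches of the node, with coefficients built from the $\mu_m$-character of $E_{v,\gamma}$. Because $C_{v,\gamma}(\ell)=\exp(\sum_i s^C_{v,\gamma,i}\operatorname{ch}_i(\ell))$, each factor of $\Theta^C_{g,n,d}$ is an exponential of a divisor-supported class; iterating the identity $\exp(\iota_{D,*}\beta)=1+\iota_{D,*}(\ldots)$ expresses $\Theta^C_{g,n,d}$ as a sum, indexed by unordered collections of independent nodes of types $v_1,\dots,v_p$, of pushforwards of explicit $\psi_\pm$-polynomials from the corresponding nested boundary strata.

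On each such stratum, the orbifold splitting axiom rewrites the integral of an ABC-twisted descendent against the virtual class as an integral over a product of smaller moduli spaces (one factor per branch of each node) with a diagonal insertion $\sum_a \phi_{v,a}\otimes \phi^{a}_{I(v)}$ dual with respect to the Chen--Ruan Poincar\'e pairing on $IY$. Evaluating the $\psi_\pm$-polynomial from the previous paragraph against this diagonal produces numerical coefficients, which one identifies as the $A^v_{ka,lb}$ of the proposition. Summing contributions over all unordered multi-sets of inserted nodes and absorbing the symmetry factor $1/2$ from the unordered pairing of the two branches at each node converts the sum into the exponential of a quadratic differential operator applied to $\D^{\rm AB}$, yielding the asserted identity. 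The hardest step will be the orbifold bookkeeping: one must correctly track the automorphism factor $1/m_v$ at the node, match the Chen--Ruan pairing on $IY$ with the naive Poincar\'e pairing on $Y_v\times Y_{I(v)}$ up to this factor, and verify that ``opposite monodromy'' branches are paired via the involution $I$ exactly as required. Once this orbifold Riemann--Roch computation at the node is in place, the Wick-type combinatorics collapsing a sum over node configurations into the exponential of a quadratic operator is essentially formal.
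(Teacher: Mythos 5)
The paper does not supply its own proof of this proposition; it is quoted from Tonita's work and used as a black box. That said, your outline is essentially the strategy of Tonita's argument: the $C$-twist is supported on the nodal locus because $Z_v$ is finite over the boundary of $Y_{g,n,d}$; orbifold Grothendieck--Riemann--Roch rewrites $\pi_*\bigl(\operatorname{ev}^*_{n+1}(E_{v,\gamma})\otimes\iota_*\O_{Z_v}\bigr)$ as a boundary-supported class polynomial in the cotangent classes $\psi_\pm$ at the two branches of the node, with the characteristic $1/(-z_1-z_2)$ kernel that is exactly what the paper uses to define the generating series $\sum A^v_{ka,lb}\,\phi_{v,a}\otimes\phi_{I(v),b}\,z_1^k z_2^l=\Delta_v(\cdots)/(-z_1-z_2)$; and splitting at the nodes plus Wick combinatorics then produces the exponential of a quadratic operator. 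You also correctly flag the $1/r(v)$ automorphism factor in the nodal Poincar\'e pairing $\eta_{ab}$, which is precisely the subtle orbifold input in the paper's definition of $\Delta_v$.

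The one place where your sketch elides real work is the sentence ``iterating the identity $\exp(\iota_{D,*}\beta)=1+\iota_{D,*}(\ldots)$ expresses $\Theta^C_{g,n,d}$ as a sum, indexed by unordered collections of independent nodes.'' The boundary divisors are not disjoint and self-intersect, so expanding the exponential of a boundary-supported class and re-expressing each term as a pushforward from a deeper stratum requires the Coates--Givental excess-intersection bookkeeping (the normal bundle of a nested boundary stratum contributes extra $\psi_\pm$ factors that must be absorbed into the same kernel). This is not merely formal: it is exactly what guarantees that the resulting sum over node configurations exponentiates cleanly into a \emph{quadratic} differential operator with the prefactor $\hbar/2$ and no higher-order corrections. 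It also requires checking that the $A$- and $B$-twist classes split multiplicatively along the gluing maps so that the reduced integrand on each stratum is again an $AB$-twisted correlator; you use this implicitly when invoking the splitting axiom, but it is a hypothesis that must be verified rather than assumed. With those two points made precise your argument would reproduce Tonita's proof.
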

Let us recall the definition of the coefficients $A^v_{ka,lb}$. First, we need to define a
map
\ben
\Delta_v: 
H(Y_v;\CC)[z]\to H(Y_v;\CC)\otimes H(Y_{I(v)};\CC)[z_1,z_2],
\een
which is a $H(Y_v;\CC)[z]$-modules morphism, where the ring structure
on $H(Y_v;\CC)[z]$ is the obvious one induced by the topological cup product, $z$ acts on the
tensor product via multiplication by $z_1 + z_2$ and
$\phi \in H(Y_v;\CC)$ acts by cup-product multiplication on the first
tensor factor, i.e., via the operator $\phi\cup\otimes 1\cup$. In order to define
$\Delta_v$ then we need only to specify the image of $1$: we put
\ben
\Delta_v(1) := \sum_{a,b=1}^{N_v} \eta^{ab} \phi_{v,a}\otimes
\phi_{I(v),b}\quad , 
\een
where
\ben
\eta_{ab}:= \frac{1}{r(v)} \int_{[Y_v]} \phi_{v,a} \cup I^*(\phi_{I(v),b}) 
\een
are the entries of the matrix of the orbifold Poincare pairing and
$\eta^{ab}$ are the entries of the corresponding inverse matrix.
Here $r(v)$ denotes the order of the local automorphism $g$ from a
point $(y,g)\in Y_v$ and $[Y_v]$ is the fundamental cycle of the
coarse moduli space $|Y_v|$.  Let us identify
$H(Y_v;\CC)[z]=H(Y_v\times \CC P^\infty;\CC)$, so that $z$ is the 1st
Chern class of the universal line bundle $\ell:=\O(1)\to \CC
P^\infty$.  Then the coefficients of the above differential operator
are defined by
\ben
\sum_{k,l=0}^\infty \sum_{a,b=1}^{N_v}
A^v_{ka,lb} \phi_{v,a} \otimes \phi_{I(v),b} z_1^k z_2^l :=
\frac{
  \Delta_v\Big(\prod_{\gamma=1}^{k_v} C_{v,\gamma}\Big(
  q^*(E_{v,\gamma})|_{Y_v}\otimes (1-\ell) \Big) -1 \Big)}{
  -z_1-z_2},
\een
where $q:IY\to Y$ is the forgetful map $(y,g)\mapsto y$. 

Let us continue with the B-twist. The relation in this case is easier.
\begin{proposition}[\cite{T}]
The $B$-twist is removed from the total descendant potential $\D^{\rm AB}$ according to the following identity:
\beq\label{rem:B}
\D^{\rm AB}(\hbar c_B^2,\mathbf{t} c_B) = 
\operatorname{const}_B\, \D^{\rm A}(\hbar,\mathbf{t}(z)+ z -\delta_B(z)z),
\eeq
where the constants $c_B,\operatorname{const}_B\in \CC$ and the vector
$\delta_B(z)\in H(IY;\CC)[z]$ depend only on the twisting data of type $B.$
\end{proposition}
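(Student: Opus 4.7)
The plan is to follow the Coates--Givental strategy for removing twists, in the orbifold form due to Tseng~\cite{T}. Taking logarithms in the $B$-twist, the problem reduces to computing the Chern character of the virtual sheaf
\ben
\pi_*\Big(\operatorname{ev}^*_{n+1}f_\beta(L^{-1}_{n+1}) - \operatorname{ev}^*_{n+1}f_\beta(1)\Big)
\een
on $Y_{g,n,d}$, for each $\beta$. The first step is to apply the orbifold Grothendieck--Riemann--Roch formula to the proper morphism $\pi:\C_{g,n,d}\to Y_{g,n,d}$. This produces $\pi_*\bigl(\operatorname{ch}(\cdots)\cdot \operatorname{Td}(T_\pi)\bigr)$, together with corrections concentrated on the singular locus of $\pi$: the $n$ tautological sections of marked points and the substacks of nodes of the universal curve.

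The crucial observation is that the combination $f_\beta(\ell^{-1})-f_\beta(1)$ is divisible by $\ell^{-1}-1$, so it vanishes on loci where $L_{n+1}$ becomes trivial. A local analysis around the nodes --- in the orbifold setting, using the description of $\omega_\pi$ and of $L_{n+1}$ near nodes of each twisted sector $Y_v$ --- shows that node corrections cancel after the subtraction. Hence the $B$-twist push-forward decomposes cleanly into two pieces: a bulk contribution depending only on $g$, $n$, $d$ through the universal curve, and, for each marked point $\sigma_s$, a contribution of the form $\operatorname{ev}_s^*(\phi)\cdot P_\beta(\psi_s)$ with $P_\beta$ a universal power series determined by $f_\beta$ and $B_\beta$.

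The remaining step is standard generating-function bookkeeping. Summing the marked-point contributions across the $n$ marked points produces, by the usual manipulation that underlies the dilaton equation, the shift of the descendant variable $\mathbf{t}(z)\mapsto \mathbf{t}(z)+z-\delta_B(z)z$, where $\delta_B(z)\in H(IY;\CC)[z]$ is the series built out of the $P_\beta$ over all $\beta$. The bulk contribution depends on $g$ and $n$ only through $(1-g)$ and $n$; separating these dependencies and absorbing them into the prefactor of $\D^{\rm AB}$ yields the rescaling $(\hbar,\mathbf{t})\mapsto (\hbar c_B^2,\mathbf{t} c_B)$, while the remaining $(g,n)$-independent factor becomes $\operatorname{const}_B$.

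The main obstacle will be the careful orbifold analysis of the node corrections, and more precisely the verification that the subtraction of $f_\beta(1)$ really does annihilate them across all twisted sectors. This is the heart of the $B$-twist removal and is exactly what distinguishes it from the $C$-twist, where node contributions are kept and packaged into a quadratic differential operator as in the previous proposition.
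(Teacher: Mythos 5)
The paper does not prove this proposition; it cites Tonita's paper \cite{T} and closes with a \textsf{QED}, so there is no internal argument to compare against and your sketch must be judged on its own terms. Your general strategy (apply orbifold GRR to $\pi_!$ of the twist argument, analyze strata) is the standard one, and your observation that the subtraction of $f_\beta(1)$ kills the nodal corrections is correct: on the codimension-two locus $Z$ of the universal curve where the $(n+1)$-st point sits on a degree-$0$ rational bubble with two nodes, the bundle $L_{n+1}$ is pulled back from $\overline{\mathcal{M}}_{0,3}$ and hence trivial, so $F:=\operatorname{ev}^*_{n+1}\bigl(f_\beta(L^{-1}_{n+1})-f_\beta(1)\bigr)$ restricts to $0$ in $K^0(Z)$ and the contribution from the singular locus of $\pi$ drops out.

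The gap is in what you do with the rest. Your decomposition into a numerical ``bulk'' and a nontrivial ``marked-point'' piece is not correct, and the two errors are two sides of the same coin. First, the same argument you used for $Z$ also applies to the sections: $L_{n+1}$ restricted to $\sigma_s$ is again trivial (the $(n+1)$-st point has bubbled off with $s_s$ onto a thrice-pointed $\mathbb{P}^1$), so $\psi_{n+1}\cdot[\sigma_s]=0$, hence $\operatorname{ch}(F)\cdot[\sigma_s]=0$, and there is \emph{no} contribution of the form $\operatorname{ev}^*_s(\phi)P_\beta(\psi_s)$ coming from the sections in the GRR computation. Second, and conversely, the ``bulk'' does not depend only on $g,n,d$: since $\operatorname{ch}(F)$ is divisible by $\psi_{n+1}$, the entire push-forward reduces to
\[
\operatorname{ch}(\pi_!F_\beta)=-\sum_{j\ge 0}g_{\beta,j}\,\kappa_j,
\qquad
\kappa_j=\pi_*\bigl(\psi_{n+1}^{\,j+1}\bigr),
\]
where $g_{\beta,j}$ are the Taylor coefficients in $z$ of $\operatorname{ch}\bigl(-\tfrac{f_\beta(\ell^{-1})-f_\beta(1)}{\ell-1}\bigr)$. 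Only $\kappa_0=2g-2+n$ is a number, and that single term accounts for the rescaling $(\hbar,\mathbf{t})\mapsto(\hbar c_B^2,\mathbf{t}c_B)$; the $\kappa_j$ with $j\ge 1$ are genuine tautological classes on $Y_{g,n,d}$. The translation $\mathbf{t}(z)\mapsto\mathbf{t}(z)+z-\delta_B(z)z$ is obtained precisely by trading the exponential of this $\kappa$-series for phantom marked-point insertions (the Kaufmann--Manin--Zagier/Kabanov--Kimura $\kappa$-to-$\psi$ conversion), and one checks that the resulting insertion $z\bigl(1-e^{-\sum b_j z^j}\bigr)$, with $b_j=-\sum_\beta s^B_{\beta,j}g_{\beta,j}$, is exactly $z-\delta_B(z)z$. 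This conversion is the actual content of the proposition; what your sketch dismisses as ``standard generating-function bookkeeping underlying the dilaton equation'' is where all the work, and the formula for $\delta_B(z)$ itself, actually lives. As written, your argument would prove only that removing the $B$-twist introduces \emph{some} $\kappa$-class twist, not that it is a pure translation and rescaling.
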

Let us recall the definition of $c_B$ and $\delta_B(z)\in
H(IY;\CC)[z]$. Let us identify
again $H(IY;\CC)[z]=H(IY\times\CC P^\infty;\CC)$ so that
$z=c_1(\ell)$. Then
\ben
\prod_{\beta=1}^{k_B} 
B_\beta\Big(
-\frac{f_\beta(\ell^{-1})-f_\beta(1) }{\ell-1}
\Big)=: c_B \delta_B(z),
\een
where $c_B\in \CC$ and $\delta_B(z)=1+\cdots$, where the dots stand
for cohomology classes in $H(IY;\CC)[z]$ of degree $>0$. 
\begin{remark}
Formula \eqref{rem:B} is slightly different from \cite{T}, Theorem
1.2. Tonita did not say explicitly, but from the proof it becomes
clear that he works with multiplicative characteristic classes
$B_\beta$ for which $s^B_{\beta,0}=0$. If $s^B_{\beta,0}=0$, then
$c_B=1$ and formula \eqref{rem:B} coincides with Tonita's. 
\end{remark}

Finally, let us recall how to remove the A-twist. Let
$E_{\alpha,v}:=q^*E_\alpha|_{Y_v}$ and 
$E_{\alpha,v,f}$ $0\leq f<1$ be the orbibundle on $Y_v$ whose fiber at a point $(y,g)\in
Y_v$ consists of all $\xi\in (E_{\alpha,v})_{(y,g)}$, such that, $g
(\xi)=e^{2\pi\ii f} \xi$. We have $E_{\alpha,v}=\bigoplus_{f}
E_{\alpha,v,f}$. Let us recall Tseng's operator: 
\begin{definition}[\cite{Ts}]\label{def:ABC_tseng_operator}
Tseng's operator $\Delta_A(z)$ is the cohomological operator defined by
\ben
\Delta_A(z):= 
\prod_{\alpha=1}^{k_A} \sqrt{A_\alpha(E^{\rm inv}_{\alpha})}
\exp\Big(
\sum_{i=0}^\infty \sum_{j=-1}^\infty
\sum_{v,f} 
s^A_{\alpha,i+j}\, \operatorname{ch}_i (E_{\alpha,v,f})\,
\frac{B_{j+1}(f) z^j }{(j+1)!}
\Big),
\een
where $E_\alpha^{\rm inv}=\sum_{v=1}^m E_{\alpha,v,0}\in K^0(IY)$ and
$B_j(t)$ are the Bernouli polynomials defined by the following identify: 
\ben
\frac{x e^{tx}}{e^x-1} = \sum_{j=0}^\infty B_j(t) \frac{x^j}{j!}. 
\een
\end{definition}
It turns out that $\Delta_A(z)$ is a symplectic transformation with
respect to Givental's symplectic loop space formalism (more precisely
its orbifold version). The quantization $\widehat{\Delta}_A$ yields a
differential operator acting on the Fock space 
\ben
\CC(\!(\hbar)\!)[\![s^A, q_0, q_1+\mathbf{1}, q_2,\dots]\!],
\een
where $s^A=(s^A_{\alpha,i})$ and $q_k= (q_{k,v,a})$ ($v\in [1,m],
1\leq a\leq N_v$) are formal 
vector variable. The components of $q_k$ should be thought off secretly as
linear coordinates on $H(IY;\CC)$ with respect to the basis
$\{\phi_{v,a}\}$. The shift in $q_1+\mathbf{1}$, also known as the
{\em dilaton shift}, is only in the variable $q_{1,1,1}$ corresponding
to the unit $\phi_{1,1}=1\in H(|Y|;\CC)$.   Both
$\D^A(\hbar,\mathbf{t})$ and $\D(\hbar,\mathbf{t})$ are identified
with elements in the Fock space via respectively the substitutions
\ben
\mathbf{q}(z) =(\mathbf{t}(z)-z) 
\prod_{\alpha=1}^{k_A} 
\sqrt{A_\alpha(E^{\rm inv}_{\alpha})}
\een
and $\mathbf{q}(z)=\mathbf{t}(z)-z$. Then,
\begin{proposition}[\cite{T}]
The $A$-twist is removed from the total descendant potential $\D^{\rm A}$ according to the following identity:
\ben
\D^A(\hbar,\mathbf{q}) = \operatorname{const}_A\,
\widehat{\Delta}_A\, 
\D(\hbar,\mathbf{q}),
\een
where $\operatorname{const}_A$ is a constant depending only on the
twisting data of type A.
\end{proposition}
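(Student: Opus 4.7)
The plan is to follow the strategy of Coates--Givental for smooth targets, extended to the orbifold setting by Tseng, based on Kawasaki's Grothendieck--Riemann--Roch formula. The starting observation is that $\Theta^{A}_{g,n,d}$ differs from $1$ only by the insertion of $\prod_\alpha A_\alpha(\pi_*\operatorname{ev}_{n+1}^*E_\alpha)$, and that the exponential presentation of $A_\alpha$ reduces the problem to computing the individual Chern characters $\operatorname{ch}_i(\pi_*\operatorname{ev}_{n+1}^*E_\alpha)$ on each moduli space $Y_{g,n,d}^{(v_1,\dots,v_n)}$.

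The first main step is to apply Kawasaki's orbifold GRR to the forgetful morphism $\pi:\mathcal{C}_{g,n,d}\to Y_{g,n,d}$. This expresses $\operatorname{ch}(\pi_*\operatorname{ev}_{n+1}^*E_\alpha)$ as a sum of push-forwards along $\pi$ of $\operatorname{ch}(\operatorname{ev}_{n+1}^*E_\alpha)$ times a Todd-like factor, taken over the strata of the inertia stack of the universal curve. These strata fall into three geometric types: the generic, trivial-automorphism bulk; the orbifold marked points at which the local monodromy on $E_\alpha$ has eigenvalues $e^{2\pi\mathbf{i} f}$; and the orbifold nodes. Expanding the generating series $\tfrac{xe^{tx}}{e^x-1}=\sum_{j\geq 0} B_j(t)\,x^j/j!$ produces precisely the Bernoulli polynomial factors $B_{j+1}(f)/(j+1)!$ appearing in Definition \ref{def:ABC_tseng_operator}.

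Next I would separate the three contributions. The bulk term, corresponding to $j=-1$ in the expansion, is a universal factor independent of the $\psi$-classes: pushing it forward produces the normalization constant $\operatorname{const}_A$ together with the scaling $\prod_\alpha \sqrt{A_\alpha(E^{\rm inv}_\alpha)}$ that matches the chosen dilaton-shift convention. The contribution at the $s$-th orbifold marked point becomes a multiplicative insertion of $\exp\bigl(\sum_{\alpha,i,j,f} s^A_{\alpha,i+j}\,\operatorname{ch}_i(E_{\alpha,v_s,f})\,B_{j+1}(f)\,\psi_s^j/(j+1)!\bigr)$, which is exactly the translation/first-order part of $\Delta_A(\psi_s)$ acting at that marked point. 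The node contribution pairs eigenvalues $f$ and $1-f$ on the two branches meeting at a node, and the Bernoulli identity $B_{j+1}(1-f)=(-1)^{j+1}B_{j+1}(f)$ ensures that the resulting sum factors through the orbifold Poincar\'e pairing on $IY$, producing a Gaussian expression in the descendant variables.

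In the final step I would translate these insertions into the language of Givental's symplectic loop space: quadratic insertions spread over two new marked points correspond to the Gaussian, second-order part of the quantization $\widehat{\Delta}_A$, single-marked-point insertions correspond to its translation part, and the bulk factor yields the constant prefactor. Assembled together these match the action of $\operatorname{const}_A\cdot\widehat{\Delta}_A$ on $\mathcal{D}(\hbar,\mathbf{q})$ order by order in $\hbar$. The principal technical obstacle is the identification of the node contribution with the Gaussian kernel of $\widehat{\Delta}_A$; already in the smooth case this is the crux of the Coates--Givental argument, and in the orbifold setting it additionally requires careful bookkeeping for the involution $I:IY\to IY$, the orders $r(v)$ of the local automorphism groups, and the restriction to the invariant subbundle $E_\alpha^{\rm inv}=\bigoplus_v E_{\alpha,v,0}$ that appears in the definition of Tseng's operator.
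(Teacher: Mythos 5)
The paper itself gives no argument for this proposition: it is stated with a \qed and attributed to \cite{T}, and the surrounding text explicitly says the results of this subsection ``are not really needed in this paper'' and are included only for completeness. The underlying result is Tseng's orbifold quantum Riemann--Roch theorem \cite{Ts}, and your sketch is a reasonable reconstruction of that argument in the Coates--Givental style: Kawasaki/orbifold GRR applied to $\pi:\C_{g,n,d}\to Y_{g,n,d}$, stratification of the inertia stack of the universal curve into bulk, orbifold marked points, and orbifold nodes, the Bernoulli-polynomial expansion of the twisted Todd contributions, and then a translation of the three types of contributions into the three pieces (constant prefactor, translation/first-order part, Gaussian/second-order part) of $\operatorname{const}_A\cdot\widehat{\Delta}_A$. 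Your handling of the node pairing via $B_{j+1}(1-f)=(-1)^{j+1}B_{j+1}(f)$ and the involution $I:IY\to IY$, and your remark that the $\sqrt{A_\alpha(E^{\rm inv}_\alpha)}$ factor matches the modified dilaton shift, are both consistent with the paper's setup.

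One attribution is off, though it does not damage the overall shape of the argument: you say the ``bulk term'' corresponds to ``$j=-1$ in the expansion.'' In Definition~\ref{def:ABC_tseng_operator}, $j=-1$ gives the $z^{-1}$ coefficient $B_0(f)/0!=1$, i.e.\ the unique negative power of $z$ in $\log\Delta_A(z)$. Under Givental's quantization, negative powers of $z$ produce the second-order (Gaussian/Laplace) part of $\widehat{\Delta}_A$, which is exactly what you correctly extract from the \emph{node} strata a few lines later. So $j=-1$ should be tied to the nodes, not the bulk. The bulk $\kappa$-class contributions, after applying string/dilaton relations, are what get absorbed into the $z^0$ normalization $\sqrt{A_\alpha(E^{\rm inv}_\alpha)}$ (together with the $j=0$, $f=0$ term that it cancels), and $\operatorname{const}_A$ itself is produced by the unstable $(g,n,d)$ strata that fall outside the generating-function manipulations. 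With that re-labeling your decomposition is internally consistent and matches the structure of Tseng's proof.
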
 

\subsection{Fake $K$-theoretic GW invariants}\label{sec:fake_kgw}
Suppose now that $X$ is a smooth projective variety. By definition,
the tangent bundle $\T_{g,n,d}$ of the moduli space $X_{g,n,d}$ is a $K$-theoretic
vector bundle defined by 
\ben
\T_{g,n,d}:= \pi_* \operatorname{ev}^*_{n+1}(T_X-1) -
\pi_*(L_{n+1}^{-1}-1) -
(\pi_*\iota_* \O_Z)^\vee,
\een
where the notation is the same as in Section \ref{sec:def} for the
case $Y=X$. The fake $K$-theoretic GW invariants are defined as if applying Hirzebruch--Riemann--Roch (HRR) formula to $X_{g,n,d}$ (being a stack, the usual HRR formula does not hold).
\begin{definition}
The fake Gromov--Witten invariants of $X$ are defined by the formula
\ben
\langle \Phi_{i_1} L^{k_1},\dots, \Phi_{i_n} L^{k_n}
\rangle^{\rm fake}_{g,n,d}:=
\int_{[X_{g,n,d}]^{\rm virt}} 
\operatorname{td}(\T_{g,n,d}) 
\prod_{s=1}^n
\operatorname{ch}(\operatorname{ev}_s^*(\Phi_{i_s})\otimes L_s^{k_s})
\in \mathbb{Q},
\een
where the Todd class of a vector bundle is defined by 
\ben
\operatorname{td}(E) = \prod_{x:{\rm Chern \, root\,  of } E} \frac{x}{1-e^{-x}}.
\een
\end{definition}
Note that the fake $K$-theoretic invariants are ABC-twisted invariants
with the following twisting data:
\begin{enumerate}
\item[(A)] Vector bundle $E:=T_X-1$ and corresponding multiplicative
  characteristic class
\ben
A(\ell):=\operatorname{td}(\ell) = \frac{z}{1-e^{-z}}.
\een
\item[(B)]
Polynomial $f(\ell)=\ell$ and corresponding multiplicative
  characteristic class
\ben
B(\ell) := \operatorname{td}(-\ell) = 
\frac{1}{\operatorname{td}(\ell)} = \frac{1-e^{-z}}{z}. 
\een
\item[(C)]
Trivial vector bundle and corresponding characteristic class
\ben
C(\ell):= 
\operatorname{td}(-\ell)^\vee= 
\frac{1}{\operatorname{td}(\ell^\vee)}= \frac{e^z-1}{z}.
\een
\end{enumerate}
Note that in $H^*(X_{g,n,d})$, we can write twisting class as $\Theta^{ABC}_{g,n,d}=1+\cdots$, where
the dots contain only cohomology classes on $X_{g,n,d}$ of degree
$>0$. In particular, if we have an $ABC$-twisted correlator as in Definition
\ref{ABC-inv}, such that, the cohomological degree of the correlator insertions
add up to the dimension of the virtual fundamental cycle
$[X_{g,n,d}]^{\rm virt}$, then the higher degree terms of
$\Theta^{ABC}_{g,n,d}$ do not contribute and hence the ABC-twisted
invariant coincides with the usual GW invariant. 

\begin{definition}
The fake $K$-theoretic $J$-function is by definition 
\ben
J^{\rm fake}(q,Q,\tau)=1-q + \tau + \sum_{d,a,l} \frac{1}{l!} 
\left\langle \frac{\Phi_a}{1-qL}, \tau,\dots,\tau
\right\rangle^{\rm fake}_{0,1+l,d} Q^d \Phi^a
\in K(X)(q)[\![Q]\!],
\een
where $\tau\in K^0(X)$ and the first insertion in the correlator
should be expanded using $\frac{1}{1-qL}=\sum_{l \geq 0} q^k L^k$.
\end{definition}

\subsection{Stem invariants}\label{sec:stem_inv}
Let $X$ be a smooth projective variety and $m>1$ an integer. Let us
fix a primitive $m$-th root of unity $\zeta$ and denote by
$\eta=e^{2\pi\ii/m}$ the standard generator of the multiplicative
group $\mu_m$. Following Givental and
Tonita (see \cite{GT}) we will refer to the moduli space 
$[X/\mu_m]_{0,n+2,d}^{(\eta,1,\dots,1,\eta^{-1})}$ as the moduli
space of {\em stems}. The inertia orbifold of $Y:=[X/\mu_m] $ consists of
$m$ copies of $Y$, which correspond naturally to the elements $g$ of
the group $\mu_m$. Let $\one_g$ be the unit in $H(|Y_g|;\CC)$. Let us
fix a basis $\{\phi_i\}$ ($1\leq i\leq N$) of $H(X;\CC)$ and denote by
$\phi_i \, \one_g$ the cohomology class $\phi_i$ but viewed as an
element in $H(|Y_v|;\CC)=H(X;\CC)$. Clearly,
$\{\phi_i\, \one_g\}$ ($1\leq i\leq N$, $g\in \mu_m$) is a basis of
the orbifold cohomology $H(|IY|;\CC)$. Note that if $\phi_i\in H^{2d_i}(X;\CC)$
and $g=e^{2\pi \ii k/m}$, then the Chen--Ruan degree of $\phi_i\,
\one_g$ is $d_i$, because the action of $g$ on the tangent bundle is
trivial. We will be interested in the ABC-twisted 
GW invariants of $[X/\mu_m]$ with the following twisting data:

{\bf Type A:}
Vector bundles $E_\alpha=T_X \otimes \CC_{\zeta^{\alpha-1}}$ $(1\leq
\alpha\leq m$),  where $\CC_{\zeta^k}$ is the orbibundle $[(X\times
\CC)/\mu_m]$, where the action of $\mu_m$ on $\CC$ is defined by
requiring that the standard generator $\eta=e^{2\pi \ii/m}$ of $\mu_m$ acts
by $\zeta^k$.    The corresponding multiplicative classes are 
\begin{align}
\nonumber
A_1(\ell) & :=  \operatorname{td}(\ell)=\frac{z}{1-e^{-z}},\\
\nonumber
A_{k+1}(\ell) & := \operatorname{td}_{\zeta^k}(\ell) =
\frac{1}{1-\zeta^k e^{-z}}\quad (1\leq k\leq m-1).
\end{align}
 
{\bf Type B:}
Polynomials $f_\beta(\ell)=\CC_{\zeta^{\beta-1}}\, \ell \in
K^0(Y)[\ell]$ ($1\leq \beta\leq m$), where $\CC_{\zeta^k}$ are the
same as in the type A data above. The corresponding characteristic
classes are 
\begin{align}
\nonumber
B_1(\ell) & :=  \operatorname{td}(-\ell)=\frac{1-e^{-z}}{z},\\
\nonumber
B_{k+1}(\ell) & := \operatorname{td}_{\zeta^k}(-\ell) =
{1-\zeta^k e^{-z}}\quad (1\leq k\leq m-1).
\end{align}

{\bf Type C:} If $g\neq 1$, then we have only one orbibundle, that is,
$k_g:=1$ and $E_{g,1}$ is the trivial line bundle on $Y$. If $g=1$,
then we have $m$ orbifold line bundles, that is, $k_1=m$ and 
$E_{1,\gamma}:= \CC_{\zeta^{\gamma-1}}$ ($1\leq \gamma\leq m$). The
corresponding characteristic classes are
\begin{align}
\nonumber
C_{g,1}(\ell) & :=
                               \operatorname{td}(-\ell)^\vee=\frac{e^{z}-1}{z},
\quad g\neq 1\\
\nonumber
C_{1,1}(\ell) & :=
                               \operatorname{td}(-\ell)^\vee=\frac{e^{z}-1}{z},\\
\nonumber
C_{1,k+1}(\ell) & := \operatorname{td}_{\zeta^k}(-\ell)^\vee =
{1-\zeta^k e^{z}}\quad (1\leq k\leq m-1).
\end{align}
In other words,
\begin{align}
\label{thetaA}
\Theta^A & = \operatorname{td}(
\pi_*\operatorname{ev}^* (T_X-1)
)
\prod_{k=1}^{m-1}
\operatorname{td}_{\zeta^k}(
\pi_*\operatorname{ev}^* (
T_X\otimes \CC_{\zeta^k}
)
) ,\\
\label{thetaB}
\Theta^B & = \operatorname{td}(-\pi_*(L^{-1}-1))
\prod_{k=1}^{m-1} \operatorname{td}_{\zeta^k}(
-\pi_*((L^{-1}-1)\otimes 
\operatorname{ev}^*\CC_{\zeta^k}
) 
) ,\\
\label{thetaC}
\Theta^C & = 
\operatorname{td}(-\pi_*\iota_* \O_{Z_g})^\vee 
\operatorname{td}(-\pi_*\iota_* \O_{Z_1})^\vee 
\prod_{k=1}^{m-1}
\operatorname{td}_{\zeta^k}(
-\pi_*(
\iota_* \O_{Z_1}\otimes 
\operatorname{ev}^*\CC_{\zeta^k}
)
)^\vee .
\end{align}
Following Givental--Tonita we define the stem invariants of $X$ as an ABC-twisted GW invariant of $[X/\mu_m]$:
\begin{definition}[\cite{GT}]\label{def:stem_stem_invariants}
The stems invariants of $X$ are defined by the following formula:
\ben
\left[
\phi_{i_1} \, \psi_1^{k_1},\dots ,\phi_{i_{n+2}}\, \psi_{n+2}^{k_{n+2}}
\right]_{0,n+2,d}
:=
\left\langle
\phi_{i_1} \one_\eta \, \psi_1^{k_1}, 
\phi_{i_2} \, \psi_2^{k_2},\dots ,\phi_{i_{n+1}}\, \psi_{n+1}^{k_{n+1}},
\phi_{i_{n+2}}\one_{\eta^{-1}} \, \psi_{n+2}^{k_{n+2}}
\right\rangle_{0,n+2,d}^{ABC}.
\een
\end{definition}
\section{Confluence of the small $K$-theoretic $J$-function}\label{sec:confluence}
The main goal in this section is to prove confluence of the $K$-theoretic $J$-function to its cohomological analogue (Theorem \ref{t1}). As we
already explained in the introduction, we need to recall the
reconstruction of Givental--Tonita outlined in \cite{GT}, Proposition
4.
\subsection{The Kawasaki--Riemann--Roch formula}
To begin with, let us recall Kawasaki's formula generalising the
Hirzebruch--Riemann--Roch formula to orbifolds (see \cite{Ka}). Let
$Y$ be as in Section \ref{sec:ABC},
$\mathcal{Y}=(\mathcal{Y}_1\rightrightarrows \mathcal{Y}_0)$ be a
corresponding orbifold groupoid, and $q:IY\to Y$ be the forgetful
map $(y,g)\mapsto y$. If $E\to Y$ is an orbifold complex vector bundle, we use
 the decompositions $q^*E|_{Y_v}=: \bigoplus_{0\leq f<1}
E_{v,f}$ -- same as in the definition of Tseng's operator, c.f. Definition \ref{def:ABC_tseng_operator}.

\begin{definition}
  \begin{itemize}
    \item[(i)]
    Let $E \to Y$ be an orbifold complex vector bundle.
    The \textit{trace} of $E$ is the K-theoretic orbifold vector
    bundle on $IY$ defined by
    \ben
    \operatorname{Tr}(E):=\sum_{v=1}^m\sum_{0\leq f<1} e^{2\pi\ii f} E_{v,f}
    \een

    \item[(ii)]
    The {\em inertia tangent} and {\em
    inertia normal} vector bundles are orbifold $K$-theoretic vector
    bundles on $IY$ defined by respectively
    \ben
    T_{IY}:=\sum_{v=1}^m TY_{v,0}.
    \quad \mbox{ and }\quad
    N_{IY}:=\sum_{v=1}^m \sum_{0<f<1} TY_{v,f}.
    \een

    \item[(iii)] The {\em holomorphic Euler characteristic} of the bundle $E$ is given by
    \ben
    \chi(Y,E):=\sum_{i=0}^ D (-1)^i \operatorname{dim} H^i(Y,E)
    \een
  \end{itemize}
\end{definition}
Kawasaki's formula can be stated as follows:
\begin{theorem}\cite{Ka}\label{thm:KRR_Kawasaki_Riemann_Roch}
  Let $E$ be a holomorphic orbifold vector bundle on $Y$. Its
  holomorphic Euler characteristic can be computed by the following formula:
  \beq\label{KHRR}
  \chi(Y,E)=\int_{[IY]} \operatorname{td}(T_{IY}) \
    \frac{
      \operatorname{ch}\circ \operatorname{Tr}(E)}{
      \operatorname{ch}\circ \operatorname{Tr}(
      \wedge^\bullet(N_{IY}^\vee)
      )
    }\ ,
  \eeq
  where the denominator on the RHS is by definition
  \ben
  \operatorname{ch}\circ\operatorname{Tr}(\wedge^\bullet(N_{IY}^\vee)) =
  \sum_{v=1}^m\prod_{0<f<1}
  \prod_{\mbox{ Chern roots $x$ of } TY_{v,f}  }
  \Big(1-e^{-2\pi\ii f} e^{-x} \Big). 
  \een
\end{theorem}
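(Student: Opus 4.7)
The plan is to reduce Kawasaki's formula to the equivariant Atiyah--Bott--Lefschetz (equivalently Atiyah--Segal--Singer) fixed point theorem applied locally on $Y$, then reorganize the resulting sum as an integral over the inertia orbifold. As a preliminary reduction, I would first pass to the case of a global quotient $Y=[M/G]$ with $M$ a smooth projective variety and $G$ a finite group acting on $M$. This step uses étale descent: a smooth Deligne--Mumford stack admits local étale presentations as global quotients, and both sides of \eqref{KHRR} are additive and compatible with étale pullback, so it suffices to verify the identity in the global quotient setting.

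In the global quotient case, the sheaf cohomology computes as $G$-invariants, so
\[
\chi(Y,E)=\chi(M,\pi^*E)^G=\frac{1}{|G|}\sum_{g\in G}\operatorname{tr}\bigl(g^*;\chi(M,\pi^*E)\bigr),
\]
where $\pi:M\to Y$ is the quotient map. For each $g\in G$, the Atiyah--Bott fixed point formula yields
\[
\operatorname{tr}\bigl(g^*;\chi(M,\pi^*E)\bigr)=\int_{M^g}\operatorname{td}(TM^g)\,\frac{\operatorname{ch}\operatorname{Tr}_g(\pi^*E|_{M^g})}{\operatorname{ch}\operatorname{Tr}_g\bigl(\wedge^\bullet N^\vee_{M^g/M}\bigr)},
\]
where $M^g$ is the fixed locus, $N_{M^g/M}$ its normal bundle, and $\operatorname{Tr}_g$ decomposes a $g$-equivariant bundle into eigenbundles weighted by the eigenvalues $e^{2\pi\mathbf{i}f}$.

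To assemble the contributions, I would use the standard fact that $\bigsqcup_{g\in G} M^g$, suitably quotiented by the centralizer action, presents the inertia orbifold $IY$; the components $Y_v$ correspond to conjugacy classes $[g]$, and the eigenbundle decomposition $\pi^*E|_{M^g}=\bigoplus_f E_{v,f}$ matches the definitions of $\operatorname{Tr}(E)$, $T_{IY}$, and $N_{IY}$ stated in the excerpt. The $1/|G|$ factor from averaging cancels against the counting of elements within each conjugacy class combined with the stacky volume of $|Y_v|$, converting the sum over $g\in G$ into the integral $\int_{[IY]}$.

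The hardest step, I expect, is the bookkeeping in the assembly stage: matching local eigenvalue data to the global structure of the inertia stack, verifying that $1/|G|$ from Molien averaging cancels exactly with the orders $r(v)$ of local automorphism groups appearing in the stacky integration measure, and ensuring that conjugacy classes of stabilizers are correctly indexed by components $Y_v$. A secondary delicate point is the reduction to global quotients — one must verify that étale morphisms of orbifolds induce compatible maps on inertia stacks and that both integrands pull back correctly, which is essentially Verdier-style base change for the inertia construction.
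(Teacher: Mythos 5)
The paper does not prove this theorem; it cites Kawasaki \cite{Ka} directly and records the statement with a \qed, so there is no proof in the text to compare against. I will therefore assess your sketch on its own.

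The global-quotient portion of your argument is the standard one and is correct: for $Y=[M/G]$ one has $\chi(Y,E)=\dim_G$-invariant $=\tfrac{1}{|G|}\sum_g\operatorname{tr}(g^*)$ by Molien's formula applied term-by-term to the cohomology, the holomorphic Lefschetz fixed-point formula of Atiyah--Bott computes each trace as an integral over $M^g$, and reorganizing the sum over $g$ by conjugacy classes while identifying $\bigsqcup_{[g]}\bigl[M^g/C(g)\bigr]$ with $IY$ gives the stated formula, with the $1/|G|$ absorbed into the stacky integration measure. This is exactly the global-quotient account of Kawasaki's theorem (it is essentially the content of Atiyah--Segal for finite groups), and it is what is relevant for this paper, since the authors only apply the theorem to $Y=[X/\mu_m]$.

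The genuine gap is the first reduction. You assert that both sides of \eqref{KHRR} are ``additive and compatible with \'etale pullback, so it suffices to verify the identity in the global quotient setting.'' This is false for the left-hand side: the holomorphic Euler characteristic $\chi(Y,E)=\sum_i(-1)^i\dim H^i(Y,E)$ is computed from global derived sections and is \emph{not} an \'etale-local quantity -- there is no sense in which it decomposes as a sum of contributions over an \'etale cover without invoking the very Riemann--Roch machinery you are trying to prove. Indeed the whole content of any HRR-type theorem is to express a non-local holomorphic invariant as an integral of a local cohomological density; if $\chi$ were manifestly \'etale-local there would be nothing to prove. A correct treatment must either (i) work directly with V-manifolds and prove a Kawasaki--index theorem by a heat-kernel/Atiyah--Singer argument, which is what Kawasaki does, or (ii) establish descent at the level of $K$-theory spectra and perfect complexes with a careful comparison of localized Chern characters, which is the route of To\"en and is substantially harder than your one-line claim. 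As stated, the reduction step would fail, and it is in fact the most delicate part of the theorem rather than the ``secondary delicate point'' you label it.
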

The moduli spaces $X_{0,n,d}$ are usually not orbifolds. Nevertheless, 
it is known that Kawasaki's formula can be applied to $Y=X_{0,n,d}$
too -- see \cite{To2}. 
\subsection{Stems as Kawasaki strata}\label{sec:DM-isom}
Recall that in Definition \ref{def:intro_k_theoretical_j_function} of the $K$-theoretic $J$-function, we used $K$-theoretic GW invariants of the form
$\Big\langle \frac{\Phi_i}{1-qL_1} \Big\rangle_{0,1,d}$ which are understood as holomorphic Euler characteristic of bundles on the moduli space $X_{0,1,d}$.
Let us apply Kawasaki's formula \eqref{KHRR} to this moduli space. It is
convenient to think of the inertia orbifold $IX_{0,n,d}$ as the moduli
space of $(C,s_1,\dots,s_n,f,g)$, where $(C,s_1,\dots,s_n,f)$ is a
stable map in $X_{0,n,d}$ and
$g\in \operatorname{Aut}(C,s_1,\dots,s_n,f)$. Suppose that $\zeta$ is
a primitive $m$-th root of 1. Let us define
\ben
I_\zeta X_{0,n,d}:=\{ (C,s_1,\dots,s_n,f,g)\ |\ dg
\mbox{ acts on $T^*_{s_1}C$ via multipl. by } \zeta \}.
\een
We are going to construct an explicit isomorphism $\varphi$ of Deligne--Mumford stacks
\beq\label{stem_iso}
\xymatrix{
  \coprod_{\substack{d_0,\dots,d_{k+1}\\
  \eta_1,\dots,\eta_{k+1}}}
  \left(
    [X/\mu_m]^{\eta,1,\dots,1,\eta^{-1}}_{0,k+2,d_0}\times_{X^{k+1}}\Big(
    I_{\eta_1}X_{0,1,d_1}\times
    \cdots \times
    I_{\eta_{k+1}}X_{0,1,d_{k+1}}\Big)
    \right) \ar[r]^-{\cong} &
    I_\zeta X_{0,1,d} },
\eeq
where the disjoint union is over all sequences $d_0,\dots, d_{k+1} \in
\operatorname{Eff}(X)$ of effective curve classes and all sequences
$\eta_1,\dots,\eta_{k+1}$ of primitive roots of 1, such that,
$m(d_0+\cdots+d_k)+d_{k+1}=d$ and $\eta_i\neq 1$ for all $1\leq i\leq
k$ and $\eta_{k+1}\neq \zeta$. The fiber
product is defined in terms of the evaluation maps in such a way that
the $(i+1)$-st marked point in
$[X/\mu_m]^{\eta,1,\dots,1,\eta^{-1}}_{0,k+2,d_0}$ is identified with
the marked point of $I_{\eta_i}X_{0,1,d_i}$ for all $1\leq i\leq
k+1$.

The isomorphism $\varphi$ is defined as follows (see Figure \ref{fig:stem}). 
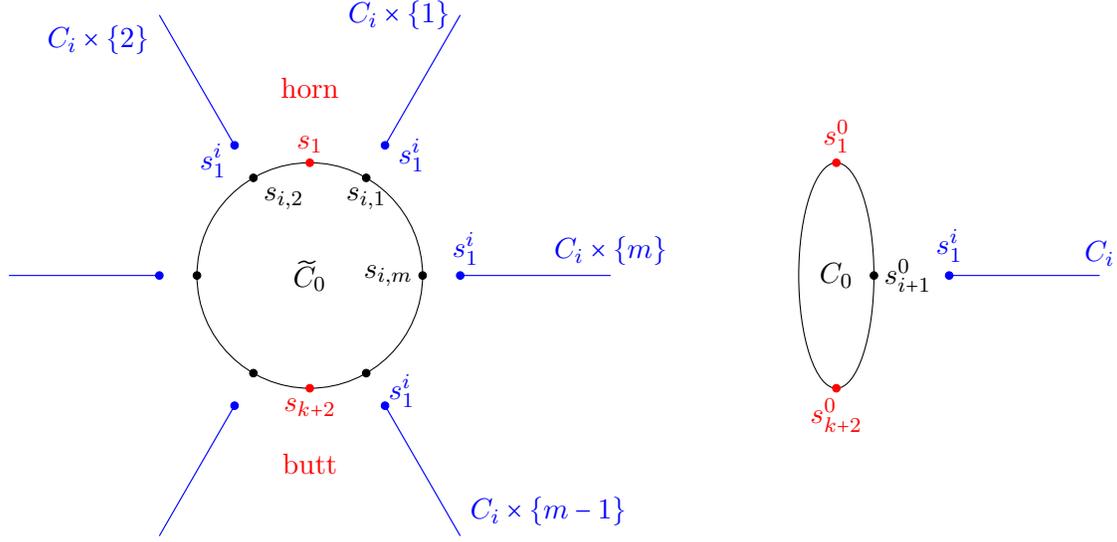
\begin{figure}
\centering
\begin{tikzpicture}
\draw (0,0) node{$\widetilde{C}_0$} circle[radius=1.5];
\filldraw (60:1.5) circle[radius=0.05] 
node[anchor=north]{$s_{i,1}$};
\filldraw (120:1.5) circle[radius=0.05]
node[anchor=north west]{$s_{i,2}$};
\filldraw (180:1.5) circle[radius=0.05]; 
\filldraw (240:1.5) circle[radius=0.05]; 
\filldraw (300:1.5) circle[radius=0.05];
\filldraw (0:1.5) circle[radius=0.05] 
node[anchor=east]{$s_{i,m}$};
\filldraw[red] (90:1.5) circle[radius=0.05]
node[above]{$s_1$};
\filldraw[red] (90:2.5) node{horn};
\filldraw[red] (270:1.5) circle[radius=0.05]
node[below]{$s_{k+2}$};
\filldraw[red] (270:0.8) node{butt};
\filldraw[red] (270:2.2)
circle[radius=0.05,fill=red]--
(270:3.6) node[right]{$C_{k+1}$};
\draw[red] (285:2.4) node{$s_{1}^{k+1}\ $ };
\filldraw[blue] (60:2) 
circle[radius=0.05, fill=blue]-- 
(60:4) node[left]{$C_i\times\{1\}$};
\filldraw[blue] (50:2.1) node{$s_1^i$};
\filldraw[blue] (120:2) 
circle[radius=0.05, fill=blue]-- 
(120:4) node[anchor=north east]{$C_i\times\{2\}$};
\draw[blue] (130:2) node{$s_1^i$} ;
\filldraw[blue] (180:2) 
circle[radius=0.05, fill=blue] --
(180:4); 
\filldraw[blue] (240:2) 
circle[radius=0.05, fill=blue] --
(240:4); 
\filldraw[blue] (300:2) 
circle[radius=0.05, fill=blue]-- 
(300:4) node[anchor=south west]{$C_i\times\{m-1\}$} ;
\draw[blue] (310:2) node{$s_{1}^i\ $ };
\filldraw[blue] (360:2) 
circle[radius=0.05, fill=blue]-- 
(360:4) node[anchor=south]{$C_i\times\{m\}$} ;
\draw[blue] (10:2.1) node{$s_1^i$};

\draw (7,0) node{$C_0$} circle[x radius=0.5, y radius=1.5];
\filldraw[blue] (8.5,0) 
circle[radius=0.05]-- 
(10.5,0) node[anchor=south]{$C_i$} ;
\draw[blue] (8.5,0.4) node{$s_1^i$};

\filldraw[red] (7,-2.2)
circle[radius=0.05,fill=red]--
(7,-3.6) node[right]{$C_{k+1}$};
\draw[red] (7.7,-2.2) node{$s_{1}^{k+1}\ $ };

\filldraw[red] (7,1.5) circle[radius=0.05] 
node[anchor=south]{$s^0_{1}$};
\filldraw (7.5,0) circle[radius=0.05] 
node[anchor=west]{$s^0_{i+1}$};
\filldraw[red] (7,-1.5) circle[radius=0.05]; 
\draw[red](7.7,-1.4) node {$s^0_{k+2}$};
\end{tikzpicture}
\caption{Kawasaki stratum $I_\zeta X_{0,1,d}$}\label{fig:stem}
\end{figure}
Suppose that 
$\C_0=(C_0,s^0_1,\dots,s^0_{k+2},f_0)$ is an orbifold stable map in 
$[X/\mu_m]^{\eta,1,\dots,1,\eta^{-1}}_{0,k+2,d_0}$ and that
$\C_i=(C_i,s^i_1,f_i,g_i)\in I_{\eta_i}X_{0,1,d_i}$ ($1\leq i\leq k+1$)
are such that $f_0(s^0_{i+1})=f_i(s^i_1)$ for all $1\leq i\leq k+1$. The
orbifold $C_0$ is a global quotient, that is,
$C_0=\left[\widetilde{C}_0/\mu_m\right]$ and an orbifold stable map $f_0:C_0\to
[X/\mu_m]$ is equivalent to the choice of a $\mu_m$-equivariant map
$\widetilde{f}_0:\widetilde{C}_0\to X$. The marked points $s^0_i$ can
be represented by the gerbes 
\begin{align}
\nonumber
s^0_1 & =[\{s_1\}/\mu_m],\\
\nonumber
s^0_{i+1} &=[\{s_{i,1},\dots, s_{i,m}\}/\mu_m] \quad (1\leq i\leq k),\\
\nonumber
s^0_{k+2} &=[\{s_{k+2} \}/\mu_m],
\end{align}
where $s_1$ and $s_{k+2}$ are the $\mu_m$-fixed non-singular points of
$\widetilde{C}_0$ and $\{s_{i,1},\dots,s_{i,m}\}$ is a regular $\mu_m$-orbit
of non-singular points. Note that there is a freedom in
choosing the $\mu_m$-action on $\widetilde{C}_0$: if we pick an
isomorphism $\phi:\mu_m\to \mu_m$ and define a new action by
$(g,z)\mapsto \phi(g)\cdot z$, then the quotient orbifold
$[\widetilde{C}_0/\mu_m]$ is equivalent to the original one. We
eliminate the freedom by requiring that the standard generator $\eta$ of
$\mu_m$ acts on the cotangent line $T^*_{s_1}\widetilde{C}_0$ by
multiplication by $\zeta$, that is, by the given primitive $m$-th root of 1. 
Let us define a nodal curve
\beq\label{sl-structure}
C=\widetilde{C}_0\coprod \left(
  \coprod_{i=1}^k C_i\times [1,m]
\right)
\coprod C_{k+1}/\sim,
\eeq
where the equivalence relation is such that the marked point $s_{i,a}\in
\widetilde{C}_0$ for $1\leq i\leq k$ is identified with $(s^i_1,a)\in C_i\times
\{a\}$ and $s_{k+2}$ is identified with $s^{k+1}_1$. Let us define an
automorphism $g$ of $C$. To begin with, let 
us denote by $g_0\in \operatorname{Aut}(\widetilde{C}_0)$ the
automorphism corresponding to the action of the generator $\eta\in
\mu_m$. Put
\begin{align}
\label{g0}
g(z) & := g_0(z)\quad \mbox{ for } z\in \widetilde{C}_0,\\
\label{gi}
g(z,a) & :=
\begin{cases}
(z,a+1) & \mbox{ if } z\in C_i \mbox{ and } 1\leq a\leq m-1,\\
(g_i(z),1) & \mbox{ if } z\in C_i \mbox{ and } a=m,
\end{cases} \quad \mbox{ for } 1\leq i\leq k, \\
  \label{gk+1}
  g(z) & := g_{k+1}(z) \quad \mbox{ for } z\in C_{k+1},
\end{align}
and 
\begin{align}
\nonumber
f(z) & := \widetilde{f}_0(z)\quad \mbox{ for } z\in \widetilde{C}_0,\\
\nonumber
f(z,a) & := f_i(z) \mbox{ if } z\in C_i (1\leq i\leq k) \mbox{ and }
         1\leq a\leq m,\\
\nonumber
f(z) & := f_{k+1}(z) \mbox{ if } z\in C_{k+1}.  
\end{align}
Clearly, $\C:=(C, s_1, f, g)$ is a stable map representing a point in
$I_\zeta X_{0,1,d}$.
\begin{proposition}[\cite{GT}]\label{prop:stems_isomorphism_components_of_inertia_stack}
  The map
  \[
    \varphi : \coprod_{\substack{d_0,\dots,d_{k+1}\\
    \eta_1,\dots,\eta_{k+1}}}
    \left(
    [X/\mu_m]^{\eta,1,\dots,1,\eta^{-1}}_{0,k+2,d_0}\times_{X^{k+1}}\Big(
    I_{\eta_1}X_{0,1,d_1}\times
    \cdots \times
    I_{\eta_{k+1}}X_{0,1,d_{k+1}}\Big)
    \right)
    \to
    I_\zeta X_{0,1,d}
  \]
constructed above (see Equation \eqref{stem_iso}) is an isomorphism of Deligne--Mumford stacks.
\end{proposition}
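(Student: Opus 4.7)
The plan is to verify that $\varphi$ is well-defined and bijective on $T$-points for an arbitrary test scheme $T$, and then to match the automorphism $2$-groupoids on both sides to conclude that it is an isomorphism of Deligne--Mumford stacks. Well-definedness amounts to checking that, given stable maps $\mathcal{C}_0 \in [X/\mu_m]^{(\eta,1,\ldots,1,\eta^{-1})}_{0,k+2,d_0}$ and $\mathcal{C}_i \in I_{\eta_i} X_{0,1,d_i}$ with matched evaluations, the object $\mathcal{C} := (C,s_1,f,g)$ built via \eqref{sl-structure}, \eqref{g0}, \eqref{gi} indeed defines a point of $I_\zeta X_{0,1,d}$. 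The curve $C$ is connected of arithmetic genus zero by direct inspection; the map $f$ is well-defined because $f_0(s_{i+1}^0) = f_i(s_1^i)$; the automorphism $g$ has order $m$ and acts on $T^*_{s_1}C$ by $\zeta$ by the constraint we imposed on the $\mu_m$-action on $\widetilde{C}_0$; and the total degree equals $md_0 + m\sum_i d_i = d$, because the orbifold degree $d_0$ lifts to degree $md_0$ on the $\mu_m$-cover $\widetilde{C}_0$, while each orbit of leaves contributes $m$ copies of degree $d_i$. Stability follows from stability of each input.

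The main content is the construction of an inverse $\psi$. Given $(C,s_1,f,g) \in I_\zeta X_{0,1,d}$, the hypothesis that $dg|_{T^*_{s_1}C}$ equals the primitive root $\zeta$ forces $g$ to have order exactly $m$. Define the stem $\widetilde{C}_0 \subset C$ as the maximal connected subcurve containing $s_1$ whose irreducible components are setwise preserved by $g$ with non-trivial action: each such component is a $\mathbb{P}^1$ on which $g$ rotates by an $m$-th root of unity, with exactly two fixed points, and the stem grows recursively through any intermediate fixed point that is also a node of $C$. The complement $C \setminus \widetilde{C}_0$ decomposes into $g$-orbits of length exactly $m$, since no component outside the stem is $g$-invariant. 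Each orbit $\{C_i^{(1)},\ldots,C_i^{(m)}\}$ attaches to $\widetilde{C}_0$ along a $g$-orbit of $m$ nodes; set $C_i := C_i^{(1)}$, $s_1^i$ the attaching node, $f_i := f|_{C_i}$, $g_i := g^m|_{C_i}$, and $\eta_i$ the $g$-eigenvalue on $T_{s_1^i} C_i$. The quotient $[\widetilde{C}_0/\mu_m]$ with its induced map $f_0$ and marked points at the two $g_0$-fixed points and at each orbit of attaching nodes gives the stem $\mathcal{C}_0$. Both compositions $\varphi \circ \psi$ and $\psi \circ \varphi$ are then identities by direct comparison with the explicit formulas. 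Upgrading to the stack level, any automorphism of $(C,s_1,f,g)$ commuting with $g$ preserves the stem (which is intrinsically defined by $g$) and permutes the $g$-orbits of attached branches, so it decomposes uniquely into a $g_0$-equivariant automorphism of $\widetilde{C}_0$ and automorphisms of each $C_i$ commuting with $g_i$, matching the automorphism group of the fiber product on the left of \eqref{stem_iso}; the same argument runs in families.

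The step I expect to be the principal obstacle is the eigenvalue bookkeeping at the interface between the stem and the leaves. One must verify that the twist $\eta_i$ attached to $C_i$ matches the cyclotomic monodromy recorded by the orbifold structure on $[\widetilde{C}_0/\mu_m]$ at the interior marked orbit $s_{i+1}^0$, and that the node at $s_{i,a} \in \widetilde{C}_0$ is balanced in the orbifold sense, which in turn pins down the monodromies of $\mathcal{C}_0$ at the intermediate markings $s_2^0,\ldots,s_{k+1}^0$ as trivial, consistent with the sector $(\eta,1,\ldots,1,\eta^{-1})$. This local model analysis at each node, combined with the recursive description of the fixed locus of $g$, is what makes the decomposition canonical; granted this, the remaining verifications are essentially tautological.
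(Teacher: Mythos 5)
Your proof takes essentially the same overall route as the paper (construct the forward map, then produce the inverse by splitting $C$ into a stem and legs), but your definition of the stem $\widetilde{C}_0$ is not the one the argument needs, and it is the source of the unjustified assertions that follow. The paper defines $\widetilde{C}_0$ as the \emph{maximal connected subcurve carrying the horn on which $g^m$ acts trivially}, whereas you define it as the maximal connected subcurve whose components are setwise preserved by $g$ with non-trivial $g$-action. These are genuinely different conditions: a component might be $g$-invariant with $g$ acting by a root of unity whose order does not divide $m$ (in your stem but not the paper's), or $g$ might act trivially on a component (in the paper's stem but not yours). The paper's phrasing is what delivers the two facts your write-up simply asserts: (i) $g|_{\widetilde{C}_0}$ has order exactly $m$ -- it divides $m$ on every component because $g^m$ is trivial, and it equals $m$ on the horn component because $dg|_{T^*_{s_1}}$ is primitive of order $m$ -- so $\langle g|_{\widetilde{C}_0}\rangle \cong \mu_m$ and $[\widetilde{C}_0/\mu_m]$ is a genuine orbifold $[X/\mu_m]$-curve; and (ii) each twist $\eta_i = dg^m|_{T_{s^i_1}C_i}$ is nontrivial, because if $\eta_i = 1$ then $g^m$ acts trivially on the component of $C_i$ containing $s^i_1$, and all $m$ copies of that component could be adjoined to $\widetilde{C}_0$ without violating the defining condition, contradicting maximality. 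With your definition you cannot conclude that $g$ rotates each stem component by an $m$-th root of unity, and the step ``the complement decomposes into $g$-orbits of length exactly $m$, since no component outside the stem is $g$-invariant'' is not justified by your definition -- indeed a trivially-acted component is outside \emph{your} stem while being $g$-invariant, so the claim is false as stated.

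Two further points. Your assertion that the condition $dg|_{T^*_{s_1}C}=\zeta$ forces $g$ to have order exactly $m$ is incorrect: $g$ has order $m$ only on the horn component (and on the stem), while on each leg $g^m$ restricts to the generically nontrivial $g_i$, so $\operatorname{ord}(g)$ is in general a proper multiple of $m$; fortunately nothing in the construction of $\varphi$ requires $\operatorname{ord}(g)=m$. Finally, your closing paragraph about orbifold balancing at $s_{i,a}$ is misplaced: the points $s_{i,1},\ldots,s_{i,m}$ form a free $\mu_m$-orbit of smooth points of $\widetilde{C}_0$, so their image in $C_0=[\widetilde{C}_0/\mu_m]$ is a single non-gerby marked point $s^0_{i+1}$ with trivial monodromy; there is no orbifold node there and nothing to balance, which is exactly why the corresponding sector entries in $(\eta,1,\ldots,1,\eta^{-1})$ are trivial. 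Balancing is only an issue at the internal nodes of $\widetilde{C}_0$ between consecutive stem components, and there it is a consequence of $g^m$ being trivial on both branches of the node.
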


Constructing the map in the inverse direction is
in some sense the starting point in Givental and Tonita's work (see
\cite{GT}, Section 7). Let us recall their idea. 

\begin{proof}
Given a point
$\C:=(C, s_1, f, g)$ in $I_\zeta X_{0,1,d}$, the marked point $s_1$ on
$C$ is called the {\em horn}. Note that $g^m$ is an automorphism of
$\C$ acting trivially on the irreducible component of $C$ that carries the
horn. Let $\widetilde{C}_0$ be the maximal connected subcurve of $C$
that carries the horn, $g^m$ acts trivially on $\widetilde{C}_0$, and
the action of $g$ on $\widetilde{C}_0$ is balanced in the following sense. The
automorphism $g$ acts on $\widetilde{C}_0$ with exactly two fixed {\em
  non-singular points}: the horn $s_1$ and one more point called the
{\em butt} which we denote by $s_{k+2}$. Note that $\widetilde{C}_0$
is a chain of $\PP^1$s. Each $\PP^1$ in the chain is $g$-invariant
with two fixed points which can be uniquely given a name horn or butt
so that the butt of a given $\PP^1$ is identified with the horn of the
next $\PP^1$ in the chain. We require that the eigenvalue of $dg$ on a
tangent line at a horn (resp. butt) to be $\zeta^{-1}$
(resp. $\zeta$). The number $k$ here is defined as follows. Removing
$\widetilde{C}_0$ from $C$ we obtain a curve consisting of several
connected components. A connected component that does not contain the
butt $s_{k+2}$ is call a {\em leg}. A connected component containing
the butt is called the {\em tail}. The tail could be empty in which
case the butt $s_{k+2}$ is a regular point. Otherwise, if the tail is
non-empty, then the butt $s_{k+2}$ is a node of $C$.  The action of the
automorphism $g$ splits the set of points on $\widetilde{C}_0$ at which
the legs are attached, into $k$ groups each consisting of
$m$ points $s_{i,1},\dots,s_{i,m}$ forming an orbit of the cyclic
group $\langle g\rangle$. Clearly $g$ defines an isomorphism
between the legs attached to the points in the same orbit. Therefore,
we may assume that these legs are copies of the same curve $C_i$, that
is, we denote the leg attached to $s_{i,a}$ by $C_i\times \{a\}$. The
point on the leg identified with $s_{i,a}$ has the form $(s^i_1,a)$,
for some point $s^i_1\in C_i$. Furthermore, 
$g^m$ induces an automorphism $g_i$ of $C_i$ fixing the point
$s^i_1\in C_i$. The differential $dg_i$ acts on
$T_{s^i_1}C_i$ with some eigenvalue $\eta^{-1}_i\neq 1$, otherwise if
$\eta_i=1$, then $g_i$ will act trivially on the irreducible component
of $C_i$ that carries $s^i_1$, so the domain of $\widetilde{C}_0$ can be
extended. Clearly, $(C_i, s^i_1, f|_{C_i}, g_i)$ is a point in
$I_{\eta_i}X_{0,1,d_i}$ for some $d_i$ and $f|_{\widetilde{C}_0} $ induces an
orbifold stable map on the orbifold curve
$C_0:=\widetilde{C}_0/\langle g\rangle$. Finally, let $C_{k+1}$ be the
tail, $s^{k+1}_1\in C_{k+1}$ be the point at which the tail is
attached to $\widetilde{C}_0$, that is, $s^{k+1}_1=s_{k+2}$, and
$g_{k+1}$ be the restriction of $g$ to $C_{k+1}$. Then $(C_{k+1},
s^{k+1}_1, f|_{C_{k+1}}, g_{k+1})$ is a point in
$I_{\eta_{k+1}}X_{0,1,d_{k+1}}$ for some primitive root of unity
$\eta_{k+1}$ and a degree $d_{k+1}\in \operatorname{Eff}(X)$. Note
that $\eta_{k+1}\in \CC^*$ is such that $dg$ acts on
$T_{s^{k+1}_1}C_{k+1}$ by multiplication by $\eta_{k+1}^{-1}$ and that
$\eta_{k+1}\neq \zeta$ otherwise the chain $\widetilde{C}_0$ can
be extended. Therefore, this data defines
a point on the fiber product. We get a map in the inverse
direction of \eqref{stem_iso} which is the inverse that we were
looking for.  The above constructions are functorial and they can be done
in families, so \eqref{stem_iso} is an isomorphism of Delign--Mumford
stacks. Let us point out that the marked point $s_{k+2}\in
\widetilde{C}_0$ is either a regular point or a node of $C$. 
\end{proof}

\subsection{Reconstruction of the small $K$-theoretic $J$-function}\label{ssec:reconstruction_of_j_function}

Let us use Kawasaki's formula \eqref{KHRR} to express the small
$K$-theoretic $J$-function as an integral over the inertia stacks
$IX_{0,1,d}=\coprod_{m\geq 1}\coprod_\zeta I_\zeta X_{0,1,d}$, where
the 2nd disjoint union is over all primitive $m$-th roots of 1. We get 
\begin{align}
\nonumber
J(q,Q) = & 1-q + \tau(q,Q) + \\
\nonumber
 & + \sum_{d,a} Q^d \Phi_a
\int_{[I_1X_{0,1,d} ]   } 
\operatorname{td} (T_{IX_{0,1,d}} ) \, 
\frac{1}{
\operatorname{ch}\circ
\operatorname{Tr}(
\wedge^\bullet(N^\vee_{IX_{0,1,d}})
)} \, 
\frac{
\operatorname{ev}_1^*(
\operatorname{ch}(\Phi^a)  )}{
1-q e^{\psi_1} }  ,
\end{align}
where $\tau(q,Q)=\sum_{m>1} \sum_\zeta \tau_\zeta(q,Q)$ and  
\beq\label{zeta_stratum}
\tau_\zeta(q,Q):= 
\sum_{d,a} Q^d \Phi_a
\int_{[I_\zeta X_{0,1,d}]} 
\operatorname{td} (T_{IX_{0,1,d}} ) \, 
\frac{1}{
\operatorname{ch}\circ
\operatorname{Tr}(
\wedge^\bullet(N^\vee_{IX_{0,1,d}})
)} \, 
\operatorname{ch}\circ
\operatorname{Tr}
\left(
\frac{
\operatorname{ev}_1^* \Phi^a}{
1-q L_1}
\right) .
\eeq
According to Givental--Tonita (see \cite{GT}, Proposition 1), the
integral over $[I_1X_{0,1,d}]$ can be expressed in terms of fake
$K$-theoretic invariants of $X$, that is, 
\begin{theorem}[\cite{GT}, Proposition 1]
The $K$-theoretic small $J$-function can be expressed in terms of the
fake $K$-theoretic $J$-function as follows:
\beq\label{fake-J}
J(q,Q)=1-q + \tau+
\sum_{d,a} \sum_{l=0}^\infty 
\frac{Q^d \Phi_a }{l!}
\left\langle
\frac{
\Phi^a }{
1-q L_1} ,\tau,\dots,\tau
\right\rangle^{\rm fake}_{0,1+l,d}\quad,
\eeq
where $\tau$ is defined by equation \eqref{zeta_stratum}.
\end{theorem}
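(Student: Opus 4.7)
The plan is to rewrite the $\zeta = 1$ stratum contribution in the already-derived Kawasaki expansion
\[
J(q,Q) = 1 - q + \tau + \sum_{d,a} Q^d\Phi_a \int_{[I_1 X_{0,1,d}]} \operatorname{td}(T_{IX_{0,1,d}})\, \frac{\operatorname{ev}_1^*\operatorname{ch}(\Phi^a)/(1-q e^{\psi_1})}{\operatorname{ch}\circ\operatorname{Tr}(\wedge^\bullet N_{IX_{0,1,d}}^\vee)}
\]
as exactly the series of fake GW invariants appearing on the right-hand side of \eqref{fake-J}, with $\tau = \sum_{m>1}\sum_\zeta \tau_\zeta$ absorbing all contributions from strata with $\zeta \neq 1$.

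First I would specialize Proposition \ref{prop:stems_isomorphism_components_of_inertia_stack} to $m=1$, so that $[X/\mu_1]=X$, $\eta=\zeta=1$, and the stem moduli space $[X/\mu_1]^{1,1,\dots,1}_{0,k+2,d_0}$ reduces to the ordinary moduli space $X_{0,k+2,d_0}$. This yields the decomposition
\[
I_1 X_{0,1,d} \;\cong\; \coprod_{k\geq 0}\;\coprod_{\substack{d_0+\cdots+d_k=d\\ \eta_1,\dots,\eta_k\neq 1}} \Bigl(X_{0,k+2,d_0}\times_{X^k}\prod_{i=1}^k I_{\eta_i}X_{0,1,d_i}\Bigr),
\]
on which the Kawasaki integrand factorizes into contributions from the base $X_{0,k+2,d_0}$, from each leg $I_{\eta_i}X_{0,1,d_i}$, and from the excess bundle at the $k$ nodes where the legs are glued to the main component.

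Second, on the base factor $X_{0,k+2,d_0}$ I would verify that the restricted integrand is exactly $\operatorname{td}(\T_{0,k+2,d_0})\cdot \operatorname{ev}_1^*\operatorname{ch}(\Phi^a)/(1-q e^{\psi_1})$, i.e.\ the integrand of a fake invariant with first insertion $\Phi^a/(1-qL_1)$. The three summands of
\[
\T_{0,k+2,d_0}=\pi_*\operatorname{ev}^*(T_X-1)-\pi_*(L^{-1}-1)-(\pi_*\iota_*\O_Z)^\vee
\]
arise, respectively, from: the universal tangent bundle of $X$ along the universal curve (entering through $\operatorname{td}(T_{IX_{0,1,d}})$); the comparison of cotangent lines at the marked points (entering through the Kawasaki denominator); and the smoothing of the $k$ nodes where the legs attach (entering through the excess bundle of the fiber product). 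On each leg factor $I_{\eta_i} X_{0,1,d_i}$ the remaining factors of the integrand are exactly those appearing in \eqref{zeta_stratum}, so fiber integration over the leg at the $(i+1)$-st marked point of $X_{0,k+2,d_0}$ produces the insertion $\operatorname{ev}_{i+1}^*\operatorname{ch}(\tau_{\eta_i}(q,Q))$.

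Finally, summing over $k$, over effective classes $d_0,\dots,d_k$, and over tuples $(\eta_1,\dots,\eta_k)$ of non-trivial roots of unity, together with the factor $1/k!$ reflecting that the $k$ legs give rise to indistinguishable marked points, reassembles the series as $\sum_l \frac{1}{l!}\langle \Phi^a/(1-qL_1),\tau,\dots,\tau\rangle^{\operatorname{fake}}_{0,1+l,d}$. The main obstacle is the local analysis in the second step: one must match $\operatorname{td}(T_{IX_{0,1,d}})$ together with the inertia-normal denominator $\operatorname{ch}\circ\operatorname{Tr}(\wedge^\bullet N_{IX_{0,1,d}}^\vee)^{-1}$, restricted along the decomposition, to $\operatorname{td}(\T_{0,k+2,d_0})$ plus the excess-bundle contribution of the fiber product, with no spurious factors. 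This intricate but essentially combinatorial bookkeeping on the universal curve near the glued nodes is the content of \cite{GT}, and is outlined in Appendix \ref{sec:GT_recursion}.
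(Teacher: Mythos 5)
Your overall strategy --- Kawasaki expansion, decompose the $\zeta=1$ stratum, match the integrand to fake invariants, reassemble --- is the same as the paper's. However, there is a concrete gap in your first step. You take the $m=1$, $\zeta=1$ specialization of Proposition \ref{prop:stems_isomorphism_components_of_inertia_stack} to be an isomorphism onto $I_1 X_{0,1,d}$ whose stem factor is $X_{0,k+2,d_0}$. That specialization is not valid: the proof of Proposition \ref{prop:stems_isomorphism_components_of_inertia_stack} defines the butt $s_{k+2}$ as the second non-singular $g$-fixed point of $\widetilde{C}_0$, which presupposes that $g$ acts non-trivially on $\widetilde{C}_0$, i.e., $m>1$. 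For $m=1$ the automorphism $g$ acts trivially on all of $\widetilde{C}_0$, so every point is fixed, the butt cannot be singled out, and the map $\varphi$ you invoke fails to be injective --- the choice of butt gives a whole universal-curve's worth of preimages of any point of $I_1 X_{0,1,d}$. The correct stem factor in the $\zeta=1$ decomposition is $X_{0,1+k,d_0}$, with the $1+k$ marked points being the horn and the $k$ leg-attachment points, and no butt.

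This is not merely a bookkeeping slip: your own final step reassembles the series into correlators over $X_{0,1+l,d}$ with $l$ copies of $\tau$, which is the correct target of \eqref{fake-J}, but the stem $X_{0,k+2,d_0}$ in your decomposition carries one extra marked point that appears nowhere on the right-hand side, so the two ends of your argument do not meet. Once you replace $X_{0,k+2,d_0}$ by $X_{0,1+k,d_0}$ (equivalently, drop the butt from the list of marked points before passing to the stem moduli space), the rest of the argument --- matching $\operatorname{td}(T_{IX_{0,1,d}})$ and the inertia-normal denominator restricted along the decomposition with $\operatorname{td}(\T_{0,1+k,d_0})$ via Lemma \ref{le:splitting}, the $1/k!$ symmetrization over unordered legs, and identification of the leg contributions with $\tau_{\eta_i}$ --- goes through as you outline.
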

This follows
from the isomorphism that we have constructed in Proposition \ref{prop:stems_isomorphism_components_of_inertia_stack} for the case $m=1$ and $\zeta=1$ after analyzing how
the inertia tangent and the inertia normal bundles decompose under the
isomorphism.   

The remaining problem is to compute $\tau(q,Q)$. To begin with, let us
introduce the notation
\ben
\delta\tau_\zeta(q,Q):=
1-q + \sum_{\eta: \eta\neq \zeta} \operatorname{ch}(\tau_\eta(q,Q)),
\een
where the sum is over all primitive roots $\eta$ of $1$ different from
$\zeta$.  The answer is given by the following relation.
\begin{theorem}[\cite{GT}, Propositions 3 and 4]\label{thm:confluence_qHHR_recursion_formula}
The integrals $\tau_\zeta(q,Q)$, defined by \eqref{zeta_stratum},
satisfy the following recursive relations: 
\begin{align}
\label{GT-recursion}
&
\tau_\zeta(q,Q) = \sum_{d_0, a}\sum_{k=0}^\infty
\frac{Q^{md_0} \Phi_a}{k!} \times\\
\nonumber
&
\left[
\frac{
\operatorname{ch}(\Phi^a)}{
1-q \zeta e^{\psi_1/m}} ,
\tau^{(m)}(\psi_2,Q),\dots,
\tau^{(m)}(\psi_{k+1},Q), 
\delta\tau_\zeta(\zeta^{-1} e^{\psi_{k+2}/m},Q)
\right]_{0,k+2,d_0},
\end{align}
where $[ \, \cdots \, ]_{0,k+2,d_0}$ is the stem correlator of Definition \ref{def:stem_stem_invariants} and $\tau^{(m)}(z,Q)\in H(X;\CC)[\![z,Q]\!]$ is
obtained from $\tau(q,Q)$ via the Chern character map and the {\em
  Adam's operations}. To define the action Adam's operations on $\tau$, put
\beq\label{tau-series}
\tau(q,Q)=:\sum_{d,a} \tau_{d,a} (q) Q^d \Phi_a,
\eeq
where $\tau_{d,a}(q)$ is a rational function in $q$. Then 
\beq\label{leg_contr}
\tau^{(m)}(z,Q):= \sum_{d,a} 
\tau_{d,a}(e^{mz}) Q^{md} 
\operatorname{ch}(\Psi^m(\Phi_a)),
\eeq
where $\Psi^m:K^0(X)\to K^0(X)$ are the Adam's operation, that
is, ring homomorphisms which on line bundles $\ell$ are defined to be
$\Psi^m(\ell)=\ell^{\otimes m}$.
\end{theorem}
The reason why the above relation is
a recursion follows immediately from the observation that if we put a
lexicographical order on $d:=(d_1,\dots,d_r):=(\langle p_1,d\rangle,\dots, \langle
p_r,d\rangle)$ and compare the coefficients in front of $Q^d
=Q_1^{d_1}\cdots Q_r^{d_r}$, then the RHS will involve only the
components $\tau_{d',a'}(q)$ of $\tau(q,Q)$ for which $d'<d$. 

\subsection{Confluence of the small $K$-theoretic $J$-function}\label{ssec:confluence_of_j_function}

We now have the tools to prove the confluence of the small
$K$-theoretic $J$-function to its cohomological analogue. 
Our goal is to prove the following theorem.

\begin{theorem}\label{t1}
  If $X$ is a smooth projective variety, such that, the anti-canonical
  bundle $K_X^\vee$ is nef, then the limit
  \ben
  \lim_{q\to 1}
  (q-1)^{\operatorname{deg}-1}\, 
  \operatorname{ch}\Big(
  J(q, (q-1)^{m_1} Q_1,\dots, (q-1)^{m_r} Q_r)
  \Big)
  \een
  exists and it coincides with $J^{\rm coh}(1,Q)$. 
\end{theorem}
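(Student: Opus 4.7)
The plan is to combine the Givental--Tonita reconstruction \eqref{fake-J} with the Chern character to reduce Theorem \ref{t1} to a cohomological limit computation. After applying $\operatorname{ch}$ to \eqref{fake-J} and substituting $Q_i \mapsto (q-1)^{m_i} Q_i$, the standalone term $1-q$ supplies the constant $-1$ of $J^{\mathrm{coh}}(1,Q)$, since $(q-1)^{\deg-1}(1-q) = -(q-1)^{\deg}$ acts as $-1$ on $1 \in H^0(X;\mathbb{C})$. The heart of the argument is then the analysis of the $l = 0$ fake contribution $\sum_{d, a} Q^d \Phi_a \bigl\langle \tfrac{\Phi^a}{1-qL}\bigr\rangle^{\mathrm{fake}}_{0,1,d}$, which after Chern character becomes an integral of $\operatorname{td}(\mathcal{T}_{0,1,d}) \cdot \operatorname{ev}_1^* \operatorname{ch}(\Phi^a) \cdot \tfrac{1}{1-qe^{\psi_1}}$ over the virtual fundamental cycle of $X_{0,1,d}$.

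For the main piece I would perform a direct Laurent expansion at $q=1$: the $\psi_1^j$-coefficient of $\tfrac{1}{1-qe^{\psi_1}}$ equals $\tfrac{1}{j!}\sum_{n\geq 0} n^j q^n \sim (1-q)^{-(j+1)}$ as $q \to 1$, and thus $\tfrac{1}{1-qe^{\psi_1}}$ has leading behavior $\tfrac{1}{1-q-\psi_1}$ near $q=1$. The virtual dimension formula $\dim_\mathbb{C} [X_{0,1,d}]^{\mathrm{virt}} = -2 + \dim X + \int_d c_1(TX)$ combined with the Novikov-rescaling power $(q-1)^{\int_d c_1(TX)}$ and the prefactor $(q-1)^{\deg -1}$ are calibrated so that only the top cohomological degree survives in the limit. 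In that top degree, $\operatorname{td}(\mathcal{T}_{0,1,d})$ reduces to $1$, the $\operatorname{td}(X)^{-1}$ factor arising from $\operatorname{ch}(\Phi^a) = \phi^a/\operatorname{td}(X)$ similarly drops out, and the leading integrand reproduces exactly the cohomological insertion $\tfrac{\phi_i}{-1-\psi}$ appearing in $J^{\mathrm{coh}}(1, Q)$.

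The remaining stem correction $\tau(q,Q)$ and the fake correlators with $l \geq 1$ $\tau$-insertions must vanish in the limit; I would prove this by induction on the Novikov degree using the recursion \eqref{GT-recursion}. The crucial point is that each stem correlator involves a factor $\tfrac{1}{1-q\zeta e^{\psi/m}}$ with $\zeta \neq 1$ a primitive $m$-th root of unity ($m > 1$), hence is regular at $q = 1$. Combining the Chen--Ruan dimension bound from Section \ref{sec:def} with the nef hypothesis $\int_d c_1(TX) \geq 0$, after the Novikov rescaling each such contribution acquires a nonnegative power of $(q-1)$ which, together with $(q-1)^{\deg-1}$, becomes strictly positive for every non-leading term. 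The main obstacle is this final bookkeeping: one must simultaneously track the $(q-1)$-powers from the Novikov rescaling, the degree shift $(q-1)^{\deg-1}$, and the rational-in-$q$ behavior of the orbifold correlators, and verify that the nef condition is exactly what forces these powers to assemble into a nonnegative sum with equality only on the cohomological leading contribution.
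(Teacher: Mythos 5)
Your proposal is correct and follows essentially the same route as the paper's proof: Chern character applied to the Givental--Tonita reconstruction \eqref{fake-J}, the Laurent expansion of $\tfrac{1}{1-qe^{\psi_1}}$ as in \eqref{1st-ins}, induction on the Novikov degree via \eqref{GT-recursion} to show that $\tau$ does not contribute, and virtual-dimension/degree counting together with the nef hypothesis to single out the $l=0$, leading-order cohomological term. Your added observation that $\tfrac{1}{1-q\zeta e^{\psi/m}}$ (with $\zeta\neq 1$) is regular at $q=1$ is a useful point the paper leaves implicit, but it does not change the structure of the argument.
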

It is natural to ask whether the $q$-difference system satisfied by the $K$-theoretic
$J$-function has also a similar limit when $q \to 1$. We expect that
the techniques from our proof of Theorem \ref{t1} can be used to prove
that if we pullback the variables $Q_1, \dots, Q_r$ to $(q-1)^{m_1} Q_1,\dots,
(q-1)^{m_r} Q_r$, then the
formal limit as $q\to 1$ of the resulting system of
$q$-difference equations exists and it coincides with the system of
differential equations satisfied by the small cohomological
$J$-function, evaluated at $z=1$. 

Note that using Equation \eqref{eqn:intro_cohom_J_recover_z}, we can recover the cohomological $J$-function for all $z$.
We may assume that $\phi_i= \operatorname{ch}(\Phi_i)$ is a
homogeneous cohomology class. Let us assume also that $\Phi_1=1$. Let
us denote the complex degree (i.e. half of the 
standard cohomology degree) of $\phi_i$ by $|\phi_i|$. First, we will
prove the following lemma.

\begin{lemma}
The limit
\ben
\lim_{q\to 1}
(q-1)^{\operatorname{deg}-1}
\operatorname{ch}\Big(
\tau(q , (q-1)^{m_1}Q_1,\dots, (q-1)^{m_r} Q_r) 
\Big)
\een
exists and it is $0$.
In other words, using the decomposition $\tau(q,Q)=\sum_{d,a} \tau_{d,a} (q) Q^d \Phi_a$ of Equation \eqref{tau-series}, we claim that
\ben
\tau_{d,a} (q)=0\quad 
\forall 
\mbox{$d$ and $a$, s.t., }
|\phi_a| -1 + \int_d c_1(T_X) \leq 0.  
\een
\end{lemma}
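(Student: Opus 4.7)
The plan is to proceed by strong induction on the effective class $d\in \operatorname{Eff}(X)$ using the Givental--Tonita recursion \eqref{GT-recursion}. The base case $d=0$ follows from $X_{0,1,0}=\emptyset$ (the genus-$0$ curve with one marked point and a constant map is unstable), which forces $J(q,Q)|_{Q^0}=1-q$, hence $\tau(q,Q)|_{Q^0}=0$ and $\tau_{0,a}(q)=0$ for every $a$. For the inductive step, the recursion \eqref{GT-recursion} only invokes the $\tau^{(m)}$-insertions $\tau_{d',a'}(e^{mz})$ with total Novikov exponent $md'$ and $m\ge 2$, so each summand contributing to the coefficient of $Q^d$ depends only on $\tau_{d',a'}$ with $d'\le d/m<d$. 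Thus the inductive hypothesis ``$\tau_{d',a'}(q)=0$ whenever $|\phi_{a'}|-1+\int_{d'}c_1(T_X)\le 0$'' is available throughout.

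Fix a homogeneous basis $\{\phi_a=\operatorname{ch}(\Phi_a)\}$ of $H^*(X;\CC)$. The Chern character $\operatorname{ch}(\Phi^a)=\phi^a/\operatorname{td}(X)$ has leading component of complex degree $D-|\phi_a|$, and the Adams operation satisfies $\operatorname{ch}(\Psi^m\Phi_{a'})=m^{|\phi_{a'}|}\phi_{a'}$, so it stays homogeneous of degree $|\phi_{a'}|$. The stem correlator in \eqref{GT-recursion} is an ABC-twisted invariant of $Y=[X/\mu_m]$ with orbifold profile $(\eta,1,\dots,1,\eta^{-1})$; since the $\mu_m$-action on $X$ is trivial, $\iota(v)=0$ for every sector and the virtual dimension of the stem moduli reduces to $k-1+D+\int_{d_0}c_1(T_X)$. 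Expanding the first insertion as a geometric series in $q\zeta e^{\psi_1/m}$ and the last insertion in $\psi_{k+2}$, the minimum cohomological degree of the integrand equals $(D-|\phi_a|)+\sum_{i=1}^k|\phi_{a'_i}|$, so the corollary of Subsection~\ref{sec:def} forces
\[
(D-|\phi_a|)+\sum_{i=1}^k|\phi_{a'_i}|\;\le\;k-1+D+\int_{d_0}c_1(T_X).
\]
Invoking the inductive hypothesis $|\phi_{a'_i}|+\int_{d'_i}c_1(T_X)\ge 2$ on each nonzero insertion, and substituting $\sum_i\int_{d'_i}c_1(T_X)=\tfrac{1}{m}\int_d c_1(T_X)-\int_{d_0}c_1(T_X)$, yields the key estimate
\[
m(k+1)\;\le\;m|\phi_a|+\int_d c_1(T_X).
\]

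For every $k\ge 1$ this already gives $|\phi_a|+\int_d c_1(T_X)\ge 2$ in each of the subcases $|\phi_a|\in\{0,1,\ge 2\}$, ruling out the forbidden range $|\phi_a|+\int_d c_1(T_X)\le 1$. For $k=0$ the inequality reduces to $m\le m|\phi_a|+\int_d c_1(T_X)$, which settles $|\phi_a|=0$ (it forces $\int_d c_1(T_X)\ge m\ge 2$) but leaves open the single edge case $|\phi_a|=1$, $\int_d c_1(T_X)=0$. The main obstacle is precisely this Calabi--Yau-type boundary case, which no pure dimension count can eliminate. To close the gap I would analyze the $\zeta$-dependence of the leading $k=0$ term, which is proportional to $\dfrac{1-\zeta^{-1}}{1-q\zeta}\cdot\Theta^{ABC}\big|_{\deg 0}$ times a $\zeta$-independent integral of $\operatorname{ev}_1^*(\phi^a)$ over $[Y^{(\eta,\eta^{-1})}_{0,2,d/m}]^{\mathrm{vir}}$, and exploit the cyclotomic cancellation obtained by summing over primitive $m$-th roots of unity $\zeta\ne 1$ (together with the sum over divisors $m$ of $d$) to force the total contribution to $\tau_{d,a}(q)$ to vanish. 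Once this last cancellation is in hand the induction closes and the lemma follows.
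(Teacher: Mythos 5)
Your argument is, up to the $k=0$ step, the paper's own proof: induction on the lexicographic order of $d$ through the Givental--Tonita recursion \eqref{GT-recursion}, the base case $d=0$ from emptiness of $X_{0,1,0}$, the orbifold dimension formula for the stem moduli (with $\iota(v)=0$ since the $\mu_m$-action is trivial), and the inductive hypothesis $|\phi_{a_i}|-1+\int_{d_i}c_1(T_X)\geq 1$ applied to each leg $i=1,\dots,k$. Your inequality $m(k+1)\leq m|\phi_a|+\int_d c_1(T_X)$ is exactly the paper's $|\phi_a|-1+\tfrac{1}{m}\int_d c_1(X)\geq k$ in disguise.

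The point you flag is a genuine subtlety. The paper's chain ``$|\phi^a|+\sum_{i=1}^k|\phi_{a_i}|>D-|\phi_a|+k-\sum_{i}\int_{d_i}c_1(X)$'' is obtained by summing the strict inductive inequalities over the legs, which produces strictness only when $k\geq 1$; for $k=0$ both sides degenerate to $D-|\phi_a|$ and the dimension count yields only $|\phi_a|-1+\int_{d_0}c_1(X)\geq 0$, hence $|\phi_a|-1+\int_d c_1(X)\geq 0$ but not $>0$. The paper's written proof does not separate out the $k=0$ case, so the boundary $|\phi_a|=1$, $\int_d c_1(T_X)=0$ (impossible when $K_X^\vee$ is ample, but possible when it is merely nef) is indeed not settled by the virtual-dimension count alone. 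You have correctly identified this.

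However, your own proposal does not close it. The ``cyclotomic cancellation'' is only a gesture: the $k=0$ leading term is not $\tfrac{1-\zeta^{-1}}{1-q\zeta}$ times a $\zeta$-independent integral, since $\Theta^{ABC}\big|_{\deg 0}$ on $[X/\mu_m]^{(\eta,\eta^{-1})}_{0,2,d_0}$ is itself a nontrivial $\zeta$-dependent constant (the moving Todd classes $\operatorname{td}_{\zeta^j}$ contribute factors of the form $(1-\zeta^j)^{\pm\operatorname{rk}}$ via the pushforwards $\pi_*\operatorname{ev}^*(T_X\otimes\mathbb{C}_{\zeta^j})$ and the $B$- and $C$-type analogues, unlike the fake-theory twist where $\Theta^{ABC}=1+\cdots$). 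One would also have to sum over all $m\geq 2$ with $d/m$ effective. Until that computation is actually performed, the edge case is open and the proposal is not a complete proof of the lemma, only a faithful reproduction of the paper's dimension argument together with a correct diagnosis of the case it leaves out.
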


\begin{proof}
We argue by induction on the lexicographical order of $d$. Note that
for $d=0$, $\tau_{0,a}=0$ for all $a$, because $\tau$ would be a sum of
integrals over the virtual fundamental cycle of $I_\zeta X_{0,1,0}$,
but due to the stability conditions the moduli spaces $X_{0,1,0}$ are
empty.  Let us compare the coefficients in front of $Q^d \phi_a$ in
\eqref{GT-recursion}. Let us expand the $i$-th insertion involving $\tau^{(m)}$
on the RHS as 
\ben
\sum_{d_i,a_i} 
\tau_{d_i,a_i}(e^{m\psi_{i+1}}) Q^{md_i} \phi_{a_i} m^{|\phi_i|}
\een
and the last insertion as
\ben
1-\zeta^{-1} e^{\psi_{k+2}/m} + 
\sum_{\eta_{k+1}: \eta_{k+1}\neq \zeta}
\sum_{d_{k+1}\neq 0, a_{k+1}}
\tau_{\eta_{k+1}, d_{k+1}, a_{k+1}}(\zeta^{-1} e^{\psi_{k+2}/m})
Q^{d_{k+1}} \phi_{a_{k+1}},
\een
where the sum over $d_{k+1}$ involves only $d_{k+1}\neq 0$ (so that
the tail is non-empty). 
We have $d=m(d_0+d_1+\cdots+d_k)+d_{k+1}$ and hence $d_i\leq d$ for all
$0\leq i\leq k+1$. We claim that $d_i<d$ for all $0\leq i\leq
k+1$. Indeed, suppose first that $d_{k+1}=d$, then $d_i=0$ for all
$0\leq i\leq k$. By stability the moduli spaces $X_{0,1,d_i}$ are
empty for all $1\leq i\leq k$. We get that there are no legs, that is,
$k=0$. Again by stability, since the stem moduli space has degree
$d_0=0$ and only two marked points, the stem invariant must be
$0$. The inequalities $d_i<d$ for all $0\leq i\leq k$ follow from
$m>1$. Our claim follows. Hence we may recall the
inductive assumption, that is, if $d_{k+1}\neq 0$, then
\beq\label{deg_ineq}
|\phi_{a_i}| -1 + \int_{d_i} c_1(T_X)  > 0, \quad 1\leq i\leq k+1.
\eeq
If $d_{k+1}=0$, then the last insertion becomes $1-\zeta^{-1}
e^{\psi_{k+2}/m}$. Therefore, in both cases $d_{k+1}\neq 0$ and
$d_{k+1}=0$, we have
\ben
|\phi_{a_{k+1}}| + \int_{d_{k+1}} c_1(T_X)  \geq 0.
\een
On the other hand, the stem correlator is defined through integration along the virtual
fundamental cycle of $[X/\mu_m]^{(\eta,1,\dots,1, \eta^{-1})}_{0,k+2,d_0}$ which has
complex dimension
\ben
3\cdot 0-3 +k+2 +D +\int_{d_0} c_1(X),
\een
where $D$ is the complex dimension of $X$. 
While the total degree of the cohomology classes inside the correlator
is at least 
\ben
|\phi^a| + \sum_{i=1}^{k+1} |\phi_{a_i}|\geq 
D-|\phi_a|+ k - \sum_{i=1}^{k+1} \int_{d_i} c_1(X).
\een
Comparing with the dimension of the virtual fundamental cycle we get 
\ben
|\phi_a|-1 + \sum_{i=0}^{k+1}\int_{d_i} c_1(X) \geq 0.
\een
Since $m>1$ and $K_X^\vee$ is nef we have
\ben
\int_d c_1(X) = m
\sum_{i=0}^k \int_{d_i} c_1(X) + \int_{d_{k+1}} c_1(X) \geq 
\sum_{i=0}^{k+1}\int_{d_i} c_1(X) .
\een
Therefore, the above inequality implies that if
$\tau_{d,a}\neq 0$, then $|\phi_a|-1 + \int_d c_1(X)\geq 0$. We need
only to check that the equality in the last inequality is not
possible.  We have to consider only contributions to the coefficient in
front of $Q^d\phi_a$ in the stem correlator \eqref{GT-recursion} for
which the set of inequalities \eqref{deg_ineq} is empty -- any of
these inequalities will destroy the equality. This happens only if
$k=0$ and $d_{k+1}=0$. The stem correlator takes the form
\ben
\left[
    \frac{\operatorname{ch}(\Phi^a)}{1-q\zeta e^{\psi_1/m}} ,
    1-\zeta^{-1} e^{\psi_2/m}
    \right]_{0,2,d_0},
\een
where $d=m d_0$. The above stem correlator is an integral against the
virtual fundamental cycle of $[X/\mu_m]^{\eta,\eta^{-1}}_{0,2,d_0}$ of
a cohomology class of the form
$\operatorname{const}\times \operatorname{ev}_1^*(\phi^a) + \cdots$,
where the dots stand for a cohomology class of complex degree $\geq
|\phi^a| + 1$. On the other hand, according to our assumption
$|\phi_a|=1-\int_d c_1(X)$. Since $|\phi_a|\geq 0$ and $m>1$, we get
that $\int_{d_0} c_1(X)=0$ -- if not then since $K_X^\vee $ is nef
$\int_{d_0} c_1(X)\geq 1$, so $1-\int_d c_1(X)\leq 1-m <0$. We get
that $|\phi_a|=1$ $\Rightarrow$ $|\phi^a|=D-1=$ the dimension of the
virtual fundamental cycle of
$[X/\mu_m]^{\eta,\eta^{-1}}_{0,2,d_0}$. Therefore, the dots of the
above cohomology class do not contribute to the stem
correlator. Finally, the stem correlator takes the form
\ben
\operatorname{const}
\left\langle \phi^a\, \mathbf{1}_\eta,
  \mathbf{1}_{\eta^{-1}}
\right\rangle^{\eta,\eta^{-1}}_{0,2,d_0} =
\operatorname{const}
\left\langle
  \phi^a, 1\right
\rangle_{0,2,d_0} = 0,
\een
where for the first equality we used the results of Jarvis--Kimura
(see \cite{JK}) to express the orbifold GW invariants of $[X/\mu_m]$
in terms of the GW invariants of $X$: the orbifold GW invariant
differs from the GW invariant by a factor equal to the degree of the
forgetfull map $[X/\mu_m]^{\eta,\eta^{-1}}_{0,2,d_0}\to X_{0,2,d_0}$.
For the second equality we used that $d_0\neq 0$ (due to stability)
and the string equation. This completes the proof.  
\end{proof}

\begin{proof}[Proof of Theorem \ref{t1}]
Let us first prove that the limit in Theorem \ref{t1} exists. In order
to do this, let us recall formula \eqref{fake-J}. The first three
terms, that is, $1-q+ \tau$ will contribute to the limit $-1$. 
Let us expand the $i$-th insertion
of $\tau$ as $\tau=\sum_{d_i,a_i} \tau_{d_i,a_i}(L_{i+1})
Q^{d_i}\Phi_{a_i}$ and express the fake $K$-theoretic correlator as a
twisted GW invariant. We get a sum of terms of the form
\ben
\frac{1}{l!}
\Phi_a Q^{d+d_1+\cdots+d_l} 
\left\langle
\frac{\operatorname{ev}_1^*(\phi^a/\operatorname{td}(X))}{
1-q e^{\psi_1}}, 
\tau_{d_1,a_1}(e^{\psi_2}) \phi_{a_1},\dots, 
\tau_{d_l,a_l}(e^{\psi_{l+1}}) \phi_{a_l}
\right\rangle_{0,1+l,d}^{ABC}.
\een
Note that in the definition of the K-theoretic J-function (see
Definition \ref{def:intro_k_theoretical_j_function}) we exchanged the
role of the bases $\{\Phi_a\}$ and $\{\Phi^a\}$, that is, here the
dual bases is inside the correlator. This transformation leaves the
J-function invariant because $\{\Phi_a\}$ and $\{\Phi^a\}$ are dual
bases. 
Note also that the first insertion in the above correlator has an
expansion at $q=1$ of the form:
\beq\label{1st-ins}
\sum_{k=0}^\infty
(1-q)^{-k-1} (\phi^a +\cdots)(  q^k \psi^k + \cdots),
\eeq
where the dots stand for cohomology classes of higher degrees.
Applying the operator $(q-1)^{\operatorname{deg}-1} \circ \operatorname{ch}$ we get that the
contribution to the correlator of the J-function corresponding to the
$k$-th term of the sum \eqref{1st-ins} is proportional to $(q-1)^{|\phi_a|-2-k}$.
Rescaling the Novikov variables by $Q_i\mapsto (q-1)^{m_i}Q_i$ will
rescale the monomial $Q^{d+d_1+\cdots + d_l}=\prod_{i=1}^r
Q_i^{\langle p_i, d+d_1,+\cdots + d_l\rangle}$ by $(q-1)^{\int_d
  c_1(X) +\int_{d_1} c_1(X)+\cdots + \int_{d_l} c_1(X)}$.  
Therefore, 
we have to prove that if the above correlator is not 0, then 
$|\phi_a|-2-k+\int_{d} c_1(X)+\sum_{i=1}^l \int_{d_i} c_1(X)\geq 0$.
According to the claim that we already proved, if $\tau_{d_i,a_i}\neq
0$, then $|\phi_{a_i}|-1 + \int_{d_i} c_1(X) > 0$, that is,
\beq\label{estimate}
2-\int_{d_i} c_1(X)\leq 
|\phi_{a_i}|. 
\eeq
Note that the
above twisted correlator is defined by integrating along the virtual
fundamental cycle of $X_{0,1+l,d}$ which has dimension $l-2+ D
+\int_{d} c_1(X)$. Therefore, by comparing the degree of the
cohomology classes in the correlator, we get the inequality
\ben
D-|\phi_a| +k +\sum_{i=1}^l |\phi_{a_i}| \leq 
l-2 + D +\int_{d} c_1(X). 
\een 
Recalling our estimate \eqref{estimate}, we get 
\ben
l\leq |\phi_a|-2-k + \int_{d} c_1(X) + \sum_{i=1}^l \int_{d_i} c_1(X). 
\een
This proves that the limit exists. In order to compute the limit, we
need to single out only the terms for which the RHS of the above
equality is $0$. But this would be the case only if $l=0$,
the dots (i.e. the higher degree terms) in \eqref{1st-ins} and the higher
degree terms of the ABC-twisting class $\Theta^{ABC}_{0,1,d}=1+\cdots$
are prepended by a factor of the form $(q-1)^p$, with $p>0$ thanks to our estimations.
When computing the limit when $q \to 1$, these higher degree terms vanish.
Thus, the limit becomes 
\ben
-1 + \sum_{d,a} \sum_{k=0}^\infty 
Q^d \phi_a
\left\langle
\phi^a\psi^k
\right\rangle_{0,1,d}\, (-1)^{k+1}= J^{\rm coh} (1,Q).
\een
\end{proof}
\section{Oscillatory integrals in Gromov--Witten theories}\label{sec:oscillatory_integral_and_gamma}

%
%
%
%
%

In this Section, we will revisit Givental's proposal for mirror
symmetry in toric geometry. 
More precisely, if $X$ is a Fano toric manifold of Picard rank 2, then we would like to
compare the following two solutions of the cohomological quantum
differential equations  and their $K$-theoretic analogues: 
\begin{itemize}
	\item[(i)]
	The $I$-function, which is a certain hypergeometric-type
        series associated to the toric manifold (see e.g.
        \cite{Givental:equivariant_GW_invariants} in quantum cohomology and
        \cite{Givental:QK_fixed_point_localization} in quantum
        $K$-theory) 
	\item[(ii)]
	Givental's oscillatory integral model in quantum cohomology
        \cite{Givental:toric_mirrors}  (see also Section 3 of \cite{Iritani:gamma_structure}) and its $q$-analogue in quantum $K$-theory (which will be introduced in Definition \ref{def:qosc_q_oscillatory_integral_real}).
\end{itemize}
The main goal in this Section is to provide a comparison of the
$K$-theoretic $I$-function and the $q$-oscillatory integral, which
will be done in Theorem
\ref{thm:gamma_comparison_theorem_in_quantum_k_theory}. We believe
that our results can be extended to all compact toric orbifolds. The
Picard rank 2 case is a natural case to investigate, because it includes
both Fano and non-Fano manifolds. However, the non-Fano
case will be pursued elsewhere. 

\subsection{Symplectic toric manifolds}\label{ssec:toric_manifolds}

%
%
%

In this Subsection, we recall some standard definitions from toric
geometry, explain some of their geometrical properties and give an
explicit construction of all symplectic toric Fano manifolds whose
Picard rank is 2. For more details on the subject, we refer to Chapter
7 of \cite{Audin:Symplectic_manifolds}. 

We consider the manifold $X$ to be a symplectic toric manifold, that is a smooth symplectic (or GIT) quotient $X = \mathbb{C}^n // \mathbb{T}^k$ of the $n$-dimensional symplectic vector space by a linear action of a $k$-dimensional torus.
We will describe such a toric manifold through its moment map.

\begin{definition}[Symplectic toric orbifolds]
	\begin{itemize}
	\item[(i)]
	Suppose that $\mu : \mathbb{Z}^n \to \mathbb{Z}^r$ is a linear
        map and let us denote by $\alpha_1, \dots, \alpha_n \in
        \mathbb{Z}^r$ the images of the canonical basis of
        $\mathbb{Z}^n$ by the moment map $\mu$. Let
        $\textnormal{Mat}(\mu) := \left( m_{ij} \right)$ be the matrix
        of $\mu$, that is, the entries in the $i$th column are the
        coordinates of $\alpha_i$. Slightly abusing the terminology,
        we will refer to $\mu$ as the \textit{moment map}. 
	The \textit{cone associated to the moment map} $\mu$ is
        defined to be the cone $\textnormal{Cone}(\mu)$ in
        $\mathbb{R}^r$ spanned by the elements $\alpha_1, \dots,
        \alpha_n \in \mathbb{R}^r \supset \mathbb{Z}^r$. 

	\item[(ii)]
	We denote by $\textnormal{BInd}(\mu)$ the set
	\[
		\textnormal{BInd}(\mu)
		:=
		\left\{
			\underline{\sigma}=(\sigma_1,\dots,\sigma_r) \subseteq \{1 , \dots, n \}
		\, \middle| \,
			\mathbb{Q}^r
			=
			\textnormal{Vect}_\mathbb{Q}
			\left(
				\alpha_{\sigma_1}, \dots, \alpha_{\sigma_r}
			\right)
		\right\},
              \]
              where $\textnormal{Vect}_\mathbb{Q}(v_1,\dots,v_r)$
              denotes the vector space spanned over $\mathbb{Q}$ by vectors
              $v_1,\dots,v_r$.  
	The \textit{singular cone associated to a moment map} $\textnormal{Cone}_\textnormal{sing}(\mu) \subseteq \mathbb{R}^r$ is the union of boundaries given by
	\[
		\textnormal{Cone}_\textnormal{sing}(\mu)
		=
		\bigcup_{\underline{\sigma} \in \textnormal{BInd}(\mu)}
		\partial \textnormal{Cone}
		\left(
			\alpha_{\sigma_1}, \dots, \alpha_{\sigma_r}
		\right),
              \]
              where
              $\textnormal{Cone}(v_1,\dots,v_r):=\mathbb{R}_{\geq
                0}v_1+\cdots +\mathbb{R}_{\geq 0}v_r$ denotes the real cone
              spanned by $v_1,\dots,v_r$.  
	A \textit{chamber} $K \subseteq \mathbb{R}^r$ is a connected component of $\textnormal{Cone}(\mu) - \textnormal{Cone}_\textnormal{sing}(\mu)$.

	\item[(iii)]
	Consider the composition of maps
	\[
		\begin{tikzcd}[
                cramped,row sep = 0cm, every text node part/.style={align=center}, node distance=0.1cm
            ]
			\mathbb{C}^n
				\arrow[r]
				\arrow[rr,bend left,"f"]
			&
			\mathbb{R}^n
				\arrow[r,"\mu"]
			&
			\mathbb{R}^r
			\\
			z_j
				\arrow[r,mapsto]
			&
			\mid z_j \mid^2
			&
		\end{tikzcd}
	\]
	Given a moment map $\mu$ and a chamber $K$, we define a symplectic manifold $X_{\mu, K}$ to be the quotient
	\[
		X_{\mu, K}
		:=
		f^{-1}(K)/\mathbb{T}^r,
	\]
	where the action of the $r$-dimensional torus $\mathbb{T}^r = \left( \mathbb{C}^* \right)^r$ on $\mathbb{C}^n \supset f^{-1}(K)$ is given by the matrix of the moment map $\mu$:
	$(t \cdot z)_j = t_1^{m_{1j}}\cdots t_r^{m_{rj}} z_j$ for
        $t=(t_1,\dots,t_r)\in \mathbb{T}^r$ and $z=(z_1,\dots,z_n)\in \mathbb{C}^n$.
	\end{itemize}
      \end{definition}
Note that $f^{-1}(K)$ is not $\mathbb{T}^r$-invariant. The
smallest $\mathbb{T}^r$-invariant subset of $\mathbb{C}^n$
containing $f^{-1}(K)$ is the following open subset of $\mathbb{C}^n$:
\[
  U_K = \bigcup_{I:K\subset \textnormal{Cone}(\alpha_I)}
  \mathbb{C}^{\overline{I}}\times (\mathbb{C}^*)^I,
\]
where the union is over all subsets $I=\{i_1,\dots,i_m\}\subset \{1,2,\dots,n\}$, such
that, the cone $\textnormal{Cone}(\alpha_I):=
\textnormal{Cone}(\alpha_{i_1},\dots, \alpha_{i_m})$ contains the
chamber $K$, $\overline{I}$ is the complement of $I$ in
$\{1,2,\dots,n\}$, and
\begin{align}
  \nonumber
  \mathbb{C}^I & := \{z\in \mathbb{C}^n\ |\ j\notin I \Rightarrow
                 z_j=0\},\\
  \nonumber
  (\mathbb{C}^*)^I & :=\{z\in \mathbb{C}^n\ |\ j\notin I \Leftrightarrow
                 z_j=0\}.
\end{align}
Strictly speaking we should define $X_{\mu,K}=U_K/\mathbb{T}^r$. By
definition,  $f^{-1}(\eta)\subset U_K$ for all $\eta\in K$. It is a
non-trivial result (see \cite{Audin:Symplectic_manifolds},
Theorem VII.2.1) that every $\mathbb{T}^r$-orbit in $U_K$ intersects
$f^{-1}(\eta)$ along a $(\mathbb{S}^1)^r$-orbit, that is,
$X_{\mu,K}=f^{-1}(\eta)/(\mathbb{S}^1)^r$ is a symplectic reduction,
where $\mathbb{S}^1\subset \mathbb{C}$ is the unit circle. 

  \begin{proposition}\label{prop:gamma_toric_manifold_properties}
	Consider a moment map $\mu$ and a chamber $K$, and denote the resulting symplectic toric variety by $X_{\mu,K}$.
	\begin{itemize}
		\item[(i)]
		The quotient $X_{\mu,K}$ is compact if and only if the cone associated to its moment map $\textnormal{Cone}(\mu)$ is contained in a half space of $\mathbb{R}^r$.

		\item[(ii)]
		The quotient $X_{\mu,K}$ is smooth if and only if for any $\underline{\sigma} \in \textnormal{BInd}(\mu)$ such that
		\[
		K \cap \textnormal{Cone}
		\left(
			\alpha_{\sigma_1}, \dots, \alpha_{\sigma_r}
		\right)
		\neq
		\varnothing,
		\]
		the linear map $\mu$ restricted to $\textnormal{Vect}\left(\alpha_{\sigma_1}, \dots, \alpha_{\sigma_r}\right)$ has determinant $\pm 1$.

		\item[(iii)]
		The quotient $X_{\mu,K}$ is Fano if and only if the vector $c_1(TX_{\mu,K}):=\alpha_1 + \cdots + \alpha_n \in \mathbb{R}^r$ is an element of the chamber $K$.
	\end{itemize}
\end{proposition}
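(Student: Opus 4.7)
The plan is to treat each part separately, exploiting the two equivalent descriptions $X_{\mu,K}=f^{-1}(\eta)/(\mathbb{S}^1)^r=U_K/\mathbb{T}^r$ (for any $\eta\in K$) recalled just before the proposition. For part (i), compactness of $X_{\mu,K}$ reduces to compactness of the moment fibre
\[
  f^{-1}(\eta)=\bigl\{z\in\mathbb{C}^n : \textstyle\sum_i |z_i|^2\alpha_i=\eta\bigr\},
\]
which is closed in $\mathbb{C}^n$, so the question becomes one of boundedness. Writing $t_i:=|z_i|^2\ge 0$ translates this into boundedness of the polyhedron $P_\eta=\{t\in\mathbb{R}_{\ge 0}^n : \sum_i t_i\alpha_i=\eta\}$, whose recession cone is $\{t\in\mathbb{R}_{\ge 0}^n : \sum_i t_i\alpha_i=0\}$. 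By Farkas' lemma this recession cone is trivial iff there is a linear functional $\lambda\in(\mathbb{R}^r)^*$ with $\lambda(\alpha_i)>0$ for every $i$, i.e.\ iff $\textnormal{Cone}(\mu)$ lies in an open half-space of $\mathbb{R}^r$, which is the statement of (i).

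For part (ii), smoothness of the quotient $X_{\mu,K}=U_K/\mathbb{T}^r$ is equivalent to freeness of the $\mathbb{T}^r$-action on $U_K$. For $z\in U_K$ with support $I=\{j:z_j\ne 0\}$, the stabilizer $\textnormal{Stab}(z)=\{t\in\mathbb{T}^r : t_1^{m_{1j}}\cdots t_r^{m_{rj}}=1\text{ for all } j\in I\}$ depends only on $I$; it is the kernel of the homomorphism $\mathbb{T}^r\to\mathbb{T}^{|I|}$ whose matrix is the submatrix $M_I=(\alpha_j)_{j\in I}$, and this kernel is trivial iff $\{\alpha_j\}_{j\in I}$ generates $\mathbb{Z}^r$. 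One then checks that the minimal supports $I$ attained by $z\in U_K$ are precisely the $\underline{\sigma}\in\textnormal{BInd}(\mu)$ with $K\cap\textnormal{Cone}(\alpha_\sigma)\ne\varnothing$, so freeness of the action reduces to the unimodularity condition $|\det M_\sigma|=1$ for every such $\underline{\sigma}$, which is the condition of (ii).

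For part (iii), the plan is to invoke two standard facts from toric geometry. First, under the Kirwan map $H^2_{\mathbb{T}^r}(\mathbb{C}^n;\mathbb{R})\twoheadrightarrow H^2(X_{\mu,K};\mathbb{R})\cong\mathbb{R}^r$, the toric prime divisor $D_j=\{z_j=0\}/\mathbb{T}^r$ is sent to the vector $\alpha_j$; since the anticanonical divisor is $-K_X=\sum_j D_j$, this identifies $c_1(TX_{\mu,K})$ with $\sum_j\alpha_j$ and justifies the definition given in the statement. Second, by the variation-of-GIT/Kempf--Ness picture, the K\"ahler (ample) cone of the GIT quotient $X_{\mu,K}$ coincides with the chamber $K$: crossing a wall of $\textnormal{Cone}_{\textnormal{sing}}(\mu)$ produces a new birational model whose K\"ahler cone is the adjacent chamber. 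Combining, $X_{\mu,K}$ is Fano iff $c_1(TX_{\mu,K})=\sum_j\alpha_j\in K$. The main obstacle is part (iii), where one must import the non-trivial identification of the K\"ahler cone with the chamber $K$; parts (i) and (ii) follow from elementary convex-analytic and group-theoretic arguments applied directly to the moment-map / GIT description.
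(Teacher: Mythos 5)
The paper states this proposition without proof (the \verb|\qed| follows immediately, and the result is essentially quoted from Chapter VII of \cite{Audin:Symplectic_manifolds}), so there is no in-paper argument to compare against; your proposal supplies the standard one and is substantially correct. A few remarks. For (i), your recession-cone/Gordan argument correctly yields the condition $\lambda(\alpha_i)>0$ for every $i$, i.e.\ an \emph{open} half space; the paper's phrasing is slightly loose here but Remark \ref{rmk:gamma_toric_manifold_dimension_dual_cone} (full-dimensionality of the dual cone) confirms this is the intended reading, and your proof even handles the corner case $\alpha_i=0$, which the dual-cone criterion does not quite see. For (ii), the logic is right but the step you label ``one then checks'' is where the real content is: because the stabilizer grows as the support shrinks, freeness reduces to the minimal supports $I$ with $K\subset\operatorname{Cone}(\alpha_I)$, and since $K$ is open and disjoint from $\operatorname{Cone}_{\textnormal{sing}}(\mu)$ these minimal supports are exactly the $r$-element sets, via the observation that any $\operatorname{Cone}(\alpha_I)$ admits a simplicial subdivision into cones $\operatorname{Cone}(\alpha_\sigma)$ with $\sigma\in\operatorname{BInd}(\mu)$ and that the connected set $K$ must land in one of them; you would want to spell out at least the subdivision step. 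For (iii) you correctly flag that the identification of the K\"ahler cone with the chamber $K$ is the external input; this is consistent with the paper, which likewise takes it as given.
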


\begin{remark}\label{rmk:gamma_toric_manifold_dimension_dual_cone}
	A cone $C$ of $\mathbb{R}^r$ is contained in a half space if and only if its dual cone has maximal dimension, i.e. $\textnormal{dim } C^\vee = r$
      \end{remark}
      Let $P_j=(U_K\times \mathbb{C})/\mathbb{T}^r$ ($1\leq j\leq r$) be the
line bundle on $X_{\mu,K}$, where the action of $\mathbb{T}^r$ on
$\mathbb{C}$ is given by the character $\mathbb{T}^r\to \mathbb{C}^*$,
$(t_1,\dots,t_r)\mapsto t_j^{-1}$. Let $p_j:=-c_1(P_j)$. 
\begin{proposition}\label{prop:gamma_cohomology_k_theory_of_toric_manifolds}
	Let $X_{\mu,K}$ be a symplectic toric manifold.
	Then, the cohomology ring and the topological $K$-ring of $X_{\mu,K}$ are given by
	\begin{align*}
		H^*(X_{\mu,K}; \mathbb{Q})
		&\simeq
		\mathbb{Q}[p_1, \dots, p_r]/\left(\prod_{j \in J_\nu} \alpha_j(p) \right)_\nu,
		\\
		K(X_{\mu,K})
		&\simeq
		\mathbb{Z}[P_1^{\pm 1}, \dots, P_r^{\pm
           1}]/\left(\prod_{j \in J_\nu} (1-U_j(P))
           \right)_\nu ,
	\end{align*}
	where
$\alpha_j(p) := m_{1j} p_1 + \cdots + m_{rj}p_r$,
$U_j(P) :=P_1^{m_{1j}}\cdots P_r^{m_{rj}}$,
	and $J_\nu=(j_{\nu,1},\dots,j_{\nu,l}) \subseteq \{1,
        \dots, N\}$ are the maximal subsets with respect to inclusion,
        such that, the cone spanned by $\alpha_{j_{\nu,1}}, \dots,
        \alpha_{j_{\nu,l}}$ does not intersect the chamber $K$.
\end{proposition}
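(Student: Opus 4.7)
My strategy is to derive both rings by a standard equivariant/Kirwan surjectivity argument applied to the free quotient $X_{\mu,K}=U_K/\mathbb{T}^r$, followed by a combinatorial matching of the resulting Stanley--Reisner-type relations with the $J_\nu$ in the statement.

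First I would record the basic observation that the smoothness criterion of Proposition~\ref{prop:gamma_toric_manifold_properties}(ii) forces $\mathbb{T}^r$ to act with trivial stabilizers on $U_K$, so that $U_K\to X_{\mu,K}$ is a principal $\mathbb{T}^r$-bundle and the Borel construction identifies
\[
H^*(X_{\mu,K};\mathbb{Q}) \cong H^*_{\mathbb{T}^r}(U_K;\mathbb{Q}), \qquad K(X_{\mu,K}) \cong K_{\mathbb{T}^r}(U_K).
\]
Because $\mathbb{C}^n$ is $\mathbb{T}^r$-equivariantly contractible to the origin, the ambient equivariant rings are
\[
H^*_{\mathbb{T}^r}(\mathbb{C}^n;\mathbb{Q}) = \mathbb{Q}[p_1,\dots,p_r], \qquad K_{\mathbb{T}^r}(\mathbb{C}^n) = \mathbb{Z}[P_1^{\pm 1},\dots,P_r^{\pm 1}],
\]
where the generators correspond to the $j$-th character of $\mathbb{T}^r$. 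I would then show that the restriction maps along $U_K\hookrightarrow\mathbb{C}^n$ are surjective by stripping off one $\mathbb{T}^r$-invariant coordinate subspace of $V_K:=\mathbb{C}^n\setminus U_K$ at a time, via the corresponding Thom/Gysin long exact sequences in equivariant cohomology and $K$-theory.

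Next I would unpack the geometry of $V_K$ from the formula for $U_K$ given in the text: a point $z\in\mathbb{C}^n$ with support $S:=\{j : z_j\neq 0\}$ lies in $U_K$ exactly when there is some $I\subseteq S$ with $K\subset\mathrm{Cone}(\alpha_I)$. Hence $V_K$ is a union of coordinate subspaces $\mathbb{C}^{I}\subseteq\mathbb{C}^n$ indexed by those $I$ that fail this property, and its irreducible components are obtained by taking the maximal such $I$. A combinatorial unwinding of this maximality condition---using that $K$ is a chamber of the secondary fan---matches the complements of these maximal $I$ with the subsets $J_\nu$ appearing in the proposition.

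Finally, I would compute the kernel of the restriction map. The long exact sequence of the pair $(\mathbb{C}^n, U_K)$ combined with the Thom isomorphism for each equivariant coordinate-subspace component of $V_K$ shows that the kernel is generated by the equivariant Euler classes (respectively $K$-theoretic Euler classes $\lambda_{-1}$ of the conormal bundles) of these components. Since the normal bundle of $\mathbb{C}^{I_\nu}\subset\mathbb{C}^n$ decomposes $\mathbb{T}^r$-equivariantly as $\bigoplus_{j\in J_\nu}L_{\alpha_j}$, where $L_{\alpha_j}$ is the line bundle attached to the character $t\mapsto t^{\alpha_j}=\prod_i t_i^{m_{ij}}$, the resulting Euler classes are exactly
\[
\prod_{j\in J_\nu}\alpha_j(p) \qquad\text{and}\qquad \prod_{j\in J_\nu}\bigl(1-U_j(P)\bigr),
\]
up to the sign conventions $p_j=-c_1(P_j)$ and $U_j(P)=\prod_i P_i^{m_{ij}}$ fixed in the text, which yields the two presentations simultaneously.

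The main obstacle I anticipate is the combinatorial step: one must verify carefully that the maximal coordinate subspaces making up $V_K$ are indexed precisely by the $J_\nu$ of the statement, and that all sign conventions (the definition of the $P_j$ as twist by $t_j^{-1}$, the sign in $p_j=-c_1(P_j)$, and the choice of $\lambda_{-1}$ vs.\ $\lambda_{-1}$ of the dual) line up so that the Euler classes produce the asserted relations and no extra ones. The remaining ingredients---Kirwan surjectivity, the Borel construction, and the Thom isomorphism---are standard in equivariant topology and $K$-theory and, in the toric case, admit elementary proofs by induction on the complexity of the unstable locus.
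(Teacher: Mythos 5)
The paper does not actually prove this proposition; the \qed{} symbol marks it as a stated known fact (for which the cited background on symplectic toric manifolds is \cite{Audin:Symplectic_manifolds}). So there is no in-paper argument to compare against, and the remarks below concern your proposal on its own merits.

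Your outline is sound: the Borel identification for the free quotient $U_K/\mathbb{T}^r$, the equivariant contractibility of $\mathbb{C}^n$, Kirwan surjectivity proved by excising the unstable strata, and the computation of the kernel as the ideal generated by the equivariant (resp.\ $K$-theoretic) Euler classes of the normal bundles of the irreducible components of $V_K=\mathbb{C}^n\setminus U_K$. Those components are the maximal coordinate subspaces $\mathbb{C}^S$ with $K\not\subset\operatorname{Cone}(\alpha_S)$, and the Euler class of the normal bundle of $\mathbb{C}^S\subset\mathbb{C}^n$ is $\prod_{j\in\overline{S}}\alpha_j(p)$ (resp.\ $\prod_{j\in\overline{S}}(1-U_j(P))$, once the sign conventions $p_j=-c_1(P_j)$ and the choice of $K$-theoretic Euler class are reconciled). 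So your argument yields relations indexed by the \emph{complements} $\overline{S}$ of the maximal unstable support sets; these are precisely Batyrev's primitive collections.

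The combinatorial step you flag as the main obstacle is indeed where the real content sits, and you should look at it with care rather than just ``verify it'': your Euler-class argument produces the complements $\overline{S}$, whereas the proposition names the maximal sets $J_\nu$ with $\operatorname{Cone}(\alpha_{J_\nu})\cap K=\varnothing$ themselves. For an open chamber $K$, the two failure conditions $K\not\subset\operatorname{Cone}(\alpha_S)$ and $\operatorname{Cone}(\alpha_S)\cap K=\varnothing$ are equivalent (a chamber cannot straddle a wall of $\operatorname{Cone}(\alpha_S)$), so the collection of maximal bad $S$ is well defined either way; the question is whether it equals its own collection of complements. In the Picard-rank-2 model of Proposition \ref{prop:gamma_model_toric_picard_rank_two}, the two maximal sets are $\{1,\dots,N\}$ and $\{N+1,\dots,N+1+k\}$, which partition $\{1,\dots,n\}$, so complementation is an involution on the collection and the proposition's phrasing gives the same relations as your derivation. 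This is special to rank $2$: for $\mathbb{P}^{n-1}$ (rank $1$) the only $S$ with $\operatorname{Cone}(\alpha_S)\cap K=\varnothing$ is $S=\varnothing$, so the literal reading of the proposition would yield the empty product as the sole relation, whereas your argument correctly gives $\overline{\varnothing}=\{1,\dots,n\}$ and the relation $p^n$. So you should present the complement-indexed relations your derivation produces as primary, and check the agreement with the stated $J_\nu$ only in the rank-$2$ setting the paper needs, rather than trying to prove an equivalence that does not hold for general Picard rank.
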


\begin{example}
	Let us consider for the manifold $X = \textnormal{Bl}_{pt} \mathbb{P}^3 \simeq \mathbb{P}( \mathcal{O}_{\mathbb{P}^2} \oplus \mathcal{O}_{\mathbb{P}^2}(-1) )$ the following toric data: we use the moment map given by the matrix
	\[
		M =
		\begin{pmatrix}
			1	&	1	&	1	&	0	&	-1	\\
			0	&	0	&	0	&	1	&	1
		\end{pmatrix},
	\]
	and the chamber $K = \left( \mathbb{R}_{> 0} \right)^2$. We give below a figure of the toric data.
	\begin{center}
		\begin{tikzpicture}
			\path[fill=LightO, fill opacity=.2] (0,0) -- (2,0) -- (2,2) -- (0,2) -- cycle;
			\draw[help lines,->] (-2,0) -- (2,0) coordinate (xaxis);
			\draw[help lines,->] (0,-1) -- (0,2) coordinate (yaxis);

			\draw
				(0,0)  node[scale=3](O){.}
			;
			\path[draw,line width=0.8pt,->] (0,0) -- (1,0);
			\path[draw,line width=0.8pt,->] (0,-0.1) -- (1,-0.1);
			\path[draw,line width=0.8pt,->] (0,0.1) -- (1,0.1);
			\path[draw,line width=0.8pt,->] (0,0) -- (0,1);
			\path[draw,line width=0.8pt,->] (0,0) -- (-1,1);

			\node[below] at (xaxis) {$p_1$};
			\node[left] at (yaxis) {$p_2$};
			\node[right] at (0.5,0.4) {$\alpha_1=\alpha_2=\alpha_3$};
			\node[above right] at (0,1) {$\alpha_4$};
			\node[above left] at (-1,1) {$\alpha_5$};
			\node[below left] at (2,2) {$K$};
		\end{tikzpicture}
	\end{center}
	Then, the sets $J_\nu$ of Proposition
        \ref{prop:gamma_cohomology_k_theory_of_toric_manifolds} are
        $\{1,2,3\}$ and $\{4,5\}$. The cohomology ring is given by 
	\[
		H^*(X_{\mu,K}; \mathbb{Q})	
		\simeq
		\mathbb{Q}[p_1, p_2]/\left(
			p_1^3 =0, p_2(p_2-p_1)=0
		\right),
	\]
and the topological $K$-ring by 
\[
K(X_{\mu,k}) \cong \mathbb{Z}[P^{\pm 1}_1,P^{\pm 1}_2]/\left( (1-P_1)^3=0, (1-P_2)(1-P_1^{-1}P_2)=0\right).
\]
\end{example}

\begin{proposition}\label{prop:gamma_model_toric_picard_rank_two}
Suppose that $X$ is a compact toric manifold with Picard rank
2. The isomorphism class of $X$ can be represented by a toric manifold
$X_{\mu,K}$, such that, the chamber 
$K=\left(\mathbb{R}_{>0}\right)^2$ 
and the matrix of the moment map $\mu$ has the following form:
	\[
\operatorname{Mat}(\mu)=
		\begin{pmatrix}
			1
			&
			\cdots
			&
			1
			&
			0
			&
			-a_1
			&
			\cdots
			&
			-a_k
			\\
			0
			&
			\cdots
			&
			0
			&
			1
			&
			1
			&
			\cdots
			&
			1
		\end{pmatrix},
	\]
where the first column $\binom{1}{0}$ is being repeated $N > 0$ times
and $a_j \in \mathbb{Z}_{\geq 0}$ ($1\leq j\leq k$). Furthermore,
$X_{\mu,K}$ is naturally isomorphic to a projectivised vector bundle,
that is, 
	\[
		X_{\mu,K}
		\simeq
		\mathbb{P}
		\left(
			\mathcal{O} \oplus \mathcal{O}(-a_1) \cdots \oplus \mathcal \mathcal{O}(-a_k) 
		\right)
		\to
		\mathbb{P}^{N-1}.
	\]
Finally, $X_{\mu,K}$ is Fano if and only if the inequality $N > a_1 + \cdots + a_k$ holds.
\end{proposition}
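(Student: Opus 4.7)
\emph{Proof plan.} The argument has three parts: (1) reduce $\operatorname{Mat}(\mu)$ to the claimed normal form using smoothness, compactness, and the chamber structure; (2) identify the resulting GIT quotient as the stated projective bundle; (3) derive the Fano criterion from Proposition~\ref{prop:gamma_toric_manifold_properties}(iii).

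For step~(1), after applying an element of $GL_2(\mathbb{Z})$ to change the basis of $\operatorname{Pic}(X) \otimes \mathbb{R}$, I may assume $K = (\mathbb{R}_{>0})^2$, since any two-dimensional chamber is simplicial. The positive $x$- and $y$-axes are then boundary rays of $K$ and must lie in $\operatorname{Cone}_{\textnormal{sing}}(\mu)$, so some columns $\alpha_i$ lie on each axis; smoothness (Proposition~\ref{prop:gamma_toric_manifold_properties}(ii)) applied to one such pair forces the corresponding primitive generators to be $(1,0)$ and $(0,1)$. No column can lie in the open first quadrant (else its ray would split $K$), nor on the strictly negative part of either axis (else $\operatorname{Cone}(\mu)$ would contain a line, contradicting compactness via Remark~\ref{rmk:gamma_toric_manifold_dimension_dual_cone}). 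For a column in the open second quadrant, smoothness paired with $(1,0)$ forces the vertical coordinate to be $1$, giving the form $(-a,1)$ with $a \in \mathbb{Z}_{\geq 0}$. Open third- and fourth-quadrant columns are ruled out by combining the compactness constraint with smoothness: any such column together with the already-fixed generators and the second-quadrant columns would force $\operatorname{Cone}(\mu)$ to contain a line through the origin. This recovers the classical Kleinschmidt classification of smooth complete toric varieties of Picard rank $2$.

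For step~(2), using coordinates $(z_1,\dots,z_N,w_0,w_1,\dots,w_k)$ on $\mathbb{C}^{N+k+1}$, the $\mathbb{T}^2$-action prescribed by $\operatorname{Mat}(\mu)$ scales $z_i \mapsto \lambda\, z_i$ and $w_j \mapsto \lambda^{-a_j}\mu\, w_j$ (with $a_0 := 0$). The chamber $K = (\mathbb{R}_{>0})^2$ corresponds to excising the unstable loci $\{z_1 = \cdots = z_N = 0\}$ and $\{w_0 = \cdots = w_k = 0\}$. Quotienting first by the $\lambda$-factor yields $\mathbb{P}^{N-1}$ with $w_j$ transforming as a section of $\mathcal{O}(-a_j)$, and the residual quotient by $\mu$ then produces $\mathbb{P}(\mathcal{O} \oplus \mathcal{O}(-a_1) \oplus \cdots \oplus \mathcal{O}(-a_k)) \to \mathbb{P}^{N-1}$. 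Step~(3) is then immediate: summing the columns of $\operatorname{Mat}(\mu)$ gives $c_1(TX_{\mu,K}) = (N - a_1 - \cdots - a_k,\, k+1)$, whose second coordinate is automatically positive, so by Proposition~\ref{prop:gamma_toric_manifold_properties}(iii) the Fano condition reduces to $N > a_1 + \cdots + a_k$.

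The principal obstacle is step~(1): establishing the rigidity of the normal form requires a careful case analysis of how the smoothness and compactness conditions interact with the singular-cone/chamber structure in rank $2$, essentially amounting to Kleinschmidt's classification. Once this normal form is in place, steps~(2) and~(3) are straightforward verifications.
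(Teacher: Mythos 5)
Your overall strategy in step (1) — normalize $K=(\mathbb{R}_{>0})^2$ using $GL_2(\mathbb{Z})$, obtain generators $(1,0)$, $(0,1)$ from smoothness, and constrain the remaining columns via compactness and smoothness — is the same as the paper's, and your sketches of steps (2) and (3) (which the paper does not write out) are sound.

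However, there is a genuine gap in your treatment of fourth-quadrant columns. You claim they are ``ruled out by combining the compactness constraint with smoothness,'' because ``any such column together with the already-fixed generators and the second-quadrant columns would force $\operatorname{Cone}(\mu)$ to contain a line.'' That argument presupposes the presence of a strictly second-quadrant column $(-a,1)$ with $a\geq 1$; without one, the configuration $(1,0),\,(0,1),\,(1,-k)$ with $k\geq 1$ is a perfectly valid compact smooth toric datum (its cone $\{(x,y): x\geq 0,\ y\geq -kx\}$ sits in the half-space $\{x\geq 0\}$), and in fact it represents the Hirzebruch surface $\mathbb{F}_k$. So fourth-quadrant columns are not ruled out. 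The paper instead shows that columns of type $(1,-a)$ and $(-b,1)$ (with $a,b\geq 1$) cannot \emph{coexist} — smoothness forces $1-ab=\pm1$, hence $ab\in\{0,2\}$, and $ab=2$ makes $\operatorname{Cone}(\mu)$ contain the line $\mathbb{R}(1,-1)$ — and then applies the swap $\begin{pmatrix}0&1\\1&0\end{pmatrix}$ (with a column permutation) to bring the fourth-quadrant case into the stated normal form. Your argument is missing this ``cannot coexist, then normalize by swap'' step, and without it the case where all extra columns are of type $(1,-k)$ is not addressed.
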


\begin{proof}
	Let $X_{\mu,K}=f^{-1}(K)/\mathbb{T}^r$ be a symplectic toric
        manifold of Picard rank $r$. Denote by
        $M:=\textnormal{Mat}(\mu) \in M_{r,n}(\mathbb{Z})$ the matrix
        of its moment map $\mu$, where $M_{r,n}(\mathbb{Z})$ denotes
        the space of matrices of size $r\times n$ with integer
        entries. 
	The action of $GL_{r}(\mathbb{Z})$ on $M_{r,n}(\mathbb{Z})$ by left multiplication $M \mapsto A \cdot M$ corresponds to changing the coordinates of the torus $\mathbb{T}^r$.
	Identifying the moment map $\mu$ with its matrix $M$, we have
        $X_{M,K}=X_{AM,A(K)}$. 
	Moreover, permuting the columns of the matrix $A$ amounts to a relabeling of the canonical basis of $\mathbb{C}^n \supset f^{-1}(K)$, therefore the quotient $X_{\mu,K}$ is still the same manifold.

Now, let us assume that the toric manifold $X_{\mu,K}$ has Picard rank
2. Note that the chamber $K$ is the interior of a cone
$\textnormal{Cone}(C_1,C_2)$, where $C_1$ and $C_2$ are columns of
$M$.  By permuting the columns of the matrix $M$ we may assume
that $C_1$ and $C_2$ are the 1st two columns of $M$.  By the
smoothness condition of Proposition
\ref{prop:gamma_toric_manifold_properties}, the matrix $A_K = ( C_1 |
C_2) \in M_{2,2}(\mathbb{Z})$  formed from the columns $C_1$ and $C_2$
has determinant $\pm 1$. Therefore, by multiplying $M$ from the left by a matrix $A \in GL_{2}(\mathbb{Z})$, we can assume without loss of generality that the matrix $A_K$ is the identity, and that the Kähler cone is the first quadrant: $K = \left( \mathbb{R}_{>0} \right)^2$.
Then, the matrix of the moment map will have the form
	\[
M=
		\begin{pmatrix}
			1	&	0	&	a 	&	\cdots  	\\
			0	&	1	&	b 	&	\cdots
		\end{pmatrix},
	\]
where $a,b \in \mathbb{Z}$. Let us investigate the sign of the
integers $a$ and $b$. Since the Kähler cone $K$ is the first quadrant 
$\left(\mathbb{R}_{>0} \right)^2$, the case where $a,b > 0$ would
contradict $K=\left( \mathbb{R}_{>0} \right)^2$, so it is
impossible. Furthermore, the case where $a,b < 0$ is also impossible, as the resulting toric manifold would fail the compactness condition of Proposition \ref{prop:gamma_toric_manifold_properties}: the cone associated to the moment map would contain the line spanned by the vector $\binom{a}{b}$, its negative being in the Kähler cone $K$.
	Therefore, the integers $a$ and $b$ must be of opposite signs.

	If $a>0$ and $b<0$, then applying the smoothness condition of Proposition \ref{prop:gamma_toric_manifold_properties}, we obtain that the matrix $\begin{pmatrix} 0 & a \\ 1 & b \end{pmatrix}$ must have determinant $\pm 1$, therefore $a=1$.
	Using a similar argument, if $a < 0$ and $b > 0$ then $b=1$.
	Therefore, the columns of $M$ following the first two columns are either of the type $\begin{pmatrix} -k \\ 1 \end{pmatrix}$ or of the type $\begin{pmatrix} 1 \\ -k \end{pmatrix}$, with $k \in \mathbb{Z}_{\geq 0}$.

	Finally, we show that $M$ can not contain simultaneously columns of these two types.
	Let us assume that
	\[
		M
		=
		\begin{pmatrix}
			1	&	0	&	1 	&	-b	&	\cdots 	\\
			0	&	1	&	-a 	&	1	&	\cdots
		\end{pmatrix}
	\]
	Then, the smoothness condition gives that
	\[
		\textnormal{det}
		\begin{pmatrix}
			1	&	-b 	\\
			-a 	&	1
		\end{pmatrix}
		=
		1 - ab
		= \pm 1
	\]
	Therefore $a=1, b=2$ or $a=2, b=1$. In both cases, the cone associated to the moment map $M$ contains the line $\textnormal{Vect}_{\mathbb{R}}(\binom{1}{-1})$, thus failing the compactness condition.
	Consequently, the remaining columns of $M$ are always of the same type $\begin{pmatrix} -k \\ 1 \end{pmatrix}$ or $\begin{pmatrix} 1 \\ -k \end{pmatrix}$.
	Finally, multiplying $M$ from the left by the matrix
        $\begin{pmatrix} 0 & 1 \\ 1 & 0 \end{pmatrix}$ and permuting
        the 1st two columns of $M$ if necessary, we can arrange that
        the matrix $M$ contains only vectors of the type
        $\begin{pmatrix} -k \\ 1 \end{pmatrix}$. 
\end{proof}

Finally, we give a result for computing intersection products for such symplectic toric manifolds.
Our formula will rely on the computation of Jeffrey--Kirwan residues for toric manifolds done in Section 2 of \cite{Szenes_Vergne:jeffrey_kirwan_residues}.
We will also refer to the same article for more details on these residues.

\begin{theorem}[\cite{Szenes_Vergne:jeffrey_kirwan_residues}, Theorem 2.6]\label{thm:gamma_computation_of_intersection_products}
	Let $X_{\mu, K}$ be a symplectic toric manifold, whose toric data is given as in the statement of Proposition \ref{prop:gamma_model_toric_picard_rank_two}: its chamber is $K=\left( \mathbb{R}_{>0} \right)^2$ and the matrix of its moment map is given by
	\[
		\begin{pmatrix}
			1
			&
			\cdots
			&
			1
			&
			0
			&
			-a_1
			&
			\cdots
			&
			-a_k
			\\
			0
			&
			\cdots
			&
			0
			&
			1
			&
			1
			&
			\cdots
			&
			1
		\end{pmatrix}
	\]
	Denote by $p_1, p_2$ the toric divisors of $X_{\mu,K}$ (see Proposition \ref{prop:gamma_cohomology_k_theory_of_toric_manifolds}), and let $f(x,y) \in \mathbb{C}[x,y]$ be some polynomial.
	Then, the intersection product given by the polynomial $f$ can be computed as an iterated residue as follows:
	\[
		\int_{\left[X_{\mu,K} \right]}
		f(p_1,p_2)
		=
		\operatorname{Res}_{y=0} \operatorname{Res}_{x=0}
		f(x,y)\frac{dx dy}{x^N y (y - a_1 x) \cdots (y - a_k x)}
	\]
\end{theorem}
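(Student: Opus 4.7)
Both sides of the claimed identity are $\mathbb{C}$-linear in $f$, so the statement amounts to showing that the two linear functionals
\[
\ell(f):=\int_{[X_{\mu,K}]} f(p_1,p_2),\qquad
R(f):=\operatorname{Res}_{y=0}\operatorname{Res}_{x=0}\frac{f(x,y)\,dx\,dy}{x^N y(y-a_1 x)\cdots(y-a_k x)}
\]
coincide on $\mathbb{C}[x,y]$. I would organise the argument in three steps.

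\emph{Step 1 (Descent to cohomology).} First show that $R$ vanishes on the Stanley--Reisner ideal
\[
I=\bigl(x^N,\; y(y-a_1 x)\cdots(y-a_k x)\bigr),
\]
so that $R$ descends to a functional on $\mathbb{C}[x,y]/I\cong H^*(X_{\mu,K};\mathbb{C})$, where the presentation is the one supplied by Proposition \ref{prop:gamma_cohomology_k_theory_of_toric_manifolds} applied to the toric data of Proposition \ref{prop:gamma_model_toric_picard_rank_two}. If $f=x^N g$, the integrand simplifies to $g(x,y)/\bigl(y\prod_j(y-a_j x)\bigr)$, which is holomorphic at $x=0$ when $y$ is treated as a nonzero parameter, so the inner residue vanishes. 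If $f=y(y-a_1x)\cdots(y-a_k x)h$, the integrand simplifies to $h(x,y)/x^N$, whose inner $x$-residue is a polynomial in $y$ and hence has vanishing residue at $y=0$.

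\emph{Step 2 (Concentration in top degree).} Expanding $(y-a_j x)^{-1}=y^{-1}\sum_{l\geq 0}(a_j x/y)^l$ and multiplying out, one sees that $R(x^a y^b)$ vanishes unless $a+b=N+k-1$, and in that case
\[
R(x^a y^b)=h_{N-a-1}(a_1,\ldots,a_k),
\]
where $h_m$ is the $m$-th complete homogeneous symmetric polynomial (with the convention $h_m=0$ for $m<0$). On the other hand, $\ell$ is automatically supported on the top cohomology $H^{2(N+k-1)}(X_{\mu,K};\mathbb{C})$, which is one-dimensional since $X_{\mu,K}$ is connected.

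\emph{Step 3 (Normalisation on a single class).} Because $H^{2(N+k-1)}(X_{\mu,K};\mathbb{C})$ is one-dimensional, it suffices to check that $R(f)=\ell(f)$ for one nonzero $f$ of degree $N+k-1$. Take $f=x^{N-1}y^k$. The formula in Step 2 yields $R(x^{N-1}y^k)=h_0=1$. On the geometric side, the projective bundle description
\[
X_{\mu,K}=\mathbb{P}\bigl(\mathcal{O}\oplus\mathcal{O}(-a_1)\oplus\cdots\oplus\mathcal{O}(-a_k)\bigr)\longrightarrow\mathbb{P}^{N-1}
\]
of Proposition \ref{prop:gamma_model_toric_picard_rank_two} identifies $p_1$ with the pullback of the hyperplane class on $\mathbb{P}^{N-1}$ (so $p_1^{N-1}$ represents a fibre $\mathbb{P}^k$) and $p_2$ with the tautological relative hyperplane class, whose top power integrates to $1$ along each fibre; consequently $\ell(p_1^{N-1}p_2^k)=1$, completing the identification $R=\ell$.

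\emph{Main obstacle.} The iterated residue $\operatorname{Res}_{y=0}\operatorname{Res}_{x=0}$ is genuinely ordered, so the delicate point is Step 1: one must verify that this specific ordering kills \emph{both} Stanley--Reisner generators, even though the mechanisms of cancellation are of different natures (vanishing of the inner residue versus vanishing of the outer residue). Once this asymmetry is handled, Steps 2 and 3 are routine, and the formula reduces to a single, completely explicit top-intersection number.
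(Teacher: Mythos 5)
Your proof is correct, and it takes a genuinely different route from the one in the paper. The paper's proof is a two-line citation: it invokes Theorem 2.6 of Szenes--Vergne to identify the iterated residue with the Jeffrey--Kirwan residue (for a suitably chosen sum-regular vector $\xi$ in the chamber $K$), and then Proposition 2.3 of Szenes--Vergne to equate the JK residue with the intersection product. Your argument, by contrast, is a self-contained, elementary verification that bypasses the Jeffrey--Kirwan formalism entirely: you show that the residue functional $R$ kills the Stanley--Reisner ideal and hence descends to $H^*(X_{\mu,K};\mathbb{C})$, that it is automatically concentrated in complex degree $N+k-1$ (a direct computation with the geometric series expansion of $\prod_j (y-a_jx)^{-1}$, giving $R(x^a y^b)=h_{N-1-a}(a_1,\dots,a_k)$ on the top degree), and that the two functionals agree on the single nonzero generator $p_1^{N-1}p_2^k$ of the one-dimensional top cohomology via the projective-bundle structure. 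What the paper's approach buys is brevity and the fact that the Szenes--Vergne machinery works uniformly for arbitrary Picard rank (and even arbitrary sum-regular chambers, which accounts for the ordering of the iterated residue). What your approach buys is transparency and independence from \cite{Szenes_Vergne:jeffrey_kirwan_residues}; in particular it makes the otherwise mysterious asymmetry of the ordering $\operatorname{Res}_{y=0}\operatorname{Res}_{x=0}$ visible as the two distinct cancellation mechanisms in Step 1. Both are valid; yours would serve well as a didactic alternative or a sanity check on the residue conventions.
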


\begin{proof}
	Recall that $\alpha_i \in \mathbb{Z}^2$ denotes the vector given by the $i$-th column of the moment map.
	Starting from the right hand side of the identity we want to prove, we use Theorem 2.6 of \cite{Szenes_Vergne:jeffrey_kirwan_residues} for the projective sequence $\mathcal{A} = (\alpha_1, \dots, \alpha_{N+k+1})$ and a sum-regular vector $\xi \in K=(\mathbb{R}_>0)^2$ located below the line $\textnormal{Vect}(c_1(TX_{\mu,K}))$. We obtain
	\[
		\operatorname{Res}_{y=0} \operatorname{Res}_{x=0}
		f(x,y)\frac{dp_1 dp_2}{x^N y (y - a_1 x) \cdots (y - a_k x)}
		=
		\textnormal{JK}_{\left( \mathbb{R}_{>0} \right)^2}
		\left(
			\frac{f(x,y)}{x^N y (y - a_1 x) \cdots (y - a_k x)}
		\right),
	\]
	where $\textnormal{JK}_{\left( \mathbb{R}_{>0} \right)^2}$ denotes the Jeffrey--Kirwan residue, see e.g. Equation (2.1) of \cite{Szenes_Vergne:jeffrey_kirwan_residues}.
	Using Proposition 2.3 of \cite{Szenes_Vergne:jeffrey_kirwan_residues}, we get
	\[
		\textnormal{JK}_{\left( \mathbb{R}_{>0} \right)^2}
		\left(
			\frac{f(x,y)}{x^N y (y - a_1 x) \cdots (y - a_k x)}
		\right)
		=
		\int_{\left[X_{\mu,K} \right]}
		f(p_1,p_2)
	\]
\end{proof}

\subsection{Oscillatory integral and gamma class in quantum cohomology}\label{ssec:oscillatory_in_qh}

%
%
%

In this Subsection, we recall the definition of the oscillatory
integral and the $I$-function of a torci manifold. Then we would like
to recall the results of Iritani from \cite{Iritani:gamma_structure,
  Iritani:QDM_of_toric_varieties_and_oscillatory}, which will be our
guiding principle for what we would like to do in the $K$-theoretic
settings.  


\subsubsection{Oscillatory integral in quantum cohomology}

From now on, when considering a symplectic toric manifold $X_{\mu,K}$, we will always assume that it satisfies the three conditions of Proposition \ref{prop:gamma_toric_manifold_properties}, i.e. it is compact, smooth, and Fano.

\begin{definition}[Landau--Ginzburg potential]\label{def:gamma_cohomological_landau_ginzburg_model}
	Let $X_{\mu,K}$ be a symplectic toric manifold and denote by $m_{ij} \in \mathbb{Z}$ the coefficients of the matrix of its moment map $\mu$.
	The \textit{Landau--Ginzburg potential} associated to the
        toric manifold $X_{\mu,K}$ is the following family of
        holomorphic functions:
	\[
		\begin{tikzcd}
			Y := \left( \mathbb{C}^* \right)^n
				\arrow[r,"W"]
				\arrow[d,"\pi"]
			&
			\mathbb{C}
			\\
			B := \left( \mathbb{C}^* \right)^r
		\end{tikzcd} ,
	\]
	where we denote by $x_1, \dots, x_n$ the standard global
        coordinates on the complex torus $Y$, $Q_1, \dots, Q_r$ the
        standard global coordinates on the complex torus $B$, the maps
        $W$ and $\pi$ are given by 
	\begin{align*}
		W(x_1, \dots, x_n)
		&
		:=
		x_1 + \cdots + x_n
		\in \mathbb{C}
		\\
		\pi(x_1, \dots, x_n),
		&
		:=
		\left(
			\cdots,
				\prod_{j=1}^n x_i^{m_{ij}}
			,\cdots
		\right)
		\in B.
	\end{align*}
	We will refer to the relations (after identification) $Q_i = \prod_{j=1}^n x_i^{m_{ij}}$ as the \textit{Batyrev constraints}.
\end{definition}

\begin{definition}[Oscillatory integral]\label{def:gamma_cohomology_oscillatory_integral}
	Consider the Landau--Ginzburg potential associated to a toric manifold $X_{\mu, K}$.
	Fix $(Q_1, \dots, Q_r) \in B$. The formula 
\[
\omega_{\pi^{-1}(Q)}:=\frac{
d\log x_1\wedge \cdots\wedge d\log x_n }{
d\log Q_1\wedge\cdots \wedge d\log Q_r }
\] 
defines a holomorphic form on $\pi^{-1}(Q_1,\dots,Q_r)$. The
oscillatory integral $\mathcal{I}^{\textnormal{coh}}(z,Q)$ is the
function defined by 
	\[
		\mathcal{I}^{\textnormal{coh}}(z,Q)
		:=
		\int_{\Gamma }
		e^{-W_Q/z}
		\omega_{\pi^{-1}(Q)},
	\]
where $W_Q := W|_{\pi^{-1}(Q_1,\dots,Q_r)}$ and $\Gamma \subset \pi^{-1}(Q_1,\dots,Q_r)$ is a
semi-infinite cycle representing a homology class in 
\begin{align}
\label{Lef_thimbles}
\varprojlim_{l \in \mathbb{Z}}
H_{n-r}\left( \pi^{-1}(Q_1,\dots,Q_r),
\operatorname{Re}(W_Q/z)>l;\mathbb{Z} \right) .
\end{align}
\end{definition}
The triple $(Y,W,\omega_{\pi^{-1}(Q)})$ was proposed by Givental (see
\cite{Givental:toric_mirrors}) as a mirror model of the toric manifold
$X_{\mu,K}$. On the other hand, there is a quantum field theory model
known as the Landau--Ginzburg model, whose partition function is
closely related to Givental's oscillatory integral. We will refer to
the triple $(Y,W,\omega_{\pi^{-1}(Q)})$ as the {\em Givental's
  mirror} or the {\em Landau--Ginzburg model} of $X_{\mu,K}$.  
\begin{remark}\label{rmk:gamma_cohomology_real_cycle}
	We will be interested mostly in a specific integration cycle $\Gamma_\mathbb{R}$,
        which we will refer to as the \textit{real Lefschetz thimble}. 
	Taking $(Q_1, \dots, Q_r) \in (\mathbb{R}_{>0})^r$, the
        corresponding real Lefschetz thimble $\Gamma_\mathbb{R}$ is
        defined by  
	\[
		\Gamma_\mathbb{R}
		:=
		\{
			(x_1,\dots,x_n) \in \pi^{-1}(Q_1,\dots,Q_r)
			\, | \,
			\forall j, x_j \in \mathbb{R}_{> 0}
		\}.
	\]
Note that if $Q \in (\mathbb{R}_{>0})^r$ and $z>0$, then
$\Gamma_{\mathbb{R}}$ represents a homology class in
(\ref{Lef_thimbles}). Therefore, we can define the corresponding
oscillatory integral. We refer to Section 3.3.1 in
\cite{Iritani:gamma_structure}, for further details on the group
(\ref{Lef_thimbles}) of semi-infinite cycles.
\end{remark}

\subsubsection{Small $I$-function}

We introduce a second function, called Givental's small $I$-function, that is related by mirror symmetry to cohomological Gromov--Witten invariants.
While we immediately define it as a formal power series, this formula should be understood through fixed point localisation in cohomology (see \cite{Givental:equivariant_GW_invariants}) or through the theory of GKZ $\mathcal{D}$-modules (see Lemma 4.6 of \cite{Iritani:gamma_structure}).

\begin{definition}[Cohomological small $I$-function]\label{def:gamma_cohomology_I_function}
	Let $X_{\mu,K}$ be a symplectic toric manifold. Write
        $\alpha_1, \dots, \alpha_n \in \mathbb{R}^r$ for the image of the canonical basis of $\mathbb{Z}^n$ by the moment map $\mu$.
	Denote by $p_1, \dots, p_r \in H^2(X_{\mu,K};\mathbb{Z})$ the toric divisors of Proposition \ref{prop:gamma_cohomology_k_theory_of_toric_manifolds}.
	The \textit{cohomological small $I$-function} $I^{\textnormal{coh}}(z,Q)$ of the toric manifold $X_{\mu,K}$ is the cohomologically valued power series defined by
	\[
		I^{\textnormal{coh}}(z,Q)
		:=
		e^
		{
			-\sum_{i=1}^r p_i \log(Q_i)/z
		}
		\sum_{\substack{
		d \in H_2(X_{\mu,K};\mathbb{Z})	\\
		d \textnormal{ effective}
		}}
		Q^d
		\prod_{j=1}^n
		\frac{
			\prod_{r=-\infty}^0
			\alpha_j(p)-rz
		}{
			\prod_{r=-\infty}^{\alpha_j(d)}
			\alpha_j(p)-rz
		}
		\in
		H^*(X_{\mu,K};\mathbb{Q})
		\otimes
		\mathbb{C}[z^{\pm 1}][ \! [Q] \! ],
	\]
	where $\alpha_j(p) := \langle \alpha_j, p \rangle = m_{1j} p_1 + \cdots + m_{rj} p_r \in H^*(X_{\mu,K};\mathbb{Q})$ and $Q^d := Q_1^{d_1} \cdots Q_r^{d_r}$, $d_i := \int_d p_i$.
\end{definition}

\begin{proposition}[\cite{Iritani:gamma_structure}, Lemma 4.6]\label{prop:oscillatory_i_function_differential_equation}
	The oscillatory integral $\mathcal{I}^{\textnormal{coh}}$ of
        Definition \ref{def:gamma_cohomology_oscillatory_integral} and
        the $I$-function $I^{\textnormal{coh}}(z,Q)$ of Definition
        \ref{def:gamma_cohomology_I_function} satisfy the same system
        of differential equations 
	\[
		\Delta \mathcal{I}^{\textnormal{coh}}(z,Q) = \Delta I^{\textnormal{coh}}(z,Q) = 0,	
	\]
	where $\Delta$ is the differential operator associated to the system of $r$ differential equations
	\[
		\prod_{j : m_{ij} > 0} \prod_{r = 0}^{m_{ij} - 1}
		\Big(
			r
			- \alpha_j(Q\partial_Q)
		\Big) I(z,Q)
		=
		z^{-m_i}Q_i
		\prod_{j : m_{ij} < 0} \prod_{r = 0}^{- m_{ij} - 1}
		\Big(
			r
			- \alpha_j(Q\partial_Q)
		\Big) I(z,Q),
	\]
where 
$
\alpha_j(Q\partial_Q):= \sum_{a=1}^r m_{aj} Q_{a} \partial_{Q_a}
$ 
and $m_i=\sum_{j=1}^n m_{ij}$.
\end{proposition}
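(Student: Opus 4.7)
The plan is to establish $\Delta\mathcal{I}^{\textnormal{coh}}=0$ and $\Delta I^{\textnormal{coh}}=0$ by two independent arguments: the $I$-function case reduces to a direct power series manipulation, while the oscillatory integral case follows from integration by parts on the fibers of $\pi$ combined with the Batyrev monomial constraints.

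For the $I$-function, I would write $I^{\textnormal{coh}}(z,Q)=e^{-\sum_i p_i\log Q_i/z}\widetilde{I}(z,Q)$ with $\widetilde{I}(z,Q)=\sum_d Q^d C_d(z,p)$ and
\begin{equation*}
C_d(z,p)=\prod_{j=1}^n\frac{\prod_{r=-\infty}^{0}(\alpha_j(p)-rz)}{\prod_{r=-\infty}^{\alpha_j(d)}(\alpha_j(p)-rz)}.
\end{equation*}
A direct conjugation by the exponential prefactor identifies $\alpha_j(Q\partial_Q)$ acting on $I^{\textnormal{coh}}$ with $\alpha_j(Q\partial_Q)-\alpha_j(p)/z$ acting on $\widetilde{I}$; on the coefficient of $Q^d$ this is the scalar $\alpha_j(d)-\alpha_j(p)/z$. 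Matching coefficients of $Q^d$ on both sides of the GKZ relation then reduces the claim to the explicit identity
\begin{equation*}
\prod_{j:m_{ij}>0}\prod_{r=0}^{m_{ij}-1}\bigl(\alpha_j(p)-(\alpha_j(d)-r)z\bigr)C_d=\prod_{j:m_{ij}<0}\prod_{r=0}^{-m_{ij}-1}\bigl(\alpha_j(p)-(\alpha_j(d)-m_{ij}-r)z\bigr)C_{d-e_i},
\end{equation*}
which is verified by a short telescoping analysis of the ratio $C_{d-e_i}/C_d$ using $\alpha_j(d-e_i)=\alpha_j(d)-m_{ij}$.

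For the oscillatory integral, I would introduce the auxiliary family
\begin{equation*}
\mathcal{J}_\beta(z,Q):=\int_\Gamma x^\beta\,e^{-W_Q/z}\,\omega_{\pi^{-1}(Q)},\qquad \beta\in\mathbb{Z}_{\geq 0}^n,
\end{equation*}
so that $\mathcal{J}_0=\mathcal{I}^{\textnormal{coh}}$. The vector field $v_k:=x_k\partial_{x_k}$ on $Y$ acts on $Q_i=\prod_j x_j^{m_{ij}}$ as multiplication by $m_{ik}$, hence $\pi_\ast v_k=\sum_i m_{ik}Q_i\partial_{Q_i}=\alpha_k(Q\partial_Q)$. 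Since $d\log x_1\wedge\cdots\wedge d\log x_n$ is $v_k$-invariant and $d\log Q_1\wedge\cdots\wedge d\log Q_r$ is $Q_i\partial_{Q_i}$-invariant, the relative form $\omega_{\pi^{-1}(Q)}$ is killed by $\operatorname{Lie}_{v_k}$ modulo $\pi^\ast\Omega^\bullet_B$. Combining this with the projection formula for relative integration and Cartan's formula yields the key recurrence
\begin{equation*}
\mathcal{J}_{\beta+e_k}(z,Q)=z\bigl(\beta_k-\alpha_k(Q\partial_Q)\bigr)\mathcal{J}_\beta(z,Q).
\end{equation*}
Setting $(m_i^\pm)_j:=\max(\pm m_{ij},0)$, the Batyrev constraint $\prod_j x_j^{m_{ij}}=Q_i$ becomes $x^{m_i^+}=Q_i\,x^{m_i^-}$ pointwise on $\pi^{-1}(Q)$, so that $\mathcal{J}_{m_i^+}(z,Q)=Q_i\,\mathcal{J}_{m_i^-}(z,Q)$. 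Iterating the recurrence to express both sides as products of $(r-\alpha_j(Q\partial_Q))$-factors applied to $\mathcal{I}^{\textnormal{coh}}$ and canceling the net factor $z^{m_i}$ produces exactly the GKZ relation.

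The main technical point is the justification of the recurrence for $\mathcal{J}_\beta$. One must verify that $\operatorname{Lie}_{v_k}\omega_{\pi^{-1}(Q)}$ vanishes modulo $\pi^\ast\Omega^\bullet_B$ (which is immediate from the two invariance statements above) and, more delicately, that the boundary terms in the integration by parts vanish. This is most transparent along the real Lefschetz thimble $\Gamma_{\mathbb{R}}$ with $z>0$ and $Q\in(\mathbb{R}_{>0})^r$, where $W_Q$ is proper and bounded below on $\Gamma_{\mathbb{R}}$ and the factor $x^\beta e^{-W_Q/z}$ decays exponentially at infinity; the resulting identity propagates to general $(z,Q)$ by analytic continuation, and the corresponding formal statement for $I^{\textnormal{coh}}$ has already been established above.
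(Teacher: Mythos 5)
Your argument is correct. The paper itself gives no proof of this proposition---it simply cites Lemma 4.6 of Iritani's \cite{Iritani:gamma_structure}---so there is no ``paper approach'' to compare against directly. What you do for the oscillatory integral (pushing the torus vector field $v_k=x_k\partial_{x_k}$ through the fiber integral via $\pi_*v_k=\alpha_k(Q\partial_Q)$, using that $v_kW=x_k$ to get the recurrence $\mathcal{J}_{\beta+e_k}=z(\beta_k-\alpha_k(Q\partial_Q))\mathcal{J}_\beta$, and then invoking the Batyrev constraint to equate $\mathcal{J}_{m_i^+}=Q_i\mathcal{J}_{m_i^-}$) is precisely the continuous analogue of the paper's own proof of the $q$-theoretic statement, Proposition \ref{prop:qosc_mirrors_have_same_qde}, where $q^{\alpha_j(Q\partial_Q)}$ is realised as $q^{x_j\partial_{x_j}}$ under the Jackson integral and the identity $q^{x\partial_x}E_{q^{-1}}(x/(1-q))=(1-x)E_{q^{-1}}(x/(1-q))$ plays the role of your $v_kW=x_k$. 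Your conjugation-and-telescoping verification for the $I$-function and your remark that the decay of $e^{-W_Q/z}$ along $\Gamma_{\mathbb{R}}$ for $z>0$, $Q\in(\mathbb{R}_{>0})^r$ justifies the integration by parts are both standard and sound.
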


\subsubsection{Comparison theorem}

Since we have two solutions $\mathcal{I}^{\textnormal{coh}}$ and
$I^{\textnormal{coh}}$ of the same differential system, we would like
to be able to compare these two solutions. Following Iritani, let us
introduce a multiplicative characteristic class which plays an
important role in quantum cohomology. 

\begin{definition}[Cohomological Gamma class]\label{def:gamma_cohomological_gamma_class}
	Let $E \to X$ be a vector bundle, and denote by
        $\delta_1, \dots, \delta_n $ 
its Chern roots.
	The \textit{cohomological Gamma class} $\widehat{\Gamma}(E) \in H^*(X;\mathbb{Q})$ is defined by
	\[
		\widehat{\Gamma}(E)
		:=
		\prod_{j=1}^n \Gamma(1+\delta_j)
		\in H^*(X_{\mu,K};\mathbb{Q}),
	\]
	where $\Gamma(1+\delta_j)$ is defined by
        substituting $x=\delta_j$ in the Taylor series expansion of
        $\Gamma(1+x)$ at $x=0$.  
\end{definition}

\begin{theorem}[\cite{Iritani:gamma_structure}, Theorem 4.14, Equation (70)]\label{thm:gamma_cohomology_oscillatory_i_function}
	Let $\Gamma_\mathbb{R}$ be the real Lefchetz thimble of Remark \ref{rmk:gamma_cohomology_real_cycle}.
	Then, the oscillatory integral
        $\mathcal{I}^{\textnormal{coh}}$ and the $I$-function
        $I^{\textnormal{coh}}$ are related by the identity 
	\[
		\mathcal{I}^{\textnormal{coh}}(z,Q)
		=
		\int_{\left[X_{\mu,K}\right]}
\widehat{\Gamma}(X_{\mu,K})\, \cup \, 
z^\rho\, z^{\operatorname{deg}}\, 
		I^{\textnormal{coh}}(z,Q),
	\]
	where $\int_{\left[X_{\mu,K}\right]}$ denotes the intersection
        product by $[X_{\mu,K}] \in H_*(X_{\mu,K};\mathbb{C})$,
        $\widehat{\Gamma}(X_{\mu,K})$ is the Gamma class of the
        holomorphic tangent bundle $TX_{\mu,K}$, and $\rho$ is the
        operator of cup product multiplication by $c_1(TX_{\mu,k})$. 
\end{theorem}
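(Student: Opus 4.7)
The plan is to compute $\mathcal{I}^{\textnormal{coh}}(z,Q)$ as a Mellin--Barnes type iterated contour integral and match it with the right-hand side via the residue formula of Theorem \ref{thm:gamma_computation_of_intersection_products}. By Proposition \ref{prop:gamma_model_toric_picard_rank_two}, one may assume $X_{\mu,K}$ is presented in the standard Picard rank 2 form, so that the Batyrev constraints defining $\pi^{-1}(Q)$ read
\begin{align*}
Q_1 \prod_{i=1}^k x_{N+1+i}^{a_i} &= x_1 \cdots x_N, \\
Q_2 &= x_{N+1}\, x_{N+2}\cdots x_{N+k+1}.
\end{align*}
First I would solve these constraints for two of the $x_j$'s (say $x_N$ and $x_{N+k+1}$), use the remaining $n-2$ torus coordinates as free parameters on the fiber, and compute the restriction of $\omega_{\pi^{-1}(Q)}$ in these coordinates.

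Next, on the real Lefschetz thimble $\Gamma_\mathbb{R}$ I would apply the Mellin representation
\[
e^{-x/z} = \frac{1}{2\pi i}\int_{c-i\infty}^{c+i\infty} \Gamma(s)\,(x/z)^{-s}\,ds,\qquad c>0,\; x>0,
\]
to each of the $n$ factors in $e^{-W_Q/z} = \prod_{j=1}^n e^{-x_j/z}$. Exchanging the order of integration (justified by absolute convergence for $Q\in(\mathbb{R}_{>0})^2$ and $z>0$), the $n-2$ integrations over the free torus coordinates against the resulting monomials in $x_j$ produce delta-type relations that eliminate $n-2$ of the Mellin variables in favor of two, which I rename $p_1,p_2$. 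The outcome should be a two-dimensional Barnes integral
\[
\mathcal{I}^{\textnormal{coh}}(z,Q) = \frac{1}{(2\pi i)^2}\int_{C_1\times C_2} \prod_{j=1}^n \Gamma(\alpha_j(p))\; z^{\sum_j\alpha_j(p)}\; Q_1^{-p_1}Q_2^{-p_2}\,dp_1\,dp_2,
\]
where $\alpha_j(p) = m_{1j}p_1 + m_{2j}p_2$ and the contours $C_i$ are vertical lines lying to the right of all poles of the integrand.

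Finally, I would deform both contours to the left and collect residues at the poles of the $\Gamma$-factors, which for Picard rank 2 accumulate at the points $p = -d$ corresponding to effective curve classes $d\in\mathbb{Z}_{\geq 0}^2$. The Fano condition $N>a_1+\cdots+a_k$ guarantees, via Stirling's estimate on vertical strips, the decay needed to justify this deformation and to discard arcs at infinity. Using the functional equation in the form $\Gamma(\alpha_j(p)-k) = \Gamma(1+\alpha_j(p))\big/\prod_{r=-\infty}^{k}(\alpha_j(p)-rz)\cdot(\text{shift factors})$, I would split each Gamma factor into a ``finite'' part $\Gamma(1+\alpha_j(p))$, which combines to form $\widehat{\Gamma}(TX_{\mu,K})$ through the Chern roots presentation of Definition \ref{def:gamma_cohomological_gamma_class}, and a ``series'' part reproducing the ratio $\prod_{r=-\infty}^{0}(\alpha_j(p)-rz)\big/\prod_{r=-\infty}^{\alpha_j(d)}(\alpha_j(p)-rz)$ appearing in Definition \ref{def:gamma_cohomology_I_function}. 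The sum of residues thus identifies with the iterated residue $\operatorname{Res}_{p_2=0}\operatorname{Res}_{p_1=0}$ of $\widehat{\Gamma}(TX_{\mu,K}) \cup z^\rho z^{\operatorname{deg}}\, I^{\textnormal{coh}}(z,Q)$ divided by $p_1^N p_2 \prod_i (p_2 - a_i p_1)$, which by Theorem \ref{thm:gamma_computation_of_intersection_products} is precisely $\int_{[X_{\mu,K}]}$ of the required class. The main obstacle is the combinatorial identification of the Mellin--Barnes residue contributions with the explicit hypergeometric ratios defining $I^{\textnormal{coh}}$, together with tracking the contour deformations across the two dimensions so that the Jeffrey--Kirwan residue prescription of Theorem \ref{thm:gamma_computation_of_intersection_products} emerges naturally from the Fano decay estimates.
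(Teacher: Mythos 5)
Your proposal follows essentially the same route as the paper's own Appendix~\ref{sec:6_IT} proof (Lemmas~\ref{lemma:gamma_coh_mellin_transform_computation} and~\ref{lemma:gamma_coh_inverse_mellin_residue}): reduce $\mathcal{I}^{\textnormal{coh}}$ to a two-dimensional Barnes integral of the form $\int \Gamma(p_1)^N\prod_j\Gamma(p_2-a_jp_1)\,z^{\ldots}\,Q^{-p}\,dp_1dp_2$, shift contours using Stirling and the Fano condition $N>\sum a_j$, collect residues at $p\in\ZZ_{\leq 0}^2$, and recognize the iterated residue as the Jeffrey--Kirwan residue of Theorem~\ref{thm:gamma_computation_of_intersection_products}, with the $\Gamma$-factors splitting into the Gamma class $\widehat{\Gamma}(TX_{\mu,K})$ and the hypergeometric ratios of $I^{\textnormal{coh}}$. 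The one place where your plan departs from the paper, and where it has a genuine gap, is how you arrive at the Barnes integral. You insert a one-variable Mellin representation for each of the $n$ factors $e^{-x_j/z}$ and then claim that integrating the resulting monomials $\prod_j x_j^{-s_j}$ over the $n-2$ free torus coordinates on the fiber ``produces delta-type relations'' killing $n-2$ of the Mellin variables. But on vertical Mellin contours $s_j=c_j+i\omega_j$ with $c_j>0$, the integral $\int_0^\infty y^{\sigma+i\omega}\,dy/y$ is divergent whenever $\sigma\neq 0$; the delta only appears when the real part of the exponent vanishes, which is not automatic and would force linear constraints on the $c_j$. The paper avoids this entirely by first taking the Mellin transform in the Novikov variables $Q_1,Q_2$, applying Fubini to undo the fiber slicing (turning the double integral over $B_\mathbb{R}\times\Gamma_\mathbb{R}$ into a single integral over $(\mathbb{R}_{>0})^n$ that factors into $n$ genuine $\Gamma$-integrals), and only then invoking the inverse Mellin transform along a vertical line chosen in the interior of the dual cone $(\sigma^\vee)^\circ$; the latter is nonempty by compactness. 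Your heuristic can likely be repaired to coincide with this, but as written it is not justified. Two small further remarks: you keep $z$ through the whole computation, whereas the paper reduces to $z=1$ via the homogeneity symmetries \eqref{oi-sym}--\eqref{I-sym} and only afterwards restores $z$; either works. And the residue prescription $\operatorname{Res}_{p_2}\operatorname{Res}_{p_1}$ versus $\sum_j\operatorname{Res}_{p_1}\operatorname{Res}_{p_2=a_jp_1}$ depends on the order of contour deformation and corresponds to the choice of a regular vector in the K\"ahler cone; the paper notes this and it matches your phrase about ``tracking the contour deformations across the two dimensions,'' but you should make the choice explicit (integrate in $p_1$ first, with $0<\max_j(1,a_j)\varepsilon_1<\varepsilon_2<1$).
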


%
%
%
%

We refer to the paper of Iritani \cite{Iritani:gamma_structure} for the proof of this statement in general.

\subsection{$q$-oscillatory integral in quantum $K$-theory}\label{ssection:q_gamma_structure_in_qk}

The goal of this subsection is to introduce a $K$-theoretic analogue of Theorem \ref{thm:gamma_cohomology_oscillatory_i_function} comparing the oscillatory integral with the $I$-function in cohomology.
In quantum $K$-theory, we will consider Givental's permutation
equivariant $I$-function defined in Theorem p.8 of
\cite{Givental:perm_toric_q_hypergeometric}, and a $q$-analogue of the
oscillatory integral (see Definition \ref{def:qosc_q_oscillatory_integral_real}).

\subsubsection{$q$-oscillatory integral in quantum $K$-theory}

A $K$-theoretic analogue of the Landau--Ginzburg potential
defined in Definition \ref{def:gamma_cohomological_landau_ginzburg_model}
was proposed by Givental in \cite{Givental:perm_mirror} (see also \cite{Iritani:qdifference_toric}).

\begin{definition}[$K$-theoretic mirror family; \cite{Givental:perm_mirror}, Theorem 2]\label{def:gamma_k_theoretic_mirror_family}
	Let $X_{\mu,K}$ be a symplectic toric manifold and write
        $m_{ij} \in \mathbb{Z}$ for the entries of the matrix
        $\textnormal{Mat}(\mu)$ of the moment map $\mu$, and suppose
        that the length of $q \in \mathbb{C}^*$ is not 1. 
	The $K$-\textit{theoretic mirror family}
        associated to the toric manifold $X_{\mu,K}$ is the following
        family of holomorphic functions:
	\[
		\begin{tikzcd}
			Y := \left( \mathbb{C}^* \right)^n
				\arrow[r,"W_q"]
				\arrow[d,"\pi"]
			&
			\mathbb{C}
			\\
			B := \left( \mathbb{C}^* \right)^r
		\end{tikzcd}
	\]
	where we denote by $x_1, \dots, x_n$ the standard coordinates
        on $Y$, $Q_1, \dots, Q_r$ the standard coordinates on $B$, and $W_q$ and $\pi$ are defined by
	\begin{align*}
		W_q(x_1,\dots,x_n)
		&:=
		\sum_{j=1}^n \sum_{l > 0}
		\frac{{x_j}^l}{l(1-q^l)}
		\in \mathbb{C},
		\\
		\pi(x_1, \dots, x_n)
		&:=
		\left(
			\cdots,
				\prod_{j=1}^n x_i^{m_{ij}}
			,\cdots
		\right)
		\in B.
	\end{align*}
	We will refer to the relations (after identification in $Y$) $Q_i =
        \prod_{j=1}^n x_i^{m_{ij}}$ as the \textit{Batyrev
          constraints}. 
\end{definition}
There are two ways to define an oscillatory integral that solves the
system of $K$-theoretic quantum difference equations of $X_{\mu,K}$. One
of them, as proposed by Givental in \cite{Givental:perm_mirror}, is by
using a Riemann (or Lebesgue) integral. In fact, we made
an attempt to achieve our goals with such a definition, but we got into
a problem which is somewhat tricky to resolve.
The second way is
to use an appropriate multi-dimensional version 
of the Jackson integral, that is, to define a $q$-analogue of the
oscillatory integral for quantum $K$-theory. This is the approach
which we take in this paper. Such $q$-integrals
appeared first
in Section 5 of \cite{Iritani:qdifference_toric}
and later for Grassmannians in Section 8 of
\cite{Givental_Yan:QK_of_grassmanian_localization}.
The $q$-oscillatory integrals should resemble the formula

\[
	\left[
	\int_{\Gamma }
	\right]_q
	\exp \left(
		W_{|\pi^{-1}(Q)}(x_1,\dots,x_n)
	\right)
	\omega_{\pi^{-1}(Q),q},
\]
where the symbol
$
	\left[
	\int_{\Gamma}
	\right]_q
$
means we should consider a sum for which the inputs $x_j$ take values
in some lattice in the semi-infinite cycle $\Gamma\subset \pi^{-1}(Q)$
stable by multiplication by $q$. In this paper, we will focus only on
the q-oscillatory integral corresponding to the real cycle
$\Gamma_{\mathbb{R}}$ (see Remark
\ref{rmk:gamma_cohomology_real_cycle}). The case of an arbitrary
semi-infinite cycle, requires choosing a representative
in the homology class that admits an appropriate $q$-discrete
structure. Proving the existence of such a choice requires a
separate investigation, so we do not pursue it in this paper.

For the sake of simplicity, let us consider the case of Picard rank 2
symplectic toric manifolds. The case of an arbitrary Picard rank is
similar. To begin with, let us examine the analytic properties of the
function $W_q$.
	Assume that the length of $q \in \mathbb{C}^*$ is not 1.
	The power series (in the definition of $W_q$) $\sum_{l
          > 0} \frac{x^l_j}{l(1-q^l)}$, has finite convergence radius.
        More precisely, using the ratio test, we get that if $|q|<1$ (resp. $|q|>1$), then the series is
        convergent for $|x_j|<1$ (resp. $|x_j|<q^{-1}$). 
	The function defined by $f(x)=\exp \left( \sum_{l > 0} \frac{x^l}{l(1-q^l)} \right)$ has the following analytical continuations:
	\begin{equation}\label{eqn:gamma_oscillatory_integral_continuations}
		\exp \left( \sum_{l > 0} \frac{x^l}{l(1-q^l)} \right)
		=
		\left\{
			\begin{aligned}
			&
			\frac{1}{(x;q)_\infty} ,
			&&
			\textnormal{if } |q| < 1,
			\\
			&
			(q^{-1}x;q^{-1})_\infty ,
			&&
			\textnormal{if } |q| > 1,
			\end{aligned}
		\right. 
	\end{equation}
	where we denote by $(z;q)_\infty := \prod_{r \geq 0}(1 - q^r z)$ the $q$-Pochhammer symbol, defined for $|q| < 1$.
	To prove the first analytical continuation, we use the following Taylor series:
	\[
		\sum_{l > 0}
		\frac{x^l}{l(1-q^l)}
		=
		\sum_{l > 0} \sum_{k \geq 0}
		\frac{x^l}{l}q^{kl}
		=
		-\sum_{k \geq 0} \log(1-q^kx).
	\]
	To obtain the second one, we start from the same Taylor series and multiply both sides of the fraction by $q^{-l}$, and use the first analytical continuation.

Note that these two analytical continuations are closely related to
the $q$-exponential functions:
	\[
		e_q(x)
		:=
		\sum_{d=0}^\infty
		x^d
		\prod_{l=1}^d \frac{1-q}{1-q^l} = \frac{1}{((1-q) x;q)_\infty}
	\]
and 
\[
E_q(x):=\sum_{d=0}^\infty 
q^{d(d-1)/2} x^d
\prod_{l=1}^d \frac{1-q}{1-q^l} 
=((q-1) x;q)_\infty.
\]
Using the $q$-binomial theorem (see Equation (1.3.2) p.8 of
\cite{Gasper_Rahmann:q_hypergeometric_series}), one can show that if
$|q|<1$, then 
	\[
	  \frac{1}{(x;q)_\infty}
	  =
	  e_q\left(
	    \frac{x}{1-q}
	  \right)
	\]
and if $|q| > 1$, then 
	\begin{align*}
		\left(q^{-1} x ;q^{-1}\right)_\infty
		=E_{q^{-1}}\left(
	    \frac{x}{1-q}
	  \right). 
	\end{align*}
From now on we will assume that $|q|>1$.  Our
motivation for choosing $|q|>1$ comes from the formula for the
$K$-theoretic $I$-function of a Fano toric manifold, i.e., if $|q|>1$, then the $I$-function has an
infinite radius of convergence with respect to the Novikov
variables.  Furthermore, for simplicity, let us assume that $q$ is a
real number. 
\begin{definition}[Jackson integral; see e.g. Appendix A of \cite{Di_Vizio_Zhang:qsummation_and_confluence}]\label{def:qosc_jackson_integral}
	The Jackson integral is a $q$-analogue of the classical (Riemann) integral. We introduce the following formal definitions for a complex function:
	\begin{align*}
		\left[
			\int_0^\infty
		\right]_q
		f(x) d_qx
		&:=
		\sum_{d \in \mathbb{Z}}
		q^d
		f\left(q^d \right),
		\\
		\left[
			\int_0^{\infty/A}
		\right]_q
		f(x) d_qx
		&:=
		\sum_{d \in \mathbb{Z}}
		\frac{q^d}{A}
		f\left(
			\frac{q^d}{A}
		\right).
	\end{align*}
\end{definition}

Let us point out that sometimes the Jackson integral is defined to be
$(1-q)\left[\int_{0}^\infty\right]_q f(x) d_qx$  for $q<1$, so that in the limit
$q\to 1$ it coincides with the Riemann integral. For our purposes,
Definition \ref{def:qosc_jackson_integral} seems to be more convenient.

In order to write this $q$-oscillatory integral, we will fix
coordinates $x_1, \dots, x_{N+k+1}$ on the fibre $\pi^{-1}(Q_1,Q_2)$.
For fixed $Q_1, Q_2 \in \mathbb{C}^*$, the the fibre
$\pi^{-1}(Q_1,Q_2)$ is defined by the following equations: 
\begin{align*}
	Q_1
	&=
	x_1 \cdots x_N x_{N+1}^{-a_0} \cdots x_{N+k+1}^{-a_k}\ ,
	\\
	Q_2
	&=
	x_{N+1} \cdots x_{N+k+1}\ .
\end{align*}
We will consider an isomorphism $\varphi : \left( \mathbb{C}^*
\right)^{N+k-1} \simeq \pi^{-1}(Q)$ by assuming all coordinates
on the fibre except for $x_N$ and $x_{N+1}$ to be free, i.e., we define
\begin{equation}
\nonumber
	\begin{aligned}
	&\varphi(x_1, \dots, x_{N-1}, x_{N+2}, \dots x_{N+k+1})
	\\
	&:=
	\left(
		x_1, \dots, x_{N-1},
		Q_1 x_1^{-1} \cdots x_{N-1}^{-1} x_{N+2}^{a_1} \cdots x_{N+k+1}^{a_k},
		Q_2 x_{N+2}^{-1} \cdots x_{N+k+1}^{-1},
		x_{N+2}, \dots, x_{N+k+1}
	\right)
	\in \pi^{-1}(Q).
	\end{aligned}
\end{equation}
The isomorphism $\varphi$ gives a parametrization of the real cycle
$\Gamma_{\mathbb{R}}$, that is, we have an isomorphism
$\left(\mathbb{R}_{>0}\right)^{N+k-1}\cong \Gamma_{\mathbb{R}}$. Moreover, the
holomorphic volume form takes the form 
\[
\omega_{\pi^{-1}(Q)} = 
\frac{dx_1}{x_1}\wedge\cdots\wedge 
\frac{dx_{N-1}}{x_{N-1}}\wedge
\frac{dx_{N+2}}{x_{N+2}}\wedge\cdots\wedge
\frac{dx_{N+1+k}}{x_{N+1+k}}.
\]
Using the parametrization $\varphi$, it is natural to define the $q$-oscillatory
integral along the real Lefschetz thimble $\Gamma_{\mathbb{R}}$ by
using the multivariable version of Definition \ref{def:qosc_jackson_integral}.
\begin{definition}[$q$-oscillatory integral]
\label{def:qosc_q_oscillatory_integral_real} 
Suppose that $Q_1,Q_2\in q^\ZZ:=\{q^n\ |\ n\in \mathbb{Z} \}$. The $q$-oscillatory integral along
the real Lefchetz thimble is the function defined by the following 
Jackson integral along the coordinates 
$x_1, \dots, x_{N-1}, x_{N+2}, \dots, x_{N+k+1}$:
\begin{align*}
\mathcal{I}^{K-\textnormal{th}}(q,Q_1,Q_2)
		&:=
		\left[
		\int_{\Gamma_\mathbb{R} }
		\right]_q
\prod_{j=1}^n \left. E_{q^{-1}}\left( \frac{x_j}{1-q}\right)\right|_{\pi^{-1}(Q)}
		\omega_{\pi^{-1}(Q),q}
		\\
		&:=
		\sum_{
			\substack{d_1, \dots, d_{N-1} \in \mathbb{Z} \\ d_{N+2}, \dots, d_{N+k+1} \in \mathbb{Z}}
		}
		\prod_{j=1}^{n} E_{q^{-1}}\left( \frac{x_j(\underline{d},Q_1,Q_2)}{1-q}\right),
\end{align*}
where
\begin{align*}
		&\textnormal{for } j \neq N,N+1,
		\quad
		x_j(\underline{d},Q_1,Q_2)
		:=
		q^{d_j}\ ,
		\\
		&x_N(\underline{d},Q_1,Q_2)
		:=
		Q_1 q^{
			-\sum_{j'=1}^{N-1} d_{j'} + \sum_{j'=1}^k a_{j'} d_{N+1+j'}
		}\ ,
		\\
		&x_{N+1}(\underline{d},Q_1,Q_2)
		:=
		Q_2 q^{
			-\sum_{j'=1}^{k} d_{N+1+j'}
		}\ 
\end{align*}
and 
\[
\omega_{\pi^{-1}(Q),q} = 
\frac{d_qx_1}{x_1}\wedge\cdots\wedge 
\frac{d_qx_{N-1}}{x_{N-1}}\wedge
\frac{d_qx_{N+1}}{x_{N+1}}\wedge\cdots\wedge
\frac{d_qx_{N+1+k}}{x_{N+1+k}}.
\]
\end{definition}

\subsubsection{Non-discrete Novikov variables}
Definition \ref{def:qosc_q_oscillatory_integral_real} does not work
for $Q_1,Q_2\notin q^\ZZ$, because the corresponding Jackson integral
is divergent. In this section we would like to outline a construction
which should allow us to define the $q$-oscillatory integral for
arbitrary values of $Q_1$ and $Q_2$. Our definition relies on a
conjecture about the regularity of a certain system of $q$-difference
equations. For toric manifolds of Picard rank $1$, the regularity
is known, but for Picard rank $>1$ our conjecture seems to be a
separate project. The results of this subsection would not be used in
what follows. The reader not interested in our speculations could
skip it. 

We follow the construction of De Sole and Kac from
\cite{DeSole_Kac:integral_representation_of_qgamma}. Let us recall the
{\em Jacobi theta function} 
\ben
\theta(x):=\theta_{q^{-1}}(x)=\sum_{n\in \ZZ} q^{-n(n-1)/2} x^n.
\een
It is a holomorphic function in $x\in \CC^*$ with essential
singularities at $x=0$ and $\infty$. Moreover, we have the so-called
Jacobi triple product identity
\ben
\theta(x)= 
\prod_{j=0}^\infty (1-q^{-j-1}) \, 
\prod_{j=0}^\infty (1+q^{-j} x)\,
\prod_{j=0}^\infty (1+q^{-j-1} x^{-1}),
\een
which shows that $\theta$ has simple zeroes at the points $x=-q^m$
($m\in \ZZ$). Finally, let us recall also the following property: for all $m \in \mathbb{Z}$, we have
\beq\label{theta-inv}
\theta(q^m x) = q^{m(m+1)/2} x^m \, \theta(x),
\eeq
which follows from the definition of $\theta(x)$. 
Now, suppose
that $A>0$ is a real number. We introduce the function 
\beq\label{q-constant:k}
k(A,t):=\frac{\theta(A^{-1})}{\theta(q^{-t} A^{-1})}.
\eeq
Using \eqref{theta-inv}, we get $k(A,m) = A^{-m} q^{-m(m-1)/2}$ for
all $m\in \ZZ$. We will make use of
the infinite-order difference operator $k(A,y\partial_y)$ acting on
the space of formal power series $\CC[\![y]\!]$ via 
\ben
k(A,y\partial_y) 
\sum_{m=0}^\infty c_m y^m = 
\sum_{m=0}^\infty c_m k(A,m) y^m = 
\sum_{m=0}^\infty c_m q^{-m(m-1)/2} (y A^{-1})^m. 
\een
Using the ratio test, we get that if the series
$\sum_{m=0}^\infty c_m y^m$ has a non-zero radius of convergence, then
the operator $k(A,y\partial_y)$ will produce an entire
function. Finally, comparing the Taylor series expansions of the two
exponential functions $e_{q^{-1}}(x)$ and $E_{q^{-1}}(x)$ at $x=0$, we
get 
\ben
k(A,y\partial_y) \, e_{q^{-1}}\left( \frac{y A}{1-q}\right) = 
E_{q^{-1}}\left( \frac{y }{1-q}\right).
\een
Motivated by the above formula, we would like to modify the definition
of the $q$-oscillatory integral as follows. Suppose that
$A_1,\dots,A_n$ and $Q_1, Q_2$ are positive real numbers. Put  
\ben
k(A,Q\partial_Q):= \prod_{j=1}^n 
k(A_j,\alpha_j(Q\partial_Q)),\quad 
\alpha_j(Q\partial_Q):=m_{1j} Q_1\partial_{Q_1} +m_{2j} Q_2\partial_{Q_2},
\een
where $A=(A_1,\dots,A_n)$. We would like to define
\begin{align}
\label{qoi:non-discr}
\mathcal{I}^{K-\textnormal{th}}(q,Q_1,Q_2)&:=
k(A,Q\partial_Q)\  \left[
\int_{\Gamma_\mathbb{R} }
\right]_q
e_{q^{-1}}\left( \frac{x_1 A_1}{1-q} \right) \cdots\, 
e_{q^{-1}}\left( \frac{x_n A_n}{1-q} \right) 
\omega_{\pi^{-1}(Q),q}\\
\nonumber
&:= k(A,Q\partial_Q)\ 
\sum_{
\substack{
d_1, \dots, d_{N-1} \in \mathbb{Z} \\ 
d_{N+2}, \dots, d_{N+k+1} \in \mathbb{Z} } }
\prod_{j=1}^{N+k+1}
e_{q^{-1}}\left(
\frac{A_j x_j(\underline{d},Q_1,Q_2)}{1-q}
		\right),
\end{align}
where the action of the operator $k(A,Q\partial_Q)$ on the oscillatory
integral will be defined next. Let us first focus on the analytic
properties of the oscillatory integral
\begin{align}
\label{qoi:eq}
\widetilde{\mathcal{I}}^{K-\textnormal{th}}(q,Q_1,Q_2):=
\left[
\int_{\Gamma_\mathbb{R} }
\right]_q
e_{q^{-1}}\left( \frac{x_1 A_1}{1-q} \right) \cdots\, 
e_{q^{-1}}\left( \frac{x_n A_n}{1-q} \right) 
\omega_{\pi^{-1}(Q),q}.
\end{align}
Using the ratio test, it is straightforward to check that the Jackson
integral is convergent, that is,
$\widetilde{\mathcal{I}}^{K-\textnormal{th}}(q,Q_1,Q_2)$ is analytic
for $|q|>1$ and for all $Q_1,Q_2\in \CC$, such that,
$\operatorname{Re}(Q_1)>0$, 
$\operatorname{Re}(Q_2)>0$. On the other hand, in order to define the
action of the operator $k(A,Q\partial_Q)$ we need to expand
$\widetilde{\mathcal{I}}^{K-\textnormal{th}}(q,Q_1,Q_2)$ in a
neighborhood of $(Q_1,Q_2)=0$. This is exactly the place where we need to
make a conjecture about the structure of such an expansion. 
\begin{proposition}\label{prop:q-gkz}
The oscillatory integral
$\widetilde{\mathcal{I}}^{K-\textnormal{th}}(q,Q_1,Q_2)$ is a solution
to the following system of $q$-difference equations
\begin{equation}\label{eqn:gamma_system_modified_oscillatory_integral}
\left[
\prod_{j : m_{ij} > 0} \prod_{r=0}^{m_{ij}-1}
\left( q^{ -\alpha_j(Q\partial_Q)+r} -1 \right)-
a_i\,q^{-m_i}\, Q_i\, 
\prod_{j : m_{ij} < 0} \prod_{r=0}^{-m_{ij}-1}
\left(q^{-\alpha_j(Q\partial_Q)+r}-1 \right)
\right]
f_q(Q)=0,
\end{equation}
where $\alpha_j(Q\partial_Q)=m_{1j}
Q_1\partial_{Q_1}+m_{2j}Q_2\partial_{Q_2}$, 
$a_i:=\prod_{j=1}^n A_j^{m_{ij}}$, 
and $m_i=\sum_{j=1}^n m_{ij}$.
\end{proposition}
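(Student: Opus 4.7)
The plan is to establish the required $q$-difference relation pointwise on the total space $(\mathbb{C}^{\ast})^n$ for the integrand $\Phi(\underline{x}):=\prod_{j=1}^n F_j(x_j)$ with $F_j(x):=e_{q^{-1}}(A_j x/(1-q))$, and then to push it forward to the base through the Jackson integral. The starting observation is the functional equation for a single factor, which follows directly from the product expansion $F_j(x)=\prod_{r\ge 0}(1+q^{-r-1}A_j x)^{-1}$ valid for $|q|>1$:
\[
F_j(q^{-1}x)=(1+q^{-1}A_j x)F_j(x),\qquad\text{i.e.}\qquad (q^{-\theta_{x_j}}-1)F_j=q^{-1}A_j x_j F_j,
\]
where $\theta_{x_j}:=x_j\partial_{x_j}$. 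A short induction on $m\ge 1$, based on the commutation $q^{-\theta_{x_j}}(x_j^m F_j)=q^{-m}x_j^m(1+q^{-1}A_j x_j)F_j$, then yields the key ``box'' identity
\[
\prod_{r=0}^{m-1}(q^{-\theta_{x_j}+r}-1)F_j(x_j)=q^{-m}A_j^m\, x_j^m\, F_j(x_j).
\]

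Since shift operators for distinct indices $j$ commute and $\Phi$ is a product, applying the box identity to each factor separately gives, for every $i$,
\[
\prod_{j:\,m_{ij}>0}\prod_{r=0}^{m_{ij}-1}(q^{-\theta_{x_j}+r}-1)\,\Phi=q^{-m_{i+}}\Big(\prod_{j:\,m_{ij}>0}A_j^{m_{ij}}x_j^{m_{ij}}\Big)\Phi,
\]
where $m_{i+}:=\sum_{j:\,m_{ij}>0}m_{ij}$, together with the analogous formula for $m_{ij}<0$ with $q^{m_{i-}}\prod_{j:\,m_{ij}<0}A_j^{-m_{ij}}x_j^{-m_{ij}}$ and $m_{i-}:=\sum_{j:\,m_{ij}<0}m_{ij}$. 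Combining these via the Batyrev constraint $\prod_{j:\,m_{ij}>0}x_j^{m_{ij}}=Q_i\prod_{j:\,m_{ij}<0}x_j^{-m_{ij}}$, and using $a_i=\prod_j A_j^{m_{ij}}$ together with $m_i=m_{i+}+m_{i-}$, the two expressions assemble into the pointwise identity on $(\mathbb{C}^{\ast})^n$:
\[
\Big[\prod_{j:\,m_{ij}>0}\prod_{r=0}^{m_{ij}-1}(q^{-\theta_{x_j}+r}-1)-a_i q^{-m_i}Q_i\prod_{j:\,m_{ij}<0}\prod_{r=0}^{-m_{ij}-1}(q^{-\theta_{x_j}+r}-1)\Big]\Phi=0.
\]

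It remains to transport this identity through $\pi_{\ast}$ to $\widetilde{\mathcal{I}}^{K-\textnormal{th}}$, which amounts to verifying that $\pi_{\ast}$ intertwines $q^{-\theta_{x_j}}$ on the total space with $q^{-\alpha_j(Q\partial_Q)}$ on the base for every $j$. For $j\in\{N,N+1\}$ this is automatic because $x_j$ is proportional to $Q_1$ or $Q_2$. For the remaining indices, the explicit parametrisation of Definition \ref{def:qosc_q_oscillatory_integral_real} yields the lattice identification $q^{-e_j}\underline{x}(\underline{d},Q)=\underline{x}(\underline{d}-e_j,q^{-m_{\cdot j}}Q)$, so the desired intertwining follows by a shift of the summation index in the multivariable Jackson sum. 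For $|q|>1$, each $F_j(q^d)$ decays like $\exp(-c\,d^2)$ as $d\to+\infty$, and the Batyrev constraints force super-exponential decay in at least one factor of $\Phi$ along every direction of the lattice $\mathbb{Z}^{n-r}$, so the sum converges absolutely and the re-indexing is legitimate. Applying $\pi_{\ast}$ to the pointwise identity, and using that $\pi_{\ast}$ commutes with multiplication by the base function $Q_i$, then produces exactly \eqref{eqn:gamma_system_modified_oscillatory_integral} applied to $\widetilde{\mathcal{I}}^{K-\textnormal{th}}$.

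The expected main obstacle is the verification of absolute convergence and the legitimacy of the re-indexing in the multivariable Jackson sum: one must carefully analyse the asymptotic behaviour of $\Phi$ along each coordinate direction of the lattice and ensure uniform control on the relevant domain of $(Q_1,Q_2)$. All remaining steps are essentially formal consequences of the functional equation for $e_{q^{-1}}$ and the Batyrev relations defining the fibres of $\pi$.
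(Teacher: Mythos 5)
Your proof is correct and follows essentially the same route the paper intends: the paper omits the proof of Proposition~\ref{prop:q-gkz}, saying it is ``similar to the proof of Proposition~\ref{prop:qosc_mirrors_have_same_qde},'' and your argument is precisely that adaptation — the $q$-shift functional equation $(q^{-\theta_{x_j}}-1)F_j=q^{-1}A_jx_jF_j$, its iteration to the box identity, the Batyrev constraint to combine the factors, and the push-forward through the Jackson sum via the intertwining of $q^{-\theta_{x_j}}$ with $q^{-\alpha_j(Q\partial_Q)}$. The only point where you expand on the paper is the re-indexing/convergence of the multivariable Jackson sum, which the paper dismisses with ``straightforward via the ratio test,'' and your heuristic there is consistent with that.
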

The proof of the above proposition is similar to the proof of
Proposition \ref{prop:qosc_mirrors_have_same_qde} below, so we omit
it. To simplify the notation in our discussion let us explain how to
define the action of $k(A,Q\partial_Q)$ on \eqref{qoi:eq} when all
constants $A_i=1$. We would like to conjecture that the above system of
$q$-difference equations \eqref{eqn:gamma_system_modified_oscillatory_integral} has a basis of solutions of the following
form
\beq\label{f-reg}
f(Q) =\sum_{a,b=0}^{L}  f_{a,b}(Q_1,Q_2) \ell(Q_1)^a \ell(Q_2)^b,
\eeq
where $L\in \ZZ_{\geq 0}$ and the coefficients $f_{a,b}(Q_1,Q_2)$ are
analytic functions at $(Q_1,Q_2)=0$, where 
\begin{align}
\label{q-log}
\ell(x) = \frac{1}{2}-x\partial_x \log \theta(x) = \frac{1}{2} - \frac{x\theta'(x)}{\theta(x)}
\end{align}
is the so-called $q$-logarithm. Here the constant
$\tfrac{1}{2}=\tfrac{\theta'(1)}{\theta(1)}$ is chosen so that
$\ell(1)=0$.
Let us mention some evidence for this conjecture:
in the case of a toric
manifold of Picard rank 1, the system of $q$-difference equations in
Proposition \ref{prop:q-gkz} has a non resonant regular singularity at $Q=0$ and
the existence of a basis of solutions of the form \eqref{f-reg} is
known (see \cite{Hardouin_Sauloy_Singer:q_difference_equation_book}, Theorem 3.1.7 p. 127).
Furthermore, in the Picard rank 2 case, we are able to prove the conjecture in the following cases:

\begin{proposition}
	We assume that the manifold $X_{\mu,K}$ satisfies the three following conditions:
	that all coefficients $m_{ij}$ of the moment map are odd;
	that the numbers $m_i:= \sum_j m_{ij}$ are all equal to some positive integer $M$ (i.e. $c_1(TX_{\mu,K}) = Mp_1 + Mp_2$);
	and that the coefficients $A_i$ are chosen so that the numbers $a_i:=\prod_{j=1}^n A_j^{m_{ij}}$ are all equal to some number $a$.
	Then, the $q$-difference system \eqref{eqn:gamma_system_modified_oscillatory_integral} has a basis of solutions of the form \eqref{f-reg}.
\end{proposition}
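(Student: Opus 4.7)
The plan is to invoke a $q$-analogue of the classical Frobenius method at the singular point $(Q_1,Q_2) = 0$ of the system \eqref{eqn:gamma_system_modified_oscillatory_integral}, exploiting the symmetry $m_1=m_2=M$, $a_1=a_2=a$ to reduce the analysis to the one-variable case established in \cite{Hardouin_Sauloy_Singer:q_difference_equation_book}, Theorem~3.1.7.

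I would begin with the indicial analysis. Writing the two operators in the form $P_i\bigl(q^{-Q_1\partial_{Q_1}},q^{-Q_2\partial_{Q_2}}\bigr) - a_i q^{-M} Q_i\, R_i\bigl(q^{-Q_1\partial_{Q_1}},q^{-Q_2\partial_{Q_2}}\bigr)$, the leading terms as $Q \to 0$ are $P_1$ and $P_2$; the joint vanishing locus of $P_1,P_2$ on the character torus determines the indicial exponents $(\lambda_1,\lambda_2)$. Under the hypothesis that all $m_{ij}$ are odd, I would verify that this locus consists of finitely many points corresponding to integer exponents $\lambda_1,\lambda_2 \in \mathbb{Z}$ modulo the $q$-spiral, with bounded multiplicity. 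The oddness hypothesis enters here: it forces the underlying system of linear Diophantine equations for $(\lambda_1,\lambda_2)$ coming from the two equations of the system to have only finitely many simultaneous solutions modulo $(2\pi\mathbf{i}/\log q)\mathbb{Z}^2$.

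Next, for each indicial exponent I would carry out a Frobenius-type recursion. Since the exponents lie in $\mathbb{Z}^2$, the leading factor $Q_1^{\lambda_1}Q_2^{\lambda_2}$ is absorbed into the analytic part $f_{0,0}(Q_1,Q_2)$. When an exponent has multiplicity $\mu > 1$, the Frobenius differentiation trick yields additional linearly independent solutions involving a polynomial in the $q$-logarithms $\ell(Q_1),\ell(Q_2)$ of total degree up to $\mu - 1$; using the shift property $\ell(qQ)=\ell(Q) - 1$ from \eqref{q-log} (which one checks directly from the quasi-periodicity $\theta(qx)=qx\,\theta(x)$), each such expression is annihilated by the system modulo terms of strictly lower $\ell$-degree, which are absorbed into the next step of the recursion. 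The equalities $m_1=m_2$ and $a_1=a_2$ ensure that the two recursions coming from the two equations of the system are compatible, so the formal solutions exist. Convergence of the coefficients $f_{a,b}$ follows by a majorant argument parallel to the one-variable proof in \cite{Hardouin_Sauloy_Singer:q_difference_equation_book}, since the recursion denominators $q^{r-\alpha_j(\lambda+\kappa)}-1$ are uniformly bounded away from zero outside a finite set of multi-indices $\kappa \in \mathbb{Z}_{\geq 0}^2$.

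The main obstacle, typical of multi-variable Frobenius arguments, will be verifying the compatibility of the two Frobenius recursions at each step. I expect the precise combinatorial content of the hypotheses $m_1=m_2$, $a_1=a_2$, combined with the parity condition, to be exactly what ensures that the two GKZ-type recursions commute along every path in the two-dimensional lattice of Frobenius coefficients; without these symmetries one would generically encounter a cohomological obstruction to gluing the one-variable Frobenius constructions into a genuine two-variable solution. A secondary technical point is to bound the maximum multiplicity of an indicial exponent explicitly in terms of the toric data of $X_{\mu,K}$, so as to extract an effective value of $L$ depending only on the moment map $\mu$.
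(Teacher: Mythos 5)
Your proposal takes a genuinely different route from the paper, and unfortunately it does not close. The paper's actual argument is far shorter and more rigid: observe that under the hypotheses $m_1=m_2=M$ and $a_1=a_2=a$, the system \eqref{eqn:gamma_system_modified_oscillatory_integral} is carried \emph{exactly} onto the $q$-difference system for Givental's small $K$-theoretic $I$-function (Proposition \ref{prop:qosc_mirrors_have_same_qde}) by the elementary reparametrizations $q \mapsto q^{-1}$ and $Q_i \mapsto a\,q^{-M} Q_i$. Thus $I_{X_{\mu,K}}^{K-\textnormal{th}}(q^{-1}, a\,q^{-M}Q)$ is a $K(X)$-valued solution; expanding it over the basis $\{(1-P_1^{-1})^\alpha(1-P_2^{-1})^\beta\}$ of $K^0(X_{\mu,K})$ immediately yields scalar solutions of the form \eqref{f-reg} (with $f_{u,v}$ built from the Givental hypergeometric coefficients and $P^{-\ell(Q)}$ supplying the polynomial in $\ell(Q_1),\ell(Q_2)$ of bounded degree), after noting that $\ell(a\,q^{-M}Q)$ differs from $\ell(Q)$ by a $q$-constant. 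The whole content of the proposition is that a uniform rescaling exists; that is precisely what $m_1=m_2$ and $a_1=a_2$ buy. You are proposing instead to build such solutions from scratch by a two-variable $q$-Frobenius method, which is a much longer road even if it were to succeed.

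There are two concrete gaps. First, the purported roles you assign to the hypotheses do not match what they actually do. You say that oddness of the $m_{ij}$ forces the indicial locus to be finite modulo the $q$-spiral; but for a regular-singular GKZ-type $q$-difference system the indicial locus is the (generically finite) common zero locus of the two leading characteristic polynomials, and finiteness has nothing to do with parity. Likewise, you claim $m_1=m_2$ and $a_1=a_2$ are what make the two Frobenius recursions ``commute along every path''; but GKZ-type systems for compact toric manifolds are holonomic and compatible regardless of whether the $m_i$ or $a_i$ are equal, since the operators arise from a torus action on the mirror and commute pairwise. If the equalities were actually needed for recursion compatibility, the original (unrescaled) GKZ system in Proposition \ref{prop:q-gkz} would fail to be consistent, which it is not. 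The real role of $m_1=m_2, a_1=a_2$ is the one described above: to collapse the constants $a_i q^{-m_i}$ to a single value so that the mirror system becomes a reparametrization of the $I$-function system.

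Second, the proposal as written is an outline of a strategy, not a proof: the existence of a multivariable $q$-Frobenius theory that produces $q$-log-polynomial solutions with analytic coefficients at a corner singularity, and the majorant convergence estimate, are exactly the hard parts, and you explicitly defer them (``I expect\ldots'', ``The main obstacle\ldots will be verifying the compatibility''). In particular, no argument is given that the indicial multiplicity $L$ is finite, that the degree in $\ell$ stabilizes, or that one obtains a full basis rather than merely some solutions. The paper sidesteps all of this by producing $\dim K^0(X_{\mu,K})$ explicit solutions. If you want to salvage the Frobenius approach, you would at minimum need to cite or establish a genuine two-variable analogue of \cite{Hardouin_Sauloy_Singer:q_difference_equation_book} Theorem~3.1.7, which is not currently available in the literature cited in this paper.
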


\begin{proof}
	Under these assumptions, the $q$-difference system we have to solve given by
	\[
		\left[
		\prod_{j : m_{ij} > 0} \prod_{r=0}^{m_{ij}-1}
		\left( 1- q^{ -\alpha_j(Q\partial_Q)+r} \right)-
		a\,q^{-M}\, Q_i\, 
		\prod_{j : m_{ij} < 0} \prod_{r=0}^{-m_{ij}-1}
		\left( 1- q^{-\alpha_j(Q\partial_Q)+r} \right)
		\right]
		f_q(Q)=0,
	\]
	Using Theorem p.8 of \cite{Givental:perm_toric_q_hypergeometric}, one obtains a $K$-theoretically-valued solution given by
	\[
		I_{X_{\mu,K}}^{K-\textnormal{th}} \left( q^{-1}, a \, q^{-M} \, Q \right),
	\]
	where $I_{X_{\mu,K}}^{K-\textnormal{th}}$ the small $K$-theoretic $I$-function
	\[
		I_{X_{\mu,K}}^{K-\textnormal{th}}(q,Q)
		=
		P^{-\ell(Q)}
		\sum_{d=(d_1,d_2)}
		Q^d
		\prod_{j=1}^n
		\frac{
			\prod_{r=-\infty}^0
			\left(
				1 - U_j(P)q^r
			\right)
		}
		{
			\prod_{r=-\infty}^{\alpha_j(d)}
			\left(
				1 - U_j(P)q^r
			\right)
		}
		\in
		K^0\left( X_{\mu,K} \right)(q)[\![Q]\!],
	\]
	where $\ell(Q)$ is the $q$-logarithm \eqref{q-log},
    $P^{-\ell(Q)} := \prod_{i=1}^2 P_i^{-\ell(Q_i)} \in 
    K^0\left( X_{\mu,K} \right)$, and $P_i^{-\ell(Q_i)}$ should be 
    understood as the expansion of the binomial
    $\left(1-(1-P_i^{-1}) \right)^{\ell(Q_i)}$.
    The function $\ell(a \, q^{-M} \, Q)$ is another $q$-logarithm, therefore there exists a $q$-constant function $f$ such that $\ell(a \, q^{-M} \, Q) = f(q) \ell(Q)$.
    Let us consider the decomposition of the solution $I_X\left( q^{-1}, a \, q^{-M} \, Q \right)$ in the basis of $K^0\left( X_{\mu,K} \right)$.
    In front of a vector of the form $(1-P_1^{-1})^\alpha (1-P_2^{-1})^\beta$, we will find a function of the form (up to a $q$-constant)
    \[
    	\sum_{u=0}^\alpha \sum_{v=0}^\beta
    	f_{u,v}(Q_1,Q_2) \ell(Q_1)^u \ell(Q_2)^v,
    \]
    with $f_{u,v}$ analytic at $(Q_1,Q_2)=0$.
\end{proof}

Our claim is that for any $B_j\in \mathbb{R}_{>0}$ ($1\leq j\leq n$), there
is a natural way to define the action $k(B_j,D_j) f(Q)$, where $D_j=m_{1j}
Q_1\partial_{Q_1}+m_{2j}Q_2\partial_{Q_2}$ and $f$ has the form
\eqref{f-reg}. Indeed, note that the difference operator $q^{-D_j} B_j^{-1}$
commutes with the differential operators 
$\ell(Q_i)-m_{ij} B_j\partial_{B_j}$ ($i=1,2$). Indeed, we have 
\ben
q^{-D_j} B_j^{-1} \, \left(\ell(Q_i)-m_{ij} B_j\partial_{B_j}\right)=
\ell(q^{-m_{ij}} Q_i) q^{-D_j} B_j^{-1} - m_{ij}(B_j\partial_{B_j} +1)
q^{-D_j}B_j^{-1}. 
\een 
Using that $\ell(q^{-m} x)=\ell(x)+m$ for all $m\in
\ZZ$, we get that the above expression coincides with 
$\left(\ell(Q_i)-m_{ij} B_j\partial_{B_j}\right)\, q^{-D_j} B_j^{-1} .$ 
Let us write $f$ in the form 
\ben
f(Q)=\sum_{a,b} f_{a,b}(Q_1,Q_2) \, 
\left( \ell(Q_1)-m_{1j} B_j\partial_{B_j}\right)^ a
\left( \ell(Q_2)-m_{2j} B_j\partial_{B_j}\right)^ b\, \cdot \, 1,
\een
that is, a differential operator in $B_j$ acting on 1. Since 
$k(B_j,D_j)=\theta(B_j^{-1})\, \theta(q^{-D_j}B_j^{-1})^{-1}$, using the
commutativity of $q^{-D_j} B_j^{-1}$ and $\ell(Q_i)-m_{ij} B_j$, we get
\ben
k(B_j,D_j) f(Q)=\sum_{a,b} 
\theta(B_j^{-1})
\left( \ell(Q_1)-m_{1j} B_j\partial_{B_j}\right)^ a
\left( \ell(Q_2)-m_{2j} B_j\partial_{B_j}\right)^ b\, \cdot \, 
\theta(q^{-D_j} B_j^{-1})^{-1} f_{a,b}(Q_1,Q_2).
\een 
On the other hand,
\ben
\theta(q^{-D_j} B_j^{-1}) ^{-1} Q_1^{d_1} Q_2^{d_2} = 
\theta(q^{-\alpha_j(d)} B_j^{-1})^{-1} Q_1^{d_1} Q_2^{d_2} = 
\theta(B_j^{-1})^{-1}\, 
q^{-\alpha_j(d) (\alpha_j(d)+1)/2} \, 
B_j^{-\alpha_j(d)} Q_1^{d_1} Q_2^{d_2} ,
\een
where $\alpha_j(d):=d_1m_{1j}+d_2 m_{2j}$.
Finally,
\ben
\theta(B_j^{-1}) \, B_j\partial_{B_j} \, \theta(B_j^{-1})^{-1} = B_j\partial_{B_j} -\ell(B_j^{-1})+\frac{1}{2}.
\een
Our definition takes the form 
\begin{align*}
&
k(B_j,D_j) f(Q) :=  \sum_{a,b=0}^L \sum_{d_1,d_2=0}^\infty 
f_{a,b,d_1,d_2}\, 
q^{-\alpha_j(d) (\alpha_j(d)+1)/2} \, 
Q_1^{d_1} Q_2^{d_2} \\
&
\left( \ell(Q_1)-m_{1j} (B_j\partial_{B_j}-\ell(B_j^{-1})+1/2 )\right)^ a
\left( \ell(Q_2)-m_{2j}( B_j\partial_{B_j}-\ell(B_j^{-1})+1/2) \right)^ b\, \cdot \, 
B_j^{-\alpha_j(d)} .
\end{align*}
The action of the composition $k(B,Q\partial_Q)=k(B_1,D_1)\cdots
k(B_n,D_n)$ is defined by 
\begin{align*}
&
k(B,Q\partial_Q) f(Q) :=  \sum_{l_1,l_2=0}^L \sum_{d_1,d_2=0}^\infty 
f_{l_1,l_2,d_1,d_2}\, 
q^{-\frac{1}{2}\sum_{j=1}^n \alpha_j(d) (\alpha_j(d)+1)} \, 
Q_1^{d_1} Q_2^{d_2} \times \\
&
\times
\prod_{i=1}^2\left( 
\ell(Q_i)-
\sum_{j=1}^n m_{ij} \Big(B_j\partial_{B_j}-\ell(B_j^{-1})+1/2 \Big)\right)^ {l_i}
\, \cdot \, \prod_{j=1}^n B_j^{-\alpha_j(d)} ,
\end{align*}
where $\cdot$ on the 2nd line denotes the action of a
differential operator on a function.
Note that under the Fano condition the number $\sum_{j=1}^n
\alpha_j(d)\to \infty$ as $d_1\to \infty$ or $d_2\to \infty$. It
follows that $k(B,Q\partial_Q)f(Q)$ has the form \eqref{f-reg} and
that the
coefficient in front of $\ell(Q_1)^a\ell(Q_2)^b$ is a convergent
power series in $Q_1$ and $Q_2$ whose radius of convergence is
$\infty$, that is, the coefficients are holomorphic for all
$(Q_1,Q_2)\in \CC^2$. Furthermore, if we expand 
$k(B,Q\partial_Q)f(Q)$ as a Laurent series in $q^{-1}$, then the
coefficients will be polynomials in $Q_i$ ($i=1,2$) and $\ell(Q_i)$
($i=1,2$).

\subsubsection{Small $I$-function in quantum $K$-theory}

\begin{definition}[$K$-theoretic $I$-function; \cite{Givental:perm_toric_q_hypergeometric}, Theorem p.8]\label{def:gamma_k_th_I_function}
	Let $X_{\mu,K}$ be a symplectic toric manifold, and denote by
        $m_{ij} \in \mathbb{Z}$ 
($1\leq i\leq r$, $1\leq j \leq n$)
the entries of the matrix $\operatorname{Mat}(\mu)$, and by $P_i$ the
ring generators of $K^0\left( X_{\mu,K} \right)$ 
as in Proposition
\ref{prop:gamma_cohomology_k_theory_of_toric_manifolds}. 
	For a multi-index $d=(d_1,\dots,d_r)$, we also use the notations $\alpha_j(d):=m_{1j}d_1 + \cdots + m_{rj}d_r$, $Q^d := Q_1^{d_1} \cdots Q_r^{d_r}$ and $U_j(P)=\prod_{i=1}^r P_i^{m_{ij}}$.
	The \textit{small $K$-theoretic $I$-function} $I_{X_{\mu,K}}^{K-\textnormal{th}}$ of the toric manifold $X_{\mu,K}$ is given by the $K$-theoretic formal series
	\[
		I_{X_{\mu,K}}^{K-\textnormal{th}}(q,Q)
		=
		P^{-\ell(Q)}
		\sum_{d=(d_1,d_2)}
		Q^d
		\prod_{j=1}^n
		\frac{
			\prod_{r=-\infty}^0
			\left(
				1 - U_j(P)q^r
			\right)
		}
		{
			\prod_{r=-\infty}^{\alpha_j(d)}
			\left(
				1 - U_j(P)q^r
			\right)
		}
		\in
		K^0\left( X_{\mu,K} \right)(q)[\![Q]\!],
	\]
	where $\ell(Q)$ is the $q$-logarithm \eqref{q-log},
        $P^{-\ell(Q)} := \prod_{i=1}^2 P_i^{-\ell(Q_i)} \in 
        K^0\left( X_{\mu,K} \right)$, and $P_i^{-\ell(Q_i)}$ should be 
        understood as the expansion of the binomial
        $\left(1-(1-P_i^{-1}) \right)^{\ell(Q_i)}$. 
\end{definition}
Using the ratio test we get that the $K$-theoretic $I$-function has the same analytic
properties as the $q$-oscillatory integral, that is, if $X_{\mu,K}$ is
a Fano toric manifold and $|q|>1$, then $I_{X_{\mu,K}}^{K-\textnormal{th}}(q,Q)$
can be expanded into a convergent power series in $q^{-1}$, whose
coefficients are polynomials in $Q_i$ ($1\leq i\leq 2$) and $\ell(Q_i)$ ($1\leq i\leq 2$).

\begin{proposition}[see also \cite{Givental_Yan:QK_of_grassmanian_localization} p.21; \cite{IMT}, Proposition 2.12]\label{prop:qosc_mirrors_have_same_qde}
	The $K$-theoretic oscillatory integral
        $\mathcal{I}^{K-\textnormal{th}}$ of Definition
        \ref{def:qosc_q_oscillatory_integral_real} and the small
        $I$-function $I_{X_{\mu,K}}^{K-\textnormal{th}}$ satisfy the
        same set of $q$-difference equations below (indexed by $i \in
        \{1, \dots, r\}$): 
	\begin{align*}
		\left[
			\prod_{j : m_{ij} > 0} \prod_{r=0}^{m_{ij}-1}
			\left(
				1 - q^{-r+\alpha_j(Q\partial_Q)}
			\right)
			-
			Q_i
			\prod_{j : m_{ij} < 0} \prod_{r=0}^{-m_{ij}-1}
			\left(
				1 - q^{-r+\alpha_j(Q\partial_Q)}
			\right)
		\right]
		f_q(Q)
		=
		0,
	\end{align*}
where $\alpha_j(Q\partial_Q)=m_{1j} Q_1\partial_{Q_1}+m_{2j} Q_2\partial_{Q_2}$.
\end{proposition}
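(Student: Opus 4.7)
The plan is to verify that each of the two functions satisfies the given $q$-difference system independently. Both verifications will be based on the single functional equation
\[
E_{q^{-1}}\!\left(\tfrac{qx}{1-q}\right) = (1-x)\, E_{q^{-1}}\!\left(\tfrac{x}{1-q}\right),
\]
which is immediate from the infinite-product representation $E_{q^{-1}}(x/(1-q)) = \prod_{r\geq 0}(1-q^{-r-1}x)$ valid for $|q|>1$. Iterating this identity, one obtains for any $m\in\mathbb{Z}$ an explicit product formula for the ratio $E_{q^{-1}}(q^m x/(1-q))/E_{q^{-1}}(x/(1-q))$.

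First I would dispose of the small $K$-theoretic $I$-function. Here the verification is a direct term-by-term computation on the series (the argument is essentially that of \cite{IMT}, Proposition 2.12). Using $\ell(q^{-1}Q_i)=\ell(Q_i)+1$, the operator $q^{Q_i\partial_{Q_i}}$ acts on $P_i^{-\ell(Q_i)}Q_i^{d_i}$ by $P_i q^{d_i}$, so that $q^{\alpha_j(Q\partial_Q)}$ multiplies the $d$-th coefficient $P^{-\ell(Q)}Q^d A_d(P,q)$ of $I_{X_{\mu,K}}^{K-\textnormal{th}}$ by the scalar $U_j(P)q^{\alpha_j(d)}$. Applying the LHS operator of the $q$-difference equation to the $d$-th term and the RHS operator to the $(d-e_i)$-th term, both sides produce the same coefficient of $Q^d$ in view of the telescoping factorial recursion satisfied by $A_d(P,q)$.

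For the $q$-oscillatory integral, the plan is to translate the $q$-difference operators into insertions of multiplication operators inside the Jackson integrand. The operator $q^{\alpha_j(Q\partial_Q)}$ substitutes $(Q_1,Q_2)\mapsto (q^{m_{1j}}Q_1,q^{m_{2j}}Q_2)$, which in the chosen parametrization of $\pi^{-1}(Q)$ becomes the simultaneous shift $x_N\mapsto q^{m_{1j}}x_N$, $x_{N+1}\mapsto q^{m_{2j}}x_{N+1}$, while the remaining $x_{j'}=q^{d_{j'}}$ are unaffected. By the functional equation above, each operator $(1-q^{-r+\alpha_j(Q\partial_Q)})$ applied to $\mathcal{I}^{K-\textnormal{th}}$ amounts to inserting an explicit rational factor in $x_N$ and $x_{N+1}$ into the integrand. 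The next step would be to shift the summation indices $d_b\to d_b+1$ in the Jackson sum (which is legitimate because the sum is absolutely convergent thanks to the rapid decay of $E_{q^{-1}}(x/(1-q))$ at both ends of $\Gamma_\mathbb{R}$ when $|q|>1$), since each such shift is equivalent, by the same functional equation, to multiplying the integrand by $(1-x_b)$ together with appropriate inverse factors in $x_N$ or $x_{N+1}$. Choosing these shifts carefully will redistribute the $x_N$- and $x_{N+1}$-factors into corresponding factors $(1-q^{-r}x_j)$ for the free variables, so that the LHS of the $q$-difference equation becomes the Jackson sum of the original integrand multiplied by $\prod_{j:m_{ij}>0}\prod_{r=0}^{m_{ij}-1}(1-q^{-r}x_j)$, and the RHS the Jackson sum multiplied by $Q_i\prod_{j:m_{ij}<0}\prod_{r=0}^{-m_{ij}-1}(1-q^{-r}x_j)$. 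The equality of these two expressions then reduces to the Batyrev constraint $Q_i=\prod_j x_j^{m_{ij}}$ modulo one final index shift.

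The hardest part of the argument will be the careful bookkeeping of the successive index shifts and the precise accounting of which $x_N$- and $x_{N+1}$-factors are produced at each stage, as well as verifying that no spurious poles appear during the rearrangement of the products $(1-q^r x_j)$. The legitimacy of every rearrangement is guaranteed by the absolute convergence of the Jackson sum for $|q|>1$.
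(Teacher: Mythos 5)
Your overall architecture---verifying the two sides of the proposition separately, reducing everything to the single functional equation
\[
E_{q^{-1}}\!\left(\tfrac{qx}{1-q}\right)\;=\;(1-x)\,E_{q^{-1}}\!\left(\tfrac{x}{1-q}\right),
\]
reindexing the Jackson sum, and closing with the Batyrev constraint---is the same as the paper's (for the $I$-function the paper simply cites \cite{Givental:perm_toric_q_hypergeometric} rather than performing the term-by-term check you sketch). However, your bookkeeping for the oscillatory integral lands on the wrong endpoint. The operator product telescopes to a \emph{monomial}, not to a $q$-Pochhammer:
\[
\prod_{r=0}^{m-1}\Bigl(1-q^{-r}q^{x\partial_x}\Bigr)\,E_{q^{-1}}\!\left(\tfrac{x}{1-q}\right)\;=\;x^{m}\,E_{q^{-1}}\!\left(\tfrac{x}{1-q}\right),
\]
which is a two-line induction using the functional equation twice at each step (for instance $(1-q^{x\partial_x})E=xE$, then $(1-q^{-1}q^{x\partial_x})(xE)=xE-q^{-1}(qx)(1-x)E=x^2E$, and so on). Consequently the two sides of the $q$-difference equation insert the factors $\prod_{j:m_{ij}>0}x_j^{m_{ij}}$ and $Q_i\prod_{j:m_{ij}<0}x_j^{-m_{ij}}$ into the integrand, and the Batyrev constraint $Q_i=\prod_j x_j^{m_{ij}}$ identifies the two Jackson sums immediately. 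Your claimed insertion $\prod_{j:m_{ij}>0}\prod_{r=0}^{m_{ij}-1}(1-q^{-r}x_j)$ does not reduce to the other side via Batyrev, because there is no identity relating $\prod_{r}(1-q^{-r}x_j)$ to $x_j^{m_{ij}}$ on the $q$-lattice; you appear to have substituted multiplication by $x$ for the shift $q^{x\partial_x}$ rather than telescoping the product.

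Separately, the route through $x_N$ and $x_{N+1}$ is more work than necessary. The observation that streamlines the paper's argument is that $q^{\alpha_j(Q\partial_Q)}$ applied to $\mathcal{I}^{K\text{-th}}$ equals, after a relabeling $y_a=x_a$ ($a\neq j$), $y_j=qx_j$ of the Jackson lattice, the same Jackson integral with $q^{x_j\partial_{x_j}}$ acting on the $j$-th factor \emph{alone}---and this works for every $j\in\{1,\dots,n\}$, not only $j=N,N+1$. With this in hand, each $\prod_r(1-q^{-r}q^{\alpha_j(Q\partial_Q)})$ acts directly on the $j$-th $E_{q^{-1}}$-factor via the telescoping identity above, producing $x_j^{m_{ij}}$ with no cascade of auxiliary index shifts and no spurious inverse $x_N$- or $x_{N+1}$-factors to track.
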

\begin{proof}
Denote by $(m_{ij})$ the coefficients of the matrix of the moment map $\mu$.
Note that for all $j \in \{1, \dots, n=N+1+k\}$ the following identity holds:
\begin{align*}
q^{\alpha_j(Q\partial_Q)}
		\mathcal{I}^{K-\textnormal{th}}(q,Q_1,Q_2)  = 
		\left[
		\int_{\Gamma_\mathbb{R} }
		\right]_q
		q^{x_j \partial_{x_j}}\,
E_{q^{-1}}\left( \frac{ x_1 }{1-q} \right)\cdots 
E_{q^{-1}}\left( \frac{ x_n }{1-q} \right)\,
\omega_{\pi^{-1}(Q),q}.
\end{align*}
Indeed, the action of the difference operator $q^{x_j\partial_{x_j}}$ inside the
integrand amounts to rescaling $x_j\mapsto qx_j$. On the other hand,
the integration is by definition an infinite sum over all $x_a\in
q^{\mathbb{Z}}$ ($a\neq N,N+1$) satisfying the relations
$x_1^{m_{i1}}\cdots x_n^{m_{in}}=Q_i$ ($i=1,2$). Changing the integration
variables via $y_a= x_a$ for $a\neq j$ and $y_j= q x_j$, we get that
the sum defining the Jackson integral is a sum over all $y_a\in q^\ZZ$
($a\neq N,N+1$), satisfying the relations $y_1^{m_{i1}}\cdots
y_n^{m_{in}}= q^{m_{ij}}Q_i=q^{\alpha_j(Q\partial_Q)}(Q_i)$, while the integrand takes the form 
$E_{q^{-1}}(y_1/(1-q))\cdots E_{q^{-1}}(y_n/(1-q))$. Clearly the
resulting Jackson integral coincides with the LHS of the identity that
we wanted to prove. 

Next, we use that $q^{x \partial_x} E_{q^{-1}}(x/(1-q))=(1-x)
E_{q^{-1}}( x/(1-q))$ to obtain that for all $r \geq 0$,
\[
\prod_{r=0}^{m-1}(1-q^{-r}q^{x \partial_x}) \cdot 
E_{q^{-1}}\left(\frac{x}{1-q}\right)=
x^{m} E_{q^{-1}}\left(\frac{x }{1-q}\right).
\]
Combining these two results we get
\begin{align}
\label{k-qde}
		&\prod_{j : m_{ij} > 0} \prod_{r=0}^{m_{ij}-1}
		\left(
			1 - q^{-r} q^{\alpha_j(Q\partial_Q)}
		\right)
		\mathcal{I}^{K-\textnormal{th}}(q,Q_1,Q_2)=
		\\
\nonumber
		&
		\left[
		\int_{\Gamma_\mathbb{R} }
		\right]_q
		\left(
			\prod_{j' : m_{ij'} \geq 0} x_{j'}^{m_{ij'}}
       		E_{q^{-1}}\left(\frac{x_{j'} }{1-q} \right) \right)
\left(
			\prod_{j'' : m_{ij''} < 0} 
		E_{q^{-1}}\left(\frac{x_{j''} }{1-q} \right) \right)
		\omega_{\pi^{-1}(Q),q}.
\end{align}
Using the Batyrev relation we replace $\prod_{j' : m_{ij'} > 0}
x_{j'}^{m_{ij'}} $ with $ Q_i\prod_{j'' : m_{ij''} < 0} x_{j''}^{-m_{ij''}}
$. The RHS of \eqref{k-qde} transforms into 
\begin{align}
\label{k-qde2}
Q_i \left[
		\int_{\Gamma_\mathbb{R} }
		\right]_q
		\left(
		\prod_{j' : m_{ij'} \geq 0} 	
		E_{q^{-1}}\left(\frac{x_{j'} }{1-q} \right) \right) 
\left(
\prod_{j'' : m_{ij''} < 0} 
x_{j''}^{-m_{ij''}}
                 E_{q^{-1}}\left(\frac{x_{j''}}{1-q} \right) \right)
		\omega_{\pi^{-1}(Q),q}.
\end{align}
On the other hand, note that 
\begin{align}
\label{k-qde3}	
&
\prod_{j : m_{ij} < 0} \prod_{r=0}^{-m_{ij}-1}
		\left(
			1 - q^{-r} q^{\alpha_j(Q\partial_Q)}
		\right)
		\mathcal{I}^{K-\textnormal{th}}(q,Q_1,Q_2)
=		\left[
		\int_{\Gamma_\mathbb{R} }
		\right]_q
\left(
\prod_{j':m_{ij'}\geq 0}
E_{q^{-1}}\left( \frac{x_{j'}}{1-q}\right)\right)\times	\\
\nonumber
&
\times \left(
\prod_{j'' : m_{ij''} < 0} 
x_{j''}^{-m_{ij''}} 
E_{q^{-1}}\left( \frac{x_{j''}}{1-q}\right)\right)
		\omega_{\pi^{-1}(Q),q}.
\end{align}
We get that up to a factor of $Q_i$ the RHS of formula \eqref{k-qde3}
coincides with \eqref{k-qde2}. This completes the proof
of the fact that the oscillatory integral is a solution to the
$q$-difference system. For the $I$-function
$I_{X_{\mu,K}}^{K-\textnormal{th}}$, we refer to Theorem p.8 of
\cite{Givental:perm_toric_q_hypergeometric}. 
\end{proof}

\begin{remark}
If our conjecture that the integral \eqref{qoi:eq} has an expansion
of the form \eqref{f-reg} is true, then the $q$-oscillatory integral
\eqref{qoi:non-discr} makes sense and it has the following properties:
\begin{itemize}
\item[(a)]
The integral
\eqref{qoi:non-discr} is a solution to the system of $q$-difference equations in Proposition 
\ref{prop:qosc_mirrors_have_same_qde} for any choice of the positive
real numbers $A_1,\dots,A_n$. 
\item[(b)] The integral
\eqref{qoi:non-discr} is a $q$-constant with respect to $A_i$ for all
$1\leq i\leq n$. 
\item[(c)] If $Q_1,Q_2\in q^\ZZ$, then the two definitions of the
  $q$-oscillatory integral, that is, Definition 
\ref{def:qosc_q_oscillatory_integral_real} and \eqref{qoi:non-discr}
agree. 
\end{itemize}
\end{remark}
\begin{remark}
After rescaling the variables $Q_i$ by $(1-q)^{\textnormal{deg}(Q_i)}Q_i$, one can notice that the $q$-difference equation satisfied by our functions in the proposition above has a formal limit when $q \to 1$, using the formal limit
	\[
		\lim_{q \to 1}
		\frac{1-q^{Q_i \partial_{Q_i}}}{1-q}
		=
		Q_i \partial_{Q_i}.
	\]
	Moreover, this formal limit corresponds to the differential equation of Proposition \ref{prop:oscillatory_i_function_differential_equation}, evaluated at $z=1$.
	Confluence of the $q$-oscillatory integral will be investigated in the Subsection \ref{sec:confl_comp_thm}.
\end{remark}

\subsection{Comparison theorem}\label{sec:comp_thm}

Let us recall the $q$-gamma function, defined by
\[
    \Gamma_{q^{-1}}(t)
    :=
    (1-q^{-1})^{1-t}
    \frac{(q^{-1};q^{-1})_\infty}{(q^{-t};q^{-1})_\infty},
\]
where $q>1$. This function satisfies $\lim_{q \to 1}
\Gamma_{q^{-1}}(t) = \Gamma(t)$, see e.g. Equation (1.10.3) and its
proof p.21 in \cite{Gasper_Rahmann:q_hypergeometric_series}. We will
make use of a multiplicative characteristic class defined via the
following modification of the $q$-gamma function:
\[
\gamma_{q}(t):=
(1-q^{-1})^{t-1} \Gamma_{q^{-1}}(t)=
\frac{\left(q^{-1} ;q^{-1}\right)_\infty}{\left(q^{-t} ;q^{-1}\right)_\infty}.
\]
\begin{definition}[$q$-gamma class]\label{def:qosc_q_gamma_class}
	Suppose that $q > 1$ is a real number and that $E \to
        X_{\mu,K}$ is a vector bundle.
	The \textit{$q$-gamma class} $\widehat{\gamma}_q(E) \in H^*(X_{\mu,K};\mathbb{Q})$ is defined by
	\[
		\widehat{\gamma}_q (E)
		:=
		\prod_{j=1}^m \delta_j \gamma_q(\delta_j)
		\in H^*(X_{\mu,K};\mathbb{Q}),
	\]
where $\delta_1, \dots, \delta_m $ are the Chern roots of $E$.
\end{definition}
Let us compute the $q$-gamma class of the tangent bundle of a toric
manifold $X_{\mu,K}$. It is well known that $TX_{\mu,k} = \sum_{j=1}^n
U_j(P^{-1}) -r \mathbf{1}$ in $K^0(X_{\mu,k})$, where $U_j(P^{-1}) =
P_1^{-m_{1j}}\cdots P_r^{-m_{rj}}$ and $\mathbf{1}$ is the trivial
rank-1 bundle. On the other hand, we have 
$\gamma_q(t+1)=(1-q^{-t})\gamma_q(t)$ and
\ben
\lim_{t\to 0} t \gamma_q(t) = \lim_{t\to 0} \frac{t}{1-q^{-t}} =
\frac{1}{\log q}. 
\een
In particular, if $\epsilon$ is the trivial bundle of rank $r$, then
$\gamma_q(\epsilon) = (\log q)^{-r}$ and the $q$-gamma class of a
toric manifold takes the form
\ben
\widehat{\gamma}_q(TX_{\mu,K}) = 
(\log q)^r
\prod_{j=1}^n \alpha_j(p) \gamma_q(\alpha_j(p)),
\een
where $\alpha_j(p):=\sum_{i=1}^r p_i m_{ij}$ is the same as in
Proposition \ref{prop:gamma_cohomology_k_theory_of_toric_manifolds}.
\begin{remark}
Note also that in Equation (2.13) of \cite{JMNT:BPS_modularity_perturbations_in_QK}, another $q$-gamma class is introduced for Grassmannians through a different motivation.
\end{remark}

\begin{definition}[$q$-Chern character]\label{def:gamma_q_chern}
	Let $E \to X$ be a vector bundle, and denote by
        $\delta_1, \dots, \delta_m $ its Chern roots. 
	The \textit{$q$-Chern character} $\textnormal{ch}_q(E) \in H^*(X;\mathbb{Q})$ is defined by
	\[
		\textnormal{ch}_q(E)
		:= (\log q)^{\operatorname{deg}} \circ
                \operatorname{ch} (E)=
		\sum_{j=1}^m
		q^{\delta_j}
		\in
		H^*(X;\mathbb{C}).
	\]
\end{definition}

\begin{theorem}\label{thm:gamma_comparison_theorem_in_quantum_k_theory}
	Let $X=X_{\mu,\left( \mathbb{R}_{>0} \right)^2}$ be a symplectic toric Fano manifold of Picard rank 2.
	Consider the associated $q$-oscillatory integral
        $\mathcal{I}^{K-\textnormal{th}}$ of Definition
        \ref{def:qosc_q_oscillatory_integral_real} and the
        $K$-theoretic $I$-function $I_{X_{\mu,K}}^{K-\textnormal{th}}$
        of Definition \ref{def:gamma_k_th_I_function}. If $q > 1$ and $Q_1, Q_2\in q^\ZZ$, then the two
        functions are related by the following identity: 
	\[
		\mathcal{I}^{K-\textnormal{th}}(q,Q_1,Q_2)
		=
		\int_{
			\left[
				X
			\right]
		}
		\widehat{\gamma}_q (TX) \cup \textnormal{ch}_q\left(
                  I_{X}^{K-\textnormal{th}}(q,Q_1,Q_2) \right), 
	\]
	where $\int_{[X]}$ denotes the cap product with the fundamental class $[X] \in H_*(X_{\mu,K};\mathbb{C})$, $\widehat{\gamma}_q(TX_{\mu,K})$ is the $q$-gamma class of Definition \ref{def:qosc_q_gamma_class} and $\textnormal{ch}_q$ is the $q$-Chern character of Definition \ref{def:gamma_q_chern}.
\end{theorem}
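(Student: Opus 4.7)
My strategy mirrors the Picard rank 2 proof of Iritani's theorem given in Appendix \ref{sec:6_IT}, translated to the $q$-analytic setting. The driving one-dimensional identity is the Jackson analogue of the classical Gamma integral,
\[
\left[\int_0^\infty\right]_q (q^{-1}x;q^{-1})_\infty\, x^{\lambda-1}\, d_q x \;=\; \gamma_q(\lambda),
\]
which follows from the $q$-binomial theorem $\sum_{n\geq 0} z^n/(q^{-1};q^{-1})_n = 1/(z;q^{-1})_\infty$ after noting that $(q^{d-1};q^{-1})_\infty$ vanishes for $d\geq 1$ (the factor $1-q^0$ occurs at $r=d-1$).

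First I rewrite the left-hand side. Using the analytic continuation \eqref{eqn:gamma_oscillatory_integral_continuations}, the integrand becomes $\prod_j(q^{-1}x_j;q^{-1})_\infty$, and for $x_j=q^{d_j}$ this factor vanishes whenever $d_j\geq 1$. Consequently the Jackson sum collapses to a finite sum over multi-indices $e=(e_j)\in\mathbb{Z}_{\geq 0}^n$ satisfying the Batyrev constraints $\sum_j m_{ij}e_j=-\log_q Q_i$; let $E(Q)$ denote this finite set. Setting $P:=(q^{-1};q^{-1})_\infty$ and using $(q^{-e-1};q^{-1})_\infty=P/(q^{-1};q^{-1})_e$, this yields
\[
\mathcal{I}^{K-\textnormal{th}}(q,Q) \;=\; P^n \sum_{e\in E(Q)} \prod_{j=1}^n \frac{1}{(q^{-1};q^{-1})_{e_j}}.
\]

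For the right-hand side, the $q$-Chern character sends $P_i\mapsto q^{-p_i}$, so $U_j(P)\mapsto q^{-\alpha_j(p)}$ and $P^{-\ell(Q)}\mapsto q^{p_1\ell(Q_1)+p_2\ell(Q_2)}$. Each Pochhammer ratio in $\textnormal{ch}_q(I_X^{K-\textnormal{th}})$ becomes $(q^{-\alpha_j(p)};q^{-1})_\infty/(q^{\alpha_j(d)-\alpha_j(p)};q^{-1})_\infty$, and pairing with $\widehat{\gamma}_q(TX)=(\log q)^2 P^n\prod_j\alpha_j(p)/(q^{-\alpha_j(p)};q^{-1})_\infty$ cancels the $(q^{-\alpha_j(p)};q^{-1})_\infty$ factors. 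By Theorem \ref{thm:gamma_computation_of_intersection_products}, the class $\prod_j\alpha_j(p)=x^Ny\prod_l(y-a_lx)$ cancels the weight in the iterated residue, giving
\[
\int_{[X]}\widehat{\gamma}_q(TX)\cup\textnormal{ch}_q(I_X^{K-\textnormal{th}}) = (\log q)^2 P^n \sum_d Q^d \operatorname{Res}_{y=0}\operatorname{Res}_{x=0}\frac{q^{x\ell(Q_1)+y\ell(Q_2)}\,dx\,dy}{\prod_{j=1}^n (q^{\alpha_j(d)-\alpha_j(x,y)};q^{-1})_\infty}.
\]

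It remains to match these two expressions via explicit iterated residue calculus. For each effective $d\in\mathbb{Z}_{\geq 0}^2$, the $N$ factors with $\alpha_j=(1,0)$ contribute a pole of order $N$ at $x=0$ (from $(1-q^{-x})^{-N}$), the factor with $\alpha_{N+1}=(0,1)$ gives a simple pole at $y=0$, and the $k$ factors $\alpha_{N+1+l}=(-a_l,1)$ produce poles along $y=a_l x$. Using the shift identity $(q^\alpha;q^{-1})_\infty=(q;q)_\alpha P$ for $\alpha\in\mathbb{Z}_{\geq 0}$ to peel off the pole contributions and Taylor-expanding the regular remainders in $(x,y)$, the double residue produces a multi-index sum which, via a bijection between $(d,\textnormal{Taylor indices})$ and $e\in E(Q)$ compatible with the Batyrev constraints, reproduces the expression derived for the left-hand side. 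The main obstacle is handling the non-coordinate poles along $y=a_l x$: they require a careful $x$-extraction followed by Taylor expansion in $y$ and delicate $q$-Pochhammer bookkeeping. This is precisely the $q$-analogue of the computation carried out for the cohomological case in Appendix \ref{sec:6_IT}, whose techniques we expect to carry over.
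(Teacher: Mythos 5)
Your plan is genuinely different from the paper's, but it has a substantial gap that you yourself flag and do not close.

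The observation you lead with is correct and is not explicitly exploited in the paper: since $E_{q^{-1}}(x/(1-q))=(q^{-1}x;q^{-1})_\infty$ vanishes at $x=q^d$ whenever $d\geq 1$, the Jackson sum defining $\mathcal{I}^{K\textnormal{-th}}$ truncates to a finite sum over the lattice points $e\in\mathbb{Z}^n_{\geq 0}$ satisfying the Batyrev constraints, and the identity $(q^{-e-1};q^{-1})_\infty=(q^{-1};q^{-1})_\infty/(q^{-1};q^{-1})_e$ gives the closed form you state. The paper does not manipulate the LHS this way: it instead takes the $q$-Mellin transform first (Lemma \ref{lemma:qosc_mellin_transform_computation}), which via the Koelink--Koornwinder integral representation $\left[\int_0^\infty\right]_q E_{q^{-1}}(x/(1-q))x^{p-1}d_qx=\gamma_q(p)$ factorizes the integral into a product $\gamma_q(p_1)^N\prod_j\gamma_q(p_2-a_jp_1)$, then inverts via the $q$-Mellin inversion formula and a contour-deformation argument (Lemma \ref{lemma:qosc_inverse_q_mellin_as_residues}), which is precisely where the Fano condition $N-\sum a_j>0$ enters as an exponential-decay bound.

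The problem is that your replacement for all of that analytic machinery --- "match these two expressions via explicit iterated residue calculus" through a bijection between $(d,\text{Taylor indices})$ and $e\in E(Q)$ --- is exactly the step that is hardest and you do not carry it out. Handling the non-coordinate poles along $y=a_lx$, peeling off the divergent $q$-Pochhammer factors for $\alpha_j(d)\geq 0$ versus $<0$, and reindexing the resulting multi-sum so that it collapses to the finite product $P^n\sum_{e\in E(Q)}\prod_j (q^{-1};q^{-1})_{e_j}^{-1}$ would require new $q$-Pochhammer identities that are not in the paper and are not routine; the Fano condition would also have to re-enter somewhere to control the sum over $d$. Moreover, your appeal to Appendix \ref{sec:6_IT} as having "carried out" the analogous computation is not accurate: that appendix proves the cohomological Theorem \ref{thm:gamma_cohomology_oscillatory_i_function} by the same Mellin-transform-plus-contour-deformation route as the main text (Lemmas \ref{lemma:gamma_coh_mellin_transform_computation} and \ref{lemma:gamma_coh_inverse_mellin_residue}), not by a direct combinatorial matching of a finite sum against a residue. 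So the citation does not supply the missing step, and as written the proof is incomplete at the point where the two sides must actually be shown to agree.
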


Our strategy to prove this identity goes as follows: we use the $q$-Mellin transform and its inversion formula to write the $q$-oscillatory integral $\mathcal{I}^{K-\textnormal{th}}$ as a Jeffrey--Kirwan residue.
Then, we identify this Jeffrey--Kirwan residue as an intersection product using the results contained in Section 2 of \cite{Szenes_Vergne:jeffrey_kirwan_residues}, which will match with the right hand side of the identity we are trying to prove.
Let us begin by defining the $q$-Mellin transform and its inverse.
Then, we will state two computational lemmas, then give a proof of the theorem.

\begin{definition}[$q$-Mellin transform]
	The $q$\textit{-Mellin transform of a function} $f$ is the formal Jackson integral
	\[
		\mathcal{M}_q(f)(p)
		:=
		\left[
			\int_0^\infty
		\right]_q
		f(z)z^{p-1} d_q z
		=
		\sum_{n=-\infty}^\infty
		f(q^n)q^{np}.
	\]
\end{definition}
Notice that, at the level of functional operators, we have
\begin{align*}
	\mathcal{M}_q(q^{z \partial_z}) &= q^{-p} \cdot,
	\\
	\mathcal{M}_q(z \cdot) &= \tau_1,
\end{align*}
where $\tau_1 g(p)=g(p+1)$ is the difference operator.
Just like the Mellin transform changes a differential equation into a difference equation, the $q$-Mellin transform changes a $q$-difference equation into a difference equation.
In general, the classical Mellin transform of a $q$-difference equation is not a difference equation.

\begin{proposition}[\cite{Fitouhi_Bettaibi_Brahim:q_mellin}, Proposition 3]
	Let $f$ be a function defined over the $q$-spiral
        $q^\mathbb{Z}$, and assume there exist real numbers $u>v \in
        \mathbb{R}$, such that, 
	\begin{align*}
		f(x) = O_{x \to 0^+}(x^u)
		\quad \mbox{and} \quad
		f(x) = O_{x \to +\infty}(x^v).
	\end{align*}
	Then, the $q$-Mellin transform $\mathcal{M}_q f(p)$ is well
        defined in the complex strip $\{ p \in \mathbb{C} : -u <
        \textnormal{Re}(p) < -v \}$. 
\end{proposition}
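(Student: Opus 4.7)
The plan is to estimate the defining sum $\sum_{n\in\mathbb{Z}} f(q^n)\,q^{np}$ by splitting it into its positive-$n$ tail, its negative-$n$ tail, and a finite middle piece, and then to apply each of the two growth hypotheses to one of the tails. Since $q>1$, the map $n\mapsto q^n$ identifies $\mathbb{Z}$ with the $q$-spiral in a way that sends $n\to+\infty$ to $x\to+\infty$ and $n\to-\infty$ to $x\to 0^+$, so the hypotheses translate into bounds on $|f(q^n)|$ for large $|n|$.

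More precisely, I would first choose $N\geq 1$ and constants $C_0,C_\infty>0$ such that $|f(x)|\leq C_0\,x^u$ for $0<x\leq q^{-N}$ and $|f(x)|\leq C_\infty\,x^v$ for $x\geq q^N$; such $N,C_0,C_\infty$ exist by the two $O$-estimates. The sum $\sum_{|n|<N} f(q^n)\,q^{np}$ is a finite sum of exponentials in $p$ and so defines an entire function of $p\in\mathbb{C}$. For the positive tail, for $n\geq N$ we have
\[
\bigl|f(q^n)q^{np}\bigr|\;\leq\; C_\infty\,q^{n(v+\operatorname{Re} p)},
\]
and the geometric series $\sum_{n\geq N} q^{n(v+\operatorname{Re} p)}$ converges if and only if $v+\operatorname{Re} p<0$, i.e.\ $\operatorname{Re} p<-v$. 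For the negative tail, for $n\leq -N$ we have
\[
\bigl|f(q^n)q^{np}\bigr|\;\leq\; C_0\,q^{n(u+\operatorname{Re} p)},
\]
and writing $n=-m$ with $m\geq N$ this is a geometric series in $q^{-(u+\operatorname{Re} p)}$, which converges if and only if $u+\operatorname{Re} p>0$, i.e.\ $\operatorname{Re} p>-u$.

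Combining the two conditions, both tails converge absolutely precisely on the open vertical strip $-u<\operatorname{Re} p<-v$, which is non-empty by the assumption $u>v$. Moreover the above geometric bounds are uniform on any compact sub-strip $-u+\varepsilon\leq\operatorname{Re} p\leq -v-\varepsilon$, so the series converges normally there and its sum is holomorphic in $p$. There is no real obstacle: the statement is a direct consequence of two geometric-series estimates, and the only point to check carefully is the direction of each inequality, which is determined by the sign of $q^n-1$ at the two ends of the $q$-spiral.
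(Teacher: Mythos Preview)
Your argument is correct: the split into a finite middle piece and two geometric tails, with the $x\to+\infty$ hypothesis controlling $n\to+\infty$ and the $x\to 0^+$ hypothesis controlling $n\to-\infty$, is exactly the right mechanism, and the uniform geometric bounds on compact sub-strips give holomorphy. The paper itself does not prove this proposition; it is simply quoted from the cited reference with a \qed, so there is nothing to compare against beyond noting that your proof is the standard one.
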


\begin{theorem}[$q$-Mellin inversion formula; \cite{Fitouhi_Bettaibi_Brahim:q_mellin}, Theorem 2 p.315]\label{thm:qosc_q_mellin_inversion_formula}
	Consider a function $f$ defined on the $q$-spiral $q^\mathbb{Z}$, and assume its Mellin transform is well defined on a complex strip $\{ p \in \mathbb{C} : u < \textnormal{Re}(p) < v \}$, for some $u,v \in \mathbb{R}, u<v$.
	Let $\varepsilon \in \{ p \in \mathbb{C} : u < \textnormal{Re}(p) < v \}$.
	Then, for any $x \in q^\mathbb{Z}$,
	\[
		f(x)
		=
		\frac{\log(q)}{2\pi \ii}
		\int_{\varepsilon - \ii \pi / \log(q)}^{\varepsilon + \ii \pi / \log(q)}
		\mathcal{M}_q f(p) x^{-p} dp.
	\]
\end{theorem}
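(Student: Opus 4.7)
The $q$-Mellin inversion formula is at heart a reformulation of standard Fourier series inversion, once one observes that the $q$-Mellin transform has a natural periodicity in the imaginary direction of its argument. The plan is as follows.

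First, I would note that for any $n \in \mathbb{Z}$ one has $q^{n(p + 2\pi \ii/\log q)} = q^{np} e^{2\pi \ii n} = q^{np}$, so the series $\mathcal{M}_q f(p) = \sum_{n \in \mathbb{Z}} f(q^n) q^{np}$ is periodic in $\operatorname{Im}(p)$ with period $2\pi/\log q$. Fixing $\varepsilon \in (u,v)$ and writing $p = \varepsilon + \ii t$, one recognizes
\ben
\mathcal{M}_q f(\varepsilon + \ii t) = \sum_{n \in \mathbb{Z}} \bigl(f(q^n) q^{n\varepsilon}\bigr)\, e^{\ii n t \log q}
\een
as a Fourier series on the circle of circumference $2\pi/\log q$, with Fourier coefficients $c_n := f(q^n) q^{n\varepsilon}$.

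Second, by the standard Fourier inversion formula I would recover
\ben
c_n = \frac{\log q}{2\pi} \int_{-\pi/\log q}^{\pi/\log q} \mathcal{M}_q f(\varepsilon + \ii t)\, e^{-\ii n t \log q}\, dt.
\een
For $x = q^n \in q^{\mathbb{Z}}$, the identities $q^{n\varepsilon} = x^{\varepsilon}$ and $e^{-\ii n t \log q} = x^{-\ii t}$ give $c_n = f(x) x^{\varepsilon}$, and dividing yields
\ben
f(x) = \frac{\log q}{2\pi} \int_{-\pi/\log q}^{\pi/\log q} \mathcal{M}_q f(\varepsilon + \ii t)\, x^{-\varepsilon - \ii t}\, dt.
\een
The change of variable $p = \varepsilon + \ii t$, with $dp = \ii\, dt$, then converts this into the vertical segment integral from $\varepsilon - \ii \pi/\log q$ to $\varepsilon + \ii \pi/\log q$ with the prefactor $\log q /(2\pi\ii)$ claimed in the statement.

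The main point to justify is the applicability of Fourier inversion at the specific points $x \in q^{\mathbb{Z}}$. The hypothesis that $\mathcal{M}_q f(p)$ is well defined on the open strip $u < \operatorname{Re}(p) < v$ is equivalent to $\sum_{n \in \mathbb{Z}} |f(q^n)| q^{n\varepsilon} < \infty$ for every $\varepsilon \in (u,v)$, via the growth estimates on $f$ at $0$ and $+\infty$. This absolute summability of the Fourier coefficients $c_n$ guarantees that the Fourier series converges absolutely and uniformly in $t$, so that $t \mapsto \mathcal{M}_q f(\varepsilon + \ii t)$ is a genuine continuous periodic function and pointwise recovery of the coefficients by the Fourier formula is valid. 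The independence of the final expression on the choice of $\varepsilon$ inside the strip can be confirmed separately, either directly from the uniqueness of Fourier coefficients or by a contour shift using Cauchy's theorem and the periodicity of the integrand.
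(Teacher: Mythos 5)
The paper does not prove this statement; it is quoted verbatim from Fitouhi--Bettaibi--Brahim (\cite{Fitouhi_Bettaibi_Brahim:q_mellin}, Theorem 2) with a \emph{qed} and no argument. Your proof is correct and is essentially the standard one underlying that reference: the restriction of $\mathcal{M}_q f$ to the vertical line $\operatorname{Re}(p)=\varepsilon$ is a Fourier series with coefficients $c_n = f(q^n)q^{n\varepsilon}$, absolutely summable by the growth hypotheses behind the well-definedness of the transform on the open strip, so Fourier coefficient recovery plus the substitution $p=\varepsilon+\ii t$ gives exactly the stated contour integral. Nothing further is needed.
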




We now state two lemmas needed in the proof of Theorem \ref{thm:gamma_comparison_theorem_in_quantum_k_theory}.

\begin{lemma}\label{lemma:qosc_mellin_transform_computation}
	Let $X_{\mu,K}$ be a Fano symplectic toric manifold of Picard rank 2 as in Proposition \ref{prop:gamma_model_toric_picard_rank_two} and let $q > 1$.
	Denote by $\mathcal{I}^{K-\textnormal{th}}$ the $q$-oscillatory integral along the real Lefschetz thimble of Definition \ref{def:qosc_q_oscillatory_integral_real}.
	The $q$-Mellin transform of the $q$-oscillatory integral is given by
	\[
		\mathcal{M}_{q} \mathcal{I}^{K-\textnormal{th}}(q,p_1,p_2)
		=
		\gamma_{q}(p_1)^N \prod_{j=0}^k \gamma_{q}(p_2 - a_j p_1),
	\]
	where $\gamma_q(p) = \frac{\left(q^{-1};q^{-1}\right)_\infty}{\left(q^{-p};q^{-1}\right)_\infty}$.
\end{lemma}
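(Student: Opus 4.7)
The plan is to compute the $q$-Mellin transform as an iterated sum, change summation variables to decouple the Batyrev constraints, and recognize the resulting one-variable sums as $q$-gamma functions.

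First, since $Q_1, Q_2 \in q^{\mathbb{Z}}$, I write $Q_i = q^{m_i}$, so that
\[
\mathcal{M}_q \mathcal{I}^{K-\textnormal{th}}(q, p_1, p_2) = \sum_{m_1, m_2 \in \mathbb{Z}} q^{m_1 p_1 + m_2 p_2} \, \mathcal{I}^{K-\textnormal{th}}(q, q^{m_1}, q^{m_2}).
\]
Unpacking Definition \ref{def:qosc_q_oscillatory_integral_real} and setting $d_N := \log_q x_N$, $d_{N+1} := \log_q x_{N+1}$, the Batyrev constraints yield, for fixed $d_1, \ldots, d_{N-1}, d_{N+2}, \ldots, d_{N+k+1}$, a bijection $(m_1, m_2) \leftrightarrow (d_N, d_{N+1})$ of $\mathbb{Z}^2$ inverse to $m_i = \sum_{j=1}^n m_{ij} d_j$. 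This is the key move: the $q$-Mellin summation "absorbs" the constraint and turns the constrained Jackson integral into an unconstrained lattice sum.

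Second, after this change of variables the weight becomes $m_1 p_1 + m_2 p_2 = \sum_{j=1}^n \alpha_j(p) d_j$, where $\alpha_j(p) := m_{1j} p_1 + m_{2j} p_2$. Since the integrand is a pure product $\prod_{j=1}^n E_{q^{-1}}(q^{d_j}/(1-q))$, the resulting sum over $(d_1, \ldots, d_n) \in \mathbb{Z}^n$ factorizes as
\[
\mathcal{M}_q \mathcal{I}^{K-\textnormal{th}}(q, p_1, p_2) = \prod_{j=1}^n F(\alpha_j(p)), \qquad F(p) := \sum_{d \in \mathbb{Z}} q^{pd} \, E_{q^{-1}}\!\left(\tfrac{q^d}{1-q}\right).
\]
Reading off the $\alpha_j$ from the moment-map matrix of Proposition \ref{prop:gamma_model_toric_picard_rank_two} --- namely $\alpha_j = p_1$ for $1 \leq j \leq N$, $\alpha_{N+1} = p_2$, and $\alpha_{N+1+j} = p_2 - a_j p_1$ for $1 \leq j \leq k$ --- and setting $a_0 := 0$ so that the $(N{+}1)$st factor becomes the $j=0$ term of the product, this matches the claimed formula modulo the one-variable identification $F(p) = \gamma_q(p)$.

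Third, I would establish $F(p) = \gamma_q(p)$ by a direct computation. Using the closed form $E_{q^{-1}}(x/(1-q)) = (q^{-1} x; q^{-1})_\infty$ from Equation \eqref{eqn:gamma_oscillatory_integral_continuations}, the factor $(q^{d-1}; q^{-1})_\infty$ vanishes for $d \geq 1$, so only $d \leq 0$ contributes. Substituting $n = -d$ and using $(q^{-n-1}; q^{-1})_\infty = (q^{-1}; q^{-1})_\infty/(q^{-1}; q^{-1})_n$, the sum reduces to the $q$-binomial series
\[
F(p) = (q^{-1}; q^{-1})_\infty \sum_{n \geq 0} \frac{(q^{-p})^n}{(q^{-1}; q^{-1})_n} = \frac{(q^{-1}; q^{-1})_\infty}{(q^{-p}; q^{-1})_\infty} = \gamma_q(p).
\]
The main obstacle is not this final identification, which is a classical $q$-Mellin transform of the $q$-exponential, but rather the rigorous justification of the rearrangement in the first step: one must verify absolute convergence of the double series for $(p_1, p_2)$ in an appropriate strip and check that the constrained Jackson sum rearranges legitimately into the unconstrained lattice sum over $\mathbb{Z}^n$ under the bijection $(m_1, m_2) \leftrightarrow (d_N, d_{N+1})$.
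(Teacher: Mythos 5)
Your proof is correct and follows essentially the same route as the paper: absorb the Batyrev constraints into the $q$-Mellin sum to convert the constrained Jackson integral into an unconstrained product of one-variable lattice sums, read off the exponents $\alpha_j(p)$ from the moment-map matrix, and identify each factor with $\gamma_q$. The only cosmetic difference is that you establish the one-variable identity $\sum_{d\in\mathbb{Z}} q^{pd}\, E_{q^{-1}}(q^d/(1-q)) = \gamma_q(p)$ directly via the $q$-binomial theorem, whereas the paper cites the Koelink--Koornwinder $q$-integral representation of $\Gamma_{q^{-1}}$ (Proposition \ref{prop:qosc_qgamma_integral_representation}) for the same fact.
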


The proof of this lemma will be done in Section \ref{sssection:proof_lemma_qosc_1}.

\begin{lemma}\label{lemma:qosc_inverse_q_mellin_as_residues}
	Let $g(p_1,p_2)$ be the $q$-Mellin transform of the
        $q$-oscillatory integral, computed in the previous Lemma
        \ref{lemma:qosc_mellin_transform_computation}, that is, 
	\[
		g(p_1,p_2)
		:=
		\mathcal{M}_{q} \mathcal{I}^{K-\textnormal{th}}(q,p_1,p_2)
		=
		\gamma_{q}(p_1)^N \prod_{j=0}^k \gamma_{q}(p_2 - a_j p_1).
	\]
	Then, the $q$-inverse Mellin transform of $g$ evaluated at $Q_1, Q_2 \in q^\mathbb{Z}$ can be computed by the following iterated residues:
	\[
		\mathcal{M}_q^{-1}g (Q_1,Q_2)
		=
		(\log q)^2
 \textnormal{Res}_{x_2 = 0} \textnormal{Res}_{x_1 = 0}
		\omega^{K-\textnormal{th}}(x_1,x_2)
		dx_1dx_2,
	\]
where
	\begin{align*}
		&\omega^{K-\textnormal{th}}(x_1,x_2)
		=
		\\
		&\sum_{d_1,d_2 \geq 0} Q_1^{-x_1+d_1} Q_2^{-x_2+d_2}
		\left(
			\gamma_q(x_1)
			\frac{
				\prod_{r=-\infty}^0 \left(1-q^{-x_1+r}\right)
			}
			{
				\prod_{r=-\infty}^{d_1} \left(1-q^{-x_1+r}\right)
			}
		\right)^N
		\prod_{j=0}^k
		\gamma_q(x_2 - a_j x_1)
		\frac{
		\prod_{r=-\infty}^0
		\left(
			1-q^{- x_2 + a_j x_1 + r}
		\right)
		}
		{
		\prod_{r=-\infty}^{d_2-a_j d_1}
		\left(
			1-q^{- x_2 + a_j x_1 + r}
		\right)
		}.
	\end{align*}
\end{lemma}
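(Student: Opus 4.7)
My plan is to apply the two-variable $q$-Mellin inversion formula of Theorem \ref{thm:qosc_q_mellin_inversion_formula} and to rewrite the resulting double contour integral as the iterated residue stated in the lemma, exploiting the periodicity and recursion properties of the $q$-gamma function. First, I would express $\mathcal{M}_q^{-1} g(Q_1, Q_2)$ as a double contour integral along vertical segments of length $2\pi i / \log q$ in the strip of holomorphicity of $g$, and then shift each contour successively to the left. The periodicity of $g(p_1, p_2) Q_1^{-p_1} Q_2^{-p_2}$ with period $2\pi i / \log q$ in each variable (valid thanks to $Q_1, Q_2 \in q^{\mathbb{Z}}$) reduces the contour integral to a sum of residues of $g$ inside one fundamental strip; the hypothesis $Q_i \in q^{\mathbb{Z}}$ is essential here to guarantee the exact periodicity of $Q_i^{-p_i}$.

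The key analytic input for the next step is the shifting identity
\[
\gamma_q(-d + x) \;=\; \gamma_q(x)\, \frac{\prod_{r=-\infty}^{0}(1 - q^{-x+r})}{\prod_{r=-\infty}^{d}(1 - q^{-x+r})}, \qquad d \in \mathbb{Z},
\]
which follows by a direct manipulation of the $q$-Pochhammer products appearing in $\gamma_q(p) = (q^{-1};q^{-1})_\infty/(q^{-p};q^{-1})_\infty$. Applying this with $d = d_1$ to $\gamma_q(p_1)$ and with $d = d_2 - a_j d_1$ to each $\gamma_q(p_2 - a_j p_1)$, the evaluation of $g$ on the local chart $(p_1, p_2) = (-d_1 + x_1, -d_2 + x_2)$ reproduces exactly the $(d_1, d_2)$-summand in the expression for $\omega^{K-\textnormal{th}}(x_1, x_2)$. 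This is the algebraic bridge between the residues coming from the Mellin contour deformation and the generating function in the lemma.

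I would then identify the sum of residues from the first step with the iterated residue $\textnormal{Res}_{x_2=0}\textnormal{Res}_{x_1=0}\omega^{K-\textnormal{th}}(x_1, x_2)\, dx_1 dx_2$: at each lattice point $(p_1, p_2) = (-d_1, -d_2)$ with $d_1, d_2 \geq 0$, the local residue coincides, up to the factor of $(\log q)^2$, with the Laurent coefficient of $1/(x_1 x_2)$ in the corresponding summand of $\omega^{K-\textnormal{th}}$ after the change of variables $p_i = -d_i + x_i$. Summing over $(d_1, d_2) \in \mathbb{Z}_{\geq 0}^2$ then organises the infinite family of residues into a single iterated residue of $\omega^{K-\textnormal{th}}$, while the overall prefactor $(\log q)^2$ arises naturally from $(\log q/2\pi i)^2 \cdot (2\pi i)^2$ when passing from contour integrals to residues.

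The main obstacle is the careful bookkeeping at lattice points where several sloped lines $p_2 = a_j p_1 - m$ coming from the factors $\gamma_q(p_2 - a_j p_1)$ intersect: at a point $(-d_1, -d_2)$ with $d_2 \geq a_j d_1$ for several indices $j$, the pole orders in both $p_1$ and $p_2$ grow, and the residue involves higher-order derivatives in both variables. The organisation into the generating function $\omega^{K-\textnormal{th}}$ is precisely what encodes these higher-order residues in a uniform way via the iterated residue at the origin; verifying that the corresponding Laurent coefficients indeed match, and justifying the vanishing of the boundary terms in the contour deformation (using the asymptotic behaviour of $\gamma_q$ for $|q|>1$), are the main technical issues.
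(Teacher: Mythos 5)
Your plan follows the paper's proof essentially step for step: invoke the $q$-Mellin inversion of Theorem \ref{thm:qosc_q_mellin_inversion_formula}, close the finite contours to the left exploiting the $2\pi i/\log q$-periodicity of $g(p_1,p_2)Q_1^{-p_1}Q_2^{-p_2}$ (whence the hypothesis $Q_i\in q^{\mathbb Z}$), apply the residue theorem to collect the poles of $\gamma_q$ at the nonpositive integer lattice, shift coordinates $p_i=x_i-d_i$, and use the $q$-gamma recursion to identify the summed local residues with the iterated residue of $\omega^{K\text{-th}}$ at the origin. The one ingredient you gloss over but the paper uses crucially is the Fano inequality $N-\sum_j a_j>0$: it is precisely what makes the ratio $\gamma_q(p_1)^N/\prod_j\gamma_q(a_jp_1-p_2)$ decay along the far vertical edge at $\mathrm{Re}(p_1)=-M$ (via Moak's $q$-Stirling asymptotics), so the boundary term vanishes. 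Your worry about compounding pole orders at lattice points where several lines $p_2=a_jp_1-m$ meet is dealt with in the paper by the choice $0<\max_j(1,a_j)\varepsilon_1<\varepsilon_2$: in the first ($p_1$) deformation those sloped poles lie outside the contour, so only the simple lattice poles of $\gamma_q(p_1)^N$ contribute, and in the second ($p_2$) deformation (taken with $a_0=0$) the surviving poles sit at $p_2\in\mathbb Z_{\le0}$; the generating-function bookkeeping then absorbs the higher-order behaviour exactly as you describe.
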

The proof of this lemma will be done in Section \ref{sssection:proof_lemma_qosc_2}.

\begin{proof}[Proof of Theorem \ref{thm:gamma_comparison_theorem_in_quantum_k_theory}]
	Using Lemma \ref{lemma:qosc_mellin_transform_computation}, we obtain that
	\[
		\mathcal{M}_{q} \mathcal{I}^{K-\textnormal{th}}(q,p_1,p_2)
		=
		\gamma_q(p_1)^N \prod_{j=0}^k \gamma_q(p_2 - a_j p_1).
	\]
	Using the $q$-Mellin inversion formula of Theorem \ref{thm:qosc_q_mellin_inversion_formula} and the computation of the inverse $q$-Mellin transform of Lemma \ref{lemma:qosc_inverse_q_mellin_as_residues}, we obtain that
	\[
		\mathcal{I}^{K-\textnormal{th}}(q,Q_1,Q_2)
		=
		(\log q)^2
\textnormal{Res}_{x_2 = 0} \textnormal{Res}_{x_1 = 0}
		\omega ^{K-\textnormal{th}} (x_1,x_2)
		dx_1dx_2,
	\]
where $\omega ^{K-\textnormal{th}} (x_1,x_2)$ is the form defined in
Lemma \ref{lemma:qosc_inverse_q_mellin_as_residues}. 
	Using Theorem \ref{thm:gamma_computation_of_intersection_products}, we identify the iterated residues above with the intersection product
	\[
		\mathcal{I}^{K-\textnormal{th}}(q,Q_1,Q_2)
		=
		(\log q)^2 \,
		\int_{[X_{\mu,K}]} x_1^N \prod_{j=0}^k (x_2 - a_j x_1) \omega^{K-\textnormal{th}}(x_1,x_2),
	\]
where $x_i=c_1(P_i^{-1})=-c_1(P_i)$. 	Finally,
	\begin{align*}
		(\log q)^2 \,
		x_1^N &\prod_{j=0}^k (x_2 - a_j x_1) \omega^{K-\textnormal{th}}(x_1,x_2)
		=
		(\log q)^2 \,
		\left[
			x_1^N \gamma_q(x_1)^N \prod_{j=0}^k (x_2 - a_j x_1) \gamma_q(x_2 - a_j x_1)
		\right]
		\times
		\\
		&\times
		\left[
			Q_1^{-x_1} Q_2^{-x_2}
			\sum_{d_1,d_2 \geq 0}
			Q_1^{d_1} Q_2^{d_2}
			\left(
				\frac{
					\prod_{r=-\infty}^0 \left(1-q^{-x_1+r}\right)
				}
				{
					\prod_{r=-\infty}^{d_1} \left(1-q^{-x_1+r}\right)
				}
			\right)^N
			\prod_{j=0}^k
			\frac{
				\prod_{r=-\infty}^0
				\left(
					1-q^{- x_2 + a_j x_1 + r}
				\right)
			}
			{
				\prod_{r=-\infty}^{d_2-a_j d_1}
				\left(
					1-q^{- x_2 + a_j x_1 + r}
				\right)
			}
		\right].
	\end{align*}
	The factor on the first line on the RHS of the above identity is the
        $q$-gamma class $\widehat{\gamma}_q\left(TX \right)$. Note
        that if $Q_i\in q^\ZZ$, then
        $\ell(Q_i)=-\tfrac{\log Q_i}{\log q}$. Therefore, the $q$-Chern
        character of $P^{-\ell(Q)}$ is $Q_1^{-x_1} Q_2^{-x_2}$ and the
        factor on the second line on the RHS of the above identity is
        the $q$-Chern character of the $K$-theoretic $I$-function
        $\textnormal{ch}_q\left( I_{X}^{K-\textnormal{th}}(q,Q_1,Q_2)
        \right)$. 
\end{proof}

\subsubsection{Proof of Lemma \ref{lemma:qosc_mellin_transform_computation}}\label{sssection:proof_lemma_qosc_1}

This lemma is a corollary of the following $q$-integral representation
of the $q$-gamma function due to
Koelink--Koornwinder \cite{KK:integral_representation_of_qgamma} (see
also Theorem 3.2 in \cite{DeSole_Kac:integral_representation_of_qgamma}) .
\begin{proposition}\label{prop:qosc_qgamma_integral_representation}
	Suppose that $q >1$.
	The $q$-gamma function admits the following $q$-integral representation:
	\[
		\Gamma_{q^{-1}}(p)
		=\left(1-q^{-1}\right)^{1-p}
		\left[
			\int_0^{\infty}
		\right]_q
		E_{q^{-1}}\left(\frac{x}{1-q}\right)
		x^p
		\frac{d_qx}{x}.
	\]
\end{proposition}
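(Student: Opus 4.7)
The plan is to verify the identity by a direct computation of the Jackson integral, using two classical ingredients: the product expansion of the $q$-exponential $E_{q^{-1}}$ and Heine's limiting form of the $q$-binomial theorem. This is essentially the argument of Koelink--Koornwinder, so the role of the sketch is to extract their computation in the conventions of the paper.

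First, I would rewrite the integrand using the identity $E_{q^{-1}}(x/(1-q)) = (q^{-1}x;q^{-1})_\infty$ recorded in the discussion of \eqref{eqn:gamma_oscillatory_integral_continuations}; note that the prefactor $(q^{-1}-1)/(1-q)$ simplifies to $q^{-1}$. Substituting $x = q^d$ into the Jackson integral of Definition \ref{def:qosc_jackson_integral}, one finds
\[
    \left[\int_0^\infty\right]_q E_{q^{-1}}\!\left(\tfrac{x}{1-q}\right) x^{p-1}\, d_q x
    = \sum_{d \in \mathbb{Z}} q^{dp}\, (q^{d-1};q^{-1})_\infty .
\]
The key observation is that for every $d \geq 1$ the Pochhammer $(q^{d-1};q^{-1})_\infty = \prod_{k \geq 0}(1-q^{d-1-k})$ contains the factor at index $k=d-1$, namely $1-q^0=0$. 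Thus the bilateral sum collapses to $d \leq 0$, and setting $m = -d \geq 0$ together with the elementary relation $(q^{-m-1};q^{-1})_\infty = (q^{-1};q^{-1})_\infty/(q^{-1};q^{-1})_m$ turns it into
\[
    (q^{-1};q^{-1})_\infty \sum_{m \geq 0} \frac{(q^{-p})^m}{(q^{-1};q^{-1})_m}.
\]

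Second, I would invoke the $a = 0$ specialization of the $q$-binomial theorem, $\sum_{m \geq 0} z^m/(u;u)_m = 1/(z;u)_\infty$, valid for $|z|<1$, applied with $u = q^{-1}$ and $z = q^{-p}$. The convergence hypothesis $|q^{-p}|<1$ is equivalent to $\operatorname{Re}(p)>0$, which is precisely the natural domain of $\Gamma_{q^{-1}}(p)$. This yields
\[
    \sum_{d \in \mathbb{Z}} q^{dp}\, (q^{d-1};q^{-1})_\infty
    = \frac{(q^{-1};q^{-1})_\infty}{(q^{-p};q^{-1})_\infty},
\]
and multiplying by $(1-q^{-1})^{1-p}$ identifies the right-hand side of the proposition with $\Gamma_{q^{-1}}(p)$ by definition.

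There is no serious obstacle here; the computation is mechanical once one notices the vanishing of the ``positive half'' of the bilateral series, which replaces the analogue of Ramanujan's $_1\psi_1$ summation with the much simpler one-sided $q$-binomial identity. The only genuine analytic point is the convergence of the resulting one-sided sum, which forces $\operatorname{Re}(p)>0$; for the intended use in Lemma \ref{lemma:qosc_mellin_transform_computation} this is automatic on the strip where the $q$-Mellin transform is defined, and the identity is then extended from $p$ in this strip to general $p$ by meromorphic continuation of both sides.
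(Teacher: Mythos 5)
The paper states this proposition with a $\qed$, citing Koelink--Koornwinder and De~Sole--Kac (Theorem 3.2) without reproducing the argument. Your computation is correct and is essentially the standard proof from those references: the bilateral Jackson sum collapses to a one-sided series because $(q^{d-1};q^{-1})_\infty$ vanishes for $d\geq 1$, after which Euler's $q$-exponential identity (the $a=0$ case of the $q$-binomial theorem) finishes the job, with the convergence constraint $\operatorname{Re}(p)>0$ correctly identified.
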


\begin{proof}[Proof of Lemma \ref{lemma:qosc_mellin_transform_computation}]
By definition, the $q$-Mellin transform $\mathcal{M}_{q}
\mathcal{I}^{K-\textnormal{th}}(q,p_1,p_2)$ is given by the expression 
	\[
		\left[
		\int_{(\mathbb{R}_{>0})^2}
		\right]_{q}
		\left(
		\left[
		\int_{\Gamma_\mathbb{R} }
		\right]_q
E_{q^{-1}}\left(\frac{x_1}{1-q}\right)\cdots E_{q^{-1}}\left(\frac{x_n}{1-q}\right)
		\omega_{\pi^{-1}(Q),q}
		\right)
		Q_1^{p_1} Q_2^{p_2}
		\frac{d_q Q_1}{Q_1}
		\frac{d_q Q_2}{Q_2}.
	\]
By definition, the above integral is a sum over all $x_j\in q^\ZZ$
($j\neq N,N+1$) and all $Q_1,Q_2\in q^\ZZ$ satisfying the relations
$Q_i=\prod_{j=1}^n x_j^{m_{ij}}$. These relations determine $x_N$ and
$x_{N+1}$ in terms of $x_j$ ($j\neq N,N+1$) and $Q_1,Q_2$. Note that
our sum can be viewed equivalently as a sum over all $x_j\in q^\ZZ$
($1\leq j\leq n$), where now we use the relations to solve for $Q_1$
and $Q_2$ in terms of $x_j$ ($1\leq j\leq n$). Clearly, the above
integral splits into a product of 1-dimensional integrals
\[
\mathcal{M}_{q} \mathcal{I}^{K-\textnormal{th}}(q,p_1,p_2) =
\prod_{j=1}^n 
\left[ \int_0^\infty \right]_{q} 
E_{q^{-1}}\left(\frac{x_j}{1-q}\right) x_j^{\alpha_j(p)} \frac{d_q x_j}{x_j},
\]
where $\alpha_j(p)=p_1 m_{1j}+p_2 m_{2j}$. Recalling the explicit
formulas for the moment matrix, we get 
	\[
		\mathcal{M}_{q} \mathcal{I}^{K-\textnormal{th}}(q,p_1,p_2)
		=
		\left(
			\left[
			\int_0^\infty
			\right]_{q} 
E_{q^{-1}}\left(\frac{x}{1-q}\right) \, x^{p_1}\, 
			\frac{d_q x}{x}
		\right)^N
		\prod_{j=0}^k
\left(
		\left[
			\int_0^\infty
		\right]_{q} 
E_{q^{-1}}\left( \frac{x}{1-q} \right) \, x^{p_2-a_j p_1}\, 
		\frac{d_q x}{x}\right).
	\]
Finally, we use Proposition \ref{prop:qosc_qgamma_integral_representation} to get
	\begin{align*}
		\left[\int_0^\infty\right]_q
E_{q^{-1}}\left(\frac{x}{1-q}\right) \, 
		x^{p}\, 
		\frac{d_q x}{x}
		=
		\left(
			1-q^{-1}
		\right)^{p-1}
		\Gamma_{q^{-1}}(p) = \gamma_q(p).
	\end{align*}
\end{proof}

\subsubsection{Proof of Lemma \ref{lemma:qosc_inverse_q_mellin_as_residues}}\label{sssection:proof_lemma_qosc_2}

The proof of this lemma relies on proving a contour deformation result to compute the integral in the $q$-Mellin inversion formula of Theorem \ref{thm:qosc_q_mellin_inversion_formula} using the residue theorem.
We will need a $q$-analogue of the Stirling formula.

\begin{proposition}[\cite{Moak:q_stirling}, Equation 2.13]\label{prop:gamma_moak_qstirling}
	Let $q < 1$ in this proposition only.
	For the usual $q$-gamma function $\Gamma_q$, the following $q$-analogue of the Stirling's formula holds for any $z$ such that $Re(z) > 0$:
	\[
		\log \Gamma_q(z)
		\sim
		\left(
			z - \frac{1}{2}
		\right)
		\log\left(
			\frac{1-q^z}{1-q}
		\right)
		+
		\frac{1}{\log(q)}
		\textnormal{Li}_2(1-q^z)
		+
		C_q
		+
		\sum_{k=1}^\infty
		\frac{B_{2k}}{(2k)!}
		\left(
			\frac{\log(q)}{q^z-1}
		\right)^{2k-1}
		q^z
		p_{2k-3}(q^z),
	\]
	where $C_q \in \mathbb{C}$ is some constant depending on $q$, and $p_k$ is a degree $k$ polynomial satisfying the recursion relation
	\begin{align*}
	p_0(z)=1,\quad 
		\textnormal{and }
		\forall k>0, \,
		p_k(z)
		=
		(z-z^2)p'_{k-1}(z)+(kz+1)p_{k-1}(z).
	\end{align*}
\end{proposition}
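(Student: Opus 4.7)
The plan is to derive this expansion by applying the Euler--Maclaurin summation formula to the logarithm of the standard infinite-product representation of $\Gamma_q$. Starting from $\Gamma_q(z) = (1-q)^{1-z}\prod_{n\geq 0}\tfrac{1-q^{n+1}}{1-q^{n+z}}$ and taking logs, I get $\log\Gamma_q(z) = (1-z)\log(1-q) + C'_q + S(z)$, where $C'_q := \sum_{n\geq 0}\log(1-q^{n+1})$ is a $z$-independent constant (to be absorbed into the final $C_q$) and $S(z):=-\sum_{n\geq 0}\log(1-q^{n+z})$. Since $q<1$, the summand $f(n) := -\log(1-q^{n+z})$ and all its derivatives decay geometrically in $n$, so Euler--Maclaurin with vanishing boundary at $+\infty$ produces the asymptotic series
\[
S(z) \sim \int_0^\infty f(n)\,dn + \tfrac{1}{2}f(0) - \sum_{k\geq 1}\frac{B_{2k}}{(2k)!}f^{(2k-1)}(0).
\]

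I would next extract the leading terms. The integral is computed by the substitution $u = q^{n+z}$, giving $-\tfrac{1}{\log q}\operatorname{Li}_2(q^z)$. Applying the dilogarithm reflection identity $\operatorname{Li}_2(x)+\operatorname{Li}_2(1-x) = \tfrac{\pi^2}{6}-\log x\cdot\log(1-x)$ at $x = q^z$ rewrites this as $\tfrac{1}{\log q}\operatorname{Li}_2(1-q^z) + z\log(1-q^z) + \text{const}(q)$. Combining with the half-value $\tfrac{1}{2}f(0) = -\tfrac{1}{2}\log(1-q^z)$ and the $(1-z)\log(1-q)$ contribution from outside $S$, an elementary rearrangement groups the $z$-dependent pieces into the leading pair $(z-\tfrac{1}{2})\log\tfrac{1-q^z}{1-q} + \tfrac{1}{\log q}\operatorname{Li}_2(1-q^z)$, with the remaining $z$-independent terms merged into $C_q$.

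For the Bernoulli tail, the key observation is that via $w := q^{n+z}$ the operator $\partial_n$ is conjugate to $\log q\cdot w\partial_w$. Starting from $f'(n) = \log q\cdot\tfrac{w}{1-w}$, induction on $k$ gives $f^{(k)}(0) = (\log q)^k\tfrac{q^z P_{k-1}(q^z)}{(1-q^z)^k}$ for polynomials obeying $P_k(w) = (w-w^2)P'_{k-1}(w) + ((k-1)w+1)P_{k-1}(w)$, with $P_0 = 1$. Setting $p_{k-1}:=P_k$ (so that $p_{-1}=P_0=1$) converts the recurrence to Moak's form $p_k(z)=(z-z^2)p'_{k-1}(z)+(kz+1)p_{k-1}(z)$. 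Rewriting $(1-q^z)^{2k-1} = -(q^z-1)^{2k-1}$ absorbs the overall minus sign from Euler--Maclaurin into the $\left(\tfrac{\log q}{q^z-1}\right)^{2k-1}$ factor of Moak's formula. Since $B_{2k+1}=0$ for $k\geq 1$, only odd-order derivatives contribute, explaining why only $2k-1$ appears in the exponent.

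The hard part will be rigorous control of the remainder: the series is divergent and only asymptotic, so one must estimate the Euler--Maclaurin error $\frac{1}{(2N)!}\int_0^\infty \widetilde{B}_{2N}(n)\,f^{(2N)}(n)\,dn$ (with $\widetilde{B}_{2N}$ the periodic Bernoulli polynomial) uniformly in $z$. The explicit rational formula for $f^{(2N)}$ combined with the $q^{n}$-decay should give a bound of order $O\bigl(q^z\cdot(\log q\cdot q^z/(1-q^z))^{2N-1}\bigr)$ uniformly in $\operatorname{Re}(z)>0$, which is exactly what is needed for the asymptotic expansion to hold in the stated form. The polynomial recursion itself is clean once one sets up the $w\partial_w$-calculus correctly, but matching the indexing convention of Moak (his $p_{2k-3}$ versus the derivative-theoretically natural $P_{2k-2}$) and tracking the sign produced by $(q^z-1)^{2k-1}$ requires careful bookkeeping.
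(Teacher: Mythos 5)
The paper gives no proof of this proposition; it is quoted directly from Moak's article (Equation 2.13 there) and cited as an external result, so there is no internal argument to compare against. Your reconstruction is sound and in fact follows the same Euler--Maclaurin route that Moak himself uses: all the main steps check out. The substitution $u=q^{n+z}$ correctly turns the integral into $-\operatorname{Li}_2(q^z)/\log q$; the reflection formula $\operatorname{Li}_2(x)+\operatorname{Li}_2(1-x)=\tfrac{\pi^2}{6}-\log x\log(1-x)$ at $x=q^z$, combined with $\tfrac12 f(0)=-\tfrac12\log(1-q^z)$ and the prefactor $(1-z)\log(1-q)$, produces exactly $\left(z-\tfrac12\right)\log\tfrac{1-q^z}{1-q}+\tfrac{1}{\log q}\operatorname{Li}_2(1-q^z)$ modulo a $z$-independent constant. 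Conjugating $\partial_n$ to $(\log q)\,w\partial_w$ yields $f^{(k)}(0)=(\log q)^k\,q^z P_{k-1}(q^z)/(1-q^z)^k$ with $P_0=1$ and $P_k=(w-w^2)P'_{k-1}+((k-1)w+1)P_{k-1}$, and the shift $p_{k-1}:=P_k$ reproduces Moak's recursion and explains the $p_{2k-3}$ index (the $k=1$ term $p_{-1}=P_0=1$, consistent with $P_1=1$ as well, so the edge case is harmless). The sign flip $(1-q^z)^{2k-1}=-(q^z-1)^{2k-1}$ absorbing the Euler--Maclaurin minus sign is also correct.

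One small imprecision worth flagging: the claimed error bound is not literally uniform on the whole half-plane $\operatorname{Re}(z)>0$, since $1-q^z\to 0$ as $\operatorname{Re}(z)\to 0^+$; the asymptotic is in $|z|\to\infty$ with $\operatorname{Re}(z)$ bounded away from $0$ (or, as Moak states it, as $\operatorname{Re}(z)\to+\infty$). This does not affect the use made of the proposition in the paper, which only needs the $\operatorname{Re}(z)\to\infty$ behavior.
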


\begin{proof}[Proof of Lemma \ref{lemma:qosc_inverse_q_mellin_as_residues}]
	To apply Theorem \ref{thm:qosc_q_mellin_inversion_formula}, we choose real numbers $\varepsilon_1, \varepsilon_2 > 0$ such that $0 < \textnormal{max}(1,a_j) \varepsilon_1 < \varepsilon_2$.
	We also choose to integrate first with respect to the coordinate $p_1$, then with respect to $p_2$.
	Thus, we will compute
	\[
		\mathcal{M}_q^{-1} g (Q_1,Q_2)
		:=
		\left(
			\frac{\log(q)}{2\pi \ii}
		\right)^2
		\int_{p_2=\varepsilon_2 - \ii \pi / \log(q)}^{\varepsilon_2 + \ii \pi / \log(q)}
		\left(
		\int_{p_1=\varepsilon_1 - \ii \pi / \log(q)}^{\varepsilon_1 + \ii \pi / \log(q)}
			g(p_1,p_2) Q_1^{-p_1} Q_2^{-p_2} dp_1
		\right) dp_2,
	\]
where
	\[
		g(p_1,p_2) = 
		\gamma_q(p_1)^N \prod_{j=0}^k \gamma_q(p_2 - a_j p_1).
	\]
	We consider the following closed curve $\mathcal{C}(M)$, with the usual orientation :
	\begin{center}
		\begin{tikzpicture}
			\draw[help lines,->] (-3,0) -- (3,0) coordinate (xaxis);
			\draw[help lines,->] (0,-3) -- (0,3) coordinate (yaxis);

			\draw
				(0,0)  node[scale=3](O){.}
				(-1,0)  node[scale=3](O){.}
				(-2,0)  node[scale=3](O){.}
				(-3,0)  node[scale=3](O){.}
			;
			\path[draw,line width=0.8pt,postaction=decorate] (1,-1) node[below right] {$\varepsilon_1 - \ii \pi / \log(q)$} -- (1,1) node[above right] {$\varepsilon_1 + \ii \pi / \log(q)$} -- (-2.8,1) node[above left] {$-M + \ii \pi / \log(q)$} -- (-2.8,-1) node[below left] {$-M - \ii \pi / \log(q)$} -- (1,-1);

			\node[below] at (xaxis) {$\textnormal{Re}(p_1)$};
			\node[left] at (yaxis) {$\textnormal{Im}(p_1)$};
			\node[below left] {$0$};
			\node[below] at (-1,0) {$-1$};
			\node at (-1.5,-0.3) {$\cdots$};
			\node[below] at (-2.2,0) {$-[M]$};
		\end{tikzpicture}
	\end{center}
where we picked some large number $M > \! > 0$ such that this curve does not encounter any of the poles of the function $g$.
	Note also that $\varepsilon_1$ should be small enough so that the poles of the form $p_1 = \frac{k+p_2}{a_j}, k \in \mathbb{Z}_{\geq 0}$ sit outside of the curve.
	Let us prove that for the inverse $q$-Mellin transform, when $M \to +\infty$, integrating along the closed curve $\mathcal{C}(M)$ is the same as integrating along the cycle in the formula of Theorem \ref{thm:qosc_q_mellin_inversion_formula}, $\int_{p_1=\varepsilon_1 - \ii \pi / \log(q)}^{\varepsilon_1 + \ii \pi / \log(q)}$.

	For the horizontal lines of $\mathcal{C}(M)$, let $t \in (-M,\varepsilon_1) $ and $Q_1=q^{k_1}, k_1 \in \mathbb{Z}$.
	Then, notice that the integrand satisfies
	\[
		g(t + \ii \pi / \log(q),p_2) Q_1^{-(t + \ii \pi / \log(q))}
		=
		g(t - \ii \pi / \log(q),p_2) Q_1^{-(t - \ii \pi / \log(q))}
	\]
	Therefore, when computing
	$
		\int_{p_1 \in \mathcal{C}(M)}^{\varepsilon_1 + \ii \pi / \log(q)}
		g(p_1,p_2) Q_1^{-p_1} Q_2^{-p_2} dp_1
	$,
	the integrals along the horizontal lines cancel each other.

	For the vertical line $(-M + \ii \pi / \log(q),-M - \ii \pi / \log(q))$ of $\mathcal{C}(M)$, write $p_1=a+ib, a < 0$.
	For such choice (recall $q > 1$),
	\[
		|
			\gamma_q(p_1)
		|
		=
		\left|
			\left( 1-q^{-1} \right)
		\frac{
			\left(q^{-1};q^{-1}\right)_\infty
		}{
			\left(q^{-p_1};q^{-1}\right)_\infty
		}
		\right|
		\leq
		|1-q| \left(q^{-1};q^{-1}\right)_\infty q^a.
	\]
Therefore $|\gamma_q|$ has exponential decay as $a \to -\infty$.
Let us explain that the function given by
$|\gamma_q(p_1)\gamma_q(-p_1)|$ will also have exponential decay when
$\textnormal{Re}(p_1) \to \pm \infty$. 
	Using Moak's $q$-analogue of the Stirling formula (see
        Proposition \ref{prop:gamma_moak_qstirling}, recall
        $\gamma_q(p)=(1-q^{-1})^{p-1} \Gamma_{q^{-1}}(p)$), we have for
        $\textnormal{Re}(p)>0$, 
	\[
		\log \left|\Gamma_{q^{-1}}(p)\right| \sim_{p \to \infty} -\textnormal{Re}(p) \log(1-q^{-1}),
	\]
while, for $\textnormal{Re}(p)<0, p \notin \mathbb{Z}_{\leq 0}$, we have
	\[
		\left|
			\left(
				q^{-p};q^{-1}
			\right)
		\right|
		\sim
		\left|
			\prod_{k : -\textnormal{Re}(p)-k \geq 0} (1-q^{p-k})
		\right|
		\sim
		q^{-\textnormal{Re}(p)(-\textnormal{Re}(p)+1)/2}.
	\]
	Thus, the function given by $|\gamma_q(p_1)\gamma_q(-p_1)|$
        has exponential decay when $\textnormal{Re}(p_1) \to \pm
        \infty$. 
	Finally, we can write the integrand as
	\[
		\left(
			\frac{\gamma_q(p_1)^N}{\prod_{j=0}^k \gamma_q(a_j p_1 - p_2)}
		\right)
		\left(
			\prod_{j=0}^k \gamma_q(p_2 - a_j p_1)\gamma_q(-p_2 + a_j p_1)
		\right).
	\]
Our previous observation gives that the second factor in the big parentheses
has exponential decay.
	Using the Fano condition $N - \sum a_j > 0$, the first factor $\left| \frac{\gamma_q(p_1)^N}{\prod_{j=0}^k \gamma_q(a_j p_1 - p_2)} \right|$ also has exponential decay.
	Therefore, we have proved for $|Q_1| < 1$,
	\[
		\lim_{M \to +\infty}
		\int_{p_1 \in \mathcal{C}(M)}
		g(p_1,p_2) Q_1^{-p_1} Q_2^{-p_2} dp_1
		=
		\int_{p_1=\varepsilon_1 - \ii \pi / \log(q)}^{\varepsilon_1 + \ii \pi / \log(q)}
		g(p_1,p_2) Q_1^{-p_1} Q_2^{-p_2} dp_1.
	\]
Now, we apply the residue theorem to the left hand side, using that the poles of the integrand inside the contour are exactly at $p_1 \in \mathbb{Z}_{\leq 0}$. We obtain
	\[
		\frac{1}{2\pi \ii} \int_{p_1=\varepsilon_1 - \ii \pi / \log(q)}^{\varepsilon_1 + \ii \pi / \log(q)}
		g(p_1,p_2) Q_1^{-p_1} Q_2^{-p_2} dp_1
		=
		\sum_{d_1 \geq 0}
		\textnormal{Res}_{p_1=-d_1} g(p_1,p_2) Q_1^{-p_1} Q_2^{-p_2} dp_1.
	\]
	We will do a similar contour deformation for the coordinate $p_2$, using the same contour as for the previous coordinate $p_1$.
	Using the same reasoning, we obtain that
	\begin{align*}
		&\lim_{M \to +\infty}
		\int_{p_2 \in \mathcal{C}(M)}
		\sum_{d_1 \geq 0}
		\textnormal{Res}_{p_1=-d_1} g(p_1,p_2) Q_1^{-p_1} Q_2^{-p_2} dp_1
		dp_2
		\\
		&=
		\int_{p_2=\varepsilon_2 - \ii \pi / \log(q)}^{\varepsilon_2 + \ii \pi / \log(q)}
		\sum_{d_1 \geq 0}
		\textnormal{Res}_{p_1=-d_1} g(p_1,p_2) Q_1^{-p_1} Q_2^{-p_2} dp_1
		dp_2.
	\end{align*}
	Recall that $
		g(p_1,p_2)
		:=
		\gamma_{q}(p_1)^N \prod_{j=0}^k \gamma_q(p_2 - a_j p_1)$ and $a_0:=0
	$, 
	therefore when applying the residue theorem, the poles that are inside the contour are exactly given by $p_2 \in \mathbb{Z}_{\leq 0}$.
	Next, we do a change of variable $p_1 = x_1 -d_1, p_2 = x_2 - d_2$ and use Fubini theorem to permute the sums and residues.
	To obtain the identity announced in the statement of the
        lemma, it remains to use the difference equation for the
        function $\gamma_q$, that is,  
	\[
		\gamma_q(p-1)
		=
		\frac{(q^{-1};q^{-1})_\infty}{(q^{-p+1};q^{-1})_\infty}
		=
		\frac{1}{1-q^{-p+1}}\gamma_q(p).
	\]
\end{proof}


\subsection{Confluence of the comparison theorem}\label{sec:confl_comp_thm}

We would like to use Theorems \ref{t1} and
\ref{thm:gamma_comparison_theorem_in_quantum_k_theory} to give a 2nd
proof of Theorem \ref{thm:gamma_cohomology_oscillatory_i_function} in the case of Fano toric
manifolds of Picard rank 2.  If we compare the proofs of Theorem 
\ref{thm:gamma_comparison_theorem_in_quantum_k_theory} and Theorem 
\ref{thm:gamma_cohomology_oscillatory_i_function} for Picard rank 2, then we see that the
argument in the $K$-theoretic case is somewhat easier. Therefore, it
looks promising that our proof of Theorem
\ref{thm:gamma_comparison_theorem_in_quantum_k_theory} generalizes to
all weak Fano toric manifolds and hence by using the confluence result
from Theorem \ref{t1}, we would be able to obtain a new proof of
Iritani's theorem. 


Suppose now that $X=X_{\mu,K}$ is a Fano toric manifold of Picard rank
2. The main difficulty in our argument comes from the fact that the oscillatory integral
$\mathcal{I}^{K-\textnormal{th}}(q,Q_1,Q_2)$ is defined only for
$Q_1,Q_2\in q^\ZZ$. Therefore, rescaling $Q_i$ by $(q-1)^{m_i}$ and
passing to the limit $q\to 1$ should be done more carefully. Let us
define a sequence of integers $a_k>0$ ($k\geq 1$) as follows. The
equation $q^{k+1}-q^k-1=0$ has a unique solution $q_k>1$. Put
$a_k=q_k-1$. Note that $q_k-1= q_k^{-k}\in q_k^{\ZZ}$. 
\begin{proposition}\label{prop:ak}
a) We have $0<a_k<\frac{1}{\sqrt{k}}$ for all sufficiently large $k$. 

b) If $Q>0$ is a real number, then there exists a sequence of integers
$b_k$, such that, $q_k^{b_k}\to Q$ when $k\to \infty$. 
\end{proposition}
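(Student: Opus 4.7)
\medskip

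\noindent\textbf{Plan of proof.} The main observation is that the defining equation $q^{k+1}-q^k-1=0$ can be rewritten as $q^k(q-1)=1$, so that $a_k=q_k-1$ satisfies the identity
\[
   (1+a_k)^k \, a_k \;=\; 1.
\]
From here both parts reduce to elementary asymptotic estimates. Let me outline them.

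\medskip

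\noindent\textbf{Part (a).} I would prove the bound directly by evaluating $f_k(q):=q^k(q-1)-1$ at the candidate point $q=1+1/\sqrt{k}$. Since $f_k$ is strictly increasing on $(1,\infty)$ (its derivative on $q>1$ equals $kq^{k-1}(q-1)+q^k>0$) and $f_k(q_k)=0$, the inequality $a_k<1/\sqrt{k}$ is equivalent to $f_k(1+1/\sqrt{k})>0$, i.e.\ to
\[
   (1+1/\sqrt{k})^k \;>\;\sqrt{k}.
\]
Taking logarithms and using the elementary estimate $\log(1+x)\geq x-x^2/2$ for $x>0$, the left-hand side is bounded below by $k(1/\sqrt{k}-1/(2k))=\sqrt{k}-1/2$, which dominates $\tfrac{1}{2}\log k$ for all sufficiently large $k$. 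This gives part (a). Note that the same identity $(1+a_k)^k a_k=1$ also shows $a_k\to 0$, hence $q_k\to 1$, which is the key input for part (b).

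\medskip

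\noindent\textbf{Part (b).} Since $q_k\to 1$, we have $\log q_k\to 0^+$. Given $Q>0$, I would define $b_k\in\mathbb{Z}$ to be the nearest integer to $\log Q/\log q_k$. Then
\[
   \bigl|\,b_k\log q_k-\log Q\,\bigr|\;\leq\;\tfrac{1}{2}\log q_k\;\longrightarrow\;0,
\]
so $q_k^{b_k}=\exp(b_k\log q_k)\to Q$, as claimed.

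\medskip

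\noindent\textbf{Main obstacle.} There is essentially no serious obstacle: once one recognizes that $q_k^k(q_k-1)=1$ forces $q_k\to 1$ at a rate strictly faster than any negative power less than $1$ of $k$, both statements reduce to one-variable calculus. The only mild care is in part (a), where one must justify monotonicity of $f_k$ on $(1,\infty)$ in order to convert $f_k(1+1/\sqrt{k})>0$ into the desired upper bound on $q_k$.
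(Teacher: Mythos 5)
Your proof is correct. For part (a) you take the same route as the paper: check that $g(q)=q^{k+1}-q^{k}-1$ is increasing on $(1,\infty)$ and evaluate at $q=1+1/\sqrt{k}$. The only difference is the final elementary estimate: the paper uses $(1+1/\sqrt{k})^{k}\geq 2^{\sqrt{k}}$, whereas you use $\log(1+x)\geq x-x^{2}/2$ to get $k\log(1+1/\sqrt{k})\geq \sqrt{k}-1/2 > \tfrac{1}{2}\log k$. Both are fine; yours is if anything a bit cleaner because it works directly with logarithms.

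For part (b) you take a genuinely different (and simpler) route. The paper argues via the interval $[c/a_k,(c+1/\sqrt{k})/a_k]$, whose length $1/(\sqrt{k}\,a_k)>1$ relies on the quantitative bound $a_k<1/\sqrt{k}$ from (a), and then rewrites $q_k^{b_k}=((1+a_k)^{1/a_k})^{a_k b_k}\to e^{c}$. You instead take $b_k$ to be the nearest integer to $\log Q/\log q_k$ and observe $|b_k\log q_k-\log Q|\leq\tfrac12\log q_k\to 0$. This only uses $q_k\to 1$ (equivalently $a_k\to 0$), which you extract directly from the identity $(1+a_k)^{k}a_k=1$ without the rate in (a), and it handles all $Q>0$ at once instead of reducing first to $Q>1$. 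So your (b) is a mild simplification, though it does make (a) and (b) logically more independent than in the paper.

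Both parts check out; in particular the inequality $\log(1+x)\geq x-x^2/2$ for $x>0$ is valid (its difference has derivative $x^2/(1+x)\geq 0$), and the nearest-integer estimate $|b_k-\log Q/\log q_k|\leq 1/2$ gives exactly what you claim.
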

\proof
a) Put $g(q)=q^{k+1}-q^k-1$. We have to prove that
$q_k=1+a_k<1+1/\sqrt{k}$ for $k\gg 0$. Since $g'(q) = q^{k-1} ((k+1)
q- k)>0$ for $q>1$. The inequality is equivalent to
$g(1+1/\sqrt{k})>g(q_k)=0$. On the other hand, 
\ben
g(1+1/\sqrt{k}) = \left(1+\frac{1}{\sqrt{k}}\right)^k\,
\frac{1}{\sqrt{k}} -1 > \frac{2^{\sqrt{k}}}{\sqrt{k}} -1
\een
and the above expression tends to $+\infty$ when $k\to +\infty$. In
particular, the inequality that we need holds for $k\gg 0$. 

b) It is sufficient to consider the case when $Q>1$, that is, $c=\log
Q>0$. We claim that the interval $[c/a_k, (c+1/\sqrt{k})/a_k]$ has
length $>1$ for $k\gg 0$ and hence it contains at least one integer
$b_k$. Indeed,  
\ben
\frac{1}{a_k}\,\left( c+ \frac{1}{\sqrt{k}} \right) -
\frac{c}{a_k} = \frac{1}{\sqrt{k} a_k} >1,
\een
where the last inequality holds for $k\gg 0$ thanks to part a). Let
$b_k$ be an integer in the above interval, then
\ben
0< a_k b_k -c <\frac{1}{\sqrt{k}}.
\een
In particular, $a_k b_k \to c$ when $k\to \infty$. Finally, 
\ben
q_k^{b_k} = (1+a_k)^{b_k} = \left( (1+a_k)^{1/a_k} \right)^{a_k b_k}
\to e^{c} = Q,
\een
where we used that according to part a), $a_k\to 0$, so
$(1+a_k)^{1/a_k}\to e$. 

\begin{proof}[Proof of Theorem \ref{thm:gamma_cohomology_oscillatory_i_function}]

We are going to apply the identity in Theorem  
\ref{thm:gamma_comparison_theorem_in_quantum_k_theory}
in the following settings. Let $Q_i^\circ$ ($i=1,2$) be positive real
numbers. Let us choose integer sequences $b^i_k$ ($k\geq 1$), such that,
$q_k^{b^i_k}\to Q_i^\circ$ when $k\to \infty$ (see Proposition
\ref{prop:ak}, b)). Let us fix an integer $k>0$ and set $q=q_k$, $Q_i=
(q_k-1)^{m_i} q_k^{b^i_k}$, where $m_i=\sum_{j=1}^n m_{ij}$. Let us
rewrite the $q$-oscillatory integral  
\beq\label{oi_confl}
(1-q^{-1})^{N-1+k} \left[\int_{\Gamma_{\mathbb{R}}}\right]_q 
\prod_{j=1}^n E_{q^{-1}}\left( \frac{x_j}{1-q}\right) \, 
\omega_{\pi^{-1}(Q),q}
\eeq
in two different ways. First, by definition the integral is a sum over
all $x_j\in q^\ZZ$ satisfying the relations $\prod_{j=1}^n x_j^{m_ij}
= Q_i$. Let us change the variables by $x_j= (q-1) y_j$. Then, since
$q-1=q_k-1=q_k^{-k}\in q^\ZZ$, the integral becomes a sum over all
$y_j\in q^\ZZ$ satisfying the relations $\prod_j y_j^{m_{ij}} =
(q-1)^{-m_i} Q_i $. Therefore, the integral turns into 
\ben
(1-q^{-1})^{N-1+k} 
\left[\int_{\Gamma_{\mathbb{R}}}\right]_q 
\prod_{j=1}^n E_{q^{-1}}\left( -y_j\right) \, 
\omega_{\pi^{-1}((q-1)^{-m_1}Q_1, (q-1)^{-m_2} Q_2),q}.
\een
In the limit when $k\to \infty$, we have $q=q_k\to 1$, the Jackson
integral $(1-q^{-1}) ^{N-1+k}
\left[\int_{\Gamma_{\mathbb{R}}}\right]_q$ tends to the Riemannian
integral $\int_{\Gamma_{\mathbb{R}}}$ while the integrand tends to 
$\prod_j e^{-y_j} \omega_{\pi^{-1}(Q^\circ)}$, where
$Q^\circ=(Q_1^\circ,Q_2^\circ)$ and we used that $(q-1)^{-m_i} Q_i =
q_k^{b^i_k}\to Q_i^\circ$. In other words, in the limit $k\to \infty$,
the Jackson integral \eqref{oi_confl} tends to
$\mathcal{I}^{\textnormal{coh}}(1,Q^\circ)$. 

The integral \eqref{oi_confl} coincides with 
$(1-q^{-1}) ^{N-1+k}
\mathcal{I}^{K-\textnormal{th}}(q,Q_1,Q_2)$. Using Theorem 
\ref{thm:gamma_comparison_theorem_in_quantum_k_theory}, we rewrite
\eqref{oi_confl} as
\beq\label{cc_confl}
(1-q^{-1}) ^{N-1+k} \int_{[X]} 
\widehat{\gamma}_q(TX) 
\operatorname{ch}_q\left(
I_X^{K-\textnormal{th}}(q,Q) \right),
\eeq
where $X=X_{\mu,K}$. Under the Fano condition the $I$-function
essentially coincides with the $J$-function (see 
\cite{Givental:perm_toric_q_hypergeometric}, Theorem  p.8), that is, 
\beq
\label{gamma_mirror_map_in_qk}
I_X^{K-\textnormal{th}}(q,Q) = 
(1-q)^{-1} \, P^{-\ell(Q)}\,
J_X^{K-\textnormal{th}}(q,Q).
\eeq
Since $Q_i\in q^\ZZ$, we have 
$P^{-\ell(Q)} = \prod_i P_i^{\frac{\log Q_i}{\log q}}$. The $q$-Chern
  character of the $I$-function takes the form 
\begin{align*}
\operatorname{ch}_q\left(
I_X^{K-\textnormal{th}}(q,Q) \right) & = 
(\log q)^{\operatorname{deg}} 
\operatorname{ch}\left(
P_1^{\frac{\log Q_1}{\log q}}P_2^{\frac{\log Q_2}{\log q} }
(1-q)^{-1} J_X^{K-\textnormal{th}}(q,Q)\right) = \\
& = 
-\frac{\log q}{q-1}\, e^{-\sum_i p_i \log Q_i}  \, 
(\log q)^{\operatorname{deg}-1} 
\operatorname{ch}\left( 
J_X^{K-\textnormal{th}}(q,Q)
\right).
\end{align*}
Let us introduce also the $\Gamma_{q^{-1}}$-class of a vector bundle
$E$ by 
\ben
\widehat{\Gamma}_{q^{-1}}(E):= \prod_{\delta: \textnormal{ Chern roots of } E}
\delta \, \Gamma_{q^{-1}} (\delta).
\een
Note that 
\ben
\widehat{\gamma}_q(TX) = 
\left( \frac{\log q}{1-q^{-1}} \right)^2 
(1-q^{-1})^{-N+1-k} (1-q^{-1})^\rho \Gamma_{q^{-1}}(TX)
\een
and that, since $\rho= m_1 p_1 + m_2 p_2$,
\ben
(1-q^{-1})^\rho  \, e^{-\sum_i p_i \log Q_i} = 
q^{-\rho} \, e^{-\sum_i p_i \log \left((q-1)^{-m_i} Q_i\right)}.
\een
We get that \eqref{cc_confl} can be written as 
\ben
-\left( \frac{\log q}{1-q^{-1}} \right)^3
\int_{[X]} \Gamma_{q^{-1}}(TX) 
q^{-\rho-1} \, 
e^{-\sum_i p_i \log \left((q-1)^{-m_i} Q_i\right)}\, 
(\log q)^{\operatorname{deg}-1} 
\operatorname{ch}
\left( 
J_X^{K-\textnormal{th}}(q,Q)\right).
\een
Let us compute the limit of the above expression when $k\to
\infty$. Since $Q_i= (q_k-1)^{m_i} q_k^{b^i_k}$ and $q_k^{b^i_k}\to
Q_i^\circ$, Theorem \ref{t1} implies that 
$(\log q)^{\operatorname{deg}-1} 
\operatorname{ch}
\left( 
J_X^{K-\textnormal{th}}(q,Q)\right)\to
J_X^{\textnormal{coh}}(1,Q^\circ)$. Therefore, the limit of
\eqref{cc_confl} is 
\ben
-\int_{[X]} \widehat{\Gamma}(TX) \, e^{-\sum_i p_i \log Q_i^\circ}\, 
J_X^{\textnormal{coh}}(1,Q^\circ).
\een
Since $J_X^{\textnormal{coh}}(1,Q^\circ) =
-I_X^{\textnormal{coh}}(1,Q^\circ)$, we get precisely the identity in
Theorem \ref{thm:gamma_cohomology_oscillatory_i_function} for the case
when $z=1$. The general case follows from the homogeneity property 
\eqref{eqn:intro_cohom_J_recover_z}.  
\end{proof}

\section*{Acknowledgement}

We are thankful to the anonymous referee for spotting out several gaps in our proof of Theorem \ref{t1}.
This work is supported by World Premier International Research Center Initiative (WPI Initiative), Ministry of Education, Culture, Sports, Science and Technology, Japan.
The work of the first author is partially supported by Japan Society for the Promotion of Science KAKENHI [JP19F19802] and Grant-in-aid (Kiban C) [17K05193] and [22K03265].
The second author conducted their research while being a JSPS International Research Fellow (Standard program at Kavli IPMU, University of Tokyo). They are also supported by JSPS KAKENHI [JP19F19802]. 
\appendix
\section{Proof of the Givental--Tonita recursion}\label{sec:GT_recursion}

We would like to outline the proof of formula \eqref{GT-recursion}. The idea is to express the integral in \eqref{zeta_stratum} as an integral over the fiber product in \eqref{stem_iso}. The main difficulty is to find the image of the inertia tangent $T_{IX_{0,1,d}}$ and the inertia normal $N_{IX_{0,1,d}}$ bundles on $I_\zeta X_{0,1,d}$ via the isomorphism \eqref{stem_iso}. Strictly speaking, we have to solve this problem for any $B$-point of $I_\zeta X_{0,1,d}$, where $B$ is an arbitrary scheme $B$. However, we will do this only in the case when $B=\operatorname{Spec} \CC$.

\subsection{Virtual tangent space}\label{sec:vts}

Suppose that $\C=(C,s_1,\dots,s_n, f)$ is a point in the moduli space
$X_{g,n,d}$. The restriction of the virtual tangent bundle
$T_{g,n,d}$ on $X_{g,n,d}$ to $\C$ is the virtual vector space
$T(\C)= - T^0(\C)+T^1(\C)-T^2(\C)$, where  
\begin{align}
\nonumber
T^0(\C) & :=H^0(C,\T_C(-s_1-\cdots - s_n)), \\
\nonumber
T^1(\C) & :=H^1(C,\T_C(-s_1-\cdots - s_n)) + H^0(C,f^*\T_X) + 
\bigoplus_{z\in \operatorname{Sing}(C)} T'_z\otimes T''_z, \\
\nonumber
T^2(\C) & :=H^1(C,f^*\T_X).
\end{align}
Here the notation is as follows. $\T_C$ is the sheaf of holomorphic
vector fields on $C$ and $\T_C(-s_1-\cdots - s_n)$ is the sheaf of
holomorphic vector fields vanishing at the marked points
$s_1,\dots,s_n$. $T'_z$ and $T''_z$ are the tangent spaces at $z$ to
the two irreducible components of $C$ that meet at $z$. The groups $T^i(\C)$ come from the deformation theory of the stable map $\C=(C,s_1,\dots,s_n,f)$. Namely, $T^0(\C)$ is 
the Lie algebra of the group of automorphisms $\operatorname{Aut}(C,s_1,\dots,s_n)$, $T^1(\C)$ is the vector space of infinitesimal deformations, and $T^2(\C)$ is the obstruction space. We refer to \cite{Pa} for more details on deformation theory. 

Suppose now that $(\C,g)=(C,s_1,\dots,s_n,f,g)$ is a point in the inertia moduli space $IX_{0,n,d}$. The automorphism $g$ acts on the virtual tangent space $T(\C)$. Let us denote  by  
\ben
T(\C,g)_\lambda:=\{ v\in T(\C)\ |\ g v=\lambda v\}
\een
the eigensubspace with eigenvalue $\lambda$ and let $I_\lambda T_{0,n,d}$ be the virtual vector bundle on $IX_{0,n,d}$ whose fiber at $(\C,g)$ is $T(\C,g)_\lambda$. Note that the inertia tangent bundle is $T_{IX_{0,n,d}}:= I_1T_{0,n,d}$ and the inertia normal bundle is $N_{IX_{0,n,d}} =\oplus_{\lambda: \lambda\neq 1} I_\lambda T_{0,n,d}$. It is convenient to introduce the notation 
\ben
\widetilde{\operatorname{td}}(T_{0,n,d}) = 
\prod_{\lambda\in \CC^*} 
\operatorname{td}_{\lambda^{-1}} (I_\lambda T_{0,n,d}),
\een
where $\operatorname{td}_1$ is the usual Todd class and $\operatorname{td}_\lambda$, $\lambda\neq 1$, is the {\em moving Todd class} 
\ben
\operatorname{td}_\lambda (E) = 
\prod_{\mbox{ Chern roots $x$ of $E$}}   \frac{1}{1-\lambda e^{-x}}.
\een
Then the integral in formula \eqref{zeta_stratum} can be written as 
\beq\label{zeta_stratum_int}
\tau_{\zeta,d,a}(q):=
\int_{[I_\zeta X_{0,1,d}]} 
\widetilde{\operatorname{td}}(T_{0,1,d}) \, 
\frac{\operatorname{ev}_1^* \operatorname{ch}(\Phi^a)}{
1-q\operatorname{ch} (\operatorname{Tr} L_1)}.
\eeq
Let us recall the notation from Section \ref{sec:DM-isom}.
Suppose that $(\C,g)=(C,s_1,f,g)$ is a point in
$I_\zeta X_{0,1,d}$  obtained via the isomorphism \eqref{stem_iso} from an
orbifold stable map $\C_0:=(C_0,s^0_1,\dots, s^0_{k+2},f_0)\in
[X/\mu_m]^{\eta,1,\dots,1,\eta^{-1}}_{0,k+2,d_0} $ and a
collection of $k+1$ stable maps $(\C_i,g_i)=(C_i,s^i_1,f_i,g_i)\in
I_{\eta_i}X_{0,1,d_i}$ ($1\leq i\leq k+1$). In other words, the curve
$C$ decomposes as in \eqref{sl-structure} and the automorphism $g$ has
the form \eqref{g0}--\eqref{gi}. Let
$\widetilde{\C}_0:=
(\widetilde{C}_0,
s_1,
s_{i,a}(1\leq i\leq k,1\leq a\leq m),
s_{k+2},
\widetilde{f}_0)$
be the stable map, such that, $\C_0=[\widetilde{\C}_0/\mu_m]$. 
Our first goal, is to express the eigensubspaces $T(\C,g)_\lambda$ in
terms of the eigensubspaces $T(\widetilde{\C}_0,g_0)_{\lambda_0}$ and
$T(\C_i,g_i)_{\lambda_i}$ ($1\leq i\leq k$). 
\begin{lemma}\label{le:splitting}
Let $\C=(C,s_1,\dots,s_n,f)$ be a stable map obtained from gluing two
stable maps
\ben
\C'=(C', s_1,\dots,s_{n_1}, z',f_1) \quad \mbox{and}\quad 
\C''=(C'',s_{n_1+1},\dots, s_{n_1+n_2}, z'',f_2),
\een
that is, we have
$n_1+n_2=n$,  $f_1(z')=f_2(z'')$, and the points $z'$ and $z''$ are
identified yielding a node $z$ of $C$.
Then the virtual tangent space decomposes as follows:
\ben
T(\C)=T(\C')+T(\C'') + T_{z'} C'\otimes T_{z''} C''-  T_{f(z)} X.
\een
\end{lemma}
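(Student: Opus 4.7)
The plan is to compare the two sides via the partial normalization $\nu\colon \widetilde{C}=C'\sqcup C''\to C$ at the distinguished node $z$ (leaving any other nodes of $C$ intact, as they are inherited from $C'$ and $C''$), and to split each of $T^0(\C),T^1(\C),T^2(\C)$ into a piece coming from $\C'$, a piece coming from $\C''$, and a correction term supported at $z$.

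For the tangent-sheaf terms I would first establish the sheaf-level identity
\[
\T_C(-s_1-\cdots-s_n)\;\cong\;\nu_*\T_{\widetilde{C}}(-s_1-\cdots-s_n-z'-z'').
\]
This reduces to a local computation at $z$: in local coordinates $uv=0$, a derivation of $\CC[\![u,v]\!]/(uv)$ has the form $A(u)\partial_u+B(v)\partial_v$ with $A(0)=B(0)=0$, which is precisely a pair of vector fields on the two branches vanishing at their respective preimages of $z$. Since $\nu$ is affine, the identification passes to cohomology via $H^i(C,\nu_*\mathcal{F})=H^i(\widetilde{C},\mathcal{F})$, yielding
\[
H^i\!\bigl(C,\T_C(-s_*)\bigr)=H^i\!\bigl(C',\T_{C'}(-s_1-\cdots-s_{n_1}-z')\bigr)\oplus H^i\!\bigl(C'',\T_{C''}(-s_{n_1+1}-\cdots-z'')\bigr)
\]
for $i=0,1$, which accounts for $-T^0(\C)$ and the first summand of $T^1(\C)$ as exactly the corresponding pieces of $T(\C')+T(\C'')$.

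For the $f^*T_X$ terms I would use the short exact sequence
\[
0\to f^*T_X \to \nu_*\bigl(f_1^*T_X\oplus f_2^*T_X\bigr)\to T_{f(z)}X\to 0,
\]
obtained by tensoring $0\to\O_C\to\nu_*\O_{\widetilde{C}}\to\CC_z\to 0$ with the locally free sheaf $f^*T_X$; the right-hand map is $(\xi_1,\xi_2)\mapsto \xi_1(z')-\xi_2(z'')$, valued in $T_{f(z)}X$ via $f_1(z')=f_2(z'')$. Taking the long exact cohomology sequence and recording the result as an equality of formal differences of vector spaces gives
\[
H^0(C,f^*T_X)-H^1(C,f^*T_X)=\sum_{i=1,2}\bigl(H^0(C_i,f_i^*T_X)-H^1(C_i,f_i^*T_X)\bigr)-T_{f(z)}X,
\]
which reproduces the $+H^0(C,f^*T_X)-H^1(C,f^*T_X)$ piece of $T(\C)$ together with the correction $-T_{f(z)}X$.

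Finally, the node sum $\bigoplus_{z\in\operatorname{Sing}(C)} T'_z\otimes T''_z$ splits along the partition $\operatorname{Sing}(C)=\operatorname{Sing}(C')\sqcup\operatorname{Sing}(C'')\sqcup\{z\}$, producing exactly the extra term $T_{z'}C'\otimes T_{z''}C''$ on top of the analogous sums for $\C'$ and $\C''$. Combining the three bookkeeping steps with the correct signs assembles to $T(\C)=T(\C')+T(\C'')+T_{z'}C'\otimes T_{z''}C''-T_{f(z)}X$, as claimed. The one step requiring genuine care, which I would handle first, is the sheaf identification for $\T_C(-s_*)$: because $\T_C$ fails to be locally free at $z$, it is not a formal consequence of a normalization sequence for a locally free sheaf, but must be checked by the explicit local calculation above.
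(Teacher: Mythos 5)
Your proposal is correct and takes essentially the same route as the paper: a local computation of the tangent sheaf at the node to get $\T_C(-s_*)$ as a pushforward from $C'\sqcup C''$, plus the normalization short exact sequence tensored with $f^*T_X$, followed by long exact cohomology sequences and the trivial splitting of the node sum. The only cosmetic differences are that you phrase things through the normalization $\nu$ rather than the two inclusions $\iota',\iota''$, and that you derive the $f^*T_X$ sequence cleanly via the projection formula where the paper simply asserts exactness by comparing stalks.
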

\proof
Locally near the node $z$ we have
$\O_{C,z}=\CC\{x,y\}/(xy)$, where $\O_C$ is the structure sheaf of $C$
and $x$ (resp. $y$) is a holomorphic coordinate on $C'$ (resp. $C''$)
near $z'$ (resp. $z''$). By definition, the stalk $\T_{C,z} $ of the tangent sheaf
at $z$ is given by the $\O_{C,z}$-module of derivations
$\operatorname{Der}(\O_{C,z},\O_{C,z})$. If $u:\O_{C,z}\to \O_{C,z}$
is a derivation, then put $u_1:=u(x)$, $u_2=u(y)$. We have
$0=u(xy)=yu_1+xu_2 $ in $\O_{C,z}$ $\Rightarrow$
$
u_1\in x\, \O_{C,z} = x\CC\{x\}
$
and
$
u_2\in y\, \O_{C,z} = y \CC\{y\}
$
$\Rightarrow$ $u\in x\CC\{x\} \partial_x + y\CC\{y\}
\partial_y$. Therefore,
\ben
\T_{C,z}= x\CC\{x\} \, \partial_x + y\CC\{y\} \, \partial_y.
\een
Let $\iota':C'\to C$ and $\iota'':C''\to C$ be the natural
inclusions. Then the above formula shows that
\beq\label{def:cs}
\T_{C}(-s_1-\cdots-s_n) =
\iota'_*\T_{C'} (-s_1-\cdots -s_{n_1}-z') + 
\iota''_*\T_{C''} (-s_{n_1+1}-\cdots -s_{n_1+n_2}-z'') 
\eeq
By comparing stalks, we can prove that the following short exact
sequence of sheaves on $C$ is exact
\beq\label{def:map}
\xymatrix{0\ar[r] &
  f^*\T_X \ar[r] &
  \iota'_* (f_1^*\T_X) \bigoplus
  \iota''_* (f_2^*\T_X) \ar[r] &
  \O_z \otimes T_{f(z)}X\ar[r] &
  0},
\eeq
where $\O_{z}$ is the structure sheaf of the point $z$. Using the long
exact cohomology sequence of \eqref{def:map} and \eqref{def:cs} we get
the formula stated in the lemma.

\subsection{Splitting into stems, legs, and tails}
Let us consider first the case when $d_{k+1}= 0$, that is, the butt
$s_{k+2}$ of $\widetilde{C}_0$ is a regular point. 
Using Lemma \ref{le:splitting} we get the following formula for the
virtual tangent space:
\beq\label{tangent-sl}
T(\C)=T(\widetilde{\C}_0)  - T_{s_{k+2}} \widetilde{C}_0+
\bigoplus_{i=1}^k \bigoplus_{a=1}^m T(\C_i\times\{a\}) + 
\bigoplus_{i=1}^{k} \bigoplus_{a=1}^m \Big(
T_{s_{i,a}}\widetilde{C}_0 \otimes T_{s^i_1} C_i - T_{\widetilde{f}_0(s_{i,a})}
X\Big),
\eeq
where the 2nd term on the RHS corresponds to forgetting that the butt $s_{k+2}$ is a
marked point. The RHS of \eqref{tangent-sl} splits naturall into five
types of subspaces. Each type corresponds to the fiber of a certain
virtual vector bundle on the fiber product in \eqref{stem_iso}. We
would like to work out the contributions of each of these five type of
virtual vector bundles to the total Todd class
$\widetilde{\operatorname{td}} (\T_{0,1,d})$. Our computation splits
naturally into five cases. 

{\em Case 1:} Contribution from $T(\widetilde{\C}_0)$. Note that
$\widetilde{C}_0$ is a $g$-invariant curve and that the restriction of
$g$ to $\widetilde{C}_0$ is $g_0$. Let us  denote by
$T(\widetilde{\C}_0)_\lambda$ the eigensubspace of $g_0$ with
eigenvalue $\lambda$. Note that $\lambda =\zeta^{-k}$ ($0\leq k\leq m-1$)
must be a $m$th root of 1.  

Let $Y=[X/\mu_m]$ and let us consider the commutative diagram
involving the universal curve
\ben
\xymatrix{
C_0\ar[r]^-{J} \ar[d]_\pi& 
Y^{\eta,1,\dots,1,\eta^{-1},1}_{0,k+3,d_0} 
\ar[r]^-{\operatorname{ev}} \ar[d]^\pi & Y \\
\operatorname{Spec}(\CC) \ar[r]^j & 
Y^{\eta,1,\dots,1,\eta^{-1}}_{0,k+2,d_0} & }
\een
where $j$ is the inclusion corresponding to the orbifold stable map
$\C_0$. Let us denote by 
\ben
T^0_{0,k+2,d_0}:= 
\pi_* \operatorname{ev}^* (T_X) -\pi_* L^{-1} - (\pi_* \iota_* \O_Z)^\vee
\een
the virtual tangent bundle  on
$Y^{\eta,1,\dots,1,\eta^{-1}}_{0,k+2,d_0}$, where $\pi_*=R^0\pi_*
-R^1\pi_*$ is the K-theoretic pushforward. Slightly abusing the
notation we view $T^0_{0,k+2,d_0}$ as an element in the topological K-ring
of $Y^{\eta,1,\dots,1,\eta^{-1}}_{0,k+2,d_0}$. One can check that
the eigensubspace $T(\widetilde{\C}_0)_1$  coincides with 
$j^*T^0_{0,k+2,d_0}$ and that the eigensubspace $T(\widetilde{\C}_0)_{\zeta^{-k}}$
coincides with 
\ben
j^*\left( 
\pi_*\operatorname{ev}^* (\operatorname{T_X}\otimes \CC_{\zeta^k} ) + 
\pi_*\left( 
(1-L^{-1})\otimes \operatorname{ev}^*
  \CC_{\zeta^k} \right)-
\left( 
\pi_* \left( 
\iota_* \O_{Z_1} \otimes \operatorname{ev}^*\CC_{\zeta^k} 
\right)\right)^\vee \right).
\een
Comparing with formulas \eqref{thetaA}--\eqref{thetaB} we get that the
contribution to $\widetilde{\operatorname{td}}(T_{0,1,d})$ is given
by the cohomology class $\Theta^{ABC}\in
H^*(Y_{0,k+2,d_0}^{\eta,1,\dots,1,\eta^{-1}},\CC)$ used in the
definition of the stem invariants (see Section \ref{sec:stem_inv}). 

{\em Case 2:} Contribution from $T_{s_{k+2}}\widetilde{C}_0$. Note
that this is a one dimensional vector space on which $g$ acts with
eigenvalue $\zeta$. The cotangent line $T^*_{s^0_{k+2}}C_0$ is by
definition the fiber of the orbifold line bundle $L_{k+2}$ on
$Y_{0,k+2,d_0}^{\eta,1,\dots,1,\eta^{-1}}$. Therefore, we can identify
$T_{s_{k+2}}\widetilde{C}_0$ with the fiber of an orbifold line bundle
$L_{k+2}^{-1/m}$, that is, an $m$-th root of $L_{k+2}^{-1}$. The
contribution to $\widetilde{\operatorname{td}}(T_{0,1,d})$ is given
by 
\beq\label{contr_2}
\operatorname{td}_{\zeta^{-1}} (- L_{k+2}^{-1/m} ) =
1-\zeta^{-1} e^{\psi_{k+2}/m}\quad \in \quad 
H^*(Y_{0,k+2,d_0}^{\eta,1,\dots,1,\eta^{-1}},\CC).
\eeq
where $\psi_{k+2}=c_1(L_{k+2})$. 

{\em Case 3:} Contribution from 
$\bigoplus_{a=1}^m T(\C_i\times \{a\} )$, $1\leq i\leq k$. To begin
with, note that we have the following identification of
eigensubspaces: 
\ben
\Big( 
\bigoplus_{a=1}^m T(\C_i\times \{a\} ), g\Big)_\lambda\cong 
T(\C_i,g_i)_{\lambda^m}. 
\een
Indeed, suppose that $(v_1,\dots, v_m)$ is an element of the vector
space on the LHS of the above isomorphism. Recalling the definition of
$g_i$, we get  
\ben
\lambda (v_1,\dots, v_m) =g (v_1,\dots, v_m) = (g_i(v_m),v_1,\dots, v_{m-1}).
\een
Comparing the components, we get 
\ben
v_{m-i}= \lambda^i v_m \quad (1\leq i\leq m-1) 
\een
and $g_i(v_m)=\lambda^m v_m$. Therefore, the isomorphism is given by
$(v_1,\dots, v_m)\mapsto v_m$. 

The vector space $T(\C_i,g_i)_\xi$ is the fiber of the virtual vector
bundle $I_\xi T_{0,1,d_i}$ on $I_{\eta_i} X_{0,1,d_i}$ introduced in
Section \ref{sec:vts}.  The contribution to
$\widetilde{\operatorname{td}}(T_{0,1,d})$ is given by 
\ben
\prod_{
\mbox{$x$:  Chern roots of $I_1 T_{0,1,d_i}$} } 
\frac{x}{1-e^{-x} }\quad 
\prod_{\substack{\lambda\in \CC^* \\ \lambda\neq 1}}
\prod_{
\mbox{$x$:  Chern roots of $I_{\lambda^m} T_{0,1,d_i}$} } 
\frac{1}{1-\lambda^{-1} e^{-x} }.
\een
Let us rewrite the product over $\lambda$ in the following way. Let us
take the terms for which $\lambda^m=1$, that is, $\lambda=\zeta^{-k}$
for $1\leq k\leq m-1$ and combine them with the terms of the first
product. Then the terms in the first product will change to
$\frac{x}{1-e^{-mx}}$. The remaining part of the product over
$\lambda$ can be parametrized by $\lambda=\xi^{1/m} \zeta^{-k}$, where
$\xi\neq 1$ and $1\leq k\leq m$. The above formula takes the form 
\ben
\prod_{
\mbox{$x$:  Chern roots of $I_1 T_{0,1,d_i}$} } 
\frac{x}{1-e^{-m x} }\quad 
\prod_{\substack{\xi\in \CC^* \\ \xi\neq 1}}
\prod_{
\mbox{$x$:  Chern roots of $I_{\xi} T_{0,1,d_i}$} } 
\frac{1}{1-\xi^{-1} e^{-mx} }.
\een 
The above formula coincides with 
\beq\label{contr_3}
m^{-\operatorname{rk}(I_1T_{0,1,d_i} )}  
\widetilde{\operatorname{td}}(\Psi^m T_{0,1,d_i})\quad \in \quad 
H^*(I_{\eta_i} X_{0,1,d_i}, \CC), 
\eeq
where $\Psi^m$ is the $m$th Adam's operation. 
 
{\em Case 4:} Contribution from 
$\bigoplus_{a=1}^m 
T_{s_{i,a}}\widetilde{C}_0 \otimes T_{s^i_1}C_i$,
 $1\leq i\leq k$. To begin 
with, note that we have the following identification of
eigensubspaces: 
\ben
\Big(
\bigoplus_{a=1}^m 
T_{s_{i,a}}\widetilde{C}_0 \otimes T_{s^i_1}C_i, g\Big)_\lambda \cong 
T_{s_{i+1}} C_0\otimes T_{s^i_1} C_i.
\een
Indeed, suppose that $(v_1\otimes w_1,\dots, v_m\otimes w_m)$ is an
eigenvector, i.e., an element of the LHS. Recalling again the
definition of $g_i$ we get 
\ben
\lambda (v_1\otimes w_1,\dots, v_m\otimes w_m) = 
g (v_1\otimes w_1,\dots, v_m\otimes w_m) = 
(v_m\otimes g_i(w_m),v_1\otimes w_1,\dots, v_{m-1}\otimes w_{m-1}). 
\een
By definition, $g_i(w_m)= \eta_i^{-1} w_m$. Comparing the components
in the above equality, we get
\ben
v_{i}\otimes w_{i} = \lambda^{m-i} v_m\otimes w_m\quad (1\leq i\leq m-1)
\een
and $\lambda^m = \eta_i^{-1}$. The desired isomorphism is given by 
$(v_1\otimes w_1,\dots, v_m\otimes w_m)\mapsto v_m\otimes
w_m$. Moreover, we proved that all possible values for the eigenvalue
are given by  $\lambda=\eta_i^{-1/m} \zeta^k$ ($1\leq k\leq m$). 

The vector spaces $T_{s_{i+1}} C_0$ and $T_{s^i_1} C_i$ are
respectively the fibers of the orbifold vector bundles $L_{i+1}^{(0)}:=L_{i+1}$ on
$[X/\mu_m]^{\eta,1,\dots,1,\eta^{-1}}_{0,k+2,d_0}$ and $L_1^{(i)}:= L_1$ on
$I_{\eta_i} X_{0,1,d_i}$.  The contribution to
$\widetilde{\operatorname{td}}(T_{0,1,d})$ is given by 
\beq\label{contr_4}
\prod_{k=1}^m 
\frac{1}{1-\eta_i^{1/m}\zeta^{-k} 
e^{\psi_{i+1}^{(0)} + \psi_1^{(i)} } }=
\frac{1}{1-\eta_i e^{m (\psi_{i+1}^{(0)} + \psi_1^{(i)}) } }= 
\Psi^m\Big(
\frac{1}{1-\eta_i \operatorname{ch}(L_{i+1}^{(0)}\otimes L_1^{(i)}) }
\Big) ,
\eeq
where 
$\psi_{i+1}^{(0)}=c_1(L_{i+1}^{(0)})$ and 
$\psi_1^{(i)}= c_1(L_1^{(i)}).$ Here, using the Chern character map,
we extend the Adam's operations to cohomology, that is, if $\phi$ is a
homogeneous cohomology class in $H^{2k}(M ;\CC)$ for some topological
space $M$, then $\Psi^m(\phi) :=m^k \phi$.  

{\em Case 5:} Contributions from 
$\bigoplus_{a=1}^m T_{\widetilde{f}_0(s_{i,a})}X$
 ($1\leq i\leq k$). 
Note that $g$ acts on the  direct sum by cyclically permuting the
summands. Therefore, the eigensubspaces 
\ben
\Big( 
\bigoplus_{a=1}^m T_{\widetilde{f}_0(s_{i,a})} X , g\Big)_\lambda\cong
T_{f_0(s_{i+1}^0 )} X = T_{f_i(s_1^i)} X
\een 
and all possible values for $\lambda$ are $\lambda=\zeta^k$ ($1\leq
k\leq m$). The vector space $T_{f_i(s_1^i)} X$ is the fiber of the
vector bundle $(\operatorname{ev}_1^i)^* T_X$, where
$\operatorname{ev}_1^i: I_{\eta_i}X_{0,1,d_i}\to X$ is the evaluation
map. The contribution to $\widetilde{\operatorname{td}}(T_{0,1,d})$
is given by  
\beq\label{contr_5}
\prod_x \frac{1-e^{-x}} {x} \ 
\prod_{k=1}^{m-1} 
\prod_x (1-\zeta^{-k} e^{-x})= 
\prod_x \frac{1-e^{-m x}}{x} = 
m^{\operatorname{dim}(X)} \, 
(\operatorname{ev}_1^i)^* \, \Psi^m\, \frac{1}{\operatorname{td}(X)},
\eeq
where in the products over $x$, the variable $x$ varies over the set
of all Chern roots of $(\operatorname{ev}_1^i)^*(T_X)$. 

Combining the results from the above 5 cases, that is formulas
\eqref{contr_2}--\eqref{contr_5}, we get the
following formula: 
\begin{align}
\label{total_td}
\widetilde{\operatorname{td}}(T_{0,1,d}) = 
&
\iota^*\left(
m^{k
\operatorname{dim}(X)-\sum_{i=1}^k 
\operatorname{rk}(I_1T_{0,1,d_i}) }
\Big(\Theta^{ABC}_{0,k+2,d_0} \cup
\Big(1-\zeta^{-1} e^{\psi^{(0)}_{k+2} /m}\Big)
\boxtimes 1^{\boxtimes k}\Big)\cup 
\phantom{
\frac{1}{
\Big( 
L_{i+1}^{(0)}\Big)} }
\right.\\
\nonumber
&
\quad
\Big( 1\boxtimes 
\Psi^m \Big(
\widetilde{\operatorname{td}}(T_{0,1,d_1}) 
(\operatorname{ev}_1^1)^*
\tfrac{1}{\operatorname{td}(X)} \Big)
\boxtimes \cdots \boxtimes
\Psi^m \Big(
\widetilde{\operatorname{td}}(T_{0,1,d_k}) 
(\operatorname{ev}_1^k)^*
\tfrac{1}{\operatorname{td}(X)} \Big)
\Big) \cup \\
\nonumber 
&
\quad
\left.
\bigcup_{i=1}^k \frac{1}{
1-\eta_i \operatorname{ch}\circ \Psi^m\Big( 
L_{i+1}^{(0)}\otimes L_1^{(i)} \Big)}\right),
\end{align}
where $\iota$ is the natural inclusion of the fiber product into the
direct product. More precisely, we have the following pullback diagram
\beq\label{cd}
\xymatrix{
    [X/\mu_m]^{\eta,1,\dots,1,\eta^{-1}}_{0,k+2,d_0}\times_{X^k}
\Big(
    I_{\eta_1}X_{0,1,d_1}\times
    \cdots \times
    I_{\eta_k}X_{0,1,d_k} 
\Big) \ar[d]^-\iota \ar[r]^-{\operatorname{ev}} & X^k\ar[d] \\
[X/\mu_m]^{\eta,1,\dots,1,\eta^{-1}}_{0,k+2,d_0}\times \Big(
    I_{\eta_1}X_{0,1,d_1}\times
    \cdots \times
    I_{\eta_k}X_{0,1,d_k} \Big) \ar[r]^-{\operatorname{ev}} & X^k\times X^k},
\eeq
where the right vertical arrow is the diagonal embedding, the upper
horizontal arrow is the evaluation map at $s_1^i$ ($1\leq i\leq k$),
and the lower horizontal map is the evaluation map at $s^0_{i+1}$
($1\leq i\leq k$) and $s_1^i$ ($1\leq i\leq k$).

If $d_{k+1}\neq 0$, that is, the butt $s_{k+2}$ of $\widetilde{C}_0$
is a node, then we have to replace the term
$-T_{s_{k+2}}\widetilde{C}_0$ with
\ben
T(\mathcal{C}_{k+1})+T_{s_{k+2}}\widetilde{C}_0\otimes T_{s^{k+1}_1}
C_{k+1} - T_{\widetilde{f}_0(s_{k+2})} X.
\een
The contributions to the Todd class
$\widetilde{\operatorname{td}}(T_{0,1,d})$ 
of the above three terms can be worked out in the same way as in Cases
3, 4, and 5. We get 
\ben
\widetilde{\operatorname{td}}(T_{0,1,d_{k+1}}) \cup 
\frac{1}{1-\zeta^{-1} e^{\psi^{(0)}_{k+2}/m}\, \eta_{k+1}
  e^{\psi^{(k+1)}_1} }\cup (\operatorname{ev}_1^{k+1})^*\Big(
\tfrac{1}{\operatorname{td}(X)} \Big).
\een
Therefore, if $d_{k+1}\neq 0$, then the formula for the total Todd class
takes the form
\begin{align}
\label{total_td_tail}
\widetilde{\operatorname{td}}(T_{0,1,d}) = 
&
\iota^*\left(
m^{k
\operatorname{dim}(X)-\sum_{i=1}^k 
\operatorname{rk}(I_1T_{0,1,d_i}) }
\Big(\Theta^{ABC}_{0,k+2,d_0} \cup
\frac{1}{1-\zeta^{-1} e^{\psi^{(0)}_{k+2}/m}\, \eta_{k+1}
  e^{\psi^{(k+1)}_1} }
\boxtimes 1^{\boxtimes k}\Big)\cup 
\right.\\
\nonumber
&
\quad
\Big( 1\boxtimes 
\Psi^m \Big(
\widetilde{\operatorname{td}}(T_{0,1,d_1}) 
(\operatorname{ev}_1^1)^*
\tfrac{1}{\operatorname{td}(X)} \Big)
\boxtimes \cdots \boxtimes
\Psi^m \Big(
\widetilde{\operatorname{td}}(T_{0,1,d_k}) 
(\operatorname{ev}_1^k)^*
            \tfrac{1}{\operatorname{td}(X)} \Big)  
\Big) \cup \\
\nonumber 
&
\quad
\left.
\bigcup_{i=1}^k \frac{1}{
1-\eta_i \operatorname{ch}\circ \Psi^m\Big( 
             L_{i+1}^{(0)}\otimes L_1^{(i)} \Big)}
\cup
\widetilde{\operatorname{td}}(T_{0,1,d_{k+1}}) 
(\operatorname{ev}_1^{k+1})^*
            \tfrac{1}{\operatorname{td}(X)}    
             \right),
\end{align}

\subsection{Integration over the Kawasaki strata}
Using formulas \eqref{total_td} and \eqref{total_td_tail}, 
let us compute the integral \eqref{zeta_stratum_int}. Recalling the
isomorphism \eqref{stem_iso}, let us split our computation into two
parts depending on whether $d_{k+1}=0$ or $d_{k+1}\neq 0$. Suppose
first that $d_{k+1}=0$. The integral
over the fiber product can be rewritten as an integral over the
corresponding  direct product by using the Thom isomorphism, i.e., if
$\iota:X\to Y$ is a regular embedding, then  
\beq\label{Thom_iso}
\int_{[X]} \iota^* \Phi = \int_{ [Y]} \Phi\cup \theta(N_\iota), 
\eeq
where $\theta(N_{\iota})$ is the Thom class of the normal bundle
$N_\iota$ to $X$ in $Y$.  
In our case, having in mind the pullback diagram \eqref{cd}, the
normal bundle $N_\iota$ is the pullback of the normal 
bundle to the diagonal embedding $X^k\to X^k\times X^k$. Therefore,
$\theta(N_\iota)$ is the pullback of the Thom class of the normal
bundle to the diagonal, which is well known to be the Poincare dual of
the diagonal, that is, 
\beq\label{Thom_class}
\theta(N_\iota) = 
\sum_{a_1,\dots,a_k=1}^N\Big( 
(\operatorname{ev}^0_2)^*\phi_{a_1} \cup \cdots \cup 
(\operatorname{ev}^0_{k+1} )^*\phi_{a_k} \Big) \boxtimes 
(\operatorname{ev}^1_1)^*\phi^{a_1}  \boxtimes\cdots\boxtimes
(\operatorname{ev}^1_k)^*\phi^{a_k},
\eeq
where $\{\phi_a\}$ and $\{\phi^a\}$ are dual bases of $H^*(X;\CC)$
with respect to the Poincare pairing. For simplicity we will choose
$\phi_a$ to be homogeneous and denote its degree by $2|\phi_a|$. Note
that the corresponding dual vector $\phi^a$ must be homogeneous too
and if we denote its degree by $2|\phi^a|$, then
$|\phi_a|+|\phi^a|=\operatorname{dim}(X)$. 
The integral \eqref{zeta_stratum_int} takes the form 
\ben
\sum_{\eta_1,\dots,\eta_k}
\sum_{(d_0+d_1+\cdots + d_k)m=d} 
\int_{[X/\mu_m]_{0,k+2,d_0}^{\eta,1,\dots,1,\eta^{-1}} } 
\int_{I_{\eta_1} X_{0,1,d_1} } \cdots 
\int_{I_{\eta_k} X_{0,1,d_k} }  
\theta(N_\iota)\,
\widetilde{\operatorname{td}}(T_{0,1,d}) \, 
\frac{
(\operatorname{ev}_1^{(0)})^* \operatorname{ch}(\Phi^a)}{
1-q \zeta e^{\psi_1^{(0)}/m} },
\een
where the 1st sum is over all primitive roots of unity
$\eta_1,\dots, \eta_k$, such that, $\eta_i\neq 1$ for all $i$, and the 2nd sum is
over all degree classes $d_0,\dots,d_k$, such that,
$m(d_0+\cdots+d_k)=d$. Here we used that under the isomorphism
\eqref{stem_iso} the line bundle $L_1$ on $I_\zeta X_{0,1,d}$ is a
$m$th root of $L_1^{(0)}$ -- the line bundle $L_1$ on
$[X/\mu_m]_{0,k+2,d_0}^{\eta,1,\dots,1,\eta^{-1}}$. Let us substitute
formulas \eqref{total_td} and \eqref{Thom_class} in the above formula
and single out the terms in the integrand that involve cohomology
classes on $I_{\eta_i}X_{0,1,d_i}$ ($1\leq i\leq k$)
\beq\label{leg_i}
m^{-\operatorname{rk}(I_1 T_{0,1,d_i})-|\phi^{a_i}| }\, 
\Psi^m \Big(
\widetilde{\operatorname{td}}(T_{0,1,d_i}) 
(\operatorname{ev}_1^i)^*
\tfrac{\phi^{a_i} }{\operatorname{td}(X)} \Big)
\frac{1}{
1-\eta_i \operatorname{ch}\circ \Psi^m\Big( 
L_{i+1}^{(0)}\otimes L_1^{(i)} \Big)}.
\eeq 
The remaining terms of the integrand are given by 
\beq\label{stem}
m^{k \operatorname{dim}(X) }\, 
\Theta^{ABC}_{0,k+2,d_0} 
\frac{
(\operatorname{ev}_1^{(0)})^* \operatorname{ch}(\Phi^a)}{
1-q \zeta e^{\psi_1^{(0)}/m} }
(\operatorname{ev}^0_2)^*\phi_{a_1} \cup \cdots \cup 
(\operatorname{ev}^0_{k+1} )^*\phi_{a_k} 
\cup
\Big(1-\zeta^{-1} e^{\psi^{(0)}_{k+2} /m}\Big)
\eeq
Let us integrate \eqref{leg_i} along the virtual fundamental cycle of
$I_{\eta_i}X_{0,1,d_i}$. First, note that the cohomology classes
$\tfrac{\phi^{a}}{\operatorname{td}(X)} =
\operatorname{ch}(\Phi^{a})$ form a basis dual to
$\phi_a=\operatorname{ch}(\Phi_a)$ with respect to the K-theoretic
Euler pairing. Furthermore, note that the rank of the inertia tangent
bundle $I_1T_{0,1,d_i}$ 
coincides with the dimension of the virtual fundamental cycle of
$I_{\eta_i}X_{0,1,d_i}$. On the other hand, the Adam's operation $\Psi^m$ in
\eqref{leg_i} rescales each cohomology class by a power of $m$ equal
to its degree. Since, the cohomology classes on $
I_{\eta_i}X_{0,1,d_i}$  that contribute
non-trivially to the integral should have total degree matching the
degree of the virtual fundamental cycle, we get that the Adam's operation cancels out
with $m^{-\operatorname{rk}(I_1T_{0,1,d_i})}$, except for the part of
the Adam's operation acting on cohomology classes not supported on
$I_{\eta_i}X_{0,1,d_i}$, such as,
$\operatorname{ch}(L^{(0)}_{i+1})$. After these remarks, it is clear
that the integral of \eqref{leg_i} along the virtual fundamental cycle
of $I_{\eta_i}X_{0,1,d_i}$ is precisely 
$
m^{-|\phi^{a_i}|}\, 
\tau_{\eta_i, d_i,  a_i}(e^{m\psi^{(0)}_{i+1}})
$, that is, we get an integral of the type  \eqref{zeta_stratum_int}.  
Therefore, in order to compute the integral \eqref{zeta_stratum_int}
we have to multiply $\prod_{i=1}^k m^{-|\phi^{a_i}|}\, 
\tau_{\eta_i, d_i,  a_i}(e^{m\psi^{(0)}_{i+1}})$ with \eqref{stem} and
integrate over the virtual fundamental cycle of
$[X/\mu_m]_{0,k+2,d_0}^{\eta,1,\dots,1,\eta^{-1}}$. Using also that 
$m^{\operatorname{dim}(X)-|\phi^a|} \phi_a = m^{|\phi_a|} \phi_a=
\Psi^m \phi_a$, we get 
\ben
\tau_{\zeta,d,a}(q) = \sum
\left[
\frac{\operatorname{ch}(\Phi^a)}{
1-q \zeta e^{\psi_1/m} }, 
\tau_{\eta_1, d_1,  a_1}(e^{m\psi_2})
\Psi^m \phi_{a_1}, \cdots, 
 \tau_{\eta_k, d_k,  a_k}(e^{m\psi_{k+1}})
\Psi^m \phi_{a_k},
1-\zeta^{-1} e^{\psi_{k+2} /m} 
\right]_{0,k+2,d_0},
\een
where the sum is over all non-trivial primitive roots of unity
$\eta_1,\dots,\eta_k$, all effective degree classes
$d_0,d_1,\dots,d_k$, such that, $m(d_0+\cdots+d_k)=d$ (just like above),  and
over all $1\leq a_1,\dots,a_k\leq N$.

In the second case, when $d_{k+1}\neq 0$, the differences are the
following. Formula \eqref{Thom_class} takes the form
\beq\label{Thom_class_tail}
\theta(N_\iota) = 
\sum_{a_1,\dots,a_{k+1}=1}^N\Big( 
(\operatorname{ev}^0_2)^*\phi_{a_1} \cup \cdots \cup 
(\operatorname{ev}^0_{k+2} )^*\phi_{a_{k+1}} \Big) \boxtimes 
(\operatorname{ev}^1_1)^*\phi^{a_1}  \boxtimes\cdots\boxtimes
(\operatorname{ev}^1_{k+1})^*\phi^{a_{k+1}},
\eeq
The contributions \eqref{leg_i} remain the same but in addition we
have  the following term involving cohomology classes on
$I_{\eta_{k+1}}X_{0,1,d_{k+1}}$:
\ben
\widetilde{\operatorname{td}}(T_{0,1,d_{k+1}}) \cup
\frac{
  (\operatorname{ev}_1^{k+1})^* \operatorname{ch}(\Phi^{a_{k+1}} )
}{
  1-\zeta^{-1} e^{\psi^{(0)}_{k+2}/m} \eta_{k+1} e^{\psi_1^{(k+1)}} }
\een
The integral of the above class over the virtual fundamental cycle of
$I_{\eta_{k+1}}X_{0,1,d_{k+1}}$ is precisely
$\tau_{\eta_{k+1},d_{k+1},a_{k+1}} (\zeta^{-1}
e^{\psi^{(0)}_{k+2}/m})$.
Therefore, the contribution \eqref{stem} should be modified to
\ben
m^{k \operatorname{dim}(X) }\, 
\Theta^{ABC}_{0,k+2,d_0} 
\frac{
(\operatorname{ev}_1^{(0)})^* \operatorname{ch}(\Phi^a)}{
1-q \zeta e^{\psi_1^{(0)}/m} }
(\operatorname{ev}^0_2)^*\phi_{a_1} \cup \cdots \cup 
(\operatorname{ev}^0_{k+1} )^*\phi_{a_k} 
\cup
(\operatorname{ev}^0_{k+2} )^*\phi_{a_{k+1}}
\tau_{\eta_{k+1},d_{k+1}, a_{k+1}} (\zeta^{-1}
e^{\psi^{(0)}_{k+2}/m}).
\een
Finally, in order to complete the proof of \eqref{GT-recursion}, it
remains only to note that 
\ben
\tau_\zeta(q,Q) = 
\sum_{a=1}^N 
\sum_{d\in \operatorname{Eff}(X)} 
\tau_{\zeta,d,a} (q) Q^d \Phi_a,
\een
\ben
\delta\tau_\zeta(q,Q)= 1-q +
\sum_{a_{k+1}=1}^N
\sum_{\eta_k: \eta_{k+1}\neq \zeta}
\sum_{d_{k+1}}
\tau_{\eta_{k+1},d_{k+1},a_{k+1}}(q) Q^{d_{k+1}} \phi_{a_{k+1}},
\een
and 
\ben
\sum_{\eta_i}\sum_{ d_i } \sum_{a_i=1}^N 
\tau_{\eta_i, d_i,  a_i}(e^{m\psi_{i+1}}) Q^{m d_i} \Psi^m \phi_{a_i} = 
\tau^{(m)}(\psi_{i+1}, Q),
\een
where $\tau^{(m)}(z,Q)$ is the same as in \eqref{leg_contr}.

\section{A proof of Theorem
  \ref{thm:gamma_cohomology_oscillatory_i_function} for Picard rank 2} 
\label{sec:6_IT}

The strategy of our proof is different from the one originally used in
\cite{Iritani:gamma_structure}. It relies on the inversion formula for
the Mellin transform and the formula for the Poincaré pairing for toric
cohomology in terms of Jeffrey--Kirwan residues (c.f. Theorem \ref{thm:gamma_computation_of_intersection_products}).
Similar use of the inversion formula for the Mellin transform appears
in the master's thesis
\cite{Xia:gamma_structure_blow_up} to study the Gamma integral
structure of the blowup of $\mathbb{P}^N$ at a point. 

\subsection{Strategy of the proof}
Let $X_{\mu,K}$ be a Fano symplectic toric manifold of Picard rank 2. According to Proposition
\ref{prop:gamma_model_toric_picard_rank_two}, we may assume that the
matrix of the moment map is 
	\[
\operatorname{Mat}(\mu)=
		\begin{pmatrix}
			1
			&
			\cdots
			&
			1
			&
			0
			&
			-a_1
			&
			\cdots
			&
			-a_k
			\\
			0
			&
			\cdots
			&
			0
			&
			1
			&
			1
			&
			\cdots
			&
			1
		\end{pmatrix}.
	\]
where $a_1,\dots,a_k \in \mathbb{Z}_{\geq 0}$. It is convenient to
define $a_0:=0$. We have $c_1(TX_{\mu,k})=m_1 p_1 + m_2 p_2$, where
$m_i=\sum_{j=1}^{N+k+1} m_{ij}$ is the sum of the entries in the $i$th
row of $\operatorname{Mat}(\mu)$, that is, $m_1=N-a_1-\cdots-a_k$ and
$m_2=k+1$.

We will make use of the inversion theorem for {\em Mellin transform}
of a smooth function $f:\mathbb{R}_{>0}^2\to \CC$. Recall that the Mellin
transform of $f$ is defined by 
\[
\mathcal{M}f (p_1,p_2):= \int_{\mathbb{R}_{>0}^2}
f(q_1,q_2) q_1^{p_1} q_2^{p_2} \frac{dq_1}{q_1} \wedge \frac{dq_2}{q_2}.
\]
The Mellin transform $\mathcal{M}f$ is a holomorphic function for all inputs $(p_1,p_2)$
for which the integral is absolutely convergent. 
Let $E= \left\{(x,y) \in \mathbb{C}^2 | x_{\textnormal{min}} < \textnormal{Re}(x) < x_{\textnormal{max}} \textnormal{ and } y_{\textnormal{min}} < \textnormal{Re}(y) < y_{\textnormal{max}} \right\}\subseteq \mathbb{C}^2$ be the product of two strips.
Assume that the Mellin transform $\mathcal{M}f$ is holomorphic on the strip $E$.
Let $F(p_1,p_2):=\mathcal{M}f(p_1,p_2)$, then the inverse Mellin transform of $F$ is given by the formula below (\cite{Srivastava_Panda:Multidimensional_integral_transforms}, Lemma 2 p.125; see also for the one dimensional case \cite{Misra_Lavoine:Transform_analysis}, Lemma 11.10.1 p.246):
\[
f(q_1,q_2) = \mathcal{M}^{-1} F(q_1,q_2):=
\lim_{T_1\to +\infty}\lim_{T_2\to +\infty} 
\frac{1}{(2\pi\mathbf{i})^2} 
\int_{\epsilon_1-\mathbf{i} T_1}^{\epsilon_1+\mathbf{i} T_1} dp_1
\int_{\epsilon_2-\mathbf{i} T_2}^{\epsilon_1+\mathbf{i} T_2} dp_2
F(p_1,p_2) q_1^{-p_1} q_2^{-p_2} ,
\] 
where $(\epsilon_1,\epsilon_2)$ is a real point of $E$.

We begin by announcing the results of two computations --  Lemmas
\ref{lemma:gamma_coh_mellin_transform_computation} and
\ref{lemma:gamma_coh_inverse_mellin_residue}, then give our proof of
the theorem. The proofs of both lemmas will follow after. 
\begin{lemma}\label{lemma:gamma_coh_mellin_transform_computation}	
	Let $X_{\mu,K}$ be a symplectic toric manifold of Picard rank 2 as in Proposition \ref{prop:gamma_model_toric_picard_rank_two}, let
	$
		\Gamma_\mathbb{R}
		=\{
		(x_1,\dots,x_n) \in \pi^{-1}(Q_1,\dots,Q_r)
		\, | \,
		x_j > 0
		\}
	$ be the real Lefschetz thimble and $
		\mathcal{I}^{\textnormal{coh}}(z,Q)
		=
		\int_{\Gamma_\mathbb{R}}
		e^{-W_Q/z}
		\omega_{\pi^{-1}(Q)}
	$ be the oscillatory integral.
	Then, the Mellin transform $ \mathcal{MI}^{\textnormal{coh}} $
        of the oscillatory integral exists and it is given by
	\[
		\mathcal{MI}^{\textnormal{coh}}(z,p_1,p_2)
		=
		z^{\rho}\Gamma(p_1)^N \prod_{j=0}^k \Gamma(p_2 - a_j p_1).
	\]
\end{lemma}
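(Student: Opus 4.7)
The proof plan is to compute the Mellin transform directly by unfolding the iterated integral (over the base $B = (\mathbb{R}_{>0})^2$ and over the fibre $\pi^{-1}(Q)\cap\Gamma_\mathbb{R}$) into a single integral over $(\mathbb{R}_{>0})^n$ in the coordinates $x_1,\dots,x_n$. The crucial observation is that the holomorphic volume form $\omega_{\pi^{-1}(Q)}$ was defined precisely so that
\[
\omega_{\pi^{-1}(Q)} \wedge \frac{dQ_1}{Q_1}\wedge\frac{dQ_2}{Q_2} = \frac{dx_1}{x_1}\wedge\cdots\wedge \frac{dx_n}{x_n},
\]
so combining this with the Batyrev constraints $Q_i = \prod_j x_j^{m_{ij}}$ turns the monomial weight $Q_1^{p_1}Q_2^{p_2}$ into $\prod_j x_j^{\alpha_j(p)}$ where $\alpha_j(p) = m_{1j}p_1 + m_{2j}p_2$. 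This is essentially the cohomological analogue of the unfolding used in the proof of Lemma \ref{lemma:qosc_mellin_transform_computation}.

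Concretely, I would first restrict to the region $\operatorname{Re}(p_1) > 0$ and $\operatorname{Re}(p_2 - a_j p_1) > 0$ for $j = 0,\dots,k$, where absolute convergence (and hence Fubini) is justified by the same kind of ratio-test estimate used in the proof of Proposition \ref{prop:q-gkz}. Under these assumptions, interchanging the order of integration yields
\[
\mathcal{M}\mathcal{I}^{\textnormal{coh}}(z,p_1,p_2) = \int_{(\mathbb{R}_{>0})^n} e^{-(x_1+\cdots+x_n)/z}\, \prod_{j=1}^{n} x_j^{\alpha_j(p)}\,\frac{dx_1}{x_1}\wedge\cdots\wedge\frac{dx_n}{x_n}.
\]
Since $W_Q$ is a sum of $x_j$'s and the weight factorises, the integral factorises as a product of $n$ one-dimensional integrals of the form $\int_0^\infty e^{-x/z} x^{s-1}dx = z^s\Gamma(s)$.

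For the explicit Picard rank $2$ moment matrix given in Proposition \ref{prop:gamma_model_toric_picard_rank_two}, the first $N$ columns contribute $\alpha_j(p) = p_1$, the $(N+1)$-st column contributes $\alpha_{N+1}(p) = p_2$, and the columns $N+1+j$ for $j = 1,\dots,k$ contribute $\alpha_{N+1+j}(p) = p_2 - a_j p_1$. Collecting Gamma factors (and adopting the convention $a_0 = 0$) yields $\Gamma(p_1)^N\prod_{j=0}^{k}\Gamma(p_2 - a_j p_1)$. The powers of $z$ sum to
\[
N p_1 + p_2 + \sum_{j=1}^{k}(p_2 - a_j p_1) = \Big(N - \sum_{j=1}^k a_j\Big) p_1 + (k+1) p_2 = m_1 p_1 + m_2 p_2,
\]
which is exactly $z^\rho$ under the standard identification of $\rho = c_1(TX_{\mu,K})\cup$ with the linear form $m_1 p_1 + m_2 p_2$ on the degree lattice dual to $(p_1,p_2)$.

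The main obstacle I anticipate is verifying absolute convergence of the double integral on a non-empty domain in the $(p_1,p_2)$ strip, which requires controlling the oscillatory integral $\mathcal{I}^{\textnormal{coh}}(z,Q)$ as $Q\to 0$ and $Q\to\infty$ on the real locus. Once convergence is established on an open region, Fubini and the change of variables via $\omega_{\pi^{-1}(Q)}$ produce the product formula, and the identity then extends to the full domain of meromorphy of the right-hand side by analytic continuation. The computation is essentially mechanical after these points are secured, and it mirrors the $q$-analogue proof given in Section \ref{sssection:proof_lemma_qosc_1}, with the Koelink--Koornwinder $q$-integral representation of $\Gamma_{q^{-1}}$ replaced by the classical Euler integral representation of $\Gamma$.
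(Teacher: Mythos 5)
Your proof is correct and follows essentially the same route as the paper: unfold the Mellin transform via Fubini into a single integral over $(\mathbb{R}_{>0})^n$, use the Batyrev constraints to convert $Q_1^{p_1}Q_2^{p_2}$ into $\prod_j x_j^{\alpha_j(p)}$, and factorise into Euler integrals for $\Gamma$. The only difference is that you flag explicitly the half-plane of convergence needed to justify Fubini, a point the paper leaves implicit.
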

For a proof of this lemma, see Section \ref{sssection:proof_lemma_mellin}.
Note that the oscillatory integral has the following symmetry:
\begin{align}
\label{oi-sym}
\mathcal{I}(z,Q_1,Q_2) = \mathcal{I}(1, z^{-m_1}Q_1, z^{-m_2}Q_2).
\end{align}
Indeed, if we change the variables in the oscillatory integral via
$x_i= z y_i$, then the Batyrev constraints take the form
$z^{-m_i}Q_i=\prod_{j=1}^{N+1+k} y_j^{m_{ij}}$ and the above identity
follows.  Furthermore, the I-function has a similar symmetry 
\begin{align}
\label{I-sym}
I^{\textnormal{coh}}(z,Q_1,Q_2) = z^{-\operatorname{deg}}\,
z^{-\rho}\,  
I^{\textnormal{coh}}(1,z^{-m_1}Q_1,z^{-m_2}Q_2) 
\end{align}
which can be checked easily. Let us point out that in the Fano case 
$-z e^{\sum p_i\log Q_i/z} \, I^{\textnormal{coh}}(z,Q_1,Q_2) $
coincides with the $J$-function, so the above symmetry follows also
from  \eqref{eqn:intro_cohom_J_recover_z}. Using the symmetries
\eqref{oi-sym} and \eqref{I-sym} we get that it is sufficient to prove
Theorem \ref{thm:gamma_cohomology_oscillatory_i_function} for $z=1$.
\begin{lemma}\label{lemma:gamma_coh_inverse_mellin_residue}
	Consider the Mellin transform
	\[
g(p_1,p_2):=
		\mathcal{MI}^{\textnormal{coh}}(1,p_1,p_2)
		=
		\Gamma(p_1)^N \prod_{j=0}^k \Gamma(p_2 - a_j p_1).
	\]
	Then, its inverse Mellin transform $\mathcal{M}^{-1}g(Q_1,Q_2)$ can be computed by the following residue formula:
	\[
		\mathcal{M}^{-1} g(Q_1,Q_2)
		=
		\textnormal{Res}_{x_2=0} \textnormal{Res}_{x_1=0}
		\omega(x_1,x_2) dx_1 dx_2,
	\]
	where
	\[
		\omega(x_1,x_2)
		:=
		\sum_{d_1,d_2 \geq 0}
		Q_1^{-x_1+d_1} Q_2^{-x_2+d_2}
		\left(
		\Gamma(x_1)
		\frac{
			\prod_{r=-\infty}^0 (x_1-r)
		}
		{
			\prod_{r=-\infty}^{d_1} (x_1-r)
		}
		\right)^N
		\prod_{j=0}^k
		\Gamma \left(x_2-a_j x_1\right)
		\frac{
			\prod_{r=-\infty}^0 (x_2 - a_j x_1 - r)
		}{
			\prod_{r=-\infty}^{d_2-a_j d_1} (x_2-a_j x_1 - r)
		}
	\]
\end{lemma}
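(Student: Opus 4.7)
The plan is to model the argument on the proof of Lemma~\ref{lemma:qosc_inverse_q_mellin_as_residues} for the $K$-theoretic case, which provides a complete roadmap; the main differences are that the ordinary Gamma function replaces the $q$-Gamma, and one invokes the classical Stirling asymptotic
\[
|\Gamma(\sigma+\ii\tau)|\ \sim\ \sqrt{2\pi}\,|\tau|^{\sigma-1/2}\,e^{-\pi|\tau|/2}\qquad (|\tau|\to\infty)
\]
in place of Moak's $q$-Stirling formula (Proposition~\ref{prop:gamma_moak_qstirling}). Starting from the inverse Mellin formula, the first step will be to choose $0<\max_j(a_j)\varepsilon_1<\varepsilon_2$ small enough so that the vertical contours $\mathrm{Re}(p_i)=\varepsilon_i$ lie to the right of every pole of $g$, and then successively deform the $p_1$- and $p_2$-contours to $\mathrm{Re}(p_i)=-M_i$ for large $M_i\notin\mathbb{Z}_{\geq 0}$. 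As $M_1\to\infty$ one sweeps across the order-$N$ poles of $\Gamma(p_1)^N$ at $p_1=-d_1$, $d_1\geq 0$; afterwards, as $M_2\to\infty$, one sweeps across the poles of $\prod_{j=0}^k\Gamma(p_2-a_jp_1)$ at $p_2=-d_2$, $d_2\geq 0$. None of the remaining ``cross'' poles $p_1=(p_2+m)/a_j$ is crossed, thanks to the relative choice of $\varepsilon_1,\varepsilon_2$.

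Justifying these deformations is the key analytic step. The Stirling estimate produces exponential decay in $|\mathrm{Im}(p_i)|$, which handles the vanishing of the horizontal portions of the rectangular contours as $|\mathrm{Im}(p_i)|\to\infty$. For the left vertical segments I will use the factorisation
\[
g(p_1,p_2)=\left(\frac{\Gamma(p_1)^N}{\prod_{j=0}^k\Gamma(a_jp_1-p_2)}\right)\prod_{j=0}^k\Gamma(p_2-a_jp_1)\Gamma(a_jp_1-p_2).
\]
The reflection identity $\Gamma(z)\Gamma(-z)=-\pi/(z\sin(\pi z))$ shows that the second factor is uniformly bounded on horizontal strips avoiding the poles, and Stirling combined with the Fano condition $N>a_1+\cdots+a_k$ forces the first factor to decay super-exponentially as $\mathrm{Re}(p_1)\to-\infty$. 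This is the precise cohomological analogue of the estimate appearing in the proof of Lemma~\ref{lemma:qosc_inverse_q_mellin_as_residues}.

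Once the deformation is validated, the residue theorem yields
\[
\mathcal{M}^{-1}g(Q_1,Q_2)=\sum_{d_1,d_2\geq 0}\mathrm{Res}_{p_2=-d_2}\mathrm{Res}_{p_1=-d_1}g(p_1,p_2)\,Q_1^{-p_1}Q_2^{-p_2}.
\]
Introducing shifted variables $x_i:=p_i+d_i$ and applying the meromorphic functional equation of the Gamma function to rewrite $\Gamma(x_1-d_1)$ and $\Gamma((x_2-d_2)-a_j(x_1-d_1))$ in terms of $\Gamma(x_1)$ and $\Gamma(x_2-a_jx_1)$ respectively, one identifies the $(d_1,d_2)$-summand on the right hand side above with the corresponding summand of $\omega(x_1,x_2)$ as written in the lemma; note that in the cases $d_2-a_jd_1<0$ the ratio of infinite products in the denominator/numerator of $\omega$ becomes a finite product of linear factors that vanishes at the origin, compensating correctly with the absent pole. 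Pulling the sum inside the iterated residue, which is legitimate because each summand is rational at the origin and the ambient series in $Q_1,Q_2$ converge, then produces the stated identity. The main obstacle will be establishing the uniform bounds needed to discard the contour at $\mathrm{Re}(p_i)\to-\infty$; the remainder is a bookkeeping exercise with gamma-function identities, essentially parallel to the $q$-gamma manipulations already carried out in the proof of Lemma~\ref{lemma:qosc_inverse_q_mellin_as_residues}.
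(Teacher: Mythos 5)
Your proposal is correct and follows essentially the same path as the paper's own argument: deform the $p_1$- and then $p_2$-contours to the left, use the classical Stirling asymptotic together with the reflection formula and the Fano inequality $N>\sum_j a_j$ to kill the far portions of the contour, then apply the residue theorem and rewrite the residues at shifted points via the $\Gamma$ functional equation. The only cosmetic differences are that you use a rectangular contour and carry over verbatim the factorization $g=\bigl(\Gamma(p_1)^N/\prod_j\Gamma(a_jp_1-p_2)\bigr)\prod_j\Gamma(p_2-a_jp_1)\Gamma(a_jp_1-p_2)$ from the $q$-case (Lemma~\ref{lemma:qosc_inverse_q_mellin_as_residues}), whereas the paper closes the contour with a circular arc and bounds $|\Gamma(p_2-a_jp_1)|$ by $|\Gamma(1-a_jp_1)|$ before factoring; both versions hinge on the same Stirling-plus-Fano estimate.
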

The proof of Lemma \ref{lemma:gamma_coh_inverse_mellin_residue} will
be given in Section \ref{sssection:proof_lemma_inverse_mellin}.


\begin{proof}[Proof of Theorem \ref{thm:gamma_cohomology_oscillatory_i_function}]
	Consider the oscillatory integral for the real Lefschetz thimble $\Gamma_\mathbb{R}$ evaluated at $z=1$, $\mathcal{I}^\textnormal{coh}(1,Q_1,Q_2)$.
	The Mellin transform of this oscillatory integral is computed in Lemma \ref{lemma:gamma_coh_mellin_transform_computation}, in which we obtain
	\[
		\mathcal{MI}^{\textnormal{coh}}(1,p_1,p_2)
		=
		\Gamma(p_1)^N \prod_{j=0}^k \Gamma(p_2 - a_j p_1)
	\]
	Using Lemma \ref{lemma:gamma_coh_inverse_mellin_residue}, we compute the inverse Mellin transform of this expression to be
	\[
		\mathcal{M}^{-1}\mathcal{MI}^{\textnormal{coh}}(1,Q_1,Q_2)
		=
		\textnormal{Res}_{x_2=0} \textnormal{Res}_{x_1=0}
		\omega(x_1,x_2)dx_1 dx_2,
	\]
	where
	\[
		\omega(x_1,x_2)
		= \!
		\sum_{d_1,d_2 \geq 0}
		Q_1^{-x_1+d_1} Q_2^{-x_2+d_2}
		\left(
		\Gamma(x_1)
		\frac{
			\prod_{r=-\infty}^0 (x_1-r)
		}
		{
			\prod_{r=-\infty}^{d_1} (x_1-r)
		}
		\right)^N
		\prod_{j=0}^k
		\Gamma \left(x_2-a_j x_1\right)
		\frac{
			\prod_{r=-\infty}^0 (x_2 - a_j x_1 - r)
		}{
			\prod_{r=-\infty}^{d_2-a_j d_1} (x_2-a_j x_1 - r)
		}.
	\]
	Using Theorem \ref{thm:gamma_computation_of_intersection_products}, we identify the iterated residues above with the intersection product
	\[
		\mathcal{M}^{-1}\mathcal{MI}^{\textnormal{coh}}(1,Q_1,Q_2)
		=
		\int_{[X_{\mu,K}]} x_1^N \prod_{j=0}^k (x_2 - a_j x_1) \omega(x_1,x_2)
	\]
	Finally,
	\begin{align*}
		x_1^N \prod_{j=0}^k (x_2 - a_j x_1) \omega(x_1,&x_2)
		=
		\left[
			x_1^N \Gamma(x_1)^N \prod_{j=0}^k (x_2 - a_j x_1) \Gamma(x_2 - a_j x_1)
		\right]
		\times
		\\
		&\times
		\left[
			Q_1^{-x_1} Q_2^{-x_2}
			\sum_{d_1,d_2 \geq 0}
			Q_1^{d_1} Q_2^{d_2}
			\left(
				\frac{
					\prod_{r=-\infty}^0 (x_1-r)
				}
				{
					\prod_{r=-\infty}^{d_1} (x_1-r)
				}
			\right)^N
			\prod_{j=0}^k
				\frac{
					\prod_{r=-\infty}^0 (x_2 - a_j x_1 - r)
				}{
					\prod_{r=-\infty}^{d_2-a_j d_1} (x_2-a_j x_1 - r)
				}
		\right].
	\end{align*}
	In the right hand side, the factor in the first line is the gamma class $\widehat{\Gamma}\left(TX_{\mu,K} \right)$, and the factor in the second line is the small $I$-function of the toric manifold $I^\textnormal{coh}(1,Q_1,Q_2)$.
	Using Equation \ref{eqn:intro_cohom_J_recover_z}, we recover the result for all $z$.
\end{proof}

\begin{remark}
	In the language of \cite{Szenes_Vergne:jeffrey_kirwan_residues}, consider the projective sequence $\mathcal{A} = (\alpha_1, \dots, \alpha_n)$. The choice of integrating first with respect to the input $p_1$ amounts to choosing in Theorem 2.6 a vector $\xi \in K$ regular with respect to $\Sigma \mathcal{A}$ that is located below the line $\textnormal{Vect}(c_1(TX_{\mu,K}))$.
	The other choice replaces the residue $\textnormal{Res}_{p_2=0} \textnormal{Res}_{p_1=0}$ with the sum $\sum_j \textnormal{Res}_{p_1=0} \textnormal{Res}_{p_2=a_j p_1}$.
\end{remark}

\subsection{Proof of Lemma
  \ref{lemma:gamma_coh_mellin_transform_computation}}\label{sssection:proof_lemma_mellin} 

The computation relies on applying the Fubini theorem to see the Mellin transform $\mathcal{MI}^{\textnormal{coh}}$ as an integral on the space $Y_\mathbb{R} := ( \mathbb{R}_{>0})^{N+k+1}$.


\begin{proof}
	We have
	\[
		\mathcal{MI}^{\textnormal{coh}}(z,p_1,p_2)
		=
		\int_{(\mathbb{R}_{>0})^2} \mathcal{I}^{\textnormal{coh}}(z,Q_1,Q_2)Q_1^{p_1} Q_2^{p_2} \frac{dQ_1}{Q_1}\frac{dQ_2}{Q_2}
	\]
	We recall the diagram of the Landau--Ginzburg model below.
	\[
		\begin{tikzcd}
			Y := \left( \mathbb{C}^* \right)^{n=N+k+1}
				\arrow[r,"W"]
				\arrow[d,"\pi"]
			&
			\mathbb{C}
			\\
			B := \left( \mathbb{C}^* \right)^{r=2}
		\end{tikzcd}
	\]
	Notice that in the oscillatory integral $\mathcal{I}^{\textnormal{coh}}$, we have a first integral along a fibre $\pi^{-1}(Q_1,Q_2)$, while the Mellin transform introduces an integral over the base, i.e. all $(Q_1,Q_2) \in B_\mathbb{R} := ( \mathbb{R}_{>0})^2$, for which  $\frac{dQ_1}{Q_1}\frac{dQ_2}{Q_2}$ is a volume form.
	Using Fubini theorem, we can write
	\[
		\mathcal{MI}^{\textnormal{coh}}(z,p_1,p_2)
		=
		\int_{Y_\mathbb{R}= ( \mathbb{R}_{>0})^n}
		e^{-W(x_1, \dots, x_{N+k+1})/z}
		Q_1^{p_1} Q_2^{p_2}
		\frac{dx_1}{x_1} \cdots \frac{dx_{N+k+1}}{x_{N+k+1}}
	\]
	Using the Batyrev constraints $Q_i = \prod_{j=1}^n x_i^{m_{ij}}$, we can write the Mellin transform as
	\[
		\mathcal{MI}^{\textnormal{coh}}(z,p_1,p_2)
		=
		\int_{( \mathbb{R}_{>0})^n}
		\prod_{j=1}^{N+k+1} e^{-x_j/z} x_j^{\alpha_j(p)} 
		\frac{dx_1}{x_1} \cdots \frac{dx_{N+k+1}}{x_{N+k+1}}
		=
		z^{c_1(TX_{\mu,K})(p)}
		\Gamma(p_1)^N \prod_{j=0}^k \Gamma(p_2 - a_j p_1)
	\]
\end{proof}

\subsection{Proof of Lemma
  \ref{lemma:gamma_coh_inverse_mellin_residue}}\label{sssection:proof_lemma_inverse_mellin} 

To prove this lemma, the goal will be to obtain a contour deformation result to express the integrals along $p_l \in \varepsilon_l + i\mathbb{R}$ (where $l=1,2$) in terms of integrals along closed curves, for which we can then apply the residue theorem.
The main ingredients to prove our contour deformation result will be Stirling's formula and the Fano condition $N - \sum_{j=0}^k a_j > 0$.

\begin{remark}
	In general, the formula for the inverse Mellin transform relies on the choice of a base point $\varepsilon=(\varepsilon_1,\varepsilon_2) \in \mathbb{R}^2$, as in the formula
	\[
		\mathcal{M}^{-1} g(Q_1,Q_2)
		:=
		\int_{p_2 \, \in \, \varepsilon_2 + i \mathbb{R}}
		\int_{p_1 \, \in \, \varepsilon_1 + i \mathbb{R}}
		\Gamma(p_1)^N \prod_{j=0}^k \Gamma(p_2 - a_j p_1)
		Q_1^{-p_1} Q_2^{-p_2} dp_1 dp_2
	\]
	This point $\varepsilon$ could be chosen such that the input inside every gamma function is positive, i.e. for all $j \in \{0, \dots, n\}, m_{1j}\varepsilon_1 + \cdots + m_{rj}\varepsilon_r > 0$.
	If $\sigma \subset \mathbb{R}^2$ denotes the cone spanned by the columns of the moment map of $X_{\mu,K}$, this condition is equivalent to choosing a point $\varepsilon$ in the interior of the dual cone $\left(\sigma^\vee\right)^\circ$.
	This interior is not empty as $X_{\mu,K}$ is compact, c.f. Remark \ref{rmk:gamma_toric_manifold_dimension_dual_cone}.
\end{remark}

\begin{proof}[Proof of Lemma \ref{lemma:gamma_coh_inverse_mellin_residue}]
	We consider the inverse Mellin transform
	\[
		\mathcal{M}^{-1} g(Q_1,Q_2)
		:=
		\int_{p_2 \, \in \, \varepsilon_2 + i \mathbb{R}}
		\int_{p_1 \, \in \, \varepsilon_1 + i \mathbb{R}}
		\Gamma(p_1)^N \prod_{j=0}^k \Gamma(p_2 - a_j p_1)
		Q_1^{-p_1} Q_2^{-p_2} dp_1 dp_2,
	\]
	where positive numbers $\varepsilon_1, \varepsilon_2$ are chosen such that $0 < \textnormal{max}_j\{1,\alpha_j\} \varepsilon_1 < \varepsilon_2 < 1$.

	We begin by showing that
	\[
		\mathcal{M}^{-1}g(Q_1,Q_2)
		=
		\sum_{d_2 \in \mathbb{Z}_{\geq 0}} \textnormal{Res}_{p_2=-d_2}
		\sum_{d_1 \in \mathbb{Z}_{\geq 0}} \textnormal{Res}_{p_1=-d_1}
		\Gamma(p_1)^N \prod_{j=0}^k \Gamma(p_2 - a_j p_1)
		Q_1^{-p_1} Q_2^{-p_2}
	\]
	Let us begin by treating the first integral, with respect to the coordinate $p_1$. We will deform the integration contour $\varepsilon_1 + i \mathbb{R}$ by the following contour: pick $R_M > \! > 0$ some large number and define the closed contour $\mathcal{C}(R_M)$ as the union of the following pieces: a vectical line segment $L_1(R_M)$ from $(\varepsilon_1,-\sqrt{R_M^2-1})$ to $(\varepsilon_1,+\sqrt{R_M^2-1})$, a horizontal line $L_2(R_M)$ from $(\varepsilon_1,+\sqrt{R_M^2-1})$ to $(-1,+\sqrt{R_M^2-1})$, a circular arc $C(R_M)$ from $(-1,+\sqrt{R_M^2-1})$ to $(-1,-\sqrt{R_M^2-1})$ along the circle of radius $R_M$ and origin 0 in the half space $\{ \textnormal{Re}(p_1) < 0 \}$, and a horizontal line $L_3(R_M)$ from $(-1,-\sqrt{R_M^2-1})$ to $(\varepsilon_1,-\sqrt{R_M^2-1})$.


	Our goal is to show that the contributions to the integral of all parts except the vectical line segment $L_1(R_M)$ vanish when $R_M \to \infty$.
	We recall that for $a,b \in \mathbb{R}$ we have
	\[
		|\Gamma(a+ib)|^2
		=
		|\Gamma(a)|^2 \prod_{k=0}^\infty
		\frac{1}{1+\frac{b^2}{(a+k)^2}}
	\]
	Therefore, for $a$ fixed, the function $b \mapsto |\Gamma(a+ib)|$ has exponential decay as $b \to \pm \infty$.
	Thus, for the integrals along the horizontal lines,
	\[
		\lim_{R_M \to \infty}
		\int_{L_2(R_M)}
		\Gamma(p_1)^N \prod_{j=0}^k \Gamma(p_2 - a_j p_1)
		Q_1^{-p_1} dp_1
		=
		\lim_{R_M \to \infty}
		\int_{L_3(R_M)}
		\Gamma(p_1)^N \prod_{j=0}^k \Gamma(p_2 - a_j p_1)
		Q_1^{-p_1} dp_1
		=
		0
	\]
	Next, on the circular arc $C(R_M)$, we have $\textnormal{Re}(p_1), \varepsilon_2 < 1$, thus
	\[
		\left|
			\Gamma(p_1)^N \prod_{j=0}^k \Gamma(p_2 - a_j p_1)
		\right|
		\leq
		\left|
			\Gamma(p_1)^N \prod_{j=0}^k \Gamma(1 - a_j p_1)
		\right|
		=
		\left|
			\frac{\Gamma(p_1)^N}{\prod_{j=0}^k \Gamma(a_j p_1)}
		\right|
		\prod_{j=0}^k
		\left|
			\frac{\pi}{\sin(\pi a_j z)}
		\right|
	\]
	For $p_1=R_M e^{i\theta}, \textnormal{Arg}(p_1) \neq \pi$, we can use Stirling's formula to obtain an asymptotic of the right hand side. We obtain
	\begin{align*}
		&\left|
			\frac{\Gamma(p_1)^N}{\prod_{j=0}^k \Gamma(a_j p_1)}
		\right|
		\\
		&\sim_{R_M \to \infty}
		(\textnormal{Constant})
		R_M^{\frac{1-N}{2}}
		e^{R_M \cos(\theta)(-N+\sum_j a_j-a_j\log(a_j))}
		e^{
		(N-\sum_j a_j)(R_M \log(R_M) \cos(\theta)-R_M \theta \sin(\theta))
		}
	\end{align*}
	The leading term in this expression is $e^{\cos(\theta) (N-\sum_j a_j)(R_M \log(R_M))}$, and the coefficient $\cos(\theta) (N-\sum_j a_j)$ is negative on the circular arc $C(R_M)$ due to the Fano condition $N-\sum_j a_j >0$.
	Combining with $|\sin (\pi a_j z)| \sim e^{\pi a_j R_M |\sin(\theta)|}/2$, we get that the function $\left| \Gamma(p_1)^N \prod_{j=0}^k \Gamma(p_2 - a_j p_1) \right|$ has exponential decay as $R_M \to \infty$ on the circular arc $C(R_M)$.
	Finally, we get
	\begin{align*}
		\lim_{R_M \to \infty} \int_{\mathcal{C}(R_M)}
		\Gamma(p_1)^N \prod_{j=0}^k \Gamma(p_2 - a_j p_1)
		Q_1^{-p_1} dp_1
		&=
		\lim_{R_M \to \infty} \int_{L_1(R_M)}
		\Gamma(p_1)^N \prod_{j=0}^k \Gamma(p_2 - a_j p_1)
		Q_1^{-p_1} dp_1
		\\
		&=
		\int_{\varepsilon_1 + i \mathbb{R}}
		\Gamma(p_1)^N \prod_{j=0}^k \Gamma(p_2 - a_j p_1)
		Q_1^{-p_1} dp_1
	\end{align*}

	It remains to apply the residue theorem to the closed contour $\mathcal{C}(R_M)$.
	As a function of $p_1$, the integrand $
		\Gamma(p_1)^N \prod_{j=0}^k \Gamma(p_2 - a_j p_1)
		Q_1^{-p_1}
	$ has poles at $p_1 \in \mathbb{Z}_{\leq 0}$ and for $p_2 - a_j p_1 \in \mathbb{Z}_{\leq 0}$.
	Since $p_2$ takes values on $\varepsilon_2 + i \mathbb{R}$, and since $0 < \textnormal{max}_j\{1,\alpha_j\} \varepsilon_1 < \varepsilon_2 < 1$, the poles obtained from the condition $p_2 - a_j p_1 \in \mathbb{Z}_{\leq 0}$ lay outside of the integration contour $\mathcal{C}(R_M)$.
	Therefore, we have computed the first integral with respect to the input $p_1$:
	\[
		g(p_1,p_2):=\mathcal{M}^{-1}\mathcal{MI}^{\textnormal{coh}}(1,p_1,p_2)
		=
		\int_{\varepsilon_2 + i \mathbb{R}}
		\sum_{d_1 \geq 0}
		\textnormal{Res}_{p_1=-d_1}
		\Gamma(p_1)^N \prod_{j=0}^k \Gamma(p_2 - a_j p_1)
		Q_1^{-Q_1} Q_2^{-Q_2}
		d{p_2}
	\]
	The second integral with respect to the input $p_2$ can be computed using a similar contour deformation, proven by using Stirling's formula once more (the Fano condition will not appear there).

	To obtain the identity in the statement of this lemma, we do the change of variables (for $l=1,2$) $x_l:=p_l-d_l$ and use Fubini's theorem to permute the discrete sums and the residues to obtain
	\begin{equation}\label{eqn:gamma_mid_computation_inverse_mellin_transform_cohomology}
		\mathcal{M}^{-1} g(Q_1,Q_2)
		=
		\textnormal{Res}_{x_2=0} \textnormal{Res}_{x_1=0}
		\sum_{d_1,d_2 \geq 0}
		\Gamma(x_1-d_1)
		\prod_{j=0}^k
		\Gamma \left(x_2-a_j x_1 - (d_2 - a_j d_1) \right)
		Q_1^{-x_1+d_1} Q_2^{x_2+d_2}
	\end{equation}
	Then, using the difference equation satisfied by the gamma function, we obtain
	\[
		\Gamma \left(x_2-a_j x_1 - (d_2 - a_j d_1) \right)
		=
		\Gamma (x_2-a_j x_1)
		\frac{
			\prod_{r=-\infty}^0 (x_2 - a_j x_1 - r)
		}{
			\prod_{r=-\infty}^{d_2-a_j d_1} (x_2-a_j x_1 - r)
		}
	\]
	Applying this formula to every factor in the Equation \ref{eqn:gamma_mid_computation_inverse_mellin_transform_cohomology} above gives the formula given in the statement of Lemma \ref{lemma:gamma_coh_inverse_mellin_residue}.
\end{proof}
\section{Continuous oscillatory integral in quantum $K$-theory}\label{sec:continuous_oscillatory_in_qk}

In this appendix we study another model of an oscillatory integral in quantum $K$-theory, by using the usual (continuous) integral instead of the Jackson integral.
This model was introduced first in \cite{Givental:perm_mirror}.
For such an oscillatory integral, we are only able to prove the analogue of our theorem for projective spaces using our strategy with Mellin transforms. When the toric manifold has Picard rank above 1, the contour deformation does not seem to work.
Combined with the fact that in general, the Mellin transform of a $q$-difference equation is not a difference equation, we decided to move these results to an appendix.

\subsection{Oscillatory integral in quantum $K$-theory}

In this subsection, we will always consider $|q|<1$.

\begin{definition}[Oscillatory integral in quantum $K$-theory; \cite{Givental:perm_mirror}, Theorem 2]
  Let $q \in (0,1)$.
  Consider the $K$-theoretic Laudau--Ginzburg model associated to a toric manifold $X_{\mu, K}$.
  Fix $(Q_1, \dots, Q_r) \in B$ and let $\omega_{\pi^{-1}(Q)} \in \Lambda^r \left(T^* \pi^{-1}(Q_1,\dots,Q_r) \right)$ be a volume form on $\pi^{-1}(Q_1,\dots,Q_r)$.
  The (continuous) oscillatory integral $\mathcal{I}_c^{K-\textnormal{th}}$ is the function defined by
  \[
    \mathcal{I}_c^{K-\textnormal{th}}(q,Q)
    :=
    \int_{\Gamma \subset \pi^{-1}(Q_1,\dots,Q_r)}
    e^{W_Q}
    \omega_{\pi^{-1}(Q)},
  \]
  where $W_Q := W_{|\pi^{-1}(Q_1,\dots,Q_r)}$ and $\Gamma \subset \pi^{-1}(Q_1,\dots,Q_r)$ is a Lefschetz thimble.
\end{definition}
Note that in this definition and unlike Subsection \ref{ssection:q_gamma_structure_in_qk}, we will understand the function $e^{W_Q}$ as its analytical continuation given by the infinite product ($|q| < 1$)
\[
  \prod_{j=1}^n \frac{1}{(x_j;q)_\infty}
\]
To choose which Lefchetz thimble we will consider, we have to compare our oscillatory integrals in $K$-theory and in cohomology.
Recall that in cohomology, we were considering (for $z,Q_i, x_j> 0$)
\[
  \mathcal{I}^{\textnormal{coh}}(z,Q)
  =
  \int_{\pi^{-1}(Q) \cap (\mathbb{R}_{> 0})^n}
  \exp(-x_1/z - \cdots - x_n/z) \omega_{\pi^{-1}(Q)}
\]
One can define a $q$-analogue of the exponential by the following formula
\[
  e_q(x)
  :=
  \sum_{d \geq 0}
  x^d
  \prod_{l=0}^d \frac{1-q}{1-q^l}
\]
This function inherits its name from the observation that 
\begin{equation}\label{eqn:gamma_q_exponential_limit}
  \lim_{q \to 1} e_q(x) = \exp(x)
\end{equation}
Furthermore, we will pick a Lefchetz thimble $\Gamma_\mathbb{R}$ so that $e^{+W(x_1, \dots, x_n)}$ is a $q$-analogue of $e^{-x_1/z - \cdots - x_n/z}$, in the sense of the limit in Equation \ref{eqn:gamma_q_exponential_limit}.
Therefore, we will need to take a real Lefschetz thimble $\Gamma_\mathbb{R}$ for which the coordinates $(x_j)$ are negative.

%

\begin{definition}
  Let $(Q_1, \dots, Q_r) \in (\mathbb{R}_{>0})^r$, the corresponding \textit{real Lefschetz thimble} $\Gamma_\mathbb{R} \subseteq \pi^{-1}(Q)$ is given by the negative points
  \[
    \Gamma_\mathbb{R}
    :=
    \{
      (x_1,\dots,x_n) \in \pi^{-1}(Q_1,\dots,Q_r)
      \, | \,
      x_j < 0
    \}
  \]
\end{definition}

To match the signs in the Batyrev relations, we will replace $Q_i$ by $(-1)^{\textnormal{deg}} Q_i := (-1)^{\sum_j m_{ij}} Q_i$, and finally consider the oscillatory integral (defined for $|q| < 1, Q_i > 0$):

\begin{equation}\label{eqn:gamma_k_theory_real_oscillatory_integral}
  \mathcal{I}_c^{K-\textnormal{th}}(q,Q)
  =
  \int_{\Gamma_\mathbb{R} \subset \pi^{-1}((-1)^{\textnormal{deg}}Q_1,\dots,(-1)^{\textnormal{deg}}Q_r)}
    e^{W_{(-1)^{\textnormal{deg}}Q}}
    \frac{d{Q_1}}{Q_1}
    \cdots
    \frac{d{Q_n}}{Q_n}
\end{equation}
Note that if we were to consider $q > 1$, we would have to replace the analytical continuation of $W$ (given by $\frac{1}{(-x;q)_\infty}$) in the integral by the expression $(q^{-1}x;q^{-1})_\infty$.
Then, the integral would be immediately divergent.

\begin{proposition}[\cite{Givental:perm_mirror}, Theorem 2; see also \cite{IMT}, Proposition 2.12]
  The $K$-theoretical oscillatory integral $\mathcal{I}_c^{K-\textnormal{th}}$ and the small $I$-function $I_{X_{\mu,K}}^{K-\textnormal{th}}$ satisfy the same set of $q$-difference equations below (indexed by $i \in \{1, \dots, r\}$):
  \begin{align*}
    \left[
      \prod_{j : m_{ij} > 0} \prod_{r=0}^{m_{ij}-1}
      \left(
        1 - q^{-r} q^{
          \sum_{i'} m_{i'j} Q_{i'} \partial_{Q_{i'}}
        }
      \right)
      -
      Q_i
      \prod_{j : m_{ij} < 0} \prod_{r=0}^{-m_{ij}-1}
      \left(
        1 - q^{-r} q^{
          \sum_{i'} m_{i'j} Q_{i'} \partial_{Q_{i'}}
        }
      \right)
    \right]
    f_q(Q)
    =
    0
  \end{align*}
\end{proposition}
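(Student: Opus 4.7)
The plan is to mirror the argument for the Jackson-integral case (Proposition \ref{prop:qosc_mirrors_have_same_qde}), replacing the Jackson integral by the Riemann integral. All the key algebraic identities transfer with only cosmetic changes; the main additional work is justifying the change of variables for the continuous integral.

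First, I would establish that for each $j \in \{1, \dots, n\}$,
$$ q^{\alpha_j(Q\partial_Q)} \mathcal{I}_c^{K-\textnormal{th}}(q, Q) = \int_{\Gamma_{\mathbb{R}}} \bigl(q^{x_j \partial_{x_j}} \exp(W)\bigr)\, \omega_{\pi^{-1}(Q)}, $$
where $\alpha_j(Q\partial_Q) := \sum_i m_{ij} Q_i \partial_{Q_i}$. Indeed, $q^{\alpha_j(Q\partial_Q)}$ implements the substitution $Q_i \mapsto q^{m_{ij}} Q_i$, and the change of variables $x_j = q y_j$, $x_{j'} = y_{j'}$ for $j' \neq j$ restores the original Batyrev constraints in the $y$-coordinates, leaves the logarithmic volume form $\omega_{\pi^{-1}(Q)}$ invariant (since $dx_j/x_j = dy_j/y_j$), and transforms the integrand by precisely the action of $q^{x_j \partial_{x_j}}$. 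Since the substitution is a positive rescaling, the real Lefschetz thimble $\Gamma_{\mathbb{R}}$ is mapped to itself.

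Next, using the analytic continuation $\exp(W) = \prod_j 1/(x_j;q)_\infty$ valid for $|q|<1$, set $f(x) := 1/(x;q)_\infty$. The identity $(x;q)_\infty = (1-x)(qx;q)_\infty$ yields $q^{x \partial_x} f(x) = (1-x) f(x)$, exactly the relation that $E_{q^{-1}}(x/(1-q))$ satisfies in the Jackson case. A routine induction on $m$ (base case $(1-q^{x\partial_x})f = f-(1-x)f = xf$; inductive step $(1-q^{-k}q^{x\partial_x})(x^k f) = x^k f - q^{-k}(qx)^k(1-x)f = x^{k+1}f$) then gives the crucial identity
$$ \prod_{r=0}^{m-1}(1 - q^{-r} q^{x\partial_x}) f(x) = x^m f(x). $$

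With these two inputs, the remainder of the derivation is formal and proceeds exactly as in the Jackson case. Applying $\prod_{j: m_{ij}>0}\prod_{r=0}^{m_{ij}-1}(1-q^{-r+\alpha_j(Q\partial_Q)})$ to $\mathcal{I}_c^{K-\textnormal{th}}$ inserts the factor $\prod_{j: m_{ij}>0} x_j^{m_{ij}}$ into the integrand; the Batyrev relation (with the sign absorbed into the $(-1)^{\operatorname{deg}}$ normalization of (\ref{eqn:gamma_k_theory_real_oscillatory_integral})) rewrites this as $Q_i \prod_{j: m_{ij}<0} x_j^{-m_{ij}}$, and reading the identity $x^m f = \prod_{r=0}^{m-1}(1-q^{-r}q^{x\partial_x})f$ backwards on the right-hand side recovers $Q_i\prod_{j: m_{ij}<0}\prod_{r=0}^{-m_{ij}-1}(1-q^{-r+\alpha_j(Q\partial_Q)}) \mathcal{I}_c^{K-\textnormal{th}}$, which is the desired $q$-difference equation. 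For the small $I$-function, the statement is Theorem p.8 of \cite{Givental:perm_toric_q_hypergeometric}, which can be invoked as a black box. The main technical nuisance, rather than an obstacle, is ensuring absolute convergence of the integrand after the $q$-difference operators act; this holds on $\Gamma_{\mathbb{R}}$ because $|q|<1$ keeps the Pochhammer symbols bounded away from their zeroes for negative $x_j$.
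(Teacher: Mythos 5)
The paper does not prove this appendix proposition; it is stated with a \qed and a citation to Givental's theorem and \cite{IMT}. Your choice to reconstruct a proof by mirroring the paper's own argument for the Jackson-integral version (Proposition~\ref{prop:qosc_mirrors_have_same_qde}) is exactly the natural one, and the algebraic core of what you wrote is correct: the functional equation $q^{x\partial_x}f(x)=(1-x)f(x)$ for $f(x)=1/(x;q)_\infty$ does hold (since $(x;q)_\infty=(1-x)(qx;q)_\infty$), your induction giving $\prod_{r=0}^{m-1}(1-q^{-r}q^{x\partial_x})f=x^m f$ is verified, and the change-of-variables argument identifying $q^{\alpha_j(Q\partial_Q)}\mathcal{I}_c$ with the integral of $q^{x_j\partial_{x_j}}$ acting inside carries over verbatim.

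The one step you should not wave away is the sign. In the Jackson case ($q>1$, $x_j>0$, $Q_i>0$) the Batyrev relation is the untwisted $Q_i=\prod_j x_j^{m_{ij}}$, so the substitution $\prod_{j:m_{ij}>0}x_j^{m_{ij}}=Q_i\prod_{j:m_{ij}<0}x_j^{-m_{ij}}$ is sign-free. In the continuous case the definition~\eqref{eqn:gamma_k_theory_real_oscillatory_integral} integrates over $\Gamma_\mathbb{R}\subset\pi^{-1}((-1)^{\mathrm{deg}}Q)$, so on the cycle the Batyrev relation reads $\prod_j x_j^{m_{ij}}=(-1)^{m_i}Q_i$ with $m_i=\sum_j m_{ij}$; running the same substitution produces $(-1)^{m_i}Q_i\prod_{j:m_{ij}<0}x_j^{-m_{ij}}$, not $Q_i\prod_{j:m_{ij}<0}x_j^{-m_{ij}}$. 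Equivalently, putting $\widetilde Q_i:=(-1)^{m_i}Q_i$, the function $\mathcal{I}_c(q,Q)$ is $\mathcal{I}_c^{\mathrm{untwisted}}(q,\widetilde Q)$ and $Q_i\partial_{Q_i}=\widetilde Q_i\partial_{\widetilde Q_i}$, so it satisfies the difference equation with $\widetilde Q_i=(-1)^{m_i}Q_i$ in place of $Q_i$. Your parenthetical claim that the sign is ``absorbed into the $(-1)^{\mathrm{deg}}$ normalization'' is exactly the step that needs an actual argument: as written, the twist \emph{introduces} the extra factor $(-1)^{m_i}$ rather than cancelling it. Either the proposition should be read as applying to the untwisted $\mathcal{I}_c$ over $\pi^{-1}(Q)$ from the first definition in the appendix (where your argument is clean), or the $Q_i$ in the stated operator should be replaced by $(-1)^{m_i}Q_i$ when one specializes to $\Gamma_\mathbb{R}$. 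You should say explicitly which convention you are using and track the sign through the Batyrev step rather than asserting the cancellation.

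Two small points of hygiene. First, the convergence remark at the end is slightly misplaced: on $\Gamma_\mathbb{R}$ the issue is not the Pochhammer zeros (they sit at $x=q^{-r}>0$ while $x_j<0$) but absolute convergence of the improper integral at infinity; that is fine since $1/(x;q)_\infty$ decays for $x\to-\infty$ when $0<q<1$, but state the correct reason. Second, you need to note that the change of variables $x_j\mapsto qx_j$ with $0<q<1$ preserves the orientation and the cycle $\Gamma_\mathbb{R}$ (being a positive rescaling on each coordinate) — you do mention this, which is good, but make sure it applies to the chosen thimble and not just formally.
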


\subsection{Corresponding $q$-gamma class and comparison theorem}

Since we changed the definition of the oscillatory integral, it turns out we will be using another $q$-analogue of the gamma function.

\begin{definition}[Continuous $q$-gamma class]\label{def:gamma_q_gamma_class}
  Let $E \to X_{\mu,K}$ be a vector bundle, and denote by $\delta_1, \dots, \delta_m \in H^2(X_{\mu,K};\mathbb{Q})$ its Chern roots.
  The \textit{continuous $q$-gamma class} $\widehat{\gamma^c_q}(E) \in H^*(X_{\mu,K};\mathbb{Q})$ is defined by
  \[
    \widehat{\gamma^c_q} (E)
    :=
    \prod_{j=1}^m \delta_j \gamma_q^c(\delta_j)
    \in H^*(X_{\mu,K};\mathbb{Q}),
  \]
  where
  \begin{equation}\label{eqn:gamma_q_gamma}
    \gamma_q^c(z)
    :=
    \int_0^\infty
    \frac{x^z}{(-x;q)_\infty}
    \frac{dx}{x}
  \end{equation}
\end{definition}

In the definition of $\widehat{\gamma^c_q} (E)$, the right hand side should be understood as its power series expansion, using the Ramanujan identity below.

\begin{proposition}[\cite{Askey:Ramanujan_extension}]\label{prop:Ramanujan_formula}
  For $a \in \mathbb{C}, z > 0$ and $q \in (0,1)$, the following Ramanujan formula holds:
  \[
    \int_0^\infty
    t^z
    \frac{
      (-at;q)_\infty
    }
    {
      (-t;q)_\infty
    }
    \frac{dt}{t}
    =
    \frac{(a;q)_\infty}{(q;q)_\infty}
    \frac{\pi}{\sin(\pi z)}
    \frac{\left( q^{1-z} ; q\right)_\infty}{\left( aq^{1-z} ; q\right)_\infty}
  \]
\end{proposition}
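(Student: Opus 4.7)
The plan is to evaluate the integral as a $q$-analogue of Euler's beta integral by appealing to Ramanujan's master theorem applied to the $q$-binomial expansion of the integrand. As a first step, the $q$-binomial theorem gives the power series
\[
\frac{(-at;q)_\infty}{(-t;q)_\infty} \;=\; \sum_{n=0}^\infty \frac{(a;q)_n}{(q;q)_n}\,(-t)^n \qquad (|t|<1),
\]
so the integrand has the shape $\sum_{n\ge 0}(-t)^n\phi(n)/n!$ with $\phi(n)=n!\,(a;q)_n/(q;q)_n$. The key observation is that $\phi$ admits the canonical meromorphic interpolation
\[
\phi(s) \;=\; \Gamma(s+1)\,\frac{(a;q)_\infty\,(q^{s+1};q)_\infty}{(q;q)_\infty\,(aq^s;q)_\infty},
\]
of admissible decay on vertical strips, thanks to the exponential decay of $|1/\Gamma(s+1)|$ along imaginary lines together with uniform bounds on the $q$-Pochhammer symbols for $s$ of fixed real part.

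I would then invoke Ramanujan's master theorem in Hardy's rigorous formulation (see \emph{Ramanujan: Twelve Lectures}, Ch.~XI), giving
\[
\int_0^\infty t^{z-1}\,\frac{(-at;q)_\infty}{(-t;q)_\infty}\,dt \;=\; \Gamma(z)\,\phi(-z)
\]
in a suitable vertical strip for $z$. Substituting the interpolation formula for $\phi(-z)$ and applying Euler's reflection identity $\Gamma(z)\Gamma(1-z)=\pi/\sin\pi z$ yields the product form announced in the statement. Analytic continuation in both $a$ and $z$ then extends the identity to its maximal domain, since both sides are meromorphic in $(a,z)$ with matching singularities.

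The principal difficulty is justifying the master theorem in this setting: the series for the integrand converges only for $|t|<1$, whereas the Mellin integral is taken over the full half-line $(0,\infty)$. Hardy's route circumvents this by representing $f(t)=\sum_n (-t)^n\phi(n)/n!$ as the Mellin--Barnes contour integral $\frac{1}{2\pi i}\int_{c-i\infty}^{c+i\infty}\Gamma(s)\phi(-s)\,t^{-s}\,ds$ for small positive $c$ and then exchanging the order of integration after suitable contour shifts. A more self-contained alternative is to verify directly the $q$-difference equation $(1-aq^{-z})\,F(a)=(1-a)\,F(qa)$ for both sides: for the left-hand side, one uses the telescoping relation $(-at;q)_\infty=(1+at)(-aqt;q)_\infty$ together with the change of variables $t\mapsto t/q$ and partial fractions, while for the right-hand side it is immediate from the recurrence $(x;q)_\infty=(1-x)(xq;q)_\infty$. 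Matching at $a=0$, where the integral reduces to the classical Jackson evaluation of the $q$-gamma function, completes the argument.
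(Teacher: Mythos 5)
The paper does not prove this proposition; it simply cites Askey's article (the \qed terminates the statement, not a proof). So I will assess your argument on its own merits. Your plan — $q$-binomial expansion, meromorphic interpolation of the Maclaurin coefficients, Ramanujan's master theorem in Hardy's rigorous form, with the $q$-difference-equation route as a cleaner fallback — is a standard and sound way to establish the identity, and the skeleton is correct.

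There is, however, a discrepancy that your write-up glosses over. Your interpolation $\phi(s)=\Gamma(s+1)\,\dfrac{(a;q)_\infty\,(q^{s+1};q)_\infty}{(q;q)_\infty\,(aq^{s};q)_\infty}$, fed into $\Gamma(z)\phi(-z)$, yields
\[
\int_0^\infty t^{z-1}\frac{(-at;q)_\infty}{(-t;q)_\infty}\,dt
= \frac{\pi}{\sin\pi z}\,\frac{(a;q)_\infty\,(q^{1-z};q)_\infty}{(q;q)_\infty\,(aq^{-z};q)_\infty},
\]
with $aq^{-z}$ in the denominator — \emph{not} $aq^{1-z}$ as in the proposition as printed. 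These differ by the factor $(1-aq^{-z})$. Your formula is in fact the correct one, and the $q$-difference equation route confirms it: iterating $(1-aq^{-z})F(a)=(1-a)F(qa)$ down to $a=0$ produces $F(a)=F(0)\,(a;q)_\infty/(aq^{-z};q)_\infty$. A quick sanity check is $a=q$: the integrand collapses to $t^{z-1}/(1+t)$ and the integral is the classical beta value $\pi/\sin\pi z$, which your version reproduces, whereas the as-printed RHS would give $\frac{\pi}{\sin\pi z}(1-q^{1-z})$. So the proposition as stated contains a typo (harmless downstream, since the paper only invokes $a=0$), but your claim that the interpolation ``yields the product form announced in the statement'' is not correct as written — you derived a \emph{different} (and correct) formula and should have flagged the mismatch rather than asserting agreement. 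Two smaller points of rigour: Hardy's growth condition is on $\psi(s)=\phi(s)/\Gamma(s+1)$, for which the uniform boundedness of the $q$-Pochhammer ratio on vertical lines does the work (your phrasing mixes $\phi$ and $\psi$); and the base case $F(0)=\int_0^\infty t^{z-1}(-t;q)_\infty^{-1}\,dt=\frac{\pi}{\sin\pi z}\frac{(q^{1-z};q)_\infty}{(q;q)_\infty}$ is Ramanujan's continuous integral (Hardy, \emph{Ramanujan}, Ch.\ XI), not Jackson's evaluation, which concerns the $q$-Jackson integral rather than the Riemann one.
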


Setting $a=0$ in this proposition, we get

\begin{equation*}
  \gamma_q^c(z) = \frac{\pi}{\sin(\pi z)} \frac{(q^{1-z};q)_\infty}{(q;q)_\infty}
\end{equation*}
Note that this function satisfies the difference equation
\begin{equation*}
  \gamma_q^c(z+1)
  =
  \frac{1}{q^{-z}-1} \gamma_q^c(z)
\end{equation*}

%
%

\begin{remark}\label{rmk:gamma_comparison_between_q_gamma_functions}
  The function $\gamma_q$ defined in Equation (\ref{eqn:gamma_q_gamma}) is another $q$-analogue of the gamma function $\Gamma$, as we replacing the exponential $e^{-x}$ by the $q$-analogue $e_q(-x/(1-q))=(-x;q)_\infty^{-1}$.
  Recall that "the" $q$-gamma function $\Gamma_q$ introduced by Jackson (see e.g. Equation (1.10.1) in \cite{Gasper_Rahmann:q_hypergeometric_series}) is defined by
  \[
    \Gamma_q(z)
    :=
    (1-q)^{1-z}
    \frac{(q;q)_\infty}{(q^z;q)_\infty}
  \]
  Our two $q$-analogues are related by the formula
  \[
    \gamma_q^c(z)
    =
    \frac{\pi}{\sin(\pi z)}
    \frac{(1-q)^z}{\Gamma_q(1-z)}
  \]
  Using Euler's reflection formula for the classical gamma function, we obtain that also 
  \begin{equation}\label{eqn:gamma_confluence_q_gamma}
    \lim_{q \to 1} (1-q)^{-z} \gamma_q^c(z) = \Gamma(z)
  \end{equation}
  Note that we introduced the factor $(1-q)^{-z}$ in the left hand side as otherwise the difference equation satisfied by $\gamma_q^c(z)$ would have no formal limit when $q \to 1$.
\end{remark}

We are now ready to state our theorem comparing the $q$-oscillatory integral and the $I$-function.

\begin{theorem}\label{thm:gamma_comparison_theorem_for_continuous_integrals}
  Let $X=\mathbb{P}^N$ be a projective space.
  Then, the oscillatory integral $\mathcal{I}_c^{K-\textnormal{th}}$ defined in Equation (\ref{eqn:gamma_k_theory_real_oscillatory_integral}) and the $I$-function $I^{K-\textnormal{th}}$ of Definition \ref{def:gamma_k_th_I_function} are related by the identity
  \[
    \mathcal{I}_c^{K-\textnormal{th}}(q,Q)
    =
    \int_{\left[X\right]}
    \textnormal{ch}_q\left(
      I_X^{K-\textnormal{th}}(q,Q)
    \right)
    \cup
    \widehat{\gamma^c_q}(TX),
  \]
  where $\int_{\left[X_{\mu,K}\right]}$ denotes the intersection product by $[X_{\mu,K}] \in H_*(X_{\mu,K};\mathbb{C})$, $\widehat{\gamma^c}(TX_{\mu,K})$ is the continuous $q$-Gamma class of Definition \ref{def:gamma_q_gamma_class} and $\textnormal{ch}_q$ is the $q$-Chern character of Definition \ref{def:gamma_q_chern}.
\end{theorem}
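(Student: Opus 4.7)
The plan is to mimic the proof of Theorem \ref{thm:gamma_comparison_theorem_in_quantum_k_theory}, but with the classical Mellin transform in place of the $q$-Mellin transform. For $X=\mathbb{P}^N$ the Batyrev constraint reads $Q=x_1\cdots x_{N+1}$, so after absorbing the sign $(-1)^{\textnormal{deg}}=(-1)^{N+1}$ introduced in (\ref{eqn:gamma_k_theory_real_oscillatory_integral}) via $x_j\mapsto -y_j$ with $y_j>0$, a Fubini-type argument should give
\[
\mathcal{M}\mathcal{I}_c^{K-\textnormal{th}}(q,p)
=\prod_{j=1}^{N+1}\int_0^\infty \frac{y_j^{\,p}}{(-y_j;q)_\infty}\frac{dy_j}{y_j}
=\gamma_q^c(p)^{N+1},
\]
where the last equality is Ramanujan's integral formula (Proposition \ref{prop:Ramanujan_formula}) specialised at $a=0$.

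Next I would invert the Mellin transform and close the integration contour to the left. The poles of $\gamma_q^c$ are exactly at $p\in\mathbb{Z}_{\leq 0}$: the apparent poles of $\pi/\sin(\pi p)$ at positive integers cancel against the zeros of $(q^{1-p};q)_\infty$. A Stirling-type estimate for $\gamma_q^c$ on vertical lines in the left half-plane, analogous to the bound used in Lemma \ref{lemma:qosc_inverse_q_mellin_as_residues}, would let me compute
\[
\mathcal{I}_c^{K-\textnormal{th}}(q,Q)=\sum_{d\geq 0}\textnormal{Res}_{p=-d}\ \gamma_q^c(p)^{N+1}Q^{-p}.
\]
Substituting $p=x-d$, interchanging the residue and the sum, and iterating the difference relation $\gamma_q^c(z)=(q^{-z}-1)\gamma_q^c(z+1)$ to get $\gamma_q^c(x-d)=\gamma_q^c(x)\prod_{k=1}^{d}(q^{k-x}-1)$, one arrives at
\[
\mathcal{I}_c^{K-\textnormal{th}}(q,Q)
=\textnormal{Res}_{x=0}\ (x\,\gamma_q^c(x))^{N+1}\cdot \frac{Q^{-x}}{x^{N+1}}\sum_{d\geq 0}Q^{d}\prod_{k=1}^{d}(q^{k-x}-1)^{N+1}\,dx.
\]
Using $T\mathbb{P}^N=(N+1)\mathcal{O}(1)-\mathbf{1}$ together with $\lim_{x\to 0}x\,\gamma_q^c(x)=1$, the factor $(x\,\gamma_q^c(x))^{N+1}$ is exactly $\widehat{\gamma_q^c}(T\mathbb{P}^N)$, while the remaining factor matches $\textnormal{ch}_q\bigl(I_{\mathbb{P}^N}^{K-\textnormal{th}}(q,Q)\bigr)$. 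The Picard-rank $1$ specialization of Theorem \ref{thm:gamma_computation_of_intersection_products}, $\int_{[\mathbb{P}^N]}f(p)=\textnormal{Res}_{x=0}f(x)\,dx/x^{N+1}$, then produces the desired identity.

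The main obstacle, and the reason this approach is confined to the Picard rank $1$ case, lies in the contour deformation step. For higher Picard rank one would have to close iterated contours in several complex variables, and the Stirling-type decay of a product of the form $\gamma_q^c(p_1)^{N}\prod_{j}\gamma_q^c(p_2-a_jp_1)$ that would be required to justify this closure fails: unlike the $q$-gamma function $\Gamma_{q^{-1}}$ used in the Jackson setting, the reflected function $\gamma_q^c$ does not enjoy exponential decay in the diagonal wedges of $\mathbb{C}^2$ that the Fano condition would otherwise select. This is a structural obstruction, consistent with the observation made at the opening of this appendix that the classical Mellin transform does not send $q$-difference equations to difference equations (whereas the $q$-Mellin transform does), which is precisely what made the Jackson-integral version of the comparison theorem extend to all Picard rank $2$ Fano toric manifolds.
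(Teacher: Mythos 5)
Your overall strategy is the same as the paper's: compute the classical Mellin transform of $\mathcal{I}_c^{K-\textnormal{th}}$, invert it by contour deformation to a sum of residues, then identify the residue with an intersection product via the Picard-rank-$1$ specialization of Theorem~\ref{thm:gamma_computation_of_intersection_products}. The Mellin transform computation $\mathcal{M}\mathcal{I}_c^{K-\textnormal{th}}(q,p)=\gamma_q^c(p)^{N+1}$ and the observation that $\gamma_q^c$ has simple poles precisely at $p\in\mathbb{Z}_{\leq 0}$ are correct, and your description of the Picard-rank obstruction at the end is a fair paraphrase of the paper's remarks.

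However, there is a genuine gap in the last step. You apply the difference relation in the form $\gamma_q^c(z)=(q^{-z}-1)\gamma_q^c(z+1)$, which is the relation as stated in the paper but is in fact reversed: starting from the integral definition $\gamma_q^c(z)=\int_0^\infty \frac{x^z}{(-x;q)_\infty}\frac{dx}{x}$, the scale-invariant substitution $x\mapsto qx$ together with $(-qx;q)_\infty=(-x;q)_\infty/(1+x)$ gives $\gamma_q^c(z)=q^z\bigl(\gamma_q^c(z)+\gamma_q^c(z+1)\bigr)$, i.e.\ $\gamma_q^c(z+1)=(q^{-z}-1)\gamma_q^c(z)$. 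Iterating the \emph{correct} relation one gets $\gamma_q^c(x-d)=\gamma_q^c(x)\prod_{k=1}^d(q^{k-x}-1)^{-1}$, with the product in the \emph{denominator}, which is exactly what is needed to match the denominator $\prod_{r=1}^d(1-Pq^r)^{N+1}$ in the $K$-theoretic $I$-function after applying $\textnormal{ch}_q$. As written, your residue formula has $\prod_{k=1}^d(q^{k-x}-1)^{N+1}$ in the numerator, and the final assertion that ``the remaining factor matches $\textnormal{ch}_q\bigl(I_{\mathbb{P}^N}^{K-\textnormal{th}}(q,Q)\bigr)$'' is therefore false for what you wrote; it is only after flipping the product to the denominator (and tracking the sign $(-1)^{d(N+1)}$, which is absorbed by the $(-1)^{\textnormal{deg}}Q$ twist in the definition of $\mathcal{I}_c^{K-\textnormal{th}}$) that the identification goes through. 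A secondary imprecision: the decay estimate you invoke ``on vertical lines'' is not what justifies the contour closure — $|\gamma_q^c(p)|$ does \emph{not} decay along the negative real direction (it is $O(1)$ away from poles), and the paper's proof of Conjecture~\ref{conj:gamma_contour_deformation_q_gamma} for $\mathbb{P}^N$ uses a half-circle contour where the decay comes from the Moak-type estimate $|\gamma_q^c(p)|\sim Ce^{-\pi\rho|\sin\theta|}$ (decay in the imaginary direction) combined with the decay of $|Q^{-p}|$ for $|Q|<1$ in the real direction. The relevant analogy is Lemma~\ref{lemma:gamma_coh_inverse_mellin_residue}, not Lemma~\ref{lemma:qosc_inverse_q_mellin_as_residues}, whose contour is a priori bounded in the imaginary direction thanks to the $\pi/\log q$ periodicity of the $q$-Mellin inversion.
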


\subsection{Proof of the comparison theorem for projective planes}

In this case, we will begin by writing our proof as if the target manifold was a symplectic toric manifold of Picard rank 2.
We are able to compute the Mellin transform of the oscillatory integral, however trouble will appear when considering its inverse.

\begin{lemma}\label{lemma:gamma_kth_mellin_transform_computation}
  Let $X_{\mu,K}$ be a Fano symplectic toric manifold of Picard rank 2, whose moment map is given by (cf Proposition \ref{prop:gamma_model_toric_picard_rank_two})
  \[
    \begin{pmatrix}
      1
      &
      \cdots
      &
      1
      &
      0
      &
      -a_1
      &
      \cdots
      &
      -a_k
      \\
      0
      &
      \cdots
      &
      0
      &
      1
      &
      1
      &
      \cdots
      &
      1
    \end{pmatrix},
  \]
  where $a_0:=0, a_1,\dots,a_k \in \mathbb{Z}_{\geq 0}$.
  Let $
    \Gamma_\mathbb{R}
    :=
    \{
      (x_1,\dots,x_n) \in \pi^{-1}(Q_1,\dots,Q_r)
      \, | \,
      x_j < 0
    \}
  $ be the real Lefschetz thimble and $
    \mathcal{I}_c^{K-\textnormal{th}}(q,Q)
    =
    \int_{\Gamma_\mathbb{R}}
    e^{W_{(-1)^{\textnormal{deg}}Q}}
    \frac{d{Q_1}}{Q_1}
    \cdots
    \frac{d{Q_n}}{Q_n}
  $ be the oscillatory integral.
  Then, the Mellin transform $ \mathcal{MI}_c^{K-\textnormal{th}}$ of the oscillatory integral exists and is given by
  \[
    \mathcal{MI}_c^{K-\textnormal{th}}(q,p_1,p_2)
    =
    \gamma_q^c(p_1)^N \prod_{j=0}^k \gamma_q^c(p_2 - a_j p_1),
  \]
  where we recall that $
  \gamma_q^c(z)
    :=
    \frac{\pi}{\sin(\pi z)}
    \frac{(q^{1-z};q)_\infty}{(q;q)_\infty}
  $.
\end{lemma}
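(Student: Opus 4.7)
The plan is to mimic the proof of the cohomological Mellin transform computation (Lemma \ref{lemma:gamma_coh_mellin_transform_computation}) and the Jackson-integral computation (Lemma \ref{lemma:qosc_mellin_transform_computation}), with Ramanujan's formula (Proposition \ref{prop:Ramanujan_formula}, specialized at $a=0$) playing the role that the standard Euler integral for $\Gamma$ played in the cohomological case.

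First, I would unfold the definition of the Mellin transform and apply Fubini. By the $K$-theoretic analogue of the Landau--Ginzburg setup, we have
\[
\mathcal{M}\mathcal{I}_c^{K-\textnormal{th}}(q,p_1,p_2)
= \int_{(\mathbb{R}_{>0})^2}
\left(\int_{\Gamma_\mathbb{R}\subset \pi^{-1}((-1)^{\deg}Q)}
\prod_{j=1}^n \frac{1}{(x_j;q)_\infty}\,\omega_{\pi^{-1}((-1)^{\deg}Q)}\right)
Q_1^{p_1}Q_2^{p_2}\frac{dQ_1}{Q_1}\frac{dQ_2}{Q_2}.
\]
Since $\Gamma_\mathbb{R}$ is parametrized by $x_j<0$, I substitute $y_j=-x_j$; the sign factors $(-1)^{m_i}$ in the Batyrev constraints $(-1)^{\deg}Q_i=\prod_j x_j^{m_{ij}}$ cancel exactly the $(-1)^{\deg}$ we introduced, leaving the cleaner relations $Q_i=\prod_j y_j^{m_{ij}}$. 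The measure on the fibre plus the volume form on the base combine into the Haar measure $\prod_j dy_j/y_j$ on $Y_\mathbb{R}=(\mathbb{R}_{>0})^n$, exactly as in Lemma \ref{lemma:gamma_coh_mellin_transform_computation}. The monomial $Q_1^{p_1}Q_2^{p_2}$ becomes $\prod_j y_j^{\alpha_j(p)}$ where $\alpha_j(p)=m_{1j}p_1+m_{2j}p_2$, so the integral factors completely:
\[
\mathcal{M}\mathcal{I}_c^{K-\textnormal{th}}(q,p_1,p_2)
=\prod_{j=1}^n \int_0^\infty \frac{y^{\alpha_j(p)}}{(-y;q)_\infty}\frac{dy}{y}.
\]

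Next, I would apply Ramanujan's formula (Proposition \ref{prop:Ramanujan_formula}) with $a=0$ and $z=\alpha_j(p)$; this identifies each factor with $\gamma_q^c(\alpha_j(p))$ by the definition \eqref{eqn:gamma_q_gamma}. Reading off the columns of the moment matrix from Proposition \ref{prop:gamma_model_toric_picard_rank_two}, the first $N$ values of $\alpha_j(p)$ equal $p_1$, and the remaining $k+1$ values are $p_2-a_j p_1$ for $j=0,\dots,k$ (with $a_0=0$). Combining these yields
\[
\mathcal{M}\mathcal{I}_c^{K-\textnormal{th}}(q,p_1,p_2)=\gamma_q^c(p_1)^N\prod_{j=0}^k\gamma_q^c(p_2-a_j p_1),
\]
as claimed.

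The main technical point, rather than an obstacle, is to justify the absolute convergence of the Fubini step and to identify the region in $(p_1,p_2)$ where the Mellin transform is holomorphic. Ramanujan's integral converges for $0<\operatorname{Re}(\alpha_j(p))<1$ (the upper bound coming from the $1/(-y;q)_\infty\sim y^{-\log_q(y)}$ decay and the need for convergence at $y\to\infty$), so one should choose $(\operatorname{Re}(p_1),\operatorname{Re}(p_2))$ inside a common strip, analogous to the strip chosen in the cohomological proof; the Fano condition guarantees that such a strip has non-empty interior. Once Fubini is justified on this strip, the computation above is unconditional, and the resulting identity extends by analytic continuation.
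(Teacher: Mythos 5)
Your proof follows the same route as the paper: apply Fubini to rewrite the double integral as an integral over $Y_\mathbb{R}$, change variables $y_j=-x_j$ so the $(-1)^{\deg}$ factors cancel against the signs coming from the Batyrev constraints, factor the resulting integral into a product of $n$ one-dimensional Mellin integrals, and apply Ramanujan's formula (Proposition \ref{prop:Ramanujan_formula} with $a=0$) to each factor, reading off the exponents $\alpha_j(p)$ from the columns of the moment matrix. The main computation is correct and matches the paper.

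Two small inaccuracies in your added convergence discussion, neither of which affects the argument. First, the asymptotic you wrote for the integrand is off: in fact $\log(-y;q)_\infty \sim (\log y)^2/(2\log(1/q))$ as $y\to\infty$, so $1/(-y;q)_\infty$ decays faster than any power of $y$, and the Mellin integral $\int_0^\infty \frac{y^{z}}{(-y;q)_\infty}\frac{dy}{y}$ converges for \emph{every} $\operatorname{Re}(z)>0$, not only in the strip $0<\operatorname{Re}(z)<1$ (the apparent pole of $\pi/\sin(\pi z)$ at positive integers is cancelled by a zero of $(q^{1-z};q)_\infty$). Second, the nonemptiness of the region $\{\operatorname{Re}(\alpha_j(p))>0 \ \forall j\}$ follows from the compactness of $X_{\mu,K}$ (the dual cone has maximal dimension, cf. Remark \ref{rmk:gamma_toric_manifold_dimension_dual_cone}), not from the Fano condition.
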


The proof will be similar to its analogue in cohomology, see Lemma \ref{lemma:gamma_coh_mellin_transform_computation}: our goal is to use Fubini's theorem to compute the Mellin transform as an integral on $Y_\mathbb{R} = \left(\mathbb{R}_{> 0}\right)^n$.

\begin{proof}
  We recall the diagram of the $K$-theoretic mirror family of Definition \ref{def:gamma_k_theoretic_mirror_family}:
  \[
    \begin{tikzcd}
      Y := \left( \mathbb{C}^* \right)^n
        \arrow[r,"W"]
        \arrow[d,"\pi"]
      &
      \mathbb{C}
      \\
      B := \left( \mathbb{C}^* \right)^r
    \end{tikzcd}
  \]
  We also recall that in the expression of the oscillatory integral, $W_Q$ (for a point $Q \in B$) designates the restriction of the map $W$ along the fibre $\pi^{-1}(Q)$, and that $\omega_{\pi^{-1}(Q)}$ designates a volume form on the same fibre $\pi^{-1}(Q)$.
  Using Fubini's theorem and the Batyrev constraints $(-1)^{\textnormal{deg}(Q_i)} Q_i = \prod_j x_j^{m_{ij}}$, we get
  \begin{align*}
    &\mathcal{MI}_c^{K-\textnormal{th}}(q,p_1,p_2)
    =
    \int_{Q_1,Q_2 > 0} \int_{\Gamma_\mathbb{R}}
    \exp \left (
      W_{(-1)^{\textnormal{deg}}Q}(x_1,\dots,x_n)
    \right)
    Q_1^{p_1} Q_2^{p_2}
    \omega_{\pi^{-1}(Q)}
    \frac{dQ_1}{Q_1}\frac{dQ_2}{Q_2}
    \\
    &=
    \int_{\{x_1, \dots, x_n < 0\}}
    \prod_{j=1}^n
    \frac{1}{(x_j;q)_\infty}
    \left(
      (-1)^{\textnormal{deg}(Q_1)} x_1^{m_{11}} \cdots x_n^{m_{1n}}
    \right)^{p_1}
    \left(
      (-1)^{\textnormal{deg}(Q_2)} x_1^{m_{21}} \cdots x_n^{m_{2n}}
    \right)^{p_2}
    \frac{dx_1}{x_1} \cdots \frac{dx_n}{x_n}
  \end{align*}
  We now do a change of variable and set $x_j'=-x_j$.
  After this change of variable, we obtain
  \begin{align*}
    \mathcal{MI}_c^{K-\textnormal{th}}(q,p_1,p_2)
    =
    \prod_{j=1}^n
    \int_{x_j' > 0}
    \frac{1}{(-x'_j;q)_\infty}
    x_j'^{\alpha_j(d)}
    \frac{dx'_j}{x'_j}
  \end{align*}
  Using the Ramanujan formula of Proposition \ref{prop:Ramanujan_formula} and the description of Picard rank 2 symplectic toric manifolds, we obtain a product of functions $\gamma_q$ as stated in the lemma.
\end{proof}


Now, our goal is to compute the inverse Mellin transform using a contour deformation result to apply the residue theorem.

\begin{conjecture}\label{conj:gamma_contour_deformation_q_gamma}
  The inverse Mellin transform of $\mathcal{MI}_c^{K-\textnormal{th}}(q,p_1,p_2)$ can be computed by the following sum of iterated residues:
  \[
    \mathcal{M}^{-1} \mathcal{MI}_c^{K-\textnormal{th}}(q,Q_1,Q_2)
    =
    \sum_{d_2 > 0} \textnormal{Res}_{p_2=-d_2} \sum_{d_1 > 0} \textnormal{Res}_{p_1=-d_1}
    \gamma_q^c(p_1)^N \prod_{j=0}^k \gamma_q^c(p_2 - a_j p_1)
    Q_1^{-d_1} Q_2^{-d_2} dQ_1 dQ_2
  \]
\end{conjecture}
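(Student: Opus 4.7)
The plan is to mimic the contour deformation argument used in Lemma \ref{lemma:gamma_coh_inverse_mellin_residue} of Appendix \ref{sec:6_IT} and in Lemma \ref{lemma:qosc_inverse_q_mellin_as_residues} of Section \ref{sec:comp_thm}, but with the classical gamma function replaced by $\gamma_q^c$. First I would fix positive real numbers $\varepsilon_1, \varepsilon_2$ lying in the domain of holomorphy of $\mathcal{M}\mathcal{I}_c^{K-\textnormal{th}}(q,p_1,p_2)$, subject to the constraint $0 < \max_j\{1,a_j\}\varepsilon_1 < \varepsilon_2 < 1$, so that all the poles $p_2 - a_j p_1 \in \mathbb{Z}_{\leq 0}$ of the integrand (coming from the factors $\gamma_q^c(p_2 - a_jp_1)$ with $j \geq 1$) lie to the right of the $p_1$ contour. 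The inner integral in $p_1$ is then to be closed off in the left half plane through a sequence of rectangular contours $\mathcal{C}(R_M)$, with horizontal sides at $\textnormal{Im}(p_1) = \pm \sqrt{R_M^2 - 1}$ and a far vertical side at $\textnormal{Re}(p_1) = -M$ chosen to avoid the poles at $\mathbb{Z}_{\leq 0}$. The poles enclosed are exactly $p_1 = -d_1$, $d_1 \in \mathbb{Z}_{\geq 0}$, so by the residue theorem one obtains the desired sum, once the vanishing of the non-vertical contributions in the limit $M \to \infty$ is established.

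The analytic control required is based on three estimates for $\gamma_q^c(z) = \tfrac{\pi}{\sin(\pi z)} \tfrac{(q^{1-z};q)_\infty}{(q;q)_\infty}$. On vertical lines $\textnormal{Re}(z) = a$ fixed, the factor $|\sin(\pi z)|^{-1}$ gives exponential decay in $|\textnormal{Im}(z)|$, while $(q^{1-z};q)_\infty$ is uniformly bounded since $|q^{1-z}|$ depends only on $\textnormal{Re}(z)$. This yields the vanishing of the top and bottom horizontal sides of the contour. On large circular arcs in the left half plane, one needs a $q$-analogue of the Stirling asymptotics for $\gamma_q^c$; this can be extracted from Moak's $q$-Stirling formula (Proposition \ref{prop:gamma_moak_qstirling}) combined with Euler's reflection identity for $\gamma_q^c$ discussed in Remark \ref{rmk:gamma_comparison_between_q_gamma_functions}. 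The Fano condition $N - \sum_j a_j > 0$, exactly as in the cohomological proof of Lemma \ref{lemma:gamma_coh_inverse_mellin_residue}, should then force the combination $\gamma_q^c(p_1)^N / \prod_j \gamma_q^c(a_j p_1 - p_2)$ to have super-exponential decay dominating the polynomial-in-$|Q_1|^{-p_1}$ growth, closing the $p_1$ contour.

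Once the $p_1$ integral is written as $\sum_{d_1 \geq 0} \textnormal{Res}_{p_1 = -d_1}$, the analogous argument is to be carried out in the $p_2$ variable, after using Fubini to commute the discrete sum with the remaining integral. Justifying this swap requires a uniform-in-$d_1$ decay estimate of the $p_1$-residues as $|\textnormal{Im}(p_2)| \to \infty$, which I expect to follow from the same vertical-line estimate applied to the remaining $\gamma_q^c$-factors, together with the explicit bound $\gamma_q^c(-d_1) \sim q^{d_1(d_1+1)/2}/(q;q)_\infty$. After the $p_2$-contour is closed, only the poles $p_2 = -d_2$ with $d_2 \in \mathbb{Z}_{\geq 0}$ contribute. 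The final step is the change of variables $x_1 = p_1 + d_1$, $x_2 = p_2 + d_2$ and an application of the difference equation $\gamma_q^c(z+1) = (q^{-z}-1)^{-1} \gamma_q^c(z)$ to bring the iterated residue into the form asserted in the conjecture.

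The hard part, which is precisely the reason the paper leaves this as a conjecture, is uniformly controlling the quotient $\gamma_q^c(p_1)^N / \prod_j \gamma_q^c(a_j p_1 - p_2)$ on the enlarging arcs in the $p_1$-plane when $p_2$ also has large imaginary part. In the cohomological case one gets doubly-exponential decay from the logarithmic factor $e^{(N - \sum a_j) R_M \log R_M \cos \theta}$ in Stirling, which easily beats the linear exponential growth of $|\sin(\pi a_j p_1)|^{-1}$; but the $q$-version of this logarithmic improvement does not appear to be present, since $\log \gamma_q^c(z)$ is asymptotically linear (not superlinear) in $z$. This is why the Jackson-integral approach of Section \ref{sec:comp_thm}, where the analogue of the Mellin transform is a bona fide Jackson integral and contour deformation is replaced by a residue computation on a discrete torus, succeeds in full generality, while the continuous version here is delicate beyond Picard rank $1$. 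A realistic route to the conjecture for higher Picard rank is therefore to reinterpret $\mathcal{I}_c^{K-\textnormal{th}}$ as a limit of Jackson sums and deduce the residue formula from Lemma \ref{lemma:qosc_inverse_q_mellin_as_residues} together with a Poisson-summation type comparison, rather than a direct contour deformation in the continuous variables.
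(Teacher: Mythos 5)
The statement is a conjecture: the paper itself only proves it when $X = \mathbb{P}^N$ (Picard rank $1$), and then explicitly records that it has ``not been able to express the inverse Mellin transform of the oscillatory integral as a sum of residues'' beyond that case. Your first three paragraphs accurately reproduce the contour-deformation method used in that special case (and in Lemmas \ref{lemma:gamma_coh_inverse_mellin_residue} and \ref{lemma:qosc_inverse_q_mellin_as_residues}), and your fourth paragraph correctly names the step that fails for Picard rank $\geq 2$ with some $a_j>0$: closing the $p_1$-contour in the left half-plane. So your overall assessment matches the paper's.

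Your diagnosis can be sharpened, though. Note first that the paper's displayed difference equation for $\gamma_q^c$ carries a sign error: from $\gamma_q^c(z)=\frac{\pi}{\sin\pi z}\frac{(q^{1-z};q)_\infty}{(q;q)_\infty}$ one gets $\gamma_q^c(z+1)=(q^{-z}-1)\gamma_q^c(z)$, not $\frac{1}{q^{-z}-1}\gamma_q^c(z)$; iterating it shows $|\gamma_q^c(z)|\sim q^{-\operatorname{Re}(z)^2/2}$ as $\operatorname{Re}(z)\to+\infty$ with bounded imaginary part, a super-exponential \emph{growth}. In the other direction $\operatorname{Re}(z)\to-\infty$, Moak's asymptotic for $\log\Gamma_q(1-z)$ is only linear in $z$ and cancels against the factor $(1-q)^z$ appearing in the reflection formula, leaving $|\gamma_q^c(z)|\sim e^{-\pi|\operatorname{Im}(z)|}$ — bounded in the real direction. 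Your phrase ``$\log\gamma_q^c(z)$ is asymptotically linear'' conflates these two regimes, and is literally false in the first. The genuine obstruction is the mismatch between them: on a left half-plane arc $|p_1|=R_M$, the factor $\gamma_q^c(p_1)^N$ provides no real-direction decay (unlike $\Gamma(p_1)^N$, whose Stirling estimate contributes an $R_M\log R_M$-exponent decay), while for $a_j>0$ each $\gamma_q^c(p_2-a_jp_1)$ blows up like $q^{-a_j^2R_M^2\cos^2\theta/2}$. An $R_M^2$-exponent growth cannot be offset by the $R_M$-exponent decay from $|Q_1|^{-p_1}$, so the Fano condition — which in quantum cohomology balances two competing $R_M\log R_M$-exponents — is powerless. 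This is precisely what defeats the optimistic estimate you tentatively put forward in your second paragraph, and it also calls into question whether the residue sum in the conjecture even converges termwise when some $a_j>0$, since $\operatorname{Res}_{p_1=-d_1}$ produces a factor $\gamma_q^c(p_2+a_jd_1)\sim q^{-a_j^2d_1^2/2}$ that dominates $Q_1^{d_1}$. Your closing suggestion — a Poisson-summation comparison against the Jackson-integral model of Section \ref{sec:comp_thm} — is a plausible direction but remains speculative and is not pursued in the paper.
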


\begin{proof}[Proof when $X = \mathbb{P}^N$]
  We recall that the comparison between the function $\gamma_q$ and the usual $q$-gamma function is given by (cf. Remark \ref{rmk:gamma_comparison_between_q_gamma_functions})
  \[
    \gamma_q^c(p)
    =
    \frac{\pi}{\sin(\pi p)}
    \frac{(1-q)^p}{\gamma_q^c(1-p)}
  \]
  Write $1-p = z = \rho e^{i \theta}$. If $\textnormal{Re}(p) < 0$, then $\theta \in \left(-\frac{\pi}{2},\frac{\pi}{2} \right)$ and we can use Moak's asympototic for the function $\Gamma_q$.
  When $\rho \to \infty$, we have
  \begin{align*}
    | \sin(\pi z) |
    &=
    | \sin(\pi \rho  e^{i \theta}) |
    \sim
    \frac{1}{2}
    e^{\rho |\sin(\theta)|}
    \\
    |(1-q)^p|
    &= e^{\log(1-q)(1-\rho \cos(\theta))}
    \\
    \textnormal{Li}_2(1-q^{z})
    &\to
    \frac{\pi^2}{6}
    \\
    \left|
      \left(
        \frac{1-q^z}{1-q}
      \right)^{z-\frac{1}{2}}
    \right|
    &\sim
    \frac{1}{
      (1-q)^{\rho \cos(\theta)-\frac{1}{2}}
    }
  \end{align*}
  Therefore,
  \[
    \left|
      \gamma_q^c(p)
    \right|
    \sim
    \textnormal{(Constant)}
    e^{- \pi \rho |\sin(\theta)|}
  \]

  In the case of $X=\mathbb{P}^N$, we have
  \[
    \mathcal{MI}_c^{K-\textnormal{th},X=\mathbb{P}^N}(q,p_1)
    =
    \gamma_q^c(p_1)^{N+1}
  \]
  In that case, we can compute the inverse Mellin transform of this expression through a contour deformation identical to the one used in the proof of Lemma \ref{lemma:gamma_coh_inverse_mellin_residue}:
  
  We want to compute
  \[
    \int_{p_1 \in \varepsilon_1+i\mathbb{R}}
    \gamma_q^c(p_1)^{N+1} Q_1^{-p_1}
    dQ_1
  \]
  We deform this integral using the following contour:
  we pick $R_M > \!> 0$ some large number and define the closed contour $\Gamma_{R_M}$ as the union of the following pieces: a vectical line segment $L_1(R_M)$ from $(\varepsilon_1,-R_M)$ to $(\varepsilon_1,+R_M)$, a horizontal line $L_2(R_M)$ from $(\varepsilon_1,R_M)$ to $(0,R_M)$,
  a half circle $C(R_M)$ from $(0,R_M)$ to $(0,-R_M)$ of radius $R_M$ and origin 0 in the half place $\{ \textnormal{Re}(p_1) < 0 \}$ ,
  and a horizontal line $L_3(R_M)$ from $(0,-R_M)$ to $(\varepsilon_1,-R_M)$.

  We focus on the integral
  \[
    \int_{p_1 \in C(R_M)}
    \gamma_q^c(p_1)^{N+1} Q_1^{-p_1}
    dQ_1
  \]
  When $|Q_1| < 0$, this integrand has exponential decay for $\textnormal{Re}(p_1) < 0$ as $|p_1| \to \infty$, therefore the integral along the arc of circle $C(R_M)$ will vanish at when $R_M \to \infty$.
  Applying the residue theorem to the deformed contour and using continuity for the vanishing of the other integrals along $L_{1,2}(R_M)$ as $\varepsilon_1 \to 0$, we obtain the formula of Conjecture \ref{conj:gamma_contour_deformation_q_gamma}.
\end{proof}

To obtain a proof of the comparison theorem \ref{thm:gamma_comparison_theorem_for_continuous_integrals}, it remains once again to identify the residue computed in Conjecture \ref{conj:gamma_contour_deformation_q_gamma} with a Jeffrey--Kirwan residue using Theorem 2.6 of \cite{Szenes_Vergne:jeffrey_kirwan_residues} and apply Proposition 2.3 of \cite{Szenes_Vergne:jeffrey_kirwan_residues}.

Unfortunately, when the target space is not a projective space $\mathbb{P}^N$, we have not been able to express the inverse Mellin transform of the oscillatory integral as a sum of residues yet.
We have attempted to find a formal continuation (for the coordinate $q$) of the Mellin transform $\mathcal{MI}_c^{K-\textnormal{th}}$ to $\{ q > 1 \}$ that satisfies the same difference equation, however our contour deformation strategy does not work for that function either.

%
%

\bibliographystyle{amsalpha}
\bibliography{Bibliography}


\end{document}